\theoremstyle{plain}
\newtheorem{thm}{Theorem}[chapter]
\newtheorem{lem}[thm]{Lemma}
\newtheorem{prop}[thm]{Proposition}
\newtheorem{conjecture}[thm]{Conjecture}
\newtheorem{cor}[thm]{Corollary}
\theoremstyle{definition}
\newtheorem{defn}[thm]{Definition}
\newtheorem{example}[thm]{Example}
\theoremstyle{remark}
\newtheorem{rem}[thm]{Remark}
\newcommand{\includefigure}[1]{\centering\includegraphics{figures/#1}}
\renewcommand*{\backrefalt}[4]{%
\ifcase #1 %
[no citations]%
\or
[see p.~#2]%
\else
[see pp.~#2]%
\fi
}
\global\long\def\i{\mathrm{i}}
\global\long\def\e{\mathrm{e}}
\global\long\def\rd{\mathrm{d}}
\global\long\def\sech{\operatorname{sech}}
\global\long\def\dist{\operatorname{dist}}
\global\long\def\bu{\boldsymbol{u}}
\global\long\def\bv{\boldsymbol{v}}
\global\long\def\bR{\boldsymbol{R}}
\global\long\def\bx{\boldsymbol{x}}
\global\long\def\by{\boldsymbol{y}}
\global\long\def\be{\boldsymbol{e}}
\global\long\def\bff{\boldsymbol{f}}
\global\long\def\bk{\boldsymbol{k}}
\global\long\def\bF{\boldsymbol{F}}
\global\long\def\bS{\boldsymbol{S}}
\global\long\def\bN{\boldsymbol{N}}
\global\long\def\bR{\boldsymbol{R}}
\global\long\def\bC{\boldsymbol{C}}
\global\long\def\bn{\boldsymbol{n}}
\global\long\def\ba{\boldsymbol{a}}
\global\long\def\bphi{\boldsymbol{\varphi}}
\global\long\def\bUF{{\boldsymbol{U}\!}_{\bF}}
\global\long\def\bUmF{{\boldsymbol{U}\!}_{-\bF}}
\global\long\def\bnabla{\boldsymbol{\nabla}}
\global\long\def\bcdot{\boldsymbol{\cdot}}
\global\long\def\bwedge{\boldsymbol{\wedge}}
\global\long\def\bzero{\boldsymbol{0}}
\global\long\def\bA{\boldsymbol{A}}
\global\long\def\bsigma{\boldsymbol{\Sigma}}
\global\long\def\argmax{\operatorname*{arg\, max}}
\global\long\def\widebar#1{\mybar{#1}}
\let\save@mathaccent\mathaccent
\newcommand*\if@single[3]{%
  \setbox0\hbox{${\mathaccent"0362{#1}}^H$}%
  \setbox2\hbox{${\mathaccent"0362{\kern0pt#1}}^H$}%
  \ifdim\ht0=\ht2 #3\else #2\fi
  }
\newcommand*\rel@kern[1]{\kern#1\dimexpr\macc@kerna}
\newcommand*\mybar[1]{\@ifnextchar^{{\wide@bar{#1}{0}}}{\wide@bar{#1}{1}}}
\newcommand*\wide@bar[2]{\if@single{#1}{\wide@bar@{#1}{#2}{1}}{\wide@bar@{#1}{#2}{2}}}
\newcommand*\wide@bar@[3]{%
  \begingroup
  \def\mathaccent##1##2{%
    \let\mathaccent\save@mathaccent
    \if#32 \let\macc@nucleus\first@char \fi
    \setbox\z@\hbox{$\macc@style{\macc@nucleus}_{}$}%
    \setbox\tw@\hbox{$\macc@style{\macc@nucleus}{}_{}$}%
    \dimen@\wd\tw@
    \advance\dimen@-\wd\z@
    \divide\dimen@ 3
    \@tempdima\wd\tw@
    \advance\@tempdima-\scriptspace
    \divide\@tempdima 10
    \advance\dimen@-\@tempdima
    \ifdim\dimen@>\z@ \dimen@0pt\fi
    \rel@kern{0.6}\kern-\dimen@
    \if#31
      \overbracket[0.65pt][0pt]{\rel@kern{-0.6}\kern\dimen@\macc@nucleus\rel@kern{0.4}\kern\dimen@}%
      \advance\dimen@0.1\dimexpr\macc@kerna
      \let\final@kern#2%
      \ifdim\dimen@<\z@ \let\final@kern1\fi
      \if\final@kern1 \kern-\dimen@\fi
    \else
       \overbracket[0.65pt][0pt]{\rel@kern{-0.6}\kern\dimen@#1}%
    \fi
  }%
  \macc@depth\@ne
  \let\math@bgroup\@empty \let\math@egroup\macc@set@skewchar
  \mathsurround\z@ \frozen@everymath{\mathgroup\macc@group\relax}%
  \macc@set@skewchar\relax
  \let\mathaccentV\macc@nested@a
  \if#31
    \macc@nested@a\relax111{#1}%
  \else
    \def\gobble@till@marker##1\endmarker{}%
    \futurelet\first@char\gobble@till@marker#1\endmarker
    \ifcat\noexpand\first@char A\else
      \def\first@char{}%
    \fi
    \macc@nested@a\relax111{\first@char}%
  \fi
  \endgroup
}
\begin{document}

\title{Steady solutions of the\\
Navier-Stokes equations\\
in the plane}

\date{November 12, 2015}

\author{\href{mailto:guill093@umn.edu}{Julien Guillod}}

\publishers{School of Mathematics\\
University of Minnesota}
\maketitle
\begin{abstract}
This study is devoted to the incompressible and stationary Navier-Stokes
equations in two-dimensional unbounded domains. First, the main results
on the construction of the weak solutions and on their asymptotic
behavior are reviewed and structured so that all the cases can be
treated in one concise way. Most of the open problems are linked with
the case of a vanishing velocity field at infinity and this will be
the main subject of the remainder of this study. The linearization
of the Navier-Stokes around the zero solution leads to the Stokes
equations which are ill-posed in two dimensions. It is the well-known
Stokes paradox which states that if the net force is nonzero, the
solution of the Stokes equations will grow at infinity. By studying
the link between the Stokes and Navier-Stokes equations, it is proven
that even if the net force vanishes, the velocity and pressure fields
of the Navier-Stokes equations cannot be asymptotic to those of the
Stokes equations. However, the velocity field can be in some cases
asymptotic to two exact solutions of the Stokes equations which also
solve the Navier-Stokes equations. Finally, a formal asymptotic expansion
at infinity for the solutions of the two-dimensional Navier-Stokes
equations having a nonzero net force is established based physical
arguments. The leading term of the velocity field in this expansion
decays like $\left|\bx\right|^{-1/3}$ and exhibits a wake behavior.
Numerical simulations are performed to validate this asymptotic expansion
when is net force is nonzero and to analyze the asymptotic behavior
in the case where the net force is vanishing. This indicates that
the Navier-Stokes equations admit solutions whose velocity field goes
to zero at infinity in contrast to the Stokes linearization and moreover
this shows that the set of possible asymptotes is very rich.\bigskip{}

\textit{Keywords:} Navier-Stokes equations, Stokes equations, Steady
solutions, Numerical simulations

\textit{MSC class:} 35Q30, 35J57, 76D05, 76D07, 76D03, 76D25, 76M10
\end{abstract}

\tableofcontents{}

\chapter{Introduction}

We consider a viscous fluid of constant viscosity $\mu$ and constant
density $\rho$ moving in a region $\Omega$ of the two or three-dimensional
space. The motion of the fluid is characterized by the velocity field\index{Velocity field}
$\bu(\bx,t)$ and the pressure field\index{Pressure field} $p(\bx,t)$,
where $\bx\in\Omega$ is the position and $t>0$ the time. In an inertial
frame, the equations of motion are given by
\begin{align}
\rho\left(\frac{\partial\bu}{\partial t}+\bu\bcdot\bnabla\bu\right) & =\mu\Delta\bu-\bnabla p-\rho\bff\,, & \bnabla\bcdot\bu & =\bzero\,,\label{eq:intro-ns-time}
\end{align}
where $\bff$ is minus the external force per unit mass acting on
the fluid. These equations were first described by \citet[p.~414]{Navier-Memoiresurles1827},
but their adequate physical justification was given only later on
in the work of \citet{Stokes-TheoriesInternalfriction1845}. Nowadays,
these equations are referred to as the Navier-Stokes equations. The
resolution of the Navier-Stokes equations consists of finding fields
$\bu$ and $p$ satisfying \eqref{intro-ns-time} together with some
prescribed boundary conditions or initial conditions. The beginning
of mathematical fluid dynamics started with the pioneering work of
\citet{Leray-Etudedediverses1933} who developed a general method
for solving the Navier-Stokes equations essentially without any restriction
on the size of the data. With the usage of computers, the Navier-Stokes
equations can now be solved numerically with good precision in many
cases, which is crucial for applications. However, up to this date,
the Navier-Stokes equations are far from being completely understood
mathematically. One major question is the one stated by the Clay Mathematical
Institute as one of the seven most important open mathematical problems:
do the time-dependent Navier-Stokes equations in an unbounded or periodic
domain of the three-dimensional space admit a solution for large data?
\citet{Ladyzhenskaya-MathematicalTheory1963} answers the same question
affirmatively in two dimensions. A second major question concerns
the steady solutions in two-dimensional unbounded domains, which is
the main subject of this research. For time-independent domains, steady
motions are described by $\partial_{t}\bu=\partial_{t}\bff=\bzero$,
which leads to the following stationary Navier-Stokes equations\index{Navier-Stokes equations},
\begin{align}
\mu\Delta\bu-\bnabla p & =\rho\left(\bu\bcdot\bnabla\bu+\bff\right)\,, & \bnabla\bcdot\bu & =\bzero\,.\label{eq:intro-ns-steady}
\end{align}
Various aspects of these equations have been studied: the monograph
of \citet{Galdi-IntroductiontoMathematical2011} presents them in
great detail. By the change of variables\index{Scaling!Navier-Stokes equations}
\begin{align*}
\bu & \mapsto\frac{\mu}{\rho}\bu\,, & p\mapsto & \frac{\mu^{2}}{\rho}p\,, & \bff & \mapsto\frac{\mu^{2}}{\rho^{2}}\bff\,,
\end{align*}
the parameters $\mu$ and $\rho$ can be set to one,\begin{subequations}
\begin{align}
\Delta\bu-\bnabla p & =\bu\bcdot\bnabla\bu+\bff\,, & \bnabla\bcdot\bu & =0\,,\label{eq:intro-ns-eq}
\end{align}
as we will do from now on. In case the domain $\Omega$ has a boundary
$\partial\Omega$, we complete \eqref{intro-ns-eq} with a condition
that describes how the fluid interacts with the boundary,\index{Boundary condition}
\begin{equation}
\left.\bu\right|_{\partial\Omega}=\bu^{*}\,,\label{eq:intro-ns-bc}
\end{equation}
and if the domain $\Omega$ is unbounded, we add a boundary condition
at infinity,
\begin{equation}
\lim_{\left|\bx\right|\to\infty}\bu(\bx)=\bu_{\infty}\,,\label{eq:intro-ns-limit}
\end{equation}
\label{eq:intro-ns}\end{subequations}where $\bu_{\infty}\in\mathbb{R}^{n}$
is a constant vector. So for a domain $\Omega\subset\mathbb{R}^{n}$,
the stationary Navier-Stokes problem consists of finding $\bu$ and
$p$ satisfying \eqref{intro-ns} for given $\bff$, $\bu^{*}$ and
$\bu_{\infty}$, which are called the data. This research focuses
on the analysis of the existence, uniqueness and asymptotic behavior
of the solutions of this problem in two-dimensional unbounded domains.
The analysis of this problem depends highly on the domain and on the
data.

First, at the end of the introduction, we make some general remarks
on the symmetries and invariant quantities of the Navier-Stokes equations
that will be later on routinely used. Concerning the symmetries, we
show that there are no further infinitesimal symmetries of the stationary
Navier-Stokes equations in $\mathbb{R}^{n}$ beside the Euclidean
group, the scaling symmetry and a trivial shift of the pressure. This
is useful to ensure that there is no hidden symmetries in the stationary
solutions that could have been used otherwise. In the last part of
the introduction, we introduce a concept of invariant quantity and
show that the net flux, the net force, and the net torque are the
only invariant quantities on the Navier-Stokes equations. By definition,
an invariant quantity can be expressed by integration over a closed
curve or surface in $\Omega$ and is independent for any homotopic
change of the curve. In unbounded domains, the invariant quantities
play an important role, because the closed curve can be enlarged to
infinity, and therefore are linked to the asymptotic behavior at infinity
of the solutions. As it will become clear later on, the asymptotic
behavior of the solutions is fundamentally intertwined with the existence
of solutions.\\

The mathematical tools needed to discuss the equations dependent a
lot on the type of the domain $\Omega$, and we distinguish four cases
as shown in \figref{domains}:\index{Domain!type}{\renewcommand\theenumi{(\alph{enumi})} \renewcommand\labelenumi{\theenumi} 
\begin{enumerate}
\item $\Omega$ is bounded;
\item $\Omega$ is unbounded and its boundary $\partial\Omega$ is bounded,
\emph{i.e.} $\Omega$ is an exterior domain;
\item $\Omega$ is unbounded and has no boundary, \emph{i.e.} $\Omega=\mathbb{R}^{n}$;
\item $\Omega$ and $\partial\Omega$ are both unbounded.
\end{enumerate}
}
\begin{figure}[h]
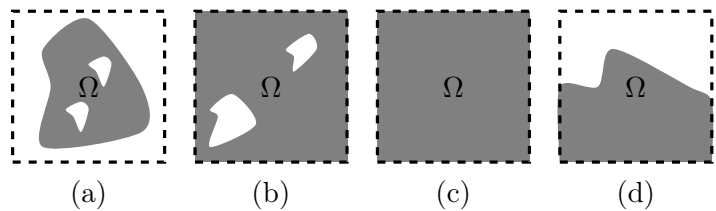

\includefigure{domains}\vspace{-4mm}
\caption{\label{fig:domains}Different families of domains $\Omega$.}
\end{figure}
As already said, the mathematical study of the Navier-Stokes equations
essentially started with the work of \citet{Leray-Etudedediverses1933},
whose method consists of three steps. First the boundary conditions
$\bu^{*}$ and $\bu_{\infty}$ have to be lifted by an extension $\ba$
which satisfies the so-called extension condition. The second step
is to show the existence of weak solutions in bounded domain. Finally
if the domain is unbounded, the third step is to define a sequence
of invading bounded domains that coincide in the limit with the unbounded
domain and show that the induced sequence of solutions converges in
some suitable space. With this strategy, \citet{Leray-Etudedediverses1933}
was able to construct weak solutions in domains with a compact boundary,
\emph{i.e.} cases (a) \& (b), if the flux through each connected component
of the boundary is zero. If $\Omega$ is bounded and in view of the
incompressibility of the fluid, the divergence theorem requires that
the total flux through the boundary $\partial\Omega$ is zero, but
not that the flux through each connected component of the boundary
is zero. If theses fluxes are small enough, the existence of weak
solutions was proved by \citet{Galdi-existencesteadymotions1991}
in bounded domains and respectively in two and three dimensions by
\citet[Theorem 2.6]{Finn-steady-statesolutions1961} and \citet{Russo-NoteExteriorTwo-Dimensional2009}
for the unbounded case (b). Without restriction on the magnitude of
the fluxes, \citet{Korobkov-existence2Dsymmetric2014,Korobkov-existence3Dsymmetric2014}
treated the case of unbounded symmetric exterior domains in both two
and three dimensions and recently, \citet{Korobkov-LerayProblem2015}
proved the existence of weak solutions under no symmetry and smallness
assumptions for two-dimensional bounded domains. In the first chapter,
we review the above results for small fluxes by proposing a method
that includes all the cases in a concise way. In case (c) where $\Omega=\mathbb{R}^{n}$,
the method of Leray work without any differences if $n=3$ but cannot
be used if $n=2$ to construct weak solutions, whose existence is
still an open problem. For the case (d), see \citet{Guillod-NavierStokesHalfplane2015}
and references therein.

If the data are regular enough, \citet{Ladyzhenskaya-InvestigationofNavierStokes1959}
showed by elliptic regularity that the weak solutions satisfy \eqref{intro-ns-eq}
and \eqref{intro-ns-bc} in the classical way, which solves the problem
\eqref{intro-ns} if $\Omega$ is bounded. However, if $\Omega$ is
unbounded, the validity of the boundary condition at infinity \eqref{intro-ns-limit}
depends drastically on the dimension. In three dimensions, the function
space used by Leray, allowed him to show that \eqref{intro-ns-limit}
is satisfied in a weak sense and the existence of uniform pointwise
limit was shown later by \citet{Finn-steadystatesolutions1959}. However,
in two dimensions, the function space used by Leray for the construction
of weak solutions does not even ensure that $\bu$ is bounded at large
distances, so that apparently no information on the behavior at infinity
$\bu_{\infty}$ is retained in the limit where the domain becomes
infinitely large. The validity of \eqref{intro-ns-limit} for two-dimensional
exterior domains remained completely open until \citet{Gilbarg.Weinberger-AsymptoticPropertiesof1974,Gilbarg.Weinberger-Asymptoticpropertiesof1978}
partially answered it by showing that either there exists $\bu_{0}\in\mathbb{R}^{2}$
such that
\[
\lim_{\left|\bx\right|\to\infty}\int_{S^{1}}\left|\bu-\bu_{0}\right|^{2}=0\,,\qquad\text{or}\qquad\lim_{\left|\bx\right|\to\infty}\int_{S^{1}}\left|\bu\right|^{2}=\infty\,.
\]
Nevertheless, the question if the second case of the alternative can
be ruled out and if $\bu_{0}$ coincides with $\bu_{\infty}$ remains
open in general. Later on \citet{Amick-Leraysproblemsteady1988} showed
that if $\bu^{*}=\bff=\bzero$, then the first alternative happens,
so $\bu$ is bounded and
\[
\lim_{\left|\bx\right|\to\infty}\bu=\bu_{0}\,.
\]
In two dimensions, the only results with $\bu_{\infty}=\bzero$ without
assuming small data are obtained by assuming suitable symmetries.
\citet[\S 3.3]{Galdi-StationaryNavier-Stokesproblem2004} showed that
if an exterior domain and the data are symmetric with respect to two
orthogonal axes, then there exists a solution satisfying the boundary
condition at infinity in the following sense:
\[
\lim_{|\bx|\to\infty}\int_{S^{1}}\left|\bu\right|^{2}=0\,.
\]
This result was improved by \citet[Theorem~7]{Russo-existenceofDsolutions2011}
by only requiring the domain and the data to be invariant under the
central symmetry $\bx\mapsto-\bx$, and by \citet{Pileckas-existencevanishing2012}
by allowing a flux through the boundary. However, all these results
rely only on the properties of the subset of symmetric functions in
the function space in which weak solutions are constructed, and therefore
the decay of the velocity at infinity remains unknown.

\Chapref{weak} is a review of the construction of weak solutions
in two- and three-dimensional Lipschitz domains for arbitrary large
data $\bu^{*}$ and $\bff$, provided that the flux of $\bu^{*}$
through each connected component of $\partial\Omega$ is small. The
proofs are based on standard techniques and structured so that all
the cases can be treated in one concise way. For unbounded domains,
the behavior at infinity of the weak solutions is also reviewed.\\

In cases (b) \& (c), more detailed results can be obtained by linearizing
\eqref{intro-ns-eq} around $\bu=\bu_{\infty}$,
\begin{align}
\Delta\bu-\bnabla p-\bu_{\infty}\bcdot\bnabla\bu & =\bff\,, & \bnabla\bcdot\bu & =\bzero\,,\label{eq:intro-ns-lin}
\end{align}
which is called the Stokes equations\index{Stokes equations} if $\bu_{\infty}=\bzero$
and the Oseen equations\index{Oseen equations} if $\bu_{\infty}\neq\bzero$.
The fundamental solution of the Stokes equations behaves like $\left|\bx\right|^{-1}$
in three dimensions and grows like $\log\left|\bx\right|$ in two
dimensions. However, the fundamental solution of the Oseen equations
exhibits a parabolic wake directed in the direction of $\bu_{\infty}$
in which the decay of the velocity is slower than in the other region.
Explicitly in three dimensions the velocity decays like $\left|\bx\right|^{-1}$
inside the wake and like $\left|\bx\right|^{-2}$ outside and in two
dimensions the decays are $\left|\bx\right|^{-1/2}$ and $\left|\bx\right|^{-1}$
respectively inside and outside the wake. In view of these different
behaviors of the fundamental solution at infinity, we distinguish
the two cases $\bu_{\infty}\neq\bzero$ and $\bu_{\infty}=\bzero$.

For $\bu_{\infty}\neq\bzero$, the estimates of the Oseen equations
show that the inversion of the Oseen operator on the nonlinearity
leads to a well-posed problem, so a fixed point argument shows the
existence of solutions behaving at infinity like the Oseen fundamental
solution for small data. This was done by \citet[\S 4]{Finn-exteriorstationaryproblem1965}
in three dimensions and by \citet{Finn-stationarysolutionsNavier1967}
in two dimensions. Moreover, in three dimensions, by using results
of \citet{Finn-exteriorstationaryproblem1965}, \citet{Babenko-stationarysolutionsof1973}
showed that the solution of \eqref{intro-ns} found by the method
of Leray behaves at infinity like the fundamental solution of the
Oseen equations \eqref{intro-ns-lin}, so in particular $\bu-\bu_{\infty}=O(\left|\bx\right|^{-1})$
at infinity. In two dimensions, by the results of \citet[\S 4]{Smith-EstimatesatInfinity1965}
and \citet[Theorem XII.8.1]{Galdi-IntroductiontoMathematical2011},
one has that if $\bu$ is a solution of \eqref{intro-ns}, then $\bu$
is asymptotic to the Oseen fundamental solution, so $\bu-\bu_{\infty}=O(\left|\bx\right|^{-1/2})$.
However, it is still not known if the solutions constructed by the
method of \citet{Leray-Etudedediverses1933} satisfy \eqref{intro-ns-limit}
in two dimensions and therefore if they coincide with the solutions
found by \citet{Finn-stationarysolutionsNavier1967}. These results
on the asymptotic behavior of weak solutions will be reviewed at the
end of \chapref{weak}.

From now one, we consider the case where $\bu_{\infty}=\bzero$. As
already said, in three dimensions, the function spaces imply the validity
of \eqref{intro-ns-limit} even if $\bu_{\infty}=\bzero$, whereas
in two dimensions, all the available results are obtained by assuming
suitable symmetries \citep{Galdi-StationaryNavier-Stokesproblem2004,Yamazaki-stationaryNavier-Stokesequation2009,Yamazaki-Uniqueexistence2011,Pileckas-existencevanishing2012}
or specific boundary conditions \citep{Hillairet-mu2013}. \citet{Yamazaki-Uniqueexistence2011}
showed the existence and uniqueness of solutions for small data in
an exterior domain provided the domain and the data are invariant
under four axes of symmetries with an angle of $\pi/4$ between them.
In the exterior of a disk, \citet{Hillairet-mu2013} proved the existence
of solutions that decay like $\left|\bx\right|^{-1}$ at infinity
provided that the boundary condition on the disk is close to $\mu\be_{r}$
for $\left|\mu\right|>\sqrt{48}$. To our knowledge, these last two
results together with the exact solutions found by \citet{Hamel-SpiralfoermigeBewegungen1917,Guillod-Generalizedscaleinvariant2015}
are the only ones showing the existence of solutions in two-dimensional
exterior domains satisfying \eqref{intro-ns-limit} with $\bu_{\infty}=\bzero$
and a known decay rate at infinity.\\

We now analyze the implications of the decay of the velocity on the
linear and nonlinear terms and on the net force. For simplicity, we
consider in this paragraph the domain $\Omega=\mathbb{R}^{n}$ and
a source force $\bff$ with compact support, but the following considerations
can be extended to the case where $\Omega$ has a compact boundary
and $\bff$ decays fast enough. A fundamental quantity is the net
force $\bF$ which has a simple expression due to the previous hypothesis,
\[
\bF=\int_{\mathbb{R}^{n}}\bff\,.
\]
If the net force is nonzero, the solution of the Stokes equations
has a velocity field that decays like $\left|\bx\right|^{-1}$ for
$n=3$ and that grows like $\log\left|\bx\right|$ for $n=2$. This
is the well-known Stokes paradox.\index{Stokes paradox}\index{Paradox of Stokes}
By power counting\index{Power counting}, if the velocity decays like
$\left|\bx\right|^{-\alpha}$, we have
\begin{align}
\bu & \sim\left|\bx\right|^{-\alpha}\,, & \bnabla\bu & \sim\left|\bx\right|^{-\alpha-1}\,, & \Delta\bu & \sim\left|\bx\right|^{-\alpha-2}\,, & \bu\bcdot\bnabla\bu & \sim\left|\bx\right|^{-2\alpha-1}\,,\label{eq:decay}
\end{align}
and therefore the Navier-Stokes equations \eqref{intro-ns-eq} are
essentially linear (subcritial) for $\alpha>1$, are critical for
$\alpha=1$, and highly nonlinear (supercritical) for $\alpha<1$.\index{Criticality of the Navier-Stokes equations}\index{Navier-Stokes equations!criticality}
However, since the net force is a conserved quantity, we have for
$\bff$ with compact support and $R$ big enough:
\[
\bF=\int_{\mathbb{R}^{n}}\bff=\int_{\partial B(\bzero,R)}\mathbf{T}\bn\,,
\]
where $\mathbf{T}$ is the stress tensor\index{Stress tensor} including
the convective part, $\mathbf{T}=\bnabla\bu+\left(\bnabla\bu\right)^{T}-p\,\boldsymbol{1}-\bu\otimes\bu$
and $B(\bzero,R)$ the open ball of radius $R$ centered at the origin.
Again by power counting, if $\bu$ satisfies \eqref{decay}, we obtain
that $\mathbf{T}\sim\left|\bx\right|^{-\min(\alpha+1,2\alpha)}$,
so if $2\alpha>n-1$, the limit\index{Net force!implications on the asymptotic behavior}
$R\to\infty$ vanishes and $\bF=\bzero$.\label{intro-criticality}
Consequently, in three dimensions, $\alpha=1$ is the critical case
for the equations as well as for the net force, whereas in two dimensions,
the equations have to be supercritical if we want to generate a nonzero
net force. If the net force vanishes, the solution of the Stokes equations
decays like $\left|\bx\right|^{-2}$ in three dimensions, so the problem
is subcritical and like $\left|\bx\right|^{-1}$ in two dimensions,
which is the critical regime. The different regimes are described
in \tabref{summary-decays}. Therefore, the problem is critical in
three dimensions if $\bF\ne\bzero$ and in two dimensions if $\bF=\bzero$.
In both of these cases, inverting the Stokes operator on the nonlinearity,
which by power counting decays like $\left|\bx\right|^{-3}$, leads
to a solution decaying like $\left|\bx\right|^{-1}\log\left|\bx\right|$.
Therefore, the Stokes system is ill-posed in this critical setting
and the leading term at infinity cannot be the Stokes fundamental
solution. In three dimensions this was proven by \citet[Theorem 3.1]{Deuring.Galdi-AsymptoticBehaviorof2000}
and in two dimensions this is proven in \chapref{link-Stokes-NS}.\\

We now discuss the critical cases in more details. In three dimensions,
by using an idea of \citet[Theorem 3.2]{Nazarov-steady2000}, \citet{Korolev.Sverak-largedistanceasymptotics2011}
proved by a fixed point argument that for small data the asymptotic
behavior is given by a class of exact solutions found by \citet{Landau-newexactsolution1944}.
The Landau solutions\index{Landau solutions}\index{Exact solutions!Landau}
are a family of exact and explicit solutions $\bUF$ of \eqref{intro-ns}
in $\mathbb{R}^{3}\setminus\left\{ \bzero\right\} $ parameterized
by $\bF\in\mathbb{R}^{3}$ and corresponding, in the sense of distributions,
to $\bff(\bx)=\bF\delta^{3}(\bx)$, so having a net force $\bF$.
Moreover, these are the only solutions that are invariant under the
scaling symmetry\index{Scaling!symmetry}\index{Symmetry!scaling}\index{Scale-invariant solutions},
\emph{i.e.} such that $\lambda\bu(\lambda\bx)=\bu(\bx)$ for all $\lambda>0$
\citep{Sverak-LandausSolutionsNavier2011}. Given this candidate for
the asymptotic expansion of the solution up to the critical decay,
the second step is to define $\bu=\bUF+\bv$, so that the Navier-Stokes
equations \eqref{intro-ns} become
\begin{align*}
\Delta\bv-\bnabla q & =\bUF\bcdot\bnabla\bv+\bv\bcdot\bnabla\bUF+\bv\bcdot\bnabla\bv+\boldsymbol{g}\,, & \bnabla\bcdot\bu & =0\,, & \lim_{|\bx|\to\infty}\bu & =\bzero\,,
\end{align*}
where the resulting source term $\boldsymbol{g}$ has zero mean, which
lifts the compatibility condition of the Stokes problem related to
the net force. Since $\bUF$ is bounded by $\left|\bx\right|^{-1}$,
the cross term $\bUF\bcdot\bnabla\bv+\bv\bcdot\bnabla\bUF$ is a critical
perturbation of the Stokes operator. Therefore this term can be put
together with the nonlinearity in order to perform a fixed point argument
on a space where $\bv$ is bounded by $\left|\bx\right|^{-2+\varepsilon}$
for some $\varepsilon>0$. This argument leads to the existence of
solutions satisfying
\[
\bu=\bUF+O(\left|\bx\right|^{-2+\varepsilon})\,,
\]
provided $\bff$ is small enough. Therefore, the key idea of this
method is to find the asymptotic term that lifts the compatibility
condition corresponding to the net force $\bF$. If net force is zero,
the solution of the Stokes equations in three dimensions decays like
$\left|\bx\right|^{-2}$, so we are in the subcritical regime and
everything is governed by the linear part of the equation, \emph{i.e.}
the Stokes equations.

In two dimensions and if $\bF=\bzero$, the solution of the Stokes
equations again decays like $\left|\bx\right|^{-1}$, and therefore
we are also in the critical case. In \chapref{strong-compatibility}
we determine the three additional compatibility conditions on the
data needed so that the solution of the Stokes equations decay faster
than $\left|\bx\right|^{-1}$. Once this is known, we can use a fixed
point argument in order to obtain the existence of solutions decaying
faster than $\left|\bx\right|^{-1}$ for small data satisfying three
compatibility conditions. Moreover, these compatibility conditions
can be automatically fulfilled by assuming suitable discrete symmetries,
which will improve the results of \citet{Yamazaki-Uniqueexistence2011}.
In \chapref{strong-compatibility}, we also show how to lift the compatibility
condition corresponding to the net torque $M$ with the solution $M\left|\bx\right|^{-2}\bx^{\perp}$,
however two compatibility conditions not related to invariant quantities
remain.

In \chapref{link-Stokes-NS}, we prove that the two solutions of the
Stokes equations decaying like $\left|\bx\right|^{-1}$ and which
are given by the two remaining compatibility conditions cannot be
the asymptote of any solutions of the Navier-Stokes equations in two-dimensions.
By analogy with the three-dimensional case where the asymptote is
given by the Landau solution which is scale-invariant, we can look
for a scale-invariant solution to describe the asymptotic behavior
also in two dimensions. As proved by \citet{Sverak-LandausSolutionsNavier2011},
the scale-invariant solutions of the Navier-Stokes equations are given
by the exact solutions found by \citet[\S 6]{Hamel-SpiralfoermigeBewegungen1917}.
These solutions are parameterized by the flux $\Phi\in\mathbb{R}$,
an angle $\theta_{0}$, and a discrete parameter $n$. As explained
by \citet[\S 5]{Sverak-LandausSolutionsNavier2011}, they are far
from the Stokes solutions decaying like $\left|\bx\right|^{-1}$,
so cannot be used to lift the compatibility conditions of the Stokes
equations. In an attempt to obtain the correct asymptotic behavior,
\citet{Guillod-Generalizedscaleinvariant2015} defined the notion
of a scale-invariant solution up to a rotation\index{Exact solutions!scale-invariant solution up to a rotation},
\emph{i.e.} a solution that satisfies 
\[
\boldsymbol{u}(\bx)=\e^{\lambda}\mathbf{R}_{\kappa\lambda}\boldsymbol{u}(\e^{\lambda}\mathbf{R}_{-\kappa\lambda}\bx)\,,
\]
for some $\kappa\in\mathbb{R}$, where $\mathbf{R}_{\vartheta}$ is
the rotation matrix of angle $\vartheta$. This is a combination of
the scaling and rotational symmetries. The scale-invariant solutions
up to a rotation of the two-dimensional Navier-Stokes equations in
$\mathbb{R}^{2}\setminus\left\{ \bzero\right\} $ are parameterized
by the flux $\Phi\in\mathbb{R}$, a parameter $\kappa\in\mathbb{R}$,
an angle $\theta_{0}$, and a discrete parameter $n$. These solutions
generalize the solutions found by \citet[\S 6]{Hamel-SpiralfoermigeBewegungen1917}
and exhibit a spiral behavior as shown in \figref{intro-hamel-new}.
However, at zero-flux, these new exact solutions have only two free
parameters, and are therefore not sufficient to lift the three compatibility
conditions of the Stokes equations required for a decay of the velocity
strictly faster than the critical decay $\left|\bx\right|^{-1}$.
Nevertheless, these exact solutions show that the asymptotic behavior
of the solutions in the case where $\bF=\bzero$ are highly nontrivial,
since by choosing a suitable boundary condition $\bu^{*}$ for an
exterior domain or source force $\bff$ if $\Omega=\mathbb{R}^{2}$,
it is easy to construction a solution that is equal to any of these
exact solutions, at least at large distances. Therefore the determination
of the general asymptotic behavior of the two-dimensional Navier-Stokes
equations with zero net force is still open and the numerical simulations
presented in \chapref{wake-sim} seem to indicate that the asymptotic
behavior is quite complicated.
\begin{figure}[h]
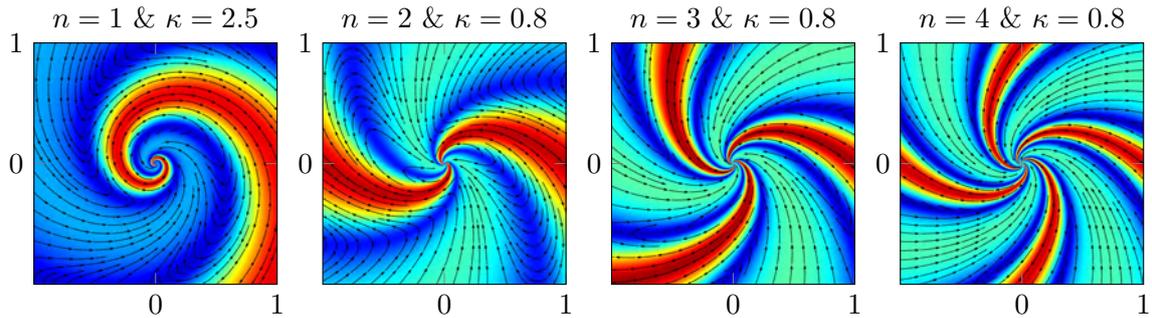

\includefigure{intro-hamel-new}\caption{\label{fig:intro-hamel-new}The exact solutions found by \citet{Guillod-Generalizedscaleinvariant2015}
with zero flux are parametrized by a discrete parameter $n$ and a
real parameter $\kappa$.}
\end{figure}
\\

Finally, we discuss the supercritical case, that is to say the two-dimensional
Navier-Stokes equations for a nonzero net force $\bF\neq\bzero$.
By assuming that the decay of the solution is homogeneous, the previous
power counting argument shows that the solution cannot decay faster
than $\left|\bx\right|^{-1/2}$. By assuming that the velocity field
has an homogeneous decay like $\left|\bx\right|^{-1/2}$, we obtain
that this leading term has to be a solution of the Euler equations.
Such a solution of the Euler equations generating a nonzero net force
$\bF$ exists. However this cannot be the asymptotic behavior of the
Navier-Stokes equations at least for small data, because the solution
will have a big flux $\Phi\leq-3\pi$. This analysis is shown in \secref{asy-euler}.

The idea to determine the correct asymptotic behavior is to make an
ansatz such that at large distances, parts of the linear and nonlinear
terms of the equation remain both dominant unlike for the previous
attempt where only the nonlinear part had dominant terms. More precisely,
\citet{Guillod-Asymptoticbehaviour2013} consider an inhomogeneous
ansatz, whose decay and inhomogeneity are fixed by the requirement
that parts of the linearity and nonlinearity remain at large distances
and that net force is nonzero. The analysis in \citet{Guillod-Asymptoticbehaviour2013}
was done in Cartesian coordinates which are not very adapted to this
problem. In \secref{asy-wake}, we use a conformal change of coordinates
to introduce the inhomogeneity which makes the analysis much simpler
and intuitive. This leads to a solution $\left(\bUF,P_{\bF}\right)$
of the Navier-Stokes equations in $\mathbb{R}^{2}$ with some $\bff=\left(O(\left|\bx\right|^{-7/3}),O(\left|\bx\right|^{-8/3})\right)$
at infinity. This solution generates a net force $\bF$ and is a candidate
for the general asymptotic behavior in the case $\bF\neq\bzero$.
In polar coordinates, the velocity field has the following decay at
infinity, 
\begin{equation}
\bUF=\frac{2a^{2}}{3r^{1/3}}\sech^{2}\left(a\sin\left(\frac{\theta-\theta_{0}}{3}\right)r^{1/3}\right)\frac{\bF}{\left|\bF\right|}+O(r^{-2/3})\,,\label{eq:intro-exact-wake}
\end{equation}
where
\begin{align*}
\theta_{0} & =\arg(-F_{1}-\i F_{2})\,, & a & =\left(\frac{9\left|\bF\right|}{16}\right)^{1/3}\,.
\end{align*}
This solution is represented in \figref{intro-ansatz} and has a wake
behavior: inside the wake characterized by $\left|\theta-\theta_{0}\right|r^{1/3}\leq1$,
the velocity decays like $\left|\bx\right|^{-1/3}$ and outside the
wake like $\left|\bx\right|^{-2/3}$. This time, the asymptotic expansion
does not have a flux, and moreover numerical simulations (see \figref{intro-wake})
indicate that this is most probably the correct asymptotic behavior
if $\bF\neq\bzero$. In the last part of \chapref{wake-sim}, we will
perform systematic numerical simulations based on the analysis of
the Stokes equations and the results of \chapref{strong-compatibility,link-Stokes-NS}.
More precisely, when the net force is nonzero, the asymptotic behavior
is given by $\bUF$, however when the net force is vanishing the asymptotic
behavior seems to be much less universal. In some regime, the asymptote
is given by a double wake $\bUF+\bUmF$ so that the net force is effectively
zero (see \figref{intro-wake-double}), in some other regime by the
harmonic solution $\mu\be_{\theta}/r$ , and finally can also be the
exact scale -invariant solution up to a rotation discussed in \citet{Guillod-Generalizedscaleinvariant2015}.
The presence of the double wake is surprising, because intuitively
on would expect that the solution should behave like the Stokes solution,\emph{
i.e.} like $\left|\bx\right|^{-1}$ and not like $\left|\bx\right|^{-1/3}$,
since we are in the critical case as in three dimensions where the
asymptote is the \citet{Landau-newexactsolution1944} solution. Finally,
in \secref{wake-multiple} we also show numerically that three or
more wakes can be produced, but only for large data. The decays of
the Stokes and Navier-Stokes equations as well as their asymptotes
are summarized in \tabref{summary-decays}.

\subsubsection*{Acknowledgment}

The author would like to thank Peter Wittwer for many useful discussions
and comments on the subject of this work as well as Matthieu Hillairet
for fruitful discussions and for having pointed out the exact solution
of the Euler equations given in \eqref{euler-sol}.

\begin{figure}[p]
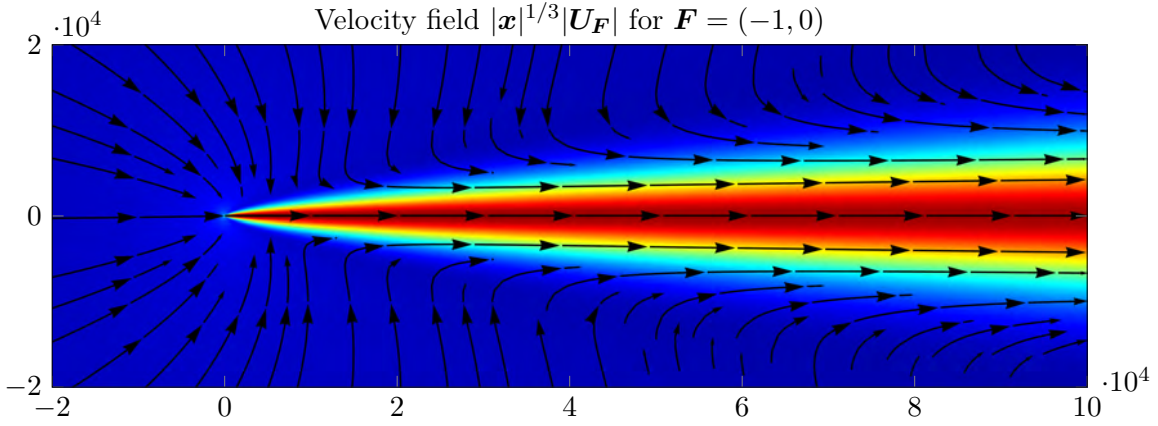

\includefigure{intro-ansatz}\caption{\label{fig:intro-ansatz}The solution $\protect\bUF$ is multiplied
by $\left|\protect\bx\right|^{1/3}$ in order to highlight its decay
properties. Inside a wake characterized by $\left|\theta\right|r^{1/3}\leq1$,
$\protect\bUF$ decays like $\left|\protect\bx\right|^{-1/3}$ inside
the wake, whereas it decays like $\left|\protect\bx\right|^{-2/3}$
outside the wake region.}
\end{figure}
\begin{figure}[p]
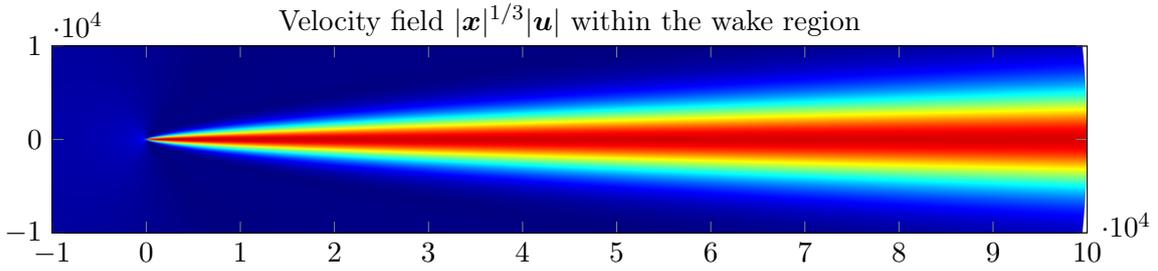

\includefigure{intro-wake}\caption{\label{fig:intro-wake}Numerical simulation of the Navier-Stokes equations
with $\protect\bF\protect\neq\protect\bzero$. The velocity field
is asymptotic to $\protect\bUF$ defined by \eqref{intro-exact-wake}
with very high precision.}
\end{figure}
\begin{figure}[p]
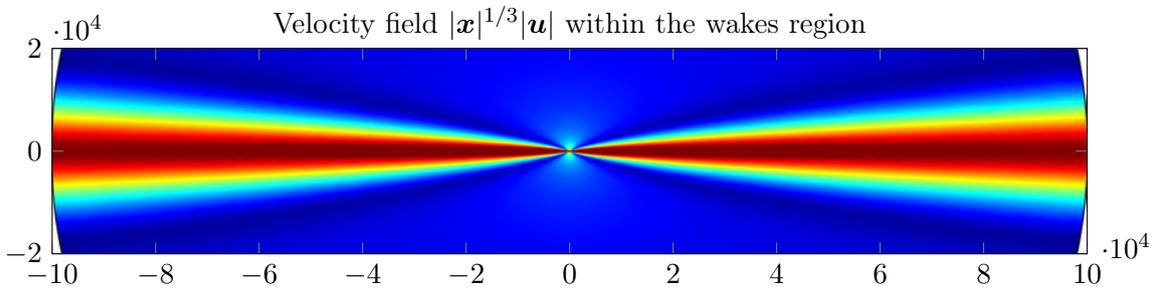

\includefigure{intro-wake-double}\caption{\label{fig:intro-wake-double}Numerical simulation of the Navier-Stokes
equations with $\protect\bF=\protect\bzero$ for a specific choice
of the boundary conditions. The velocity field is only bounded by
$\left|\protect\bx\right|^{-1/3}$.}
\end{figure}
\begin{table}[h]
\includefigure{table}

\caption{\label{tab:summary-decays}Summary of the different properties of
the Stokes and Navier-Stokes equations in $\mathbb{R}^{n}$. In every
dimension, the critical decay of the Navier-Stokes equations is given
by $\left|\protect\bx\right|^{-1}$ and is drawn in yellow. The decays
that make the equations subcritical are drawn in green and the ones
that are supercritical are shown in red. As shown on page \pageref{intro-criticality},
the critical decay for having a nonzero net force is $\left|\protect\bx\right|^{-1/2}$
in two dimensions and $\left|\protect\bx\right|^{-1}$ in three dimensions.
The results of the two-dimensional cases are based on \citet{Guillod-Asymptoticbehaviour2013,Guillod-Generalizedscaleinvariant2015}
and on the results of \chapref{wake-sim}. In three dimensions, the
results were proven by \citet{Korolev.Sverak-largedistanceasymptotics2011}.}
\end{table}

\clearpage{}

\section{Notations}

For the reader's convenience, we collect here the most frequently
used symbols:\index{Notations}%
\begin{longtable}[l]{rl}
$\lesssim$ & less than up to a constant: $a\lesssim b$ means $a\leq Cb$ for some
$C>0$\tabularnewline
$n$ & dimension of the underlying space\tabularnewline
$\bx$ & position: $\bx=(x_{1},\dots,x_{n})$\tabularnewline
$\be_{i}$ & unit vector in the direction $i$\tabularnewline
$r$ & radial polar coordinate: $r=\vert\bx\vert$\tabularnewline
$\theta$ & angular polar coordinate: $\theta=\arg(x_{1}+\i x_{2})\in(-\pi;\pi]$\\[8pt]
$\ensuremath{B(\bx,R)}$ & open ball of radius $R$ centered at $\bx$\tabularnewline
$\Omega$ & region of flow\tabularnewline
$\partial\Omega$ & boundary of the domain $\Omega$\tabularnewline
$\bn$ & normal outgoing unit vector to the boundary $\partial\Omega$\\[8pt]
$\bv$ & vector: $\bv=(v_{1},\dots,v_{n})$\tabularnewline
$\ensuremath{\vert\bv\vert}$ & Euclidean norm of the vector $\bv$: $\vert\bv\vert^{2}=\sum_{i=1}^{n}v_{i}^{2}$\tabularnewline
$\ensuremath{\bv^{\perp}}$ & orthogonal of the two-dimensional vector $\bv=(v_{1},v_{2})$: $\bv^{\perp}=(-v_{2},v_{1})$\tabularnewline
$\ensuremath{\bv_{1}\bcdot\bv_{2}}$ & scalar product between $\bv_{1}$ and $\bv_{2}$\tabularnewline
$\ensuremath{\bv_{1}\bwedge\bv_{2}}$ & cross product between the three-dimensional vectors $\bv_{1}$ and
$\bv_{2}$\tabularnewline
$\mathbf{A}$ & second-order tensor field: $\bA=(A_{ij})_{i,j=1,\dots,n}$\tabularnewline
$\mathbf{A}:\mathbf{B}$ & contraction of the tensors $\mathbf{A}$ and $\mathbf{B}$: $\mathbf{A}:\mathbf{B}=\sum_{i,j=1}^{n}A_{ij}B_{ij}$\\[8pt]
$\varphi$ & scalar field: $\varphi(x)$\tabularnewline
$\bphi$ & vector field: $\left(\varphi_{1}(\bx),\dots,\varphi_{n}(\bx)\right)$\tabularnewline
$\ensuremath{\bnabla\varphi}$ & gradient of the scalar field $\varphi$: $\bnabla\varphi=(\partial_{1}\varphi,\dots,\partial_{n}\varphi)$\tabularnewline
$\ensuremath{\bnabla\bcdot\bphi}$ & divergence of the vector field $\bphi$: $\bnabla\bcdot\bphi=\sum_{i=1}^{n}\partial_{i}\varphi_{i}$\tabularnewline
$\ensuremath{\bnabla\bwedge\bphi}$ & curl of the three-dimensional vector field $\bphi$\tabularnewline
$\ensuremath{\bnabla\wedge\varphi}$ & curl of the scalar field $\varphi$: $\bnabla\wedge\varphi=\bnabla^{\perp}\varphi=(-\partial_{2}\varphi,\partial_{1}\varphi)$\\[8pt]
$\bu$ & velocity field\tabularnewline
$p$ & pressure field\tabularnewline
$\omega$ & vorticity field: $\omega=\bnabla\bwedge\bu$\tabularnewline
$\psi$ & stream function: $\bu=\bnabla\wedge\psi$\tabularnewline
\end{longtable}

\section{Symmetries of the Navier-Stokes equations}

The aim is to determine all the infinitesimal symmetries\index{Symmetry!Navier-Stokes equations}
that leave the homogeneous Navier-Stokes equations in $\mathbb{R}^{n}$
invariant. The symmetries of the time-dependent Navier-Stokes equations
were determined by \citet{Lloyd-infinitesimalgroup1981}. It is not
completely obvious that the symmetries of the stationary case are
given by the time-independent symmetries of the time-dependent case
only. The following proposition establishes that this is actually
the case:
\begin{prop}
\label{prop:symmetries}For $n=2,3$, the only infinitesimal symmetries\index{Infinitesimal symmetry}
of the type
\begin{align}
\bx\mapsto & \bx+\varepsilon\boldsymbol{\xi}(\bu,p,\bx)\,, & \left(\bu,p\right) & \mapsto\left(\bu,p\right)+\boldsymbol{\eta}(\bu,p,\bx)\,,\label{eq:symmetries-epsilon}
\end{align}
\emph{i.e.} generated by 
\[
X=\boldsymbol{\xi}\bcdot\bnabla_{\bx}+\boldsymbol{\eta}\bcdot\bnabla_{(\bu,p)}\,,
\]
which leave the homogeneous Navier-Stokes equations in $\mathbb{R}^{n}$
invariant are:
\begin{enumerate}
\item The translations\index{Symmetry!translations}\index{Translational symmetry}
\[
\bx\mapsto\bx+\boldsymbol{\delta}\,,
\]
where $\boldsymbol{\delta}\in\mathbb{R}^{n}$, whose generator is
given by
\[
X=\frac{\boldsymbol{\delta}}{\left|\boldsymbol{\delta}\right|}\bcdot\bnabla_{\bx}\,.
\]

\item The rotations\index{Symmetry!rotations}\index{Rotational symmetry}
\begin{align*}
\bu(\bx) & \mapsto\mathbf{R}^{-1}\bu(\mathbf{R}\bx)\,, & p(\bx) & \mapsto p(\mathbf{R}\bx)\,,
\end{align*}
for $\mathbf{R}\in\text{SO}(n)$, where the $n(n-1)/2$ generators
are given in terms of the lie algebra $\mathfrak{so}(n)$. For example
for $n=2$,
\[
X=\bx^{\perp}\bcdot\left(\bnabla_{\bx}+\bnabla_{\bu}\right)\,.
\]

\item The scaling symmetry,\index{Symmetry!scaling}\index{Scaling!symmetry}
\begin{align*}
\bu(\bx) & \mapsto\e^{\lambda}\bu(\e^{\lambda}\bx)\,, & p(\bx) & =\e^{2\lambda}p(\e^{\lambda}\bx)\,,
\end{align*}
for $\lambda\in\mathbb{R}$, which corresponds to
\[
X=\bx\bcdot\bnabla_{\bx}-\bu\bcdot\bnabla_{\bu}-2p\partial_{p}\,.
\]

\item The addition of a constant $c$ to the pressure,
\[
p\mapsto p+c\,,
\]
for $c\in\mathbb{R}$, which corresponds to
\[
X=\partial_{p}\,.
\]

\end{enumerate}
\end{prop}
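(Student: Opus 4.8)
The plan is to run the classical Lie symmetry algorithm: apply the prolongation of the vector field $X$ to the system $\Delta\bu-\bnabla p=\bu\bcdot\bnabla\bu$, $\bnabla\bcdot\bu=0$, restrict to the solution manifold, and collect the resulting determining equations. Since $\bxi$ and $\boldsymbol{\eta}$ are allowed to depend on $(\bu,p,\bx)$ (a priori more general than a geometric transformation), the first task is to show that in fact $\bxi$ cannot depend on $(\bu,p)$ and that $\boldsymbol{\eta}$ is affine in $(\bu,p)$. This should follow, as in the time-dependent analysis of \citet{Lloyd-infinitesimalgroup1981}, from the highest-order terms in the prolongation: the coefficients of the second derivatives $\partial_{ij}u_k$ and of quadratic expressions in first derivatives force $\partial_{u}\bxi=\partial_{p}\bxi=0$ and a similar structural constraint on $\boldsymbol{\eta}$. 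I would carry out this reduction first, since once $\bxi=\bxi(\bx)$ and $\boldsymbol{\eta}$ is affine the problem becomes a finite-dimensional linear algebra computation.

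Next I would extract the determining PDEs for $\bxi(\bx)$ and the matrix/vector coefficients in $\boldsymbol{\eta}$. The Laplacian term contributes the conformal-type Killing equations on $\bxi$; crucially, in $n=2,3$ (and unlike the pure Laplace equation) the nonlinear convective term $\bu\bcdot\bnabla\bu$ and the divergence constraint kill the conformal inversions and leave only the Euclidean motions plus a dilation. Concretely I expect: $\partial_i\xi_j+\partial_j\xi_i=\tfrac{2}{n}(\bnabla\bcdot\bxi)\delta_{ij}$ together with a constraint that $\bnabla\bcdot\bxi$ is constant, giving $\bxi(\bx)=A\bx+\bd$ with $A=\sigma\mathbf{1}+S$, $S$ skew. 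The $\bu$-component of $\boldsymbol{\eta}$ must then be $-A\bu$ up to the rotation piece (so translations act trivially on $\bu$, rotations rotate it, scaling multiplies it by $-\sigma$), and matching the pressure equation forces the $p$-component to be $-2\sigma p + c$. Reading off the one-parameter subgroups $\bd$, $S$, $\sigma$, $c$ gives exactly the four families (1)--(4), and normalizing $|\bd|=1$, $|\sigma|=1$ etc. reproduces the stated generators.

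The genuinely delicate step is ruling out the special conformal transformations: for the scalar Laplacian in dimension $n\ge 3$ the symmetry algebra is the full conformal algebra $\mathfrak{so}(n+1,1)$, and one must verify that the inversion generators $X=(2x_i(\bx\bcdot\bnabla_{\bx})-|\bx|^2\partial_i)+\dots$ fail to be symmetries once the quadratic nonlinearity and the incompressibility constraint are imposed. I expect this to come out of the terms in the prolonged equation that are linear in first derivatives of $\bu$ with coefficients quadratic in $\bx$: the Stokes part alone would permit them, but the convective term produces a contribution with no counterpart to cancel against, forcing the inversion parameters to vanish. A secondary bookkeeping point is that the pressure, appearing only through $\bnabla p$, carries the extra trivial shift $c$, and one must make sure no spurious $\bx$-dependent shift of $p$ survives — this is handled by the $\bnabla\bcdot\bu=0$ equation having no $p$ at all, so the $p$-determining equations come purely from the momentum equation and pin $\boldsymbol{\eta}_p$ down to $-2\sigma p+c$. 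Assembling these pieces yields the proposition; essentially everything is routine once the conformal inversions are eliminated, so that is where I would concentrate the argument.
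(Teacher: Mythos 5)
Your plan is the classical Lie prolongation/determining-equations algorithm, which is exactly the method the paper uses (following \citet{Lloyd-infinitesimalgroup1981} and \citet{Eisenhart-ContinuousGroupsTransformations1933}): prolong $X$ to second order, restrict to the solution manifold by solving for $\bnabla p$ and $\partial_{1}u_{1}$, and collect the determining equations for $\boldsymbol{\xi}$ and $\boldsymbol{\eta}$. The only difference is presentational — the paper derives and solves the resulting linear PDE system with a computer algebra system, whereas you propose to organize the same reduction by hand (first $\partial_{\bu}\boldsymbol{\xi}=\partial_{p}\boldsymbol{\xi}=0$, then the conformal-Killing structure, then elimination of the inversions via the convective term) — and both routes land on the same four families.
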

\begin{proof}
We use the same method as \citet{Lloyd-infinitesimalgroup1981}, which
is explained in details by \citet{Eisenhart-ContinuousGroupsTransformations1933}.
First of all we write the Navier-Stokes equations as $\boldsymbol{L}=\bzero$,
where
\[
\boldsymbol{L}=\begin{pmatrix}\Delta\bu-\bnabla p-\bu\bcdot\bnabla\bu\\
\bnabla\bcdot\bu
\end{pmatrix}\,,
\]
and define $\bv=\left(\bu,p\right)$. Since $\boldsymbol{L}$ is a
second order differential operator, we have to compute the transformations
of the first and second derivatives. We have
\[
\partial_{i}\mapsto\partial_{i}-\varepsilon\frac{\rd\boldsymbol{\xi}}{\rd x_{i}}\bcdot\bnabla\,,
\]
so that
\[
D^{\alpha}\bv\mapsto D^{\alpha}\bv+\varepsilon\boldsymbol{\eta}^{\alpha}\,,
\]
where $\boldsymbol{\eta}^{\alpha}$ is defined by recursion through
\[
\boldsymbol{\eta}^{(\alpha,\beta)}=\frac{\rd\boldsymbol{\eta}^{\beta}}{\rd x_{\alpha}}-\frac{\rd\boldsymbol{\xi}}{\rd x_{\alpha}}\bcdot\bnabla D^{\beta}\bv\,,
\]
where $\alpha$ and $\beta$ are multi-indices with $\left|\alpha\right|=1$.
We consider the second extension of $X$,
\[
X_{2}=\boldsymbol{\xi}\bcdot\bnabla_{\bx}+\sum_{\left|\alpha\right|\leq2}\boldsymbol{\eta}^{\alpha}\bcdot\bnabla_{D^{\alpha}\bv}\,.
\]
Then the Navier-Stokes system admits the symmetry \eqref{symmetries-epsilon}
if and only if $X_{2}\boldsymbol{L}=\bzero$ whenever $\boldsymbol{L}=\bzero$.
The idea of the proof is the following: we solve $\boldsymbol{L}=\bzero$
for $\bnabla p$ and $\partial_{1}u_{1}$, and substitute this into
$X_{2}\boldsymbol{L}=\bzero$. By grouping similar terms involving
$\bv$ and its derivatives, we can obtain a list of linear partial
differential equations for $\boldsymbol{\xi}$ and $\boldsymbol{\eta}$.
By using a computer algebra system, we obtain the explicit list of
partial differential equations for $\boldsymbol{\xi}$ and $\boldsymbol{\eta}$.
For $n=2$, the general solution is given by
\begin{align*}
\boldsymbol{\xi} & =\boldsymbol{\delta}+\lambda\bx+r\bx^{\perp}\,,\\
\left(\eta_{1},\eta_{2}\right) & =-\lambda\bu+r\bx^{\perp}\\
\eta_{3} & =-2p\lambda+c\,,
\end{align*}
where $\boldsymbol{\delta}\in\mathbb{R}^{2}$ and $\lambda,r,c\in\mathbb{R}$.
For $n=3$, we have similar results, except that there are three different
rotations. 
\end{proof}
In additions to the four infinitesimal symmetries listed in \propref{symmetries},
the Navier-Stokes equations are also invariant under discrete symmetries.
They are invariant under the central symmetry
\begin{align}
\bx & \mapsto-\bx\,, & \bu & \mapsto-\bu\,,\label{eq:central-symmetry}
\end{align}
and under the reflections with respect to an axis or a plane. For
example, the reflection with respect to the first coordinate $x_{1}$
is given by
\begin{align}
\bx=\left(x_{1},\tilde{\bx}\right) & \mapsto\left(-x_{1},\tilde{\bx}\right)\,, & \bu=\left(u_{1},\tilde{\bu}\right) & \mapsto\left(-u_{1},\tilde{\bu}\right)\,.\label{eq:axial-symmetry}
\end{align}
This corresponds to the reflection with respect to the $x_{2}$-axis
for $n=2$ and with respect to the $x_{2}x_{3}$-plane for $n=3$.

\section{Invariant quantities of the Navier-Stokes equations}

We consider the stationary Navier-Stokes equations \eqref{intro-ns-eq}
in a sufficiently smooth bounded domain $\Omega\subset\mathbb{R}^{n}$,
$n=2,3$. For clarity, we add a source-term $g$ in the divergence
equation, so we consider
\begin{align}
\Delta\bu-\bnabla p & =\bnabla\bcdot\left(\bu\otimes\bu\right)+\bff\,, & \bnabla\bcdot\bu & =g\,,\label{eq:invariant-ns}
\end{align}
which is equal to \eqref{intro-ns-eq} if $g=0$. The aim is to show
that the only invariant quantities in a sense defined below, are the
flux, the net force, and the net torque.
\begin{defn}[invariant quantity]
For two functions $\boldsymbol{\Lambda}\in C^{\infty}(\Omega,\mathbb{R}^{n+1})$
and $\Lambda\in C^{\infty}(\Omega,\mathbb{R})$ we consider the functional
\[
I[\bff,g]=\int_{\Omega}\left(\boldsymbol{\Lambda}\bcdot\bff+\Lambda g\right)\,.
\]
The functional $I[\bff,g]$ is an invariant quantity\index{Invariant quantity}
if it can be expressed in terms of an integral on $\partial\Omega$,
\emph{i.e.} such that there exists a function $\boldsymbol{\lambda}\in C^{\infty}(\mathbb{R}^{n+1},\mathbb{R}^{n})$
with 
\[
I[\bff,g]=\int_{\partial\Omega}\boldsymbol{\lambda}[\bu,p]\bcdot\bn\,,
\]
for any smooth $\bu$, $p$, $\bff$ and $g$ satisfying \eqref{invariant-ns}.\end{defn}
\begin{rem}
The name invariant comes from the fact that if for example $\bu,p,\bff,g$
satisfy \eqref{invariant-ns} in $\mathbb{R}^{n}$, with $\bff,g$
having support in a bounded set $B$, then the quantity $I[\bff,g]$
does not depend on the domain of integration $\Omega$ as soon as
$B\subset\Omega$, and in particular $\int_{\partial\Omega}\boldsymbol{\lambda}[\bu,p]\bcdot\bn$
is independent of the choice of any smooth closed curve or surface
$\partial\Omega$ that encircles $B$.\end{rem}
\begin{prop}
\label{prop:invariants}The only invariant quantities (that are not
linearly related) are the flux $\Phi\in\mathbb{R}$, the net force
$\bF\in\mathbb{R}^{n}$, and the net torque $M\in\mathbb{R}$ if $n=2$
and $\boldsymbol{M}\in\mathbb{R}^{3}$ if $n=3$, which are given
by\index{Invariant quantity!net flux}\index{Invariant quantity!net force}\index{Invariant quantity!net torque}\index{Net flux}\index{Flux}\index{Net force}\index{Net torque}\index{Torque}
\begin{align*}
\Phi & =\int_{\Omega}g=\int_{\partial\Omega}\bu\bcdot\bn\,, & \bF & =\int_{\Omega}\bff=\int_{\partial\Omega}\mathbf{T}\bn\,, & M\text{ or }\boldsymbol{M} & =\int_{\Omega}\bx\bwedge\bff=\int_{\partial\Omega}\bx\bwedge\mathbf{T}\bn\,,
\end{align*}
where $\mathbf{T}$ is the stress tensor including the convective
part,
\begin{equation}
\mathbf{T}=\bnabla\bu+\left(\bnabla\bu\right)^{T}-p\,\mathbf{1}-\bu\otimes\bu\,.\label{eq:stress-tensor}
\end{equation}
\end{prop}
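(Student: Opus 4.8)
The plan is to establish the two halves of the statement separately: that $\Phi$, $\bF$ and the torque are invariant quantities, and that there are no others. Part one is a direct application of the divergence theorem, so I would dispatch it briskly. Writing the momentum equation in divergence form, $\bnabla\bcdot\mathbf{T}=\bff+\bnabla g$ with $\mathbf{T}$ as in \eqref{stress-tensor}, and pairing with a constant field $\boldsymbol{\Lambda}=\be_{i}$ gives, after one integration by parts, $\int_{\Omega}f_{i}=\int_{\partial\Omega}(\mathbf{T}\bn)_{i}$ up to a boundary term in $g$ (absent when $g=0$, or absorbed by replacing $\mathbf{T}$ with $\mathbf{T}-g\,\mathbf{1}$); pairing with a linear field $\boldsymbol{\Lambda}(\bx)=\mathbf{A}\bx$ with $\mathbf{A}^{T}=-\mathbf{A}$, and using the symmetry of $\mathbf{T}$ to annihilate the interior remainder $\int_{\Omega}\tfrac12\bigl(\bnabla\boldsymbol{\Lambda}+(\bnabla\boldsymbol{\Lambda})^{T}\bigr):\mathbf{T}$, gives the net torque; and $\int_{\Omega}g=\int_{\partial\Omega}\bu\bcdot\bn$ gives the flux.

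The substance is part two. The crucial point is that the definition quantifies over \emph{all} smooth $\bu,p,\bff,g$ solving \eqref{invariant-ns}, and given \emph{any} smooth $(\bu,p)$ one may simply set $g:=\bnabla\bcdot\bu$ and $\bff:=\Delta\bu-\bnabla p-\bnabla\bcdot(\bu\otimes\bu)$, so that $(\bu,p)$ is in fact unconstrained. Substituting these into $I[\bff,g]$ and integrating by parts so that every derivative falls on $(\boldsymbol{\Lambda},\Lambda)$ rather than on $(\bu,p)$ yields
\[
I=\int_{\Omega}\Bigl[(\Delta\boldsymbol{\Lambda}-\bnabla\Lambda)\bcdot\bu+(\bnabla\bcdot\boldsymbol{\Lambda})\,p+\sum_{i,j}(\partial_{j}\Lambda_{i})\,u_{i}u_{j}\Bigr]+\int_{\partial\Omega}(\cdots)\,,
\]
where the boundary remainder is a smooth expression in $\bx$, $\bu$, $p$ and their first derivatives contracted with $\bn$, hence already admissible. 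Therefore $I$ is an invariant quantity precisely when the displayed bulk integral can, for every smooth $(\bu,p)$, be written as an admissible boundary integral.

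To exploit this I would compare the value at $(\bu,p)$ with the value at $(\bu+\boldsymbol{\phi},p+q)$ for arbitrary $(\boldsymbol{\phi},q)\in C_{c}^{\infty}(\Omega)$: the boundary integrals depend only on boundary jets and so are unchanged, which forces the increment of the bulk integrand to vanish; taking $\bu=p=0$ and rescaling $(\boldsymbol{\phi},q)\mapsto(\varepsilon\boldsymbol{\phi},\varepsilon q)$ then separates the part linear in $(\boldsymbol{\phi},q)$ from the part quadratic in $\boldsymbol{\phi}$, and a standard localisation argument applied to each yields the three pointwise identities
\[
\bnabla\bcdot\boldsymbol{\Lambda}=0\,,\qquad\Delta\boldsymbol{\Lambda}=\bnabla\Lambda\,,\qquad\bnabla\boldsymbol{\Lambda}+(\bnabla\boldsymbol{\Lambda})^{T}=\bzero\quad\text{in }\Omega\,.
\]
The last is the Killing equation; differentiating it gives $\partial_{k}\partial_{i}\Lambda_{j}=0$, so $\boldsymbol{\Lambda}(\bx)=\boldsymbol{a}+\mathbf{A}\bx$ with $\boldsymbol{a}\in\mathbb{R}^{n}$ and $\mathbf{A}^{T}=-\mathbf{A}$; this automatically makes $\bnabla\bcdot\boldsymbol{\Lambda}=0$ and $\Delta\boldsymbol{\Lambda}=\bzero$, whence the middle identity reduces to $\bnabla\Lambda=\bzero$, i.e. $\Lambda$ is constant. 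The constant part $\boldsymbol{a}$ paired with $\bff$ gives the $n$ components of $\bF$, the linear part $\mathbf{A}\bx$ paired with $\bff$ (identifying $\mathbf{A}\bx$ with a multiple of $\bx^{\perp}$ when $n=2$ and with $\bomega\bwedge\bx$ when $n=3$) gives the $n(n-1)/2$ components of the torque, and the constant $\Lambda$ gives $\Phi$; together with part one this is exactly the stated list, of dimension $1+n+n(n-1)/2$, equal to $4$ when $n=2$ and $7$ when $n=3$.

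The main obstacle is the middle step, converting ``the bulk integral is an admissible boundary integral for all $(\bu,p)$'' into the three differential identities: one must be careful that the admissible boundary expressions depend only on boundary jets, so that compactly supported interior perturbations leave them untouched, and that the $(\bu,p)$-linear and $(\bu,p)$-quadratic pieces can be cleanly separated. By comparison, the divergence-theorem computations of part one and the classification of Killing vector fields are routine. One minor bookkeeping matter, worth flagging, is that for $g\neq0$ the explicit boundary representatives of $\bF$ and of the torque pick up harmless $g$-dependent corrections that disappear for the Navier-Stokes equations themselves.
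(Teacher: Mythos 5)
Your proof is correct and reaches the same classification, but the technical route differs from the paper's in a way worth noting. The paper integrates by parts only once, leaving the bulk remainder $\int_{\Omega}\bigl(\bnabla\boldsymbol{\Lambda}:\mathbf{T}+\bnabla\Lambda\bcdot\bu\bigr)$, and then argues that since $\bff$ and $g$ can always be chosen to make any smooth $(\bu,p)$ a solution, $\mathbf{T}$ ranges over all symmetric tensors and $\bu$ over all vector fields; demanding the remainder vanish yields the Killing system $\bnabla\boldsymbol{\Lambda}+(\bnabla\boldsymbol{\Lambda})^{T}=\bzero$ and $\bnabla\Lambda=\bzero$ directly. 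You instead push all derivatives onto $(\boldsymbol{\Lambda},\Lambda)$, obtaining a bulk integrand explicit in $(\bu,p)$, and then justify the vanishing of that bulk by comparing against compactly supported perturbations and scaling to separate the linear and quadratic pieces. Your version is slightly longer (you get the auxiliary identities $\bnabla\bcdot\boldsymbol{\Lambda}=0$ and $\Delta\boldsymbol{\Lambda}=\bnabla\Lambda$, which the Killing equation then renders redundant), but it buys something the paper's proof glosses over: the paper simply \emph{asserts} that the interior integral must vanish, whereas the definition of an invariant quantity only requires $I$ to be expressible as a boundary integral, and your compactly-supported-perturbation argument is precisely what closes that gap. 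You also correctly flag that $\bnabla\bcdot\mathbf{T}=\bff+\bnabla g$ rather than $\bff$ when $g\neq0$, a point the paper's proof states imprecisely. Both proofs conclude identically via the classification of Killing fields as $\boldsymbol{a}+\mathbf{A}\bx$ with $\mathbf{A}$ antisymmetric.
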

\begin{proof}
The Navier-Stokes equation \eqref{invariant-ns} can be written as
\begin{align*}
\bnabla\bcdot\mathbf{T} & =\bff\,, & \bnabla\bcdot\bu & =g\,.
\end{align*}
For two general functions $\boldsymbol{\Lambda}$ and $\Lambda$,
and a solution of the previous equation, we have
\begin{align*}
I[\bff,g] & =\int_{\Omega}\left(\boldsymbol{\Lambda}\bcdot\bff+\Lambda g\right)=\int_{\Omega}\boldsymbol{\Lambda}\bcdot\bnabla\bcdot\mathbf{T}+\int_{\Omega}\Lambda\bnabla\bcdot\bu\\
 & =\int_{\Omega}\bnabla\bcdot\left(\mathbf{T}\boldsymbol{\Lambda}\right)-\int_{\Omega}\bnabla\boldsymbol{\Lambda}:\mathbf{T}+\int_{\Omega}\bnabla\bcdot\left(\Lambda\bu\right)-\int_{\Omega}\bnabla\Lambda\bcdot\bu\\
 & =\int_{\partial\Omega}\left(\mathbf{T}\boldsymbol{\Lambda}+\Lambda\bu\right)\bcdot\bn-\int_{\Omega}\left(\bnabla\boldsymbol{\Lambda}:\mathbf{T}+\bnabla\Lambda\bcdot\bu\right)\,.
\end{align*}
Now we determine in which cases the integral over $\Omega$ vanishes,
\[
\int_{\Omega}\left(\bnabla\boldsymbol{\Lambda}:\mathbf{T}+\bnabla\Lambda\bcdot\bu\right)=0
\]
for all $\bu,p,\bff,g$ satisfying \eqref{invariant-ns}. Since this
integral does not depend on $\bff$ and $g$, we can choose $\bu\in C^{\infty}(\Omega,\mathbb{R}^{n})$
and $p\in C^{\infty}(\Omega,\mathbb{R})$ arbitrarily, and therefore
the tensor $\mathbf{T}$ is an arbitrary symmetric tensor. Consequently,
we obtain the conditions
\begin{align*}
\int_{\Omega}\bnabla\boldsymbol{\Lambda}:\mathbf{T} & =0\,, & \int_{\Omega}\bnabla\Lambda\bcdot\bu & =0\,,
\end{align*}
for all $\bu\in C^{\infty}(\Omega,\mathbb{R}^{n})$ and all symmetric
tensors $\mathbf{T}\in C^{\infty}(\Omega,\mathbb{R}^{n}\otimes\mathbb{R}^{n})$.
For $n=2$, this implies the equations
\begin{align*}
\partial_{1}\Lambda_{1} & =0\,, & \partial_{2}\Lambda_{2} & =0\,, & \partial_{1}\Lambda_{2}+\partial_{2}\Lambda_{1} & =0\,,
\end{align*}
and
\begin{align*}
\partial_{1}\Lambda & =0\,, & \partial_{2}\Lambda & =0\,.
\end{align*}
The general solution of the system is given by
\begin{align*}
\boldsymbol{\Lambda}(\bx) & =\boldsymbol{A}+B\bx^{\perp}\,, & \Lambda(\bx) & =C\,,
\end{align*}
where $\boldsymbol{A}\in\mathbb{R}^{n}$, $B,C\in\mathbb{R}$, and
therefore the only invariant quantities linearly independent are the
net force $\bF$ and the net torque $M$. For $n=3$, the equations
are similar and lead to the same result, except that the net torque
has three parameters.
\end{proof}

\chapter{\label{chap:weak}Existence of weak solutions}

In order to prove existence of weak solutions to \eqref{intro-ns},
one has to face two kinds of difficulties: the local behavior and
the behavior at large distances. The local behavior corresponds to
the differentiability properties of the solutions, which can be deduced
from the case where $\Omega$ is bounded. The behavior at large distances
is much more complicated but information on it is required to prove
that the solutions satisfy \eqref{intro-ns-limit}. In three dimensions,
the function spaces used in the definition of weak solutions are sufficient
to prove the limiting behavior at large distances, but in two dimensions
this is not the case. The behavior of the two-dimensional weak solutions
of the Navier-Stokes equations is one of the most important open problem
in stationary fluid mechanics. In this chapter, we review the construction
of weak solutions in Lipschitz domain in two and three dimensions
and analyze their asymptotic behavior.

We denote by $C_{0,\sigma}^{\infty}(\Omega)$ the space of smooth
solenoidal functions\index{Space!C@$C_{0,\sigma}^{\infty}$} compactly
supported in $\Omega$,
\[
C_{0,\sigma}^{\infty}(\Omega)=\left\{ \bphi\in C_{0}^{\infty}(\Omega):\,\bnabla\bcdot\bphi=0\right\} .
\]
By multiplying \eqref{intro-ns-eq} by $\bphi\in C_{0,\sigma}^{\infty}(\Omega)$
and integrating over $\Omega$, we have
\[
\int_{\Omega}\Delta\bu\bcdot\bphi-\int_{\Omega}\bnabla p\bcdot\bphi=\int_{\Omega}\bu\bcdot\bnabla\bu\bcdot\bphi+\int_{\Omega}\bff\bcdot\bphi\,,
\]
and if we integrate by parts, we obtain
\begin{equation}
\int_{\Omega}\bnabla\bu:\bnabla\bphi+\int_{\Omega}\bu\bcdot\bnabla\bu\bcdot\bphi+\int_{\Omega}\bff\bcdot\bphi=0\,.\label{eq:weak-solution}
\end{equation}
This implies that every regular solution of \eqref{intro-ns-eq} satisfies
\eqref{weak-solution} for all $\bphi\in C_{0,\sigma}^{\infty}(\Omega)$.
However, the converse is true only if $\bu$ is sufficiently regular.
This is the reason why a function $\bu$ satisfying \eqref{weak-solution}
for all $\bphi\in C_{0,\sigma}^{\infty}(\Omega)$ is called a weak
solution. We review the construction of weak solutions by the method
of \citet{Leray-Etudedediverses1933} and analyze the asymptotic behavior
of the velocity in the case where the domain is unbounded.

\section{Function spaces}

We now introduce the function spaces required for the proof of the
existence of weak solutions.
\begin{defn}[Lipschitz domain]
\index{Lipschitz domain}A Lipschitz domain $\Omega$ is a locally
Lipschitz domain whose boundary $\partial\Omega$ is compact. In particular
a Lipschitz domain is either:
\begin{enumerate}
\item a bounded domain;
\item an exterior domain\index{Exterior domain}\index{Domain!exterior},
\emph{i.e.} the complement in $\mathbb{R}^{n}$ of a compact set $B$
having a nonempty interior;
\item the whole space $\mathbb{R}^{n}$.
\end{enumerate}
If $\Omega$ is a bounded domain, respectively an exterior domain,
we can assume without loss of generality that $\bzero\in\Omega$,
respectively $\bzero\notin\Omega$.
\end{defn}

\begin{defn}[spaces $W^{1,2}$ and $D^{1,2}$]
The Sobolev space\index{Sobolev space} $W^{1,2}(\Omega)$ is the
Banach space\index{Space!W@$W^{1,2}$}
\[
W^{1,2}(\Omega)=\left\{ \bu\in L^{2}(\Omega):\,\bnabla\bu\in L^{2}(\Omega)\right\} ,
\]
with the norm
\[
\left\Vert \bu\right\Vert _{1,2}=\left\Vert \bu\right\Vert _{2}+\left\Vert \bnabla\bu\right\Vert _{2}\,.
\]
The homogeneous Sobolev space $D^{1,2}(\Omega)$ is defined as the
linear space\index{Space!D@$D^{1,2}$}
\[
D^{1,2}(\Omega)=\left\{ \bu\in L_{loc}^{1}(\Omega):\,\bnabla\bu\in L^{2}(\Omega)\right\} ,
\]
with the associated semi-norm
\[
\left|\bu\right|_{1,2}=\left\Vert \bnabla\bu\right\Vert _{2}\,.
\]
This semi-norm on $D^{1,2}(\Omega)$ defines the following equivalent
classes on $D^{1,2}(\Omega)$,
\[
\left[\bu\right]_{1}=\left\{ \bu+\boldsymbol{c},\:\boldsymbol{c}\in\mathbb{R}^{n}\right\} ,
\]
so that
\[
\left\{ \left[\bu\right]_{1},\:\bu\in D^{1,2}(\Omega)\right\} ,
\]
is a Hilbert space with the scalar product
\[
\left[\left[\bu\right]_{1},\left[\bv\right]_{1}\right]=\left(\bnabla\bu,\bnabla\bv\right)\,.
\]

\end{defn}
We now define the completion of $C_{0}^{\infty}(\Omega)$ in the previous
norms:
\begin{defn}[spaces $W_{0}^{1,2}$ and $D_{0}^{1,2}$]
The Banach space $W_{0}^{1,2}(\Omega)$ is defined as the completion
of $C_{0}^{\infty}(\Omega)$ with respect to the norm $\left\Vert \cdot\right\Vert _{1,2}$.
The semi-norm $\left|\cdot\right|_{1,2}$ defines a norm on $C_{0}^{\infty}(\Omega)$,
so we introduced the Banach space $D_{0}^{1,2}(\Omega)$ as the completion
of $C_{0}^{\infty}(\Omega)$ in the norm $\left|\cdot\right|_{1,2}$.\index{Space!WA@$W_{0}^{1,2}$}\index{Space!D0@$D_{0}^{1,2}$}
\end{defn}
The following lemmas (see for example \citealp[Theorems II.6.1 \& II.7.6]{Galdi-IntroductiontoMathematical2011}
or \citealp[Lemma III.1.2.1]{Sohr-Navier-Stokesequations.elementary2001})
prove that $D_{0}^{1,2}(\Omega)$ can be viewed as a space of locally
defined functions in case $\widebar{\Omega}\neq\mathbb{R}^{2}$:
\begin{lem}
Let $n\geq3$ and $\Omega\subset\mathbb{R}^{n}$ be any domain\index{Poincaré inequality}\index{Inequality!Poincaré}.
Then for all $\bu\in D_{0}^{1,2}(\Omega)$,
\[
\left\Vert \bu\right\Vert _{2n/(n-2)}\leq C\left\Vert \bnabla\bu\right\Vert _{2}\,,
\]
where $C=C(n)$. Moreover, for any $R>0$ big enough and $1<p\leq2n/(n-2)$,
\[
\left\Vert \bu;L^{p}(\Omega\cap B(\bzero,R))\right\Vert \leq C\left\Vert \bnabla\bu\right\Vert _{2}\,,
\]
for all $\bu\in D_{0}^{1,2}(\Omega)$, where $C=C(n,R,p)$.\end{lem}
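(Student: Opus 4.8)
The lemma is the Gagliardo--Nirenberg--Sobolev inequality, extended from $C_0^\infty(\Omega)$ to $D_0^{1,2}(\Omega)$ by density, together with a local H\"older estimate. The plan is to carry this out in three steps.

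First, I would prove $\|\bu\|_{2n/(n-2)}\le C(n)\|\bnabla\bu\|_2$ for $\bu\in C_0^\infty(\Omega)$. Extending $\bu$ by zero outside its support reduces matters to $\Omega=\mathbb{R}^n$. For a scalar $u\in C_0^\infty(\mathbb{R}^n)$ one has $|u(\bx)|\le\int_{\mathbb{R}}|\partial_i u|\,\rd x_i$ for every $i$, hence
\[
|u(\bx)|^{n/(n-1)}\le\prod_{i=1}^n\left(\int_{\mathbb{R}}|\partial_i u|\,\rd x_i\right)^{1/(n-1)};
\]
integrating successively in $x_1,\dots,x_n$ and applying the generalized H\"older inequality at each step gives the endpoint inequality $\|u\|_{n/(n-1)}\le C(n)\|\bnabla u\|_1$. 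Applying this to $|u|^{\gamma}$ with $\gamma=\frac{2(n-1)}{n-2}$ and estimating $\| |u|^{\gamma-1}\bnabla u \|_1\le\| |u|^{\gamma-1}\|_2\,\|\bnabla u\|_2$ by Cauchy--Schwarz gives $\|u\|_{2n/(n-2)}^{\gamma}\le C(n)\,\|u\|_{2n/(n-2)}^{\gamma-1}\|\bnabla u\|_2$; since $\|u\|_{2n/(n-2)}$ is finite, dividing yields the claim, which passes to the vector field $\bu$ by applying it componentwise.

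Second, I would pass to $D_0^{1,2}(\Omega)$. By definition any $\bu\in D_0^{1,2}(\Omega)$ is a limit of a sequence $\bu_k\in C_0^\infty(\Omega)$ with $\bnabla\bu_k\to\bnabla\bu$ in $L^2(\Omega)$, and Step~1 shows $(\bu_k)$ is Cauchy, hence convergent, in $L^{2n/(n-2)}(\Omega)$. The assignment $\bphi\mapsto(\bphi,\bnabla\bphi)$ therefore extends to a bounded linear map $D_0^{1,2}(\Omega)\to L^{2n/(n-2)}(\Omega)\times L^2(\Omega)$, and it is injective: if $\bu_k\to\bzero$ in $L^{2n/(n-2)}$ then $\bnabla\bu_k\to\bzero$ in the sense of distributions, so its $L^2$-limit vanishes. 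This realizes $D_0^{1,2}(\Omega)$ as a genuine space of functions (which is the content of the remark preceding the lemma, and the reason $n\ge3$ is needed), and letting $k\to\infty$ in Step~1 gives $\|\bu\|_{2n/(n-2)}\le C(n)\|\bnabla\bu\|_2$ for all $\bu\in D_0^{1,2}(\Omega)$.

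Third, the local estimate follows from finiteness of $|\Omega\cap B(\bzero,R)|$: for $1<p\le\frac{2n}{n-2}$, H\"older's inequality gives
\[
\|\bu;L^p(\Omega\cap B(\bzero,R))\|\le|\Omega\cap B(\bzero,R)|^{\frac1p-\frac{n-2}{2n}}\,\|\bu;L^{2n/(n-2)}(\Omega)\|\le C(n,R,p)\,\|\bnabla\bu\|_2,
\]
where the exponent $\frac1p-\frac{n-2}{2n}$ is nonnegative precisely because $p\le\frac{2n}{n-2}$. The only substantial ingredient is the Sobolev inequality of Step~1, while Steps~2 and~3 are routine; in a review chapter one would presumably just cite Galdi or Sohr for Steps~1--2, as the statement already does.
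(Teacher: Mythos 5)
Your proof is correct and follows the same route as the paper: the first inequality is the Sobolev embedding $D_0^{1,2}\hookrightarrow L^{2n/(n-2)}$ (which the paper simply cites from Brezis, Theorem~9.9, while you supply the standard Gagliardo--Nirenberg argument and the density passage to $D_0^{1,2}(\Omega)$), and the second follows from H\"older on the bounded set $\Omega\cap B(\bzero,R)$ exactly as in the paper.
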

\begin{proof}
The first inequality is a classical Sobolev embedding \citep[Theorem 9.9]{Brezis-FunctionalAnalysisSobolev2011},
since $p^{*}=\frac{2n}{n-2}$. Then for any $p<p^{*}$ and $R>0$
big enough, by Hölder inequality,
\[
\bigl\Vert\bu;L^{p}(\Omega\cap B(\bzero,R))\bigr\Vert\leq C(R,p)\bigl\Vert\bu;L^{p^{*}}(\Omega\cap B(\bzero,R))\bigr\Vert\,,
\]
and the second inequality follows by applying the first one.
\end{proof}

\begin{lem}
\label{lem:local-D0}Let $\Omega\subset\mathbb{R}^{2}$ be any domain
such that $\widebar{\Omega}\neq\mathbb{R}^{2}$. Then for any $R>0$
big enough and $p>1$, 
\[
\left\Vert \bu;L^{p}(\Omega\cap B(\bzero,R))\right\Vert \leq C\left\Vert \bnabla\bu\right\Vert _{2}\,,
\]
for all $\bu\in D_{0}^{1,2}(\Omega)$, where $C=C(\Omega,R,p)$. In
particular if $\Omega$ is bounded, then $D_{0}^{1,2}(\Omega)$ is
isomorphic to $W_{0}^{1,2}(\Omega)$.\end{lem}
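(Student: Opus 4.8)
The plan is to exploit the fact that $\widebar{\Omega}\neq\mathbb{R}^{2}$ means there is a whole ball in the complement of $\Omega$, which plays the role of a "Dirichlet anchor" and lets us run a Poincaré-type argument even though the global $L^{2}$-control on $\bu$ itself is unavailable in two dimensions. First I would fix a point $\bx_{0}\notin\widebar{\Omega}$ and a radius $\rho>0$ with $\widebar{B(\bx_{0},\rho)}\subset\mathbb{R}^{2}\setminus\widebar{\Omega}$; after a translation we may as well assume $\bx_{0}=\bzero$, and we choose $R>\rho$ large enough that $B(\bzero,\rho)\subset B(\bzero,R)$ in the natural way. For $\bphi\in C_{0}^{\infty}(\Omega)$ we extend $\bphi$ by zero on $B(\bzero,\rho)$; the extension lies in $W^{1,2}(B(\bzero,R))$ and vanishes on the subball $B(\bzero,\rho)$, which has positive measure. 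The key step is then a Poincaré inequality on the annular geometry: there is a constant $C=C(\rho,R)$ such that every $w\in W^{1,2}(B(\bzero,R))$ vanishing on $B(\bzero,\rho)$ satisfies $\left\Vert w;L^{2}(B(\bzero,R))\right\Vert\leq C\left\Vert\bnabla w;L^{2}(B(\bzero,R))\right\Vert$. Applying this to $w=\bphi$ gives $\left\Vert\bphi;L^{2}(\Omega\cap B(\bzero,R))\right\Vert\leq C\left\Vert\bnabla\bphi\right\Vert_{2}$, and since $C_{0}^{\infty}(\Omega)$ is dense in $D_{0}^{1,2}(\Omega)$ by definition, the bound passes to all $\bu\in D_{0}^{1,2}(\Omega)$ with the $L^{2}$ norm on the right.

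Once the $L^{2}$ estimate on $B(\bzero,R)$ is in hand, upgrading to $L^{p}$ for any $p>1$ is routine: combine the two-dimensional Sobolev embedding $W^{1,2}_{\mathrm{loc}}\hookrightarrow L^{q}_{\mathrm{loc}}$ for every finite $q$ (which on the bounded set $\Omega\cap B(\bzero,R)$ gives $\left\Vert\bu;L^{q}(\Omega\cap B(\bzero,R))\right\Vert\lesssim\left\Vert\bu;L^{2}(\Omega\cap B(\bzero,R))\right\Vert+\left\Vert\bnabla\bu\right\Vert_{2}$) with the $L^{2}$ bound just obtained, and then use the boundedness of $\Omega\cap B(\bzero,R)$ together with Hölder's inequality to come down to any $p>1$. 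This yields the stated inequality with $C=C(\Omega,R,p)$, the dependence on $\Omega$ entering only through $\rho$ and the geometry of the excluded ball.

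For the last sentence, suppose $\Omega$ is bounded. Then $\widebar{\Omega}\neq\mathbb{R}^{2}$ automatically, and we may take $R$ large enough that $\Omega\subset B(\bzero,R)$, so the inequality just proved reads $\left\Vert\bu\right\Vert_{2}\leq C\left\Vert\bnabla\bu\right\Vert_{2}$ for all $\bu\in D_{0}^{1,2}(\Omega)$ — a genuine Poincaré inequality. Consequently the norms $\left\Vert\cdot\right\Vert_{1,2}$ and $\left|\cdot\right|_{1,2}$ are equivalent on $C_{0}^{\infty}(\Omega)$, and taking completions in the two equivalent norms gives the same space, i.e. $D_{0}^{1,2}(\Omega)\cong W_{0}^{1,2}(\Omega)$.

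The main obstacle is the annular Poincaré inequality itself: one has to be a little careful that the constant depends only on $\rho$ and $R$ and not on the shape of $\Omega$. The cleanest route is probably a compactness (Rellich) argument — if it failed there would be a sequence $w_{k}\in W^{1,2}(B(\bzero,R))$, supported outside $B(\bzero,\rho)$, with $\left\Vert w_{k}\right\Vert_{2}=1$ and $\left\Vert\bnabla w_{k}\right\Vert_{2}\to0$; extracting an $L^{2}$-convergent subsequence forces the limit to be a nonzero constant vanishing on $B(\bzero,\rho)$, a contradiction. Everything else is standard Sobolev bookkeeping.
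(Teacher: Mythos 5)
Your proposal is correct and follows essentially the same route as the paper: reduce to the case $p=2$ (Sobolev embedding for $p>2$, H\"older for $p<2$), then use the excluded ball in the complement of $\Omega$ as a Dirichlet anchor to run a Poincar\'e inequality for the zero-extension of $\bphi\in C_{0}^{\infty}(\Omega)$ on $B(\bzero,R)$, and conclude by density. The only cosmetic difference is that the paper cites the relevant Poincar\'e inequality from the literature, whereas you prove it by a Rellich compactness argument; both are fine.
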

\begin{proof}
It suffices to prove the inequality for all $\bu\in C_{0}^{\infty}(\Omega)$.
By the Sobolev embedding \citep[Corollary 9.11]{Brezis-FunctionalAnalysisSobolev2011},
for $p>2$,
\[
\bigl\Vert\bu;L^{p}(\Omega\cap B(\bzero,R))\bigr\Vert\leq C(R,q)\left(\bigl\Vert\bu;L^{2}(\Omega\cap B(\bzero,R))\bigr\Vert+\bigl\Vert\bnabla\bu;L^{2}(\Omega\cap B(\bzero,R))\bigr\Vert\right)\,.
\]
By the Hölder inequality, for $p<2$,
\[
\bigl\Vert\bu;L^{p}(\Omega\cap B(\bzero,R))\bigr\Vert\leq C(R,q)\bigl\Vert\bu;L^{2}(\Omega\cap B(\bzero,R))\bigr\Vert\,.
\]
Therefore it remains to prove the inequality for $p=2$. Since $\widebar{\Omega}\neq\mathbb{R}^{2}$,
there exists $\varepsilon>0$ and $\bx_{0}\in\mathbb{R}^{2}$ such
that $B(\bx_{0},\varepsilon)\cap\Omega=\emptyset$. By extending each
function $\bu\in C_{0}^{\infty}(\Omega)$ by zero from $\Omega\cap B(\bzero,R)$
to $B(\bzero,R)\setminus B(\bx_{0},\varepsilon)$, the Poincaré inequality
(\citealp[Theorem 1.5.]{Necas-DirectMethods2012} or \citealp[Corollary 9.19]{Brezis-FunctionalAnalysisSobolev2011})
implies that
\[
\bigl\Vert\bu;L^{2}(\Omega\cap B(\bzero,R))\bigr\Vert\leq C(R)\bigl\Vert\bnabla\bu\bigr\Vert_{2}\,.
\]

\end{proof}
The following example \citep[Remarque 4.1]{Deny-Lesespaces1954} shows
that the elements of $D_{0}^{1,2}(\mathbb{R}^{2})$ are equivalence
classes and cannot be viewed as functions.
\begin{example}
There exists a sequence $\left(u_{n}\right)\subset C_{0}^{\infty}(\mathbb{R}^{2})$
which converges to $u\in D_{0}^{1,2}(\mathbb{R}^{2})$ in the norm
$\left|\cdot\right|_{1,2}$ and a sequence $\left(c_{n}\right)_{n\in\mathbb{N}}\subset\mathbb{R}$
such that for any bounded domain $B$,
\[
\bigl\Vert u_{n}-u;L^{4}(B)\bigr\Vert\to\infty\qquad\text{and}\qquad\bigl\Vert u_{n}-c_{n}-u;L^{4}(B)\bigr\Vert\to0
\]
as $n\to\infty$.\end{example}
\begin{proof}
Let $a\in C^{\infty}(\mathbb{R},[0,1])$ such that $a(r)=0$ if $r\leq5/2$,
$a(r)=1$ if $r\geq3$. For $n\in\mathbb{N}$, let $a_{n}\in C_{0}^{\infty}(\mathbb{R},[0,1])$
such that $a_{n}(r)=a(r)$ if $r\leq n$ and $a_{n}(x)=0$ if $r\geq n+1$.
Then we consider the function $u_{n}\in C_{0}^{\infty}(\mathbb{R}^{2})$
defined by
\[
u_{n}(\bx)=-\int_{\left|\bx\right|}^{\infty}\frac{1}{r\log r}a_{n}(r)\,\rd r\,.
\]
The function $u_{n}$ is constant on $B(\bzero,2)$, and has support
inside $B(\bzero,n+1)$. We have
\[
\bnabla u_{n}(\bx)=\frac{1}{\left|\bx\right|\log\left|\bx\right|}a_{n}(\left|\bx\right|)\, e_{r}\,,
\]
and
\[
\left\Vert \bnabla u_{n}\right\Vert _{2}^{2}=2\pi\int_{2}^{n+1}\left(\frac{1}{r\log r}a_{n}(r)\right)^{2}r\,\rd r\leq2\pi\int_{2}^{n+1}\frac{1}{r\log^{2}r}\,\rd r\leq\frac{2\pi}{\log2}\,.
\]
so the sequence $\left(u_{n}\right)_{n\in\mathbb{N}}$ is bounded
in $D_{0}^{1,2}(\mathbb{R}^{2})$. Explicitly, we have
\[
\lim_{n\to\infty}\left\Vert \bnabla u_{n}-\bnabla u\right\Vert _{2}=0\,,
\]
where $u$ is determined by
\[
u(\bx)=\int_{0}^{\left|\bx\right|}\frac{1}{r\log r}a(r)\,\mathrm{d}r\,.
\]
We have
\[
u_{n}-u=c_{n}+\int_{0}^{\left|\bx\right|}\frac{1}{r\log r}\left[a_{n}(r)-a(r)\right]\mathrm{d}r\,,
\]
where
\[
c_{n}=-\int_{0}^{\infty}\frac{1}{r\log r}a_{n}(r)\,\rd r\,.
\]
Therefore, $u_{n}-c_{n}-u$ vanishes on $B(\bzero,n)$, the sequence
$\left(u_{n}-c_{n}\right)_{n\in\mathbb{N}}$ converges to $u$ in
$L^{4}(B)$ for all bounded domain $B$, but $\left(u_{n}\right)_{n\in\mathbb{N}}$
doesn't converge in $L^{4}(B)$.\end{proof}
\begin{defn}[spaces of divergence-free vector fields]
We denote by $D_{\sigma}^{1,2}(\Omega)$ the subspace of divergence-free
vector fields of $D^{1,2}(\Omega)$,
\[
D_{\sigma}^{1,2}(\Omega)=\left\{ \bu\in D^{1,2}(\Omega):\,\bnabla\bcdot\bu=0\right\} .
\]
We denote by $D_{0,\sigma}^{1,2}(\Omega)$ the subspace of $D_{0}^{1,2}(\Omega)$
defined as the completion of $C_{0,\sigma}^{\infty}(\Omega)$ in the
semi-norm $\left|\cdot\right|_{1,2}$.\index{Space!D0s@$D_{0,\sigma}^{1,2}$}
\end{defn}
Finally, we recall the following standard compactness result of \citet{Rellich-EinSatzuber1930}\textendash{}\citet{Kondrachov-certainpropertiesfunctions1945}:
\begin{lem}[{\citealp[Theorem 9.16]{Brezis-FunctionalAnalysisSobolev2011}}]
\label{lem:compact-embedding}If $\Omega$ is a bounded Lipschitz
domain, the embedding $W^{1,2}(\Omega)\subset L^{p}(\Omega)$ is compact
for $p\geq1$ if $n=2$ and for $1\leq p<6$ if $n=3$.
\end{lem}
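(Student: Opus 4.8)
The statement is the classical Rellich--Kondrachov theorem, and the plan is the standard two-step scheme: first prove that a bounded sequence in $W^{1,2}(\Omega)$ has an $L^2(\Omega)$-convergent subsequence, then upgrade from $L^2$ to $L^p$ by interpolation with the Sobolev embedding.

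For the first step, let $(\bu_k)$ be bounded in $W^{1,2}(\Omega)$. Since $\Omega$ is a bounded Lipschitz domain, there is a bounded linear extension operator $E\colon W^{1,2}(\Omega)\to W^{1,2}(\mathbb{R}^n)$; multiplying by a fixed cut-off $\chi\in C_0^\infty(\mathbb{R}^n)$ equal to $1$ on $\Omega$ produces a sequence $\bv_k=\chi\,E\bu_k$ that is bounded in $W^{1,2}(\mathbb{R}^n)$ and supported in a fixed ball $B\supset\Omega$. The key uniform estimate is the translation bound $\|\bv_k(\bcdot+\boldsymbol{h})-\bv_k\|_{L^2(\mathbb{R}^n)}\leq|\boldsymbol{h}|\,\|\bnabla\bv_k\|_{L^2(\mathbb{R}^n)}$, whose right-hand side tends to $0$ as $|\boldsymbol{h}|\to0$ uniformly in $k$ because $\sup_k\|\bnabla\bv_k\|_2<\infty$. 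Together with the uniform support, the Riesz--Fr\'echet--Kolmogorov compactness criterion in $L^2(\mathbb{R}^n)$ yields a subsequence of $(\bv_k)$ converging in $L^2(\mathbb{R}^n)$, and restriction to $\Omega$ gives an $L^2(\Omega)$-convergent subsequence of $(\bu_k)$. Equivalently, one can mollify, $\bv_k*\rho_\varepsilon$, use $\|\bv_k*\rho_\varepsilon-\bv_k\|_2\lesssim\varepsilon\|\bnabla\bv_k\|_2$ to approximate uniformly in $k$, extract for each fixed $\varepsilon$ a locally uniformly convergent subsequence of the smooth functions $\bv_k*\rho_\varepsilon$ by Arzel\`a--Ascoli, and diagonalise.

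For the second step, recall the Sobolev embedding $W^{1,2}(\Omega)\hookrightarrow L^{2n/(n-2)}(\Omega)$ for $n=3$, and $W^{1,2}(\Omega)\hookrightarrow L^q(\Omega)$ for all $q<\infty$ for $n=2$. Let $(\bu_k)$ be bounded in $W^{1,2}(\Omega)$ with, after the first step, a subsequence converging in $L^2(\Omega)$ to some $\bu$. For $1\leq p<2$, boundedness of $\Omega$ gives $\|\bu_k-\bu\|_{L^p(\Omega)}\leq|\Omega|^{1/p-1/2}\|\bu_k-\bu\|_{L^2(\Omega)}\to0$. For $2\leq p<2n/(n-2)$ if $n=3$ (respectively $2\le p<\infty$ if $n=2$), choose $q\geq p$ in the Sobolev range and use $\|\bu_k-\bu\|_{L^p}\leq\|\bu_k-\bu\|_{L^2}^{\theta}\,\|\bu_k-\bu\|_{L^q}^{1-\theta}$ with $\theta\in(0,1]$ given by $\tfrac1p=\tfrac\theta2+\tfrac{1-\theta}q$; since $(\bu_k)$ is bounded in $L^q$ and converges to $\bu$ in $L^2$, the right-hand side tends to $0$, so the subsequence converges in $L^p(\Omega)$.

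The main obstacle is the first step, and inside it the construction of the bounded extension operator $E$: this is exactly where the Lipschitz regularity of $\partial\Omega$ is genuinely needed (the compactness statement fails for sufficiently wild domains). One can bypass a global extension theorem by using a finite boundary atlas in which $\partial\Omega$ is locally a Lipschitz graph, a subordinate partition of unity, and a local bi-Lipschitz flattening, reducing the translation estimate to cubes and half-cubes --- but that is essentially a hands-on re-derivation of the extension. Everything else, namely the translation/mollification estimate and the interpolation, is soft and dimension-independent, the dimension entering only through the Sobolev exponent $2n/(n-2)$ that pins down the admissible range of $p$.
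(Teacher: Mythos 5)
Your proof is correct and is the standard Rellich--Kondrachov argument; the paper gives no proof of this lemma at all, citing it directly as Theorem~9.16 of Brezis, so you have simply supplied the classical proof that the citation points to (extension to $\mathbb{R}^n$, the translation estimate plus Riesz--Fr\'echet--Kolmogorov for compactness in $L^2$, then interpolation against the Sobolev exponent). The only cosmetic point is that in the interpolation step you need $q>p$ strictly when $p>2$ so that $\theta>0$, which your actual choices ($q=6$ for $n=3$, any finite $q>p$ for $n=2$) of course satisfy.
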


\section{Existence of an extension}

This section is devoted to the construction of an extension\index{Extension!existence}\index{Existence!extension}
$\ba$ of the boundary condition $\bu^{*}\in W^{1/2,2}(\partial\Omega)$
that satisfies the so called extension condition\index{Extension!condition},
\emph{i.e.} such that
\[
\left|\left(\bv\bcdot\bnabla\ba,\bv\right)\right|\leq\varepsilon\left\Vert \bnabla\bv\right\Vert _{2}^{2}\,,
\]
for some $\varepsilon>0$ small enough. The proofs of the following
two lemmas are inspired by \citet[Lemma III.6.2, Lemma IX.4.1, Lemma IX.4.2, Lemma X.4.1,]{Galdi-IntroductiontoMathematical2011}
and by \citet{Russo-NoteExteriorTwo-Dimensional2009} for the two-dimensional
unbounded case. We first define admissible domains and boundary conditions
which will be required for the existence of an extension satisfying
the extension condition.
\begin{defn}[admissible domain]
\index{Admissible domain}\index{Domain!admissible}An admissible
domain is a Lipschitz domain $\Omega\subset\mathbb{R}^{n}$, $n=2,3$
such that $\mathbb{R}^{n}\setminus\Omega$ is composed of a finite
number $k\in\mathbb{N}$ of bounded simply connected components (single
points are not allowed), denoted by $B_{i}$, $i=0\dots k$ and possibly
one unbounded component. The main possibilities are drawn in \figref{admissible-domains}.
\begin{figure}[h]
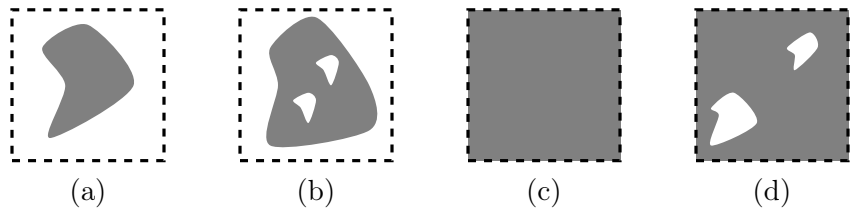

\includefigure{admissible-domains}

\caption{\label{fig:admissible-domains}Admissible domains for the existence
of weak solutions: (a) a simply connected bounded domain; (b) a bounded
domain with holes; (c) the whole plane; (d) an exterior domain. }
\end{figure}

\end{defn}

\begin{defn}[admissible boundary condition]
\index{Admissible boundary condition}\index{Boundary condition!admissible}If
$\Omega$ is an admissible domain, an admissible boundary condition
is a field $\bu^{*}\in W^{1/2,2}(\partial\Omega)$, defined on the
boundary such that if $\Omega$ is bounded, the total flux is zero,
\[
\int_{\partial\Omega}\bu^{*}\bcdot\bn=0\,.
\]
We define the flux through each bounded component $B_{i}$ by
\[
\Phi_{i}=\int_{\partial B_{i}}\bu^{*}\bcdot\bn\,,
\]
and we denote the sum of the magnitude of the fluxes by $\Phi$,
\[
\Phi=\sum_{i=1}^{k}\left|\Phi_{i}\right|\,.
\]
\end{defn}
\begin{lem}
\label{lem:extention}If $\Omega$ is an admissible domain and $\bu^{*}\in W^{1/2,2}(\partial\Omega)$
an admissible boundary condition, then there exists an extension $\ba\in D_{\sigma}^{1,2}(\Omega)\cap L^{4}(\Omega)$
such that $\bu^{*}=\ba$ in the trace sense on $\partial\Omega$,
and moreover, there exists a constant $C>0$ depending on the domain
and on $\bu^{*}$ such that
\[
\bigl|\bigl(\bv\bcdot\bnabla\ba,\bv\bigr)\bigr|\leq\left(\frac{1}{4}+C\Phi\right)\left\Vert \bnabla\bv\right\Vert _{2}^{2}
\]
for all $\bv\in D_{0,\sigma}^{1,2}(\Omega)$.\end{lem}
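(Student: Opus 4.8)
The plan is to construct the extension $\ba$ by superposing two pieces: a Hopf-type cut-off extension $\bb$ of $\bu^{*}$ that carries no net flux through any component (and whose contribution to the quadratic form can be made arbitrarily small, in particular $\le\frac14\|\bnabla\bv\|_2^2$), and a radial flux-carrier $\bd$ near each bounded obstacle $B_i$ that absorbs the fluxes $\Phi_i$ but contributes only $O(\Phi)\|\bnabla\bv\|_2^2$. Concretely, fix a smooth bump $\chi$ equal to $1$ near $\partial\Omega$ and supported in a fixed neighbourhood; for the flux part, around each $B_i$ one takes $\bd_i=\bnabla\wedge\bigl(\tfrac{\Phi_i}{2\pi}\,\zeta(\br)\,\theta_i\bigr)$ or, equivalently, the Gagliardo-type construction of \citet[Lemma IX.4.2]{Galdi-IntroductiontoMathematical2011}, so that $\bd_i$ is solenoidal, has the prescribed flux $\Phi_i$ through $\partial B_i$, is supported in a thin collar of width $\delta$ around $\partial B_i$, and satisfies the pointwise bound $|\bd_i(\bx)|\lesssim |\Phi_i|/\dist(\bx,\partial B_i)$. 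The remainder $\bu^{*}-\sum_i\bd_i|_{\partial\Omega}$ then has zero flux through every component and can be lifted to a solenoidal $\bb\in D^{1,2}_\sigma(\Omega)\cap L^4(\Omega)$ by Bogovskii's operator locally and a standard cut-off globally; here one uses that $\bu^{*}\in W^{1/2,2}(\partial\Omega)$ and $\partial\Omega$ is compact (Lipschitz), so all the local estimates are classical.

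The second step is to estimate the quadratic form. For the zero-flux part $\bb$, the key is the Leray-Hopf trick: by choosing the collar width $\delta$ small one gets $|(\bv\bcdot\bnabla\bb,\bv)|\le\|\,|\bv|^2/\dist(\cdot,\partial\Omega)\,\|_{L^1(\text{collar})}\cdot\|\dist(\cdot,\partial\Omega)\,\bnabla\bb\|_\infty$, and the Hardy inequality $\int_{\text{collar}}|\bv|^2/\dist(\bx,\partial\Omega)^2\le C\|\bnabla\bv\|_2^2$ (valid for $\bv\in D^{1,2}_{0,\sigma}$ since $\bv$ vanishes on $\partial\Omega$ in the trace sense) together with $\|\dist\cdot\bnabla\bb\|_\infty\to0$ as $\delta\to0$ makes this term $\le\frac14\|\bnabla\bv\|_2^2$. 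For the flux carriers $\bd_i$, the pointwise bound $|\bd_i|\lesssim|\Phi_i|/\dist(\bx,\partial B_i)$ combined again with Hardy's inequality gives $|(\bv\bcdot\bnabla\bd_i,\bv)|=|(\bv\bcdot\bnabla\bv,\bd_i)|\le\||\bv|/\dist\|_2\,\|\bnabla\bv\|_2\,\||\dist|\,\bd_i/\dist\|_\infty\lesssim|\Phi_i|\,\|\bnabla\bv\|_2^2$ (after integrating by parts using $\bnabla\bcdot\bv=0$ and $\bv|_{\partial\Omega}=0$, which also kills the boundary term). Summing over $i$ produces the $C\Phi$ with $\Phi=\sum_i|\Phi_i|$, and adding the two contributions yields exactly $(\frac14+C\Phi)\|\bnabla\bv\|_2^2$.

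The regularity claim $\ba\in D^{1,2}_\sigma(\Omega)\cap L^4(\Omega)$ follows from the construction: $\bnabla\ba\in L^2$ by the bounded collar estimates and the $W^{1/2,2}$ trace lifting, while $\ba\in L^4$ locally comes from Sobolev embedding in the bounded region near $\partial\Omega$ (in $\mathbb R^2$, $W^{1,2}\hookrightarrow L^4$ on bounded sets by \lemref{local-D0}-type arguments, and in $\mathbb R^3$ by $D^{1,2}\hookrightarrow L^6\subset L^4_{loc}$), and at infinity $\ba$ is simply zero by compact support of the collar. I expect the main obstacle to be the handling of the unbounded component of $\mathbb R^n\setminus\Omega$ and the two-dimensional case simultaneously: in $2$D one cannot use $D^{1,2}_0\hookrightarrow L^6$, so the $L^4$ bound on $\bv$ in the support of $\ba$ must be extracted purely from the Hardy/Poincar\'e structure in the thin collar, and one must be careful that the flux carriers are genuinely supported away from infinity so that no logarithmically growing mode is introduced — this is precisely where the hypothesis that the $B_i$ are bounded and simply connected, and that $\int_{\partial\Omega}\bu^{*}\bcdot\bn=0$ when $\Omega$ is bounded, is used.
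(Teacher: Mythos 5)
There is a genuine gap, and it sits exactly at the point your own last paragraph flags as the ``main obstacle.'' Your flux carrier $\bd_{i}$ cannot simultaneously be solenoidal, single-valued, supported in a thin collar around $\partial B_{i}$, and carry a nonzero flux $\Phi_{i}$: applying the divergence theorem to the annular region between $\partial B_{i}$ and the outer edge of the collar (where $\bd_{i}$ vanishes) forces $\int_{\partial B_{i}}\bd_{i}\bcdot\bn=0$. Concretely, $\bnabla\bwedge\bigl(\zeta\,\theta_{i}\bigr)=\zeta\,\bnabla\bwedge\theta_{i}+\theta_{i}\,\bnabla\bwedge\zeta$ is multivalued wherever $\bnabla\zeta\neq0$, because $\theta_{i}$ jumps by $2\pi$ across a branch cut; the reference you invoke (Galdi, Lemma IX.4.2) uses a flux carrier that extends all the way to infinity (or to the outer boundary component), not a compactly supported one. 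Once the carrier is forced to be global, your estimate $\bigl|\bigl(\bv\bcdot\bnabla\bd_{i},\bv\bigr)\bigr|\leq\bigl\Vert\left|\bv\right|/\dist\bigr\Vert_{2}\left\Vert \bnabla\bv\right\Vert _{2}\bigl\Vert\dist\cdot\bd_{i}\bigr\Vert_{\infty}$ breaks down in the two-dimensional unbounded case: at large distances $\dist(\bx,\partial\Omega)\sim\left|\bx\right|$, and the Hardy inequality $\bigl\Vert\bv/\left|\bx\right|\bigr\Vert_{2}\lesssim\left\Vert \bnabla\bv\right\Vert _{2}$ is \emph{false} for $\bv\in D_{0,\sigma}^{1,2}(\Omega)$ when $n=2$ --- only the logarithmically weakened version of \lemref{hardy-2d-log} holds, as the counterexample in the paper shows. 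So the flux term cannot be closed this way.

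The paper's proof circumvents both problems at once. It takes the global gradient carrier $\ba_{\Phi}=\bnabla A_{\Phi}$ with $A_{\Phi}=\frac{1}{2\pi}\sum_{i}\Phi_{i}\log\left|\bx-\bx_{i}\right|$ in two dimensions, integrates by parts twice to get $\bigl(\bv\bcdot\bnabla\ba_{\Phi},\bv\bigr)=\int_{\Omega}\bnabla\bcdot\left(\bv\bcdot\bnabla\bv\right)A_{\Phi}$, observes that $\bnabla\bcdot\left(\bv\bcdot\bnabla\bv\right)=\bnabla\bv:\left(\bnabla\bv\right)^{T}$ lies in the Hardy space $\mathcal{H}^{1}$ with norm $\lesssim\left\Vert \bnabla\bv\right\Vert _{2}^{2}$ (Coifman--Lions--Meyer--Semmes), and pairs it with $A_{\Phi}\in\mathrm{BMO}$ to obtain the bound $C\sum_{i}\left|\Phi_{i}\right|\left\Vert \bnabla\bv\right\Vert _{2}^{2}$. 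This $\mathcal{H}^{1}$--$\mathrm{BMO}$ duality is the key ingredient your argument is missing; without it (or some substitute) the $C\Phi$ term cannot be obtained in a two-dimensional exterior domain. Your treatment of the zero-flux Hopf part via the Leray--Hopf collar trick is essentially the paper's \lemref{extension-basic} and is fine.
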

\begin{proof}
For each $i=1,\dots,k$, there exists $\bx_{i}\in B_{i}$. We consider
the field $\ba_{\Phi}\in C_{\sigma}^{\infty}(\widebar{\Omega})$ defined
by
\[
\ba_{\Phi}(\bx)=\frac{1}{2\pi\left(n-1\right)}\sum_{i=1}^{k}\Phi_{i}\frac{\bx-\bx_{i}}{\left|\bx-\bx_{i}\right|^{n}}\,.
\]
By construction, the boundary field $\bu^{*}-\ba_{\Phi}$ has zero
flux through each connected component of $\partial\Omega$. Since
the connected components of the boundary $\partial\Omega$ are separated,
by using \lemref{extension-basic}, there exists $\delta>0$ and an
extension $\ba_{\delta}\in W_{\sigma}^{1,2}(\Omega)\cap L^{4}(\Omega)$
of $\bu^{*}-\ba_{\Phi}$ such that
\[
\bigl|\bigl(\bv\bcdot\bnabla\ba_{\delta},\bv\bigr)\bigr|\leq\frac{1}{4}\left\Vert \bnabla\bv\right\Vert _{2}^{2}\,.
\]
By integrating by parts and using that $\bv$ is divergence free,
we have
\[
\bigl(\bv\bcdot\bnabla\ba_{\Phi},\bv\bigr)=-\bigl(\bv\bcdot\bnabla\bv,\ba_{\Phi}\bigr)=\int_{\Omega}\bnabla\bcdot\left(\bv\bcdot\bnabla\bv\right)A_{\Phi}\,,
\]
where $A_{\Phi}$ is the potential of $\ba_{\Phi}$, \emph{i.e.} $\ba_{\Phi}=\bnabla A_{\Phi}$.
We note that in case $\Omega$ is bounded, we could easily conclude
the proof now, but not in the unbounded case. For $n=3$, we have
\[
\left|\int_{\Omega}\bnabla\bcdot\left(\bv\bcdot\bnabla\bv\right)A_{\Phi}\right|\leq\int_{\Omega}\left|\bnabla\bv\right|^{2}\left|A_{\Phi}\right|\leq\frac{1}{4\pi}\sum_{i=1}^{k}\left|\Phi_{i}\right|\sup_{x\in\Omega}\frac{1}{\left|\bx-\bx_{i}\right|}\left\Vert \bnabla\bv\right\Vert _{2}^{2}\leq C\sum_{i=1}^{k}\left|\Phi_{i}\right|\left\Vert \bnabla\bv\right\Vert _{2}^{2}\,,
\]
where $C>0$ is a constant depending on the domain. For $n=2$, by
using \citet[Theorem II.1]{Coifman-Compensatedcompactness1993}, $\bnabla\bcdot\left(\bv\bcdot\bnabla\bv\right)=\bnabla\bv:\left(\bnabla\bv\right)^{T}$
is in the Hardy space $\mathcal{H}^{1}$ and by using \citet[Proposition 12.11]{Taylor-PartialDifferentialEquations2011},
we obtain that the form $\left(\bnabla\bcdot\left(\bv\bcdot\bnabla\bv\right),A_{\Phi}\right)$
is bilinear and continuous for $\bv\in D_{\sigma}^{1,2}$, so there
exists a constant $C>0$ depending on the domain such that
\[
\left|\int_{\Omega}\bnabla\bcdot\left(\bv\bcdot\bnabla\bv\right)A_{\Phi}\right|\leq C\sum_{i=1}^{k}\left|\Phi_{i}\right|\left\Vert \bnabla\bv\right\Vert _{2}^{2}\,.
\]
Therefore, by choosing $\delta$ small enough, $\ba=\ba_{\Phi}+\ba_{\delta}$
satisfies the statement of the lemma.\end{proof}
\begin{lem}
\label{lem:extension-basic}Let $B$ be a bounded and simply connected
domain with smooth boundary. Let $\Omega$ be either $B$ or its complement
$\mathbb{R}^{2}\setminus\widebar B$. If $\bu^{*}\in W^{1/2,2}(\partial\Omega)$
is an admissible boundary condition with $\Phi=0$, then for all $\delta>0$,
there exists an extension $\ba_{\delta}\in W_{\sigma}^{1,2}(\Omega)\cap L^{4}(\Omega)$
of $\bu^{*}$ having support in a tube of weight $\delta$ around
the boundary, \emph{i.e.} in $\left\{ \bx\in\Omega\,:\,\dist(\bx,\partial\Omega)\leq2\delta\right\} $,
and such that
\[
\left|\left(\bv\bcdot\bnabla\ba_{\delta},\bv\right)\right|\leq\frac{C}{\left|\log\delta\right|}\left\Vert \bnabla\bv\right\Vert _{2}^{2}
\]
for all $\bv\in D_{0,\sigma}^{1,2}(\Omega)$ where $C>0$ is a constant
depending on the domain and on $\bu^{*}$.\end{lem}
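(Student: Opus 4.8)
The plan is to build $\ba_{\delta}$ as $\bnabla^{\perp}$ of a stream function obtained from a fixed $W^{2,2}$ extension multiplied by a Hopf-type \emph{logarithmic} cut-off, and to estimate the trilinear form by moving one derivative onto $\bv$ and invoking Hardy's inequality near $\partial\Omega$. Since $B$ is simply connected, $\partial\Omega$ is a single smooth closed curve and the hypothesis $\Phi=0$ reads $\int_{\partial\Omega}\bu^{*}\bcdot\bn=0$, so the primitive $\psi_{0}$ of the normal component of $\bu^{*}$ along $\partial\Omega$ is single-valued; together with the (appropriately signed) tangential component of $\bu^{*}$ it forms an admissible pair of Cauchy data. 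I would therefore invoke the trace theorem to fix $\Psi\in W^{2,2}(\Omega)$, supported in a fixed collar $\{d<d_{0}\}$ of $\partial\Omega$ on which $d(\bx)=\dist(\bx,\partial\Omega)$ is smooth, with $\Psi|_{\partial\Omega}=\psi_{0}$ and $\partial_{n}\Psi|_{\partial\Omega}$ the tangential component of $\bu^{*}$, so that $\bnabla^{\perp}\Psi=\bu^{*}$ on $\partial\Omega$. In two dimensions $W^{2,2}\hookrightarrow L^{\infty}$ and $W^{1,2}\hookrightarrow L^{4}$, hence $\Psi\in L^{\infty}(\Omega)$ and $\bnabla\Psi\in L^{4}(\Omega)$, and these are the only features of $\Psi$ I would use.

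Next I would introduce the cut-off: fix $\chi\in C^{\infty}(\mathbb{R},[0,1])$ with $\chi\equiv1$ on $(-\infty,1/3]$ and $\chi\equiv0$ on $[2/3,\infty)$, and for small $\delta>0$ set $\gamma_{\delta}(d)=\chi\bigl(\log(d/\delta^{2})/\log(1/\delta)\bigr)$ for $d>0$ and $\gamma_{\delta}(0)=1$. Then $\gamma_{\delta}$ is smooth on $[0,\infty)$, identically $1$ near $d=0$, supported in $\{d\le\delta^{4/3}\}\subset\{d\le2\delta\}$, and a one-line computation gives $\sup_{d>0}d\,|\gamma_{\delta}'(d)|\le\|\chi'\|_{\infty}/|\log\delta|$. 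Setting $\psi_{\delta}(\bx)=\Psi(\bx)\,\gamma_{\delta}(d(\bx))$ and $\ba_{\delta}=\bnabla^{\perp}\psi_{\delta}$, one gets $\psi_{\delta}\in W^{2,2}(\Omega)$ supported in $\{d\le2\delta\}$, hence $\ba_{\delta}\in W_{\sigma}^{1,2}(\Omega)\cap L^{4}(\Omega)$ with support in $\{d\le2\delta\}$; and since $\gamma_{\delta}\equiv1$ near $\partial\Omega$ we have $\ba_{\delta}=\bnabla^{\perp}\Psi=\bu^{*}$ there in the trace sense.

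For the estimate, let $\bv\in D_{0,\sigma}^{1,2}(\Omega)$. Integration by parts together with $\bnabla\bcdot\bv=0$ and $\bv|_{\partial\Omega}=\bzero$ gives $(\bv\bcdot\bnabla\ba_{\delta},\bv)=-(\bv\bcdot\bnabla\bv,\ba_{\delta})$, so $|(\bv\bcdot\bnabla\ba_{\delta},\bv)|\le\int_{\Omega}|\bv|\,|\bnabla\bv|\,|\ba_{\delta}|$, and $|\ba_{\delta}|\le\gamma_{\delta}|\bnabla\Psi|+|\Psi|\,|\gamma_{\delta}'(d)|$ by the product rule. For the first term, \lemref{local-D0} gives $\|\bv;L^{8}(\Omega\cap B(\bzero,R))\|\le C\|\bnabla\bv\|_{2}$ for $R$ large, hence $\|\bv\|_{L^{4}(\{d\le2\delta\})}\le C\delta^{1/8}\|\bnabla\bv\|_{2}$ since $|\{d\le2\delta\}|\le C\delta$, so by H\"older
\[
\int_{\{d\le2\delta\}}|\bv|\,|\bnabla\bv|\,|\bnabla\Psi|\le\|\bnabla\bv\|_{2}\,\|\bv\|_{L^{4}(\{d\le2\delta\})}\,\|\bnabla\Psi\|_{L^{4}(\Omega)}\le C\delta^{1/8}\|\bnabla\bv\|_{2}^{2}\le\frac{C}{|\log\delta|}\|\bnabla\bv\|_{2}^{2}
\]
for $\delta$ small. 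For the second term, writing $|\gamma_{\delta}'(d)|=d^{-1}\bigl(d\,|\gamma_{\delta}'(d)|\bigr)$ and applying the cut-off bound and then Hardy's inequality near $\partial\Omega$ (valid because $\bv$ vanishes there),
\[
\int_{\Omega}|\bv|\,|\bnabla\bv|\,|\Psi|\,|\gamma_{\delta}'(d)|\le\frac{\|\chi'\|_{\infty}\|\Psi\|_{\infty}}{|\log\delta|}\int_{\{d\le2\delta\}}\frac{|\bv|}{d}\,|\bnabla\bv|\le\frac{C\|\Psi\|_{\infty}}{|\log\delta|}\,\|\bv/d\|_{L^{2}(\{d<d_{0}\})}\,\|\bnabla\bv\|_{2}\le\frac{C}{|\log\delta|}\|\bnabla\bv\|_{2}^{2}.
\]
Adding the two contributions gives the assertion, with $C$ depending on the domain and, through $\|\Psi\|_{\infty}$ and $\|\bnabla\Psi\|_{4}$, on $\bu^{*}$.

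The crux — and the only place the rate $|\log\delta|^{-1}$ is produced — is the logarithmic cut-off: the bound $\sup_{d}d\,|\gamma_{\delta}'(d)|\lesssim|\log\delta|^{-1}$ coupled with Hardy's inequality controls the worst term, whereas a cut-off that is linear on a tube of width $\delta$ would only give $|\gamma_{\delta}'|\sim\delta^{-1}$ and a useless estimate. The other delicate point is the low regularity of $\bu^{*}$: being merely $W^{1/2,2}$, it forces $\Psi$ to be only $W^{2,2}$ with $\bnabla\Psi$ in $L^{4}$ but not $L^{\infty}$, which is why I would integrate by parts so that $\ba_{\delta}$ enters linearly — with $\Psi$ appearing through its finite $L^{\infty}$ norm and $\bnabla\Psi$ only to the first power — and why the $\gamma_{\delta}\bnabla\Psi$ term must be dispatched by the small power $\delta^{1/8}$ gained from the shrinking support rather than by any pointwise control of $\bnabla\Psi$. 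For $\delta$ not small the statement is immediate, e.g.\ by using the construction at a fixed scale.
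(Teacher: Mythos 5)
Your proposal is correct and follows essentially the same route as the paper: a stream-function extension $\Psi\in W^{2,2}$ of $\bu^{*}$ (possible because $\Phi=0$), a Hopf-type logarithmic cut-off with $\sup_{d}d\,|\gamma_{\delta}'(d)|\lesssim|\log\delta|^{-1}$, integration by parts to make $\ba_{\delta}$ appear undifferentiated, the shrinking-support/H\"older gain of a power of $\delta$ for the $\gamma_{\delta}\bnabla\Psi$ term, and Hardy's inequality for the $\Psi\,\gamma_{\delta}'$ term. The only cosmetic differences are that the paper uses Stein's regularized distance instead of $d(\bx)$ itself and a $L^{12}$--$L^{6}$ H\"older split instead of your $L^{8}$--$L^{8}$ one.
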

\begin{proof}
We will construct an extension having support near the boundary of
$\Omega$. If $\Omega$ is unbounded, we can truncate the domain to
some large enough ball and therefore, without lost of generality,
we consider that $\Omega$ is bounded. Since $\bu^{*}\in W^{1/2,2}(\partial\Omega)$
has zero flux, there exists $\boldsymbol{\psi}\in W^{2,2}(\Omega)$
such that $\bu^{*}=\bnabla\bwedge\boldsymbol{\psi}$ on $\partial\Omega$
in the trace sense \citep{Galdi-existencesteadymotions1991}. By \citet[Chapter VI, Theorem 2]{Stein-Singularintegralsand1970},
there exists a function $\rho\in C^{\infty}(\Omega)$ and $\kappa>0$,
such that
\begin{align*}
\frac{1}{\kappa}\rho(\bx) & \leq\dist(\bx,\partial\Omega)\leq\rho(\bx)\,, & \left|\bnabla\rho(\bx)\right| & \leq\kappa\,.
\end{align*}
We define
\[
\Omega_{\delta}=\left\{ \bx\in\Omega\,:\,\dist(\bx,\partial\Omega)\leq\delta\right\} .
\]
Let $\chi_{\delta}\in C_{0}^{\infty}(\mathbb{R},[0,1])$ be a smooth
function such that $\chi_{\delta}(r)=1$ if $r\leq\delta^{2}/2$ and
$\chi_{\delta}(r)=0$ if $r\geq2\delta$, and moreover $\left|\chi_{\delta}^{\prime}(r)\right|\leq r^{-1}\left|\log(\delta)\right|^{-1}$.
We define $\xi_{\delta}=\chi_{\delta}\circ\rho$ so that $\xi_{\delta}\in C_{0}^{\infty}(\widebar{\Omega})$,
$\psi_{\delta}(\bx)=1$ $\dist(\bx,\partial\Omega)\leq\frac{\delta^{2}}{2\kappa}$,
and $\xi_{\delta}(\bx)=0$ if $\dist(\bx,\partial\Omega)\geq2\delta$.
Moreover,
\[
\left|\bnabla\xi_{\delta}(\bx)\right|=\left|\chi_{\delta}^{\prime}(\rho(\bx))\left|\bnabla\rho(\bx)\right|\right|\leq\frac{\kappa}{\rho(\bx)\left|\log(\delta)\right|}\,.
\]
By setting $\boldsymbol{\psi}_{\delta}=\xi_{\delta}\boldsymbol{\psi}$,
$\ba_{\delta}=\bnabla\bwedge\boldsymbol{\psi}_{\delta}$ is an extension
of $\bu^{*}$, which has support in $\Omega_{\delta}=\left\{ \bx\in\Omega\,:\,\dist(\bx,\partial\Omega)\leq2\delta\right\} $.

Since
\[
\ba_{\delta}=\xi_{\delta}\bnabla\bwedge\boldsymbol{\psi}+\bnabla\xi_{\delta}\bwedge\boldsymbol{\psi}\,,
\]
we have
\begin{align*}
\left\Vert \left|\bv\right|\left|\ba_{\delta}\right|\right\Vert _{2} & \leq\left\Vert \left|\bv\right|\left|\xi_{\delta}\right|\left|\bnabla\boldsymbol{\psi}\right|\right\Vert _{2}+\left\Vert \left|\bv\right|\left|\bnabla\xi_{\delta}\right|\left|\boldsymbol{\psi}\right|\right\Vert _{2}\\
 & \leq\bigl\Vert\bv;L^{4}(\Omega_{\delta})\bigr\Vert\left\Vert \bnabla\boldsymbol{\psi}\right\Vert _{4}+\frac{\kappa}{\left|\log\delta\right|}\left\Vert \left|\bv/\rho\right|\left|\boldsymbol{\psi}\right|\right\Vert _{2}\,.
\end{align*}
By using the Hölder inequality and Sobolev embeddings, we have
\begin{align*}
\bigl\Vert\bv;L^{4}(\Omega_{\delta})\bigr\Vert\left\Vert \bnabla\boldsymbol{\psi}\right\Vert _{4} & \leq C_{1}\bigl\Vert1;L^{12}(\Omega_{\delta})\bigr\Vert\bigl\Vert\bv;L^{6}(\Omega_{\delta})\bigr\Vert\bigl\Vert\boldsymbol{\psi};W^{2,2}(\Omega)\bigr\Vert\\
 & \leq\frac{C_{2}}{\left|\log\delta\right|}\bigl\Vert\bnabla\bv\bigr\Vert_{2}\bigl\Vert\boldsymbol{\psi};W^{2,2}(\Omega)\bigr\Vert\,,
\end{align*}
and by Hardy inequality,
\[
\left\Vert \left|\bv/\rho\right|\left|\boldsymbol{\psi}\right|\right\Vert _{2}\leq C_{3}\left\Vert \bnabla\bv\right\Vert _{2}\left\Vert \boldsymbol{\psi}\right\Vert _{\infty}\leq C_{4}\left\Vert \bnabla\bv\right\Vert _{2}\bigl\Vert\boldsymbol{\psi};W^{2,2}(\Omega)\bigr\Vert\,,
\]
where $C_{i}$ are constants depending only on the domain $\Omega$.
Therefore, there exists a constant $C>0$ depending on the domain
$\Omega$ such that
\[
\left\Vert \left|\bv\right||\ba_{\delta}|\right\Vert _{2}\leq\frac{C}{\left|\log\delta\right|}\left\Vert \bnabla\bv\right\Vert _{2}\bigl\Vert\boldsymbol{\psi};W^{2,2}(\Omega)\bigr\Vert\,,
\]
and finally by integrating by parts, we obtain the claimed bound
\[
\bigl|\bigl(\bv\bcdot\bnabla\ba_{\delta},\bv\bigr)\bigr|\leq\bigl|\bigl(\bv\bcdot\bnabla\bv,\ba_{\delta}\bigr)\bigr|\leq\left\Vert \bnabla\bv\right\Vert _{2}\left\Vert \left|\bv\right||\ba_{\delta}|\right\Vert _{2}\leq\frac{C\bigl\Vert\boldsymbol{\psi};W^{2,2}(\Omega)\bigr\Vert}{\left|\log\delta\right|}\left\Vert \bnabla\bv\right\Vert _{2}^{2}\,.
\]

\end{proof}

\section{\label{sec:existence-weak}Existence of weak solutions}
\begin{defn}
\label{def:weak-solution}A vector field $\bu:\Omega\to\mathbb{R}^{n}$
is called a weak solution\index{Weak solutions!definition}\index{Navier-Stokes equations!weak solutions}
to \eqref{intro-ns} if
\begin{enumerate}
\item $\bu\in D_{\sigma}^{1,2}(\Omega)$;
\item $\left.\bu\right|_{\partial\Omega}=\bu^{*}$ in the trace sense;
\item $\bu$ satisfies
\begin{equation}
\bigl(\bnabla\bu,\bnabla\bphi\bigr)+\bigl(\bu\bcdot\bnabla\bu,\bphi\bigr)+\bigl(\bff,\bphi\bigr)=0\label{eq:weak-solution-short}
\end{equation}
for all $\bphi\in C_{0,\sigma}^{\infty}(\Omega)$.
\end{enumerate}
\end{defn}
\begin{rem}
We note that in this definition, there is no mention of the limit
of $\bu$ at infinity in case $\Omega$ is unbounded. The limit of
$\bu$ at infinity will be discussed in \secref{weal-solution-limit}.
\end{rem}
If the total $\Phi$ is small enough, there exists a weak solution
as stated by:
\begin{thm}
\label{thm:existence-weak-solutions}\index{Weak solutions!existence}\index{Existence!weak solutions}\index{Navier-Stokes equations!weak solutions}If
$\Omega\neq\mathbb{R}^{2}$ is an admissible domain, $\bu^{*}\in W^{1/2,2}(\partial\Omega)$
an admissible boundary condition with $\Phi$ small enough, there
exists a weak solution to \eqref{intro-ns}, provided $\left(\bff,\bphi\right)$
defines a linear functional on $\bphi\in D_{0,\sigma}^{1,2}(\Omega)$.\end{thm}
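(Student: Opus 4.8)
The plan is to follow the three-step method of \citet{Leray-Etudedediverses1933} recalled in the introduction. \emph{Reduction to homogeneous boundary data.} By \lemref{extention} there is an extension $\ba\in D_\sigma^{1,2}(\Omega)\cap L^4(\Omega)$ with $\ba=\bu^*$ on $\partial\Omega$ in the trace sense and
\[
\bigl|(\bv\bcdot\bnabla\ba,\bv)\bigr|\leq\Bigl(\frac{1}{4}+C\Phi\Bigr)\|\bnabla\bv\|_2^2
\]
for all $\bv\in D_{0,\sigma}^{1,2}(\Omega)$. Writing $\bu=\bv+\ba$, the field $\bu$ is a weak solution in the sense of Definition~\ref{def:weak-solution} if and only if $\bv\in D_{0,\sigma}^{1,2}(\Omega)$ satisfies
\[
(\bnabla\bv,\bnabla\bphi)+(\bv\bcdot\bnabla\bv,\bphi)+(\bv\bcdot\bnabla\ba,\bphi)+(\ba\bcdot\bnabla\bv,\bphi)=-(\bnabla\ba,\bnabla\bphi)-(\ba\bcdot\bnabla\ba,\bphi)-(\bff,\bphi)
\]
for all $\bphi\in C_{0,\sigma}^{\infty}(\Omega)$. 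Since $\ba\in L^4(\Omega)$, since $\bff$ induces a bounded linear functional on $D_{0,\sigma}^{1,2}(\Omega)$ by hypothesis, and since $D_{0,\sigma}^{1,2}(\Omega)$ embeds locally into $L^4$ by \lemref{local-D0} (this is the only place $\Omega\neq\mathbb{R}^2$ enters), the right-hand side defines a bounded linear functional $\mathcal F$ on $D_{0,\sigma}^{1,2}(\Omega)$, and every term on the left is well defined because the test functions have compact support.

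\emph{Galerkin approximation and the a priori bound.} The Hilbert space $D_{0,\sigma}^{1,2}(\Omega)$ is separable, so it has a countable basis $\{\bphi_k\}_{k\geq1}\subset C_{0,\sigma}^{\infty}(\Omega)$; for each $m$ one seeks $\bv_m$ in the span of $\bphi_1,\dots,\bphi_m$ satisfying the displayed identity tested against $\bphi_1,\dots,\bphi_m$, a polynomial system in the $m$ unknown coefficients. The key estimate comes from testing with $\bphi=\bv_m$: since $\bv_m$ is divergence free with compact support, $(\bv_m\bcdot\bnabla\bv_m,\bv_m)=0$ and $(\ba\bcdot\bnabla\bv_m,\bv_m)=0$, hence
\[
\|\bnabla\bv_m\|_2^2\leq\bigl|(\bv_m\bcdot\bnabla\ba,\bv_m)\bigr|+\bigl|\mathcal F(\bv_m)\bigr|\leq\Bigl(\frac{1}{4}+C\Phi\Bigr)\|\bnabla\bv_m\|_2^2+\|\mathcal F\|\,\|\bnabla\bv_m\|_2.
\]
For $\Phi$ small enough that $\frac{1}{4}+C\Phi<1$ this gives $\|\bnabla\bv_m\|_2\leq(1-\frac{1}{4}-C\Phi)^{-1}\|\mathcal F\|$, a bound independent of $m$. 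The same computation shows that the associated map $P\colon\mathbb{R}^m\to\mathbb{R}^m$ satisfies $P(\xi)\bcdot\xi>0$ on a sphere of sufficiently large radius, so Brouwer's theorem provides a zero, i.e.\ a solution $\bv_m$.

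\emph{Passage to the limit.} The uniform bound yields a subsequence with $\bv_m\rightharpoonup\bv$ in $D_{0,\sigma}^{1,2}(\Omega)$, so $\bnabla\bv_m\rightharpoonup\bnabla\bv$ in $L^2(\Omega)$, and by \lemref{local-D0} together with the Rellich--Kondrachov compactness of \lemref{compact-embedding}, $\bv_m\to\bv$ strongly in $L^2(K)$ and $L^4(K)$ for every compact $K\subset\Omega$. Fixing $\bphi=\bphi_j$ and letting $m\to\infty$ along the subsequence: $(\bnabla\bv_m,\bnabla\bphi_j)$ and $(\ba\bcdot\bnabla\bv_m,\bphi_j)$ converge by weak convergence, $(\bv_m\bcdot\bnabla\ba,\bphi_j)$ converges by the strong $L^2$ convergence of $\bv_m$ on $\supp\bphi_j$ together with $\ba\in L^4$, and the nonlinear term, rewritten as $-(\bv_m\bcdot\bnabla\bphi_j,\bv_m)$, converges because $\bv_m\otimes\bv_m\to\bv\otimes\bv$ in $L^1(\supp\bphi_j)$. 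Hence $\bv$ satisfies the identity for every $\bphi_j$, and by density for all $\bphi\in C_{0,\sigma}^{\infty}(\Omega)$, so $\bu=\bv+\ba\in D_\sigma^{1,2}(\Omega)$ is the desired weak solution.

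The main obstacle is the passage to the limit in the nonlinear term in the unbounded cases: the Galerkin sequence is bounded only in the homogeneous space $D_{0,\sigma}^{1,2}(\Omega)$ and need not converge in any global $L^p(\Omega)$, so one must exploit that every test function is compactly supported and reduce the argument to \emph{local} strong compactness --- precisely what \lemref{local-D0} and \lemref{compact-embedding} supply, and the reason the statement excludes $\Omega=\mathbb{R}^2$, where $D_{0,\sigma}^{1,2}$ is only a space of equivalence classes. A secondary point requiring care is checking that $\mathcal F$ is genuinely bounded on this homogeneous space, which again rests on $\ba\in L^4(\Omega)$ and the local embedding.
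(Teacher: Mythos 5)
Your proof is correct, but it takes a genuinely different route from the paper's. After the common first step (lifting $\bu^{*}$ by the extension $\ba$ of \lemref{extention}), you run a Galerkin scheme directly in the full, possibly unbounded, domain: a countable basis of $D_{0,\sigma}^{1,2}(\Omega)$ drawn from $C_{0,\sigma}^{\infty}(\Omega)$, Brouwer's theorem via the acute-angle estimate $P(\xi)\bcdot\xi>0$ for the finite-dimensional systems, the uniform bound $\left\Vert \bnabla\bv_{m}\right\Vert _{2}\lesssim\left\Vert \mathcal{F}\right\Vert$ from the extension condition, and then weak convergence plus \emph{local} strong compactness to pass to the limit in the nonlinearity. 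The paper instead splits the argument in two: in bounded domains it recasts the problem as $\bv+A\bv+\boldsymbol{B}+\bF=0$ and applies Leray--Schauder, using the global compact embedding $D_{0,\sigma}^{1,2}\hookrightarrow L^{4}$ to make $A$ completely continuous; for unbounded domains it then invokes the invading-domains procedure $\Omega_{n}=\Omega\cap B(\bzero,R+n)$, extends by zero, and passes to the weak limit exactly as you do. Your approach buys a single uniform argument covering both cases, at the price of having to justify the choice of basis and the solvability of the algebraic Galerkin systems; the paper's buys a cleaner functional-analytic statement in the bounded case (where the compactness is global) but must repeat the limit passage a second time for the exhaustion. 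Both hinge on the same two ingredients you correctly isolate: the extension condition with $\frac{1}{4}+C\Phi<1$, and the local embedding of \lemref{local-D0} combined with \lemref{compact-embedding}, which is precisely where $\Omega\neq\mathbb{R}^{2}$ is used (for $n=3$, including $\Omega=\mathbb{R}^{3}$, the corresponding local $L^{4}$ bound comes from the Sobolev inequality in the preceding unnumbered lemma rather than from \lemref{local-D0}, a point worth stating explicitly). One cosmetic difference: the paper lifts $\bu^{*}-\bu_{\infty}$ and writes $\bu=\bu_{\infty}+\ba+\bv$ so that the construction at least records the intended value at infinity, whereas you lift $\bu^{*}$ alone; since \defref{weak-solution} imposes nothing at infinity, both yield a weak solution in the required sense.
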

\begin{rem}
In symmetric unbounded domains, \citet{Korobkov-existence2Dsymmetric2014,Korobkov-existence3Dsymmetric2014}
showed the existence of a weak solution for arbitrary large $\Phi$.
This was recently improved by \citet{Korobkov-LerayProblem2015} that
showed the existence of weak solutions in two-dimensional bounded
domains without any symmetry and smallness assumptions.
\end{rem}

\begin{rem}
\citet[pp. 36--37]{Ladyzhenskaya-MathematicalTheory1963} listed some
conditions on $\bff$, so that $\left(\bff,\bphi\right)$ defines
a linear functional on $\bphi\in D_{0,\sigma}^{1,2}(\Omega)$.\end{rem}
\begin{proof}
We treat the case where $\Omega$ is bounded and unbounded in parallel.
In case $\Omega$ is bounded, we set $\bu_{\infty}=\bzero$ by convenience
in what follows. By using Riesz' theorem, there exists $\bF\in D_{0,\sigma}^{1,2}(\Omega)$,
such that
\[
\bigl[\bF,\bphi\bigr]=\bigl(\bff,\bphi\bigr)\,,
\]
where $\left[\cdot,\cdot\right]$ denotes the scalar product in $D_{0,\sigma}^{1,2}(\Omega)$.
We look for a solution of the form $\bu=\bu_{\infty}+\ba+\bv$, where
$\ba$ is the extension of $\bu^{*}-\bu_{\infty}$ given by \lemref{extention},
so that $\bv$ vanishes at the boundary and with the hope that $\bv$
will converges to zero for large $\bx$ in case $\Omega$ is unbounded.
\begin{enumerate}
\item We first treat the case where $\Omega$ is bounded, so that $D_{0,\sigma}^{1,2}(\Omega)=W_{0,\sigma}^{1,2}(\Omega)$,
and $D_{0,\sigma}^{1,2}(\Omega)$ is compactly embedded in $L^{4}(\Omega)$.
First of all, by integrating by parts we have since $\bu$ is divergence-free,
\[
\bigl(\bu\bcdot\bnabla\bu,\bphi\bigr)+\bigl(\bu\bcdot\bnabla\bphi,\bu\bigr)=\int_{\Omega}\bnabla\bcdot\left(\bu\bcdot\bphi\,\bu\right)=\int_{\partial\Omega}\bu\bcdot\bphi\,\bu\bcdot\bn=0\,.
\]
By the Riesz' theorem there exists $\boldsymbol{B}\in D_{0,\sigma}^{1,2}(\Omega)$
such that
\[
\bigl[\boldsymbol{B},\bphi\bigr]=\bigl(\bnabla\ba,\bnabla\bphi\bigr)+\bigl(\ba\bcdot\bnabla\ba,\bphi\bigr)\,,
\]
because
\[
\bigl|\bigl[\boldsymbol{B},\bphi\bigr]\bigr|\leq\bigl|\bigl(\bnabla\ba,\bnabla\bphi\bigr)\bigr|+\bigl|\bigl(\ba\bcdot\bnabla\bphi,\ba\bigr)\bigr|\leq\left\Vert \bnabla\ba\right\Vert _{2}\left\Vert \bnabla\bphi\right\Vert _{2}+\left\Vert \ba\right\Vert _{4}^{2}\left\Vert \bnabla\bphi\right\Vert _{2}\,.
\]
In the same way, there exists a map $A:D_{0,\sigma}^{1,2}(\Omega)\to D_{0,\sigma}^{1,2}(\Omega)$
such that
\[
\bigl[A\bv,\bphi\bigr]=\bigl(\ba\bcdot\bnabla\bv,\bphi\bigr)+\bigl(\bv\bcdot\bnabla\ba,\bphi\bigr)+\bigl(\bv\bcdot\bnabla\bv,\bphi\bigr)\,,
\]
because 
\[
\bigl|\bigl[A\bv,\bphi\bigr]\bigr|\leq\bigl|\bigl(\ba\bcdot\bnabla\bphi,\bv\bigr)\bigr|+\bigl|\bigl(\bv\bcdot\bnabla\bphi,\ba\bigr)\bigr|+\bigl|\bigl(\bv\bcdot\bnabla\bphi,\bv\bigr)\bigr|\leq\left(2\left\Vert \ba\right\Vert _{4}+\left\Vert \bv\right\Vert _{4}\right)\left\Vert \bnabla\bphi\right\Vert _{2}\left\Vert \bv\right\Vert _{4}
\]
Since $D_{0,\sigma}^{1,2}(\Omega)$ is compactly embedded in $L^{4}(\Omega)$,
the map $A$ is continuous on $D_{0,\sigma}^{1,2}(\Omega)$ when equipped
with the $L^{4}$-norm and therefore is completely continuous on $D_{0,\sigma}^{1,2}(\Omega)$
when equipped with its underlying norm.

The condition \eqref{weak-solution-short} is equivalent to
\[
\bigl[\bv+A\bv+\boldsymbol{B}+\bF,\bphi\bigr]=0\,,
\]
which corresponds to solving the nonlinear equation
\begin{equation}
\bv+A\bv+\boldsymbol{B}+\bF=0\label{eq:ns-functional}
\end{equation}
in $D_{0,\sigma}^{1,2}(\Omega)$. From the Leray-Schauder fixed point
theorem \citep[see for example][Theorem 11.6]{Gilbarg.Trudinger-EllipticPartialDifferential1988}
to prove the existence of a solution to \eqref{ns-functional} it
is sufficient to prove that the set of solutions $\bv$ of the equation
\begin{equation}
\bv+\lambda\left(A\bv+\boldsymbol{B}+\bF\right)=0\label{eq:ns-functional-lambda}
\end{equation}
is uniformly bounded in $\lambda\in\left[0,1\right]$. To this end,
we take the scalar product of \eqref{ns-functional-lambda} with $\bv$,
\[
\bigl(\bnabla\bv,\bnabla\bv\bigr)+\lambda\bigl(\bv\bcdot\bnabla\ba,\bv\bigr)+\lambda\bigl(\bu\bcdot\bnabla\bv,\bv\bigr)+\lambda\bigl(\bnabla\ba,\bnabla\bv\bigr)+\lambda\bigl(\ba\bcdot\bnabla\ba,\bv\bigr)+\lambda\bigl(\bff,\bv\bigr)=0\,.
\]
where $\bu=\ba+\bv$. We have
\[
\bigl(\bu\bcdot\bnabla\bv,\bv\bigr)=\frac{1}{2}\int_{\Omega}\bnabla\bcdot\left(\bv\bcdot\bv\,\bu\right)=\frac{1}{2}\int_{\partial\Omega}\left(\bv\bcdot\bv\,\bu\bcdot\bn\right)=0\,,
\]
and by \lemref{extention}, if $\Phi$ is small enough,
\[
\bigl|\bigl(\bv\bcdot\bnabla\ba,\bv\bigr)\bigr|\leq\frac{1}{2}\left\Vert \bnabla\bv\right\Vert _{2}^{2}\,,
\]
so by Hölder inequality, we obtain
\begin{align*}
\left\Vert \bnabla\bv\right\Vert _{2}^{2} & \leq\frac{1}{2}\left\Vert \bnabla\bv\right\Vert _{2}^{2}+\left\Vert \bnabla\ba\right\Vert _{2}\left\Vert \bnabla\bv\right\Vert _{2}+\left\Vert \ba\right\Vert _{4}^{2}\left\Vert \bnabla\bv\right\Vert _{2}+\left\Vert \bnabla\bF\right\Vert _{2}\left\Vert \bnabla\bv\right\Vert _{2}\\
 & \leq\frac{1}{2}\left\Vert \bnabla\bv\right\Vert _{2}^{2}+\left(\left\Vert \bnabla\ba\right\Vert _{2}+\left\Vert \ba\right\Vert _{4}^{2}\right)\left\Vert \bnabla\bv\right\Vert _{2}+\left\Vert \bnabla\bF\right\Vert _{2}\left\Vert \bnabla\bv\right\Vert _{2}\,.
\end{align*}
Consequently, we have
\[
\left\Vert \bnabla\bv\right\Vert _{2}\leq\frac{\left\Vert \bnabla\ba\right\Vert _{2}+\left\Vert \ba\right\Vert _{4}^{2}+\left\Vert \bnabla\bF\right\Vert _{2}}{2}\,.
\]

\item We now consider the case where $\Omega$ is unbounded. There exists
$R>0$ such that $\mathbb{R}^{n}\setminus\Omega$ is contained in
$B(\bzero,R)$. For $n\in\mathbb{N}$, we consider the domains $\Omega_{n}=\Omega\cap B(\bzero,R+n)$.
By the existence result for the bounded case, there exists for each
$n\in\mathbb{N}$ a weak solution $\bu_{n}=\bu_{\infty}+\ba+\bv_{n}$,
where $\bv_{n}\in D_{0,\sigma}^{1,2}(\Omega_{n})$ to \eqref{intro-ns-eq}
in $\Omega_{n}$, with $\bu^{*}=\bu_{\infty}+\ba$ on $\partial\Omega_{n}$.
By extending $\bv_{n}$ to $\Omega$ by setting $\bv_{n}=\bzero$
on $\Omega\setminus\Omega_{n}$, then $\bv_{n}\in D_{0,\sigma}^{1,2}(\Omega)$
and the sequence $\left(\bv_{n}\right)_{n\in\mathbb{N}}$ is bounded
in $D_{0,\sigma}^{1,2}(\Omega)$. Therefore, there exists a subsequence,
denoted also by $\left(\bv_{n}\right)_{n\in\mathbb{N}}$, which converges
weakly to some $\bv$ in $D_{0,\sigma}^{1,2}(\Omega)$. We now show
that $\bu=\bu_{\infty}+\ba+\bv$ is a weak solution to \eqref{intro-ns}
in $\Omega$. Given $\bphi\in C_{0,\sigma}^{\infty}(\Omega)$, there
exists $m\in\mathbb{N}$ such that the support of $\bphi$ is contained
in $\Omega_{m}$. Therefore, for any $n\geq m$, we have
\[
\bigl(\bnabla\bu_{n},\bnabla\bphi\bigr)+\bigl(\bu_{n}\bcdot\bnabla\bu_{n},\bphi\bigr)+\bigl(\bff,\varphi\bigr)=0\,,
\]
and it only remains to show that the equation is valid in the limit
$n\to\infty$. By definition of the weak convergence, 
\[
\lim_{n\to\infty}\bigl(\bnabla\bv_{n},\bnabla\bphi\bigr)=\bigl(\bnabla\bv,\bnabla\bphi\bigr)\,,
\]
and since $\bphi$ has compact support in $\Omega_{m}$, 
\begin{align*}
\bigl|\bigl(\bu_{n}\bcdot\bnabla\bu_{n}-\bu\bcdot\bnabla\bu,\bphi\bigr)\bigr| & \leq\bigl|\left(\left(\bu_{n}-\bu\right)\bcdot\bnabla\bu_{n},\bphi\right)\bigr|+\bigl|\left(\bu\bcdot\left(\bnabla\bu_{n}-\bnabla\bu\right),\bphi\right)\bigr|\\
 & \leq\bigl|\left(\left(\bu_{n}-\bu\right)\bcdot\bnabla\bu_{n},\bphi\right)\bigr|+\bigl|\left(\bu\bcdot\bnabla\bphi,\bv_{n}-\bv\right)\bigr|\\
 & \leq\bigl|\left(\left(\bv_{n}-\bv\right)\bcdot\bnabla\bu_{n},\bphi\right)\bigr|+\bigl|\left(\bu\bcdot\bnabla\bphi,\bv_{n}-\bv\right)\bigr|\\
 & \leq\left(\left\Vert \bnabla\bu_{n}\right\Vert _{2}\left\Vert \bphi\right\Vert _{4}+\bigl\Vert\bu;L^{4}(\Omega_{m})\bigr\Vert\left\Vert \bnabla\bphi\right\Vert _{2}\right)\bigl\Vert\bv_{n}-\bv;L^{4}(\Omega_{m})\bigr\Vert\,.
\end{align*}
By \lemref{local-D0}, the sequence $\left(\bv_{n}\right)_{n\in\mathbb{N}}$
is bounded in $W^{1,2}(\Omega_{m})$ and by  \lemref{compact-embedding},
there exists a subsequence also denoted by $\left(\bv_{n}\right)_{n\in\mathbb{N}}$
which converges strongly to $\bv$ in $L^{4}(\Omega_{m})$. Therefore,
\[
\lim_{n\to\infty}\bigl(\bu_{n}\bcdot\bnabla\bu_{n},\bphi\bigr)=\bigl(\bu\bcdot\bnabla\bu,\bphi\bigr)\,.
\]
and $\bu$ satisfies \eqref{weak-solution-short}.
\end{enumerate}
\end{proof}

\section{Regularity of weak solutions}

A weak solution\index{Weak solutions!regularity}\index{Regularity of weak solutions}\index{Navier-Stokes equations!regularity}
is a vector field $\bu\in D_{\sigma}^{1,2}(\Omega)$ that satisfies
the Navier-Stokes equations in a variational way and therefore a weak
solution is defined even for low regularity on the data $\bu^{*}$
and $\bff$ and does not necessarily satisfies the equations in a
classical way. By assuming more regularity on the data, any weak solution
becomes more regular and satisfies the Navier-Stokes equations in
the classical way. The following theorem states this fact:
\begin{thm}[{\citealp[Theorems IX.5.1, IX.5.2 and X.1.1]{Galdi-IntroductiontoMathematical2011}}]
Let $\bu$ be a weak solution according to \defref{weak-solution}.
The following properties hold:
\begin{enumerate}
\item For $m\ge1$ if $\bff\in W_{loc}^{m,2}(\Omega)$, then $\bu\in W_{loc}^{m+2,2}(\Omega)$
and $p\in W_{loc}^{m+1,2}(\Omega)$.
\item If $\Omega$ is a smooth domain, $\bu^{*}\in C^{\infty}(\partial\Omega)$
and $\bff\in C^{\infty}(\widebar{\Omega})$, then $\bu,p\in C^{\infty}(\widebar{\Omega})$.
\end{enumerate}
\end{thm}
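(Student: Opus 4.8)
The plan is to regard the Navier--Stokes system satisfied by $\bu$ as a steady Stokes system with forcing $\bff+\bu\bcdot\bnabla\bu$ and to run a bootstrap argument. The first step is to recover the pressure. Since $\bu\in D_{\sigma}^{1,2}(\Omega)$ satisfies $(\bnabla\bu,\bnabla\bphi)+(\bu\bcdot\bnabla\bu,\bphi)+(\bff,\bphi)=0$ for all $\bphi\in C_{0,\sigma}^{\infty}(\Omega)$, the distribution $\Delta\bu-\bu\bcdot\bnabla\bu-\bff$ annihilates every solenoidal test field; by the local version of De~Rham's lemma there is a scalar $p\in L_{loc}^{2}(\Omega)$, unique up to a constant, with $\bnabla p=\Delta\bu-\bu\bcdot\bnabla\bu-\bff$ in the sense of distributions, so that $(\bu,p)$ is a weak solution of $\Delta\bu-\bnabla p=\bu\bcdot\bnabla\bu+\bff$, $\bnabla\bcdot\bu=0$.

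Next comes the interior bootstrap. Fix concentric balls $B_{1}\Subset B_{2}\Subset\Omega$. On $B_{2}$ the weak solution lies in $W^{1,2}$, hence by Sobolev embedding $\bu\in L^{q}(B_{2})$ for every $q<\infty$ if $n=2$ and for $q\le6$ if $n=3$, so the quadratic term $\bu\bcdot\bnabla\bu=\bnabla\bcdot(\bu\otimes\bu)$, with $\bu\otimes\bu\in L^{q/2}$ and $\bnabla\bu\in L^{2}$, lies in $L^{r}(B_{2})$ for some $r>1$. Applying the interior $L^{r}$-regularity theory of the Stokes operator on a slightly smaller ball gives $\bu\in W_{loc}^{2,r}$ and $p\in W_{loc}^{1,r}$; this raises the integrability of $\bu$ and $\bnabla\bu$, hence of $\bu\bcdot\bnabla\bu$, and finitely many repetitions bring the forcing $\bff+\bu\bcdot\bnabla\bu$ into $L_{loc}^{2}$, so that $\bu\in W_{loc}^{2,2}$ and $p\in W_{loc}^{1,2}$. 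From this base case one induces in the $W^{k,2}$ scale: if $\bu\in W_{loc}^{k+2,2}$ and $\bff\in W_{loc}^{k,2}$ with $k\ge0$, then $W^{k+2,2}$ is locally closed under products (since $k+2>n/2$), so $\bu\otimes\bu\in W_{loc}^{k+2,2}$ and $\bff+\bu\bcdot\bnabla\bu\in W_{loc}^{k+1,2}$; the $W^{k+2,2}$ interior estimates for the Stokes system then upgrade the solution to $\bu\in W_{loc}^{k+3,2}$, $p\in W_{loc}^{k+2,2}$. Induction on $k$ proves part~(1).

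For part~(2) one repeats the same scheme up to the boundary. Near a boundary point, flatten $\partial\Omega$ by a smooth chart and lift $\bu^{*}$ by a smooth solenoidal extension, which is possible because $\partial\Omega$ and $\bu^{*}$ are smooth; the interior $L^{r}$- and $W^{k,2}$-estimates for the Stokes operator are then replaced by the corresponding boundary estimates for the Stokes--Dirichlet problem on a half-ball, and the same induction yields $\bu\in W^{k,2}(\Omega')$ for every $k$ on each relatively compact piece $\Omega'$ of $\widebar{\Omega}$. Sobolev embedding then gives $\bu,p\in C^{\infty}(\widebar{\Omega})$. Throughout, the linear Stokes estimates used are the standard ones collected for instance in \citet{Galdi-IntroductiontoMathematical2011}.

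The step I expect to be the main obstacle is the opening of the interior bootstrap in three dimensions: there $\bu\bcdot\bnabla\bu$ is only marginally better than $L^{1}$, so the exponents must be chosen carefully so that each Stokes estimate genuinely gains two derivatives and the integrability strictly improves, and one must keep track of the successive shrinkings of the balls so that the finitely many steps still cover a prescribed interior (or boundary) neighbourhood. In two dimensions the stronger Sobolev embedding makes this first step painless and the argument is noticeably shorter.
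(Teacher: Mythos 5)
The paper gives no proof of this theorem---it is quoted verbatim from \citet{Galdi-IntroductiontoMathematical2011}---and your bootstrap is exactly the argument behind the cited Theorems IX.5.1, IX.5.2 and X.1.1: De~Rham's lemma to recover $p$, an integrability bootstrap through interior (resp.\ boundary) Stokes estimates to reach $W^{2,2}_{loc}$, then induction in the $W^{k,2}$ scale using that $W^{k+2,2}$ is locally an algebra for $n\le 3$. Your proposal is correct; the only cosmetic point is that the inductive claim $\bff+\bu\bcdot\bnabla\bu\in W^{k+1,2}_{loc}$ implicitly uses $\bff\in W^{k+1,2}_{loc}$, which is available only while $k+1\le m$, i.e.\ exactly until the induction is meant to terminate at $\bu\in W^{m+2,2}_{loc}$.
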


\section{\label{sec:weal-solution-limit}Limit of the velocity at large distances}

We start with two lemmas \citep[\S 1.4]{Ladyzhenskaya-MathematicalTheory1963}
on the behavior at infinity of functions in $D_{0}^{1,2}(\Omega)$,
with $\Omega$ unbounded. Due to the presence of a logarithm if $n=2$,
the discussion of the validity of
\begin{equation}
\lim_{\left|\bx\right|\to\infty}\bu=\bu_{\infty}\,,\label{eq:weak-limit}
\end{equation}
for a weak solution depends drastically on the dimension.
\begin{lem}[{\citealp[Theorem II.6.1]{Galdi-IntroductiontoMathematical2011}}]
\label{lem:hardy-3d}For $n\geq3$, if $\Omega\subset\mathbb{R}^{n}$
is an unbounded Lipschitz domain, then for all $\bu\in D_{0}^{1,2}(\Omega)$,\index{Hardy inequality!in three dimensions}\index{Inequality!Hardy!in three dimensions}
\[
\left\Vert \frac{\bu}{\left|\bx\right|}\right\Vert _{2}\leq\frac{2}{n-2}\left\Vert \bnabla\bu\right\Vert _{2}\,.
\]
\end{lem}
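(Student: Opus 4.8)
The plan is to reduce the inequality to the case of smooth, compactly supported fields and then exploit the elementary divergence identity $\bnabla\bcdot\bigl(\bx/|\bx|^2\bigr)=(n-2)/|\bx|^2$ on $\mathbb{R}^n\setminus\{\bzero\}$, combined with two applications of the Cauchy--Schwarz inequality.

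First I would prove the inequality for $\bu\in C_0^\infty(\Omega)$, extended by zero to an element of $C_0^\infty(\mathbb{R}^n)$ still denoted $\bu$. Because $n\ge3$, the weight $|\bx|^{-2}$ is locally integrable, so $\int_{\mathbb{R}^n}|\bu|^2/|\bx|^2<\infty$. Then I would integrate the identity $|\bu|^2/|\bx|^2=(n-2)^{-1}|\bu|^2\,\bnabla\bcdot(\bx/|\bx|^2)$ over $\mathbb{R}^n\setminus B(\bzero,\varepsilon)$ and integrate by parts. There is no boundary contribution from infinity since $\bu$ has compact support, and the boundary integral over $\partial B(\bzero,\varepsilon)$ equals $-\varepsilon^{-1}\int_{\partial B(\bzero,\varepsilon)}|\bu|^2$, which is $O(\varepsilon^{n-2})$ and hence vanishes as $\varepsilon\to0$. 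Passing to the limit (by dominated convergence, using $|\bx|^{-1}\in L^1_{\mathrm{loc}}$ and that $\bnabla(|\bu|^2)$ is bounded with compact support) yields
\[
\int_{\mathbb{R}^n}\frac{|\bu|^2}{|\bx|^2}=-\frac{2}{n-2}\int_{\mathbb{R}^n}\frac{\bu\bcdot(\bx\bcdot\bnabla)\bu}{|\bx|^2}\,,
\]
where $(\bx\bcdot\bnabla)\bu$ denotes the vector with components $\bx\bcdot\bnabla u_i$. Since $|\bu\bcdot(\bx\bcdot\bnabla)\bu|\le|\bu|\,|\bx|\,|\bnabla\bu|$ pointwise by Cauchy--Schwarz, a further Cauchy--Schwarz inequality in $L^2$ gives
\[
\int_{\mathbb{R}^n}\frac{|\bu|^2}{|\bx|^2}\le\frac{2}{n-2}\,\biggl\Vert\frac{\bu}{|\bx|}\biggr\Vert_2\left\Vert\bnabla\bu\right\Vert_2\,,
\]
and dividing by the finite quantity $\bigl\Vert\bu/|\bx|\bigr\Vert_2$ (the inequality being trivial when it vanishes) proves the claim for $\bu\in C_0^\infty(\Omega)$.

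The general case then follows by density. Given $\bu\in D_0^{1,2}(\Omega)$, pick $\bu_k\in C_0^\infty(\Omega)$ with $\bnabla\bu_k\to\bnabla\bu$ in $L^2(\Omega)$; applying the inequality just proved to the differences $\bu_k-\bu_j\in C_0^\infty(\Omega)$ shows that $(\bu_k)_{k\in\mathbb{N}}$ is a Cauchy sequence in $L^2\bigl(\Omega,|\bx|^{-2}\,\rd\bx\bigr)$ and thus converges there. By the Sobolev embedding established in the preceding lemma, $\left\Vert\bu_k-\bu\right\Vert_{2n/(n-2)}\le C\left\Vert\bnabla(\bu_k-\bu)\right\Vert_2\to0$, so a subsequence converges to $\bu$ almost everywhere; hence the weighted-$L^2$ limit coincides with $\bu$, and the inequality for the $\bu_k$ passes to the limit.

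The argument involves no essential difficulty; the two points that need a little care are the excision of the origin --- necessary because $\bzero$ may lie in $\widebar\Omega$, and legitimate precisely because the spherical boundary term scales like $\varepsilon^{n-1}\cdot\varepsilon^{-1}\to0$ for $n\ge3$ --- and the identification, in the density step, of the weighted-$L^2$ limit with $\bu$, which is exactly what the already-available Sobolev embedding provides.
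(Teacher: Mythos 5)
Your proof is correct and follows essentially the same route as the paper: reduction to $C_{0}^{\infty}$ fields, the divergence identity $\bnabla\bcdot\bigl(\bx/\left|\bx\right|^{2}\bigr)=(n-2)/\left|\bx\right|^{2}$, integration by parts, and two applications of the Cauchy--Schwarz inequality. The only difference is that you spell out the excision of the origin and the density/identification step, which the paper's proof leaves implicit.
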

\begin{proof}
It suffices to prove the inequality for a scalar field $u\in C_{0}^{\infty}(\mathbb{R}^{n})$.
Since
\[
\bnabla\bcdot\left(\frac{\bx}{\left|\bx\right|^{2}}\right)=\frac{n-2}{\left|\bx\right|^{2}}\,,
\]
we have by integrating by parts,
\[
\int_{\mathbb{R}^{n}}\frac{u^{2}}{\left|\bx\right|^{2}}=-\frac{1}{n-2}\int_{\mathbb{R}^{n}}\frac{\bx}{\left|\bx\right|^{2}}\bcdot\bnabla\left(u^{2}\right)=-\frac{2}{n-2}\int_{\mathbb{R}^{n}}\frac{\bx}{\left|\bx\right|^{2}}\bcdot\bnabla u\, u\,.
\]
Then by Schwarz inequality, we obtain
\[
\left\Vert \frac{u}{\left|\bx\right|}\right\Vert _{2}^{2}\leq\frac{2}{n-2}\left\Vert \frac{\bx}{\left|\bx\right|^{2}}u\right\Vert _{2}\left\Vert \bnabla u\right\Vert _{2}\leq\frac{2}{n-2}\left\Vert \frac{u}{\left|\bx\right|}\right\Vert _{2}\left\Vert \bnabla u\right\Vert _{2}\,,
\]
and the inequality is proved.\end{proof}
\begin{lem}[{\citealp[Theorem II.6.1]{Galdi-IntroductiontoMathematical2011}}]
\label{lem:hardy-2d-log}If $\Omega\subset\mathbb{R}^{2}$ is an
exterior Lipschitz domain such that $B(\bzero,\varepsilon)\subset\mathbb{R}^{2}\setminus\Omega$
for some $\varepsilon>0$, then for all $\bu\in D_{0}^{1,2}(\Omega)$,\index{Hardy inequality!in two dimensions}\index{Inequality!Hardy!in two dimensions}
\[
\left\Vert \frac{\bu}{\left|\bx\right|\log(\left|\bx\right|/\varepsilon)}\right\Vert _{2}\leq2\left\Vert \bnabla\bu\right\Vert _{2}\,.
\]
\end{lem}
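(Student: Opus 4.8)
The plan is to imitate the proof of \lemref{hardy-3d}, replacing the radial vector field $\bx/\left|\bx\right|^{2}$ by one tailored to the logarithmic weight. First I would reduce to a scalar field $u\in C_{0}^{\infty}(\Omega)$: the inequality is then applied component by component to $\bu$, and the general case $\bu\in D_{0}^{1,2}(\Omega)$ follows by density since, as $\widebar{\Omega}\neq\mathbb{R}^{2}$, \lemref{local-D0} identifies elements of $D_{0}^{1,2}(\Omega)$ with honest functions. Moreover, since $B(\bzero,\varepsilon)\subset\mathbb{R}^{2}\setminus\Omega$ and $\Omega$ is open, in fact $\Omega\subset\left\{ \left|\bx\right|>\varepsilon\right\}$, so extending $u$ by zero I may even assume $\Omega=\left\{ \left|\bx\right|>\varepsilon\right\}$; in particular $\supp u$ is a compact subset of that open region and hence lies at a positive distance from the circle $\left|\bx\right|=\varepsilon$.

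The key observation is that the vector field
\[
\boldsymbol{W}(\bx)=\bnabla\bigl(-\log\log(\left|\bx\right|/\varepsilon)\bigr)=-\frac{\bx}{\left|\bx\right|^{2}\log(\left|\bx\right|/\varepsilon)}\,,
\]
which is smooth on $\left\{ \left|\bx\right|>\varepsilon\right\}$, satisfies
\[
\bnabla\bcdot\boldsymbol{W}=\frac{1}{\left|\bx\right|^{2}\log^{2}(\left|\bx\right|/\varepsilon)}\,,
\]
the point being that in the planar Laplacian the $1/r$ contribution exactly cancels the $\left|\bx\right|^{-2}\log^{-1}$ term produced by differentiating the radial factor. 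Since $u$ has compact support away from $\left|\bx\right|=\varepsilon$, all the integrals below converge and integration by parts produces no boundary term:
\[
\int_{\Omega}\frac{u^{2}}{\left|\bx\right|^{2}\log^{2}(\left|\bx\right|/\varepsilon)}=\int_{\Omega}u^{2}\,\bnabla\bcdot\boldsymbol{W}=-2\int_{\Omega}u\,\boldsymbol{W}\bcdot\bnabla u=2\int_{\Omega}\frac{u\,\bx\bcdot\bnabla u}{\left|\bx\right|^{2}\log(\left|\bx\right|/\varepsilon)}\,.
\]
Bounding $\left|\bx\bcdot\bnabla u\right|\leq\left|\bx\right|\left|\bnabla u\right|$ and applying the Schwarz inequality gives
\[
\int_{\Omega}\frac{u^{2}}{\left|\bx\right|^{2}\log^{2}(\left|\bx\right|/\varepsilon)}\leq2\left\Vert \frac{u}{\left|\bx\right|\log(\left|\bx\right|/\varepsilon)}\right\Vert _{2}\left\Vert \bnabla u\right\Vert _{2}\,,
\]
and dividing by the (finite) first factor yields the claim for $u\in C_{0}^{\infty}(\Omega)$.

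The only point requiring some care is the passage to $\bu\in D_{0}^{1,2}(\Omega)$, since a priori the left-hand side could be infinite there. Taking $u_{n}\in C_{0}^{\infty}(\Omega)$ with $\bnabla u_{n}\to\bnabla u$ in $L^{2}(\Omega)$, the inequality applied to the differences $u_{n}-u_{m}$ shows that $\bigl(u_{n}/(\left|\bx\right|\log(\left|\bx\right|/\varepsilon))\bigr)_{n}$ is Cauchy in $L^{2}(\Omega)$, hence converges to some $v\in L^{2}(\Omega)$ with $\left\Vert v\right\Vert _{2}\leq2\left\Vert \bnabla u\right\Vert _{2}$; and since, by \lemref{local-D0}, $u_{n}\to u$ in $L_{loc}^{2}(\Omega)$, a subsequence converges a.e., so multiplying by the fixed a.e.-finite weight identifies $v=u/(\left|\bx\right|\log(\left|\bx\right|/\varepsilon))$ almost everywhere. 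This density step is the main (though routine) obstacle; the computation itself is elementary once the right vector field $\boldsymbol{W}$ has been spotted.
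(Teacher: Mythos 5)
Your proof is correct and follows essentially the same route as the paper: the same vector field $-\bx/(\left|\bx\right|^{2}\log(\left|\bx\right|/\varepsilon))$, the same divergence identity, integration by parts, and the Schwarz inequality. The extra care you take with the density step (Cauchy sequence in the weighted $L^{2}$ norm plus a.e.\ identification via \lemref{local-D0}) is a welcome tightening of the paper's one-line reduction to $u\in C_{0}^{\infty}$, but it does not change the argument.
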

\begin{proof}
Again, it is sufficient to prove the inequality for the scalar field
$u\in C_{0}^{\infty}(\mathbb{R}^{2}\setminus\widebar B(\bzero,\varepsilon))$.
Since
\[
\bnabla\bcdot\left(\frac{\bx}{\left|\bx\right|^{2}\log(\left|\bx\right|/\varepsilon)}\right)=-\frac{1}{\left|\bx\right|^{2}\log^{2}(\left|\bx\right|/\varepsilon)}\,,
\]
by integrating by parts,
\[
\int_{\mathbb{R}^{2}}\frac{u^{2}}{\left|\bx\right|^{2}\log^{2}(\left|\bx\right|/\varepsilon)}=\int_{\mathbb{R}^{2}}\frac{\bx}{\left|\bx\right|^{2}\log(\left|\bx\right|/\varepsilon)}\bcdot\bnabla\left(u^{2}\right)=\int_{\mathbb{R}^{2}}\frac{\bx}{\left|\bx\right|^{2}\log(\left|\bx\right|/\varepsilon)}\bcdot\bnabla u\, u\,.
\]
Then the lemma is proven by using the Schwartz inequality,
\[
\left\Vert \frac{u}{\left|\bx\right|\log(\left|\bx\right|/\varepsilon)}\right\Vert _{2}^{2}\leq2\left\Vert \frac{\bx}{\left|\bx\right|^{2}\log(\left|\bx\right|/\varepsilon)}u\right\Vert _{2}\left\Vert \bnabla u\right\Vert _{2}\leq2\left\Vert \frac{u}{\left|\bx\right|\log(\left|\bx\right|/\varepsilon)}\right\Vert _{2}\left\Vert \bnabla u\right\Vert _{2}\,.
\]

\end{proof}

\subsection{Three dimensions}

By using \lemref{hardy-3d}, we can now prove that a function in $D_{0}^{1,2}(\Omega)$
tends to zero at infinity. In what follows, we set $B_{r}=B(\bzero,r)$.\index{Weak solutions!limit of the velocity}\index{Limit of the velocity}\index{Navier-Stokes equations!limit of the velocity}
\begin{lem}
\label{lem:limit-3d}For $n=3$, if $\bu\in D_{0}^{1,2}(\Omega)$,
then
\[
\int_{S^{2}}\left|\bu\right|^{2}=O(\left|\bx\right|^{-1})\,,
\]
where $S^{2}\subset\mathbb{R}^{3}$ is the sphere of unit radius,
or more precisely
\[
\frac{1}{\left|\partial B_{r}\right|}\int_{\partial B_{r}}\left|\bu\right|^{2}=O(r^{-1})\,.
\]
\end{lem}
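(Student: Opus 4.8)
The plan is to combine the three-dimensional Hardy inequality of \lemref{hardy-3d} with a Gilbarg--Weinberger type differential-inequality argument for the spherical means of $\bu$. Since $\Omega$ is an unbounded Lipschitz domain, $\mathbb{R}^{3}\setminus\Omega$ is compact, so there is $R_{0}>0$ with $\left\{ \left|\bx\right|>R_{0}\right\} \subset\Omega$; as the statement concerns large $\left|\bx\right|$, only this exterior part is relevant, and $\partial B_{r}\subset\Omega$ for $r>R_{0}$. Because the final estimate will depend only on $\left\Vert \bnabla\bu\right\Vert _{2}$, I would first prove it for $\bu\in C_{0}^{\infty}(\Omega)$ and then pass to the limit, using that $C_{0}^{\infty}(\Omega)$ is dense in $D_{0}^{1,2}(\Omega)$ by definition and that $\bu\mapsto\bu/\left|\bx\right|$ is continuous from $D_{0}^{1,2}(\Omega)$ to $L^{2}(\Omega)$ by \lemref{hardy-3d}; writing the latter convergence in polar coordinates gives, along a subsequence, almost-everywhere convergence of the spherical means, which transfers the bound to the limiting function.

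For smooth $\bu$, set
\[
g(r)=\int_{S^{2}}\left|\bu(r\bomega)\right|^{2}\rd\bomega\,,\qquad k(r)=\int_{S^{2}}\left|\bnabla\bu(r\bomega)\right|^{2}\rd\bomega\,,
\]
so that $\frac{1}{\left|\partial B_{r}\right|}\int_{\partial B_{r}}\left|\bu\right|^{2}=\frac{1}{4\pi}g(r)$, and it suffices to show $g(r)=O(r^{-1})$. Writing the integrals in polar coordinates, \lemref{hardy-3d} gives $\int_{R_{0}}^{\infty}g(r)\,\rd r\leq\int_{\Omega}\left|\bu\right|^{2}/\left|\bx\right|^{2}\leq4\left\Vert \bnabla\bu\right\Vert _{2}^{2}<\infty$, so $g\in L^{1}(R_{0},\infty)$, and similarly $\int_{R_{0}}^{\infty}r^{2}k(r)\,\rd r\leq\left\Vert \bnabla\bu\right\Vert _{2}^{2}$. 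Differentiating $g$ under the integral sign and using $\left|\partial_{r}\bu\right|=\left|\bomega\bcdot\bnabla\bu\right|\leq\left|\bnabla\bu\right|$ together with the Cauchy--Schwarz inequality on $S^{2}$ yields $\left|g'(r)\right|\leq2\,g(r)^{1/2}k(r)^{1/2}$; hence, with $m=g^{1/2}$, the pointwise bound $\left|m'(r)\right|\leq k(r)^{1/2}$.

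Now for $R_{0}\leq r_{1}<r_{2}$, the Cauchy--Schwarz inequality with the splitting $k^{1/2}=(s^{2}k)^{1/2}s^{-1}$ gives
\[
\left|m(r_{2})-m(r_{1})\right|\leq\int_{r_{1}}^{r_{2}}k(s)^{1/2}\,\rd s\leq\left(\int_{r_{1}}^{\infty}s^{2}k(s)\,\rd s\right)^{1/2}\left(\int_{r_{1}}^{\infty}\frac{\rd s}{s^{2}}\right)^{1/2}\leq\frac{\left\Vert \bnabla\bu\right\Vert _{2}}{\sqrt{r_{1}}}\,.
\]
By the Cauchy criterion $m(r)$ has a finite limit $L$ as $r\to\infty$; since $g=m^{2}\in L^{1}(R_{0},\infty)$ necessarily $L=0$, and letting $r_{2}\to\infty$ above yields $m(r_{1})\leq\left\Vert \bnabla\bu\right\Vert _{2}\,r_{1}^{-1/2}$, that is $g(r)\leq\left\Vert \bnabla\bu\right\Vert _{2}^{2}\,r^{-1}$ for all $r\geq R_{0}$. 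This is the claim for smooth $\bu$, and the density argument above carries it over to every $\bu\in D_{0}^{1,2}(\Omega)$.

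I expect the main obstacle to be the technical last point: justifying the pointwise-on-spheres manipulations for a general $\bu\in D_{0}^{1,2}(\Omega)$, which a priori is only an equivalence class of locally integrable functions. The uniformity of the bound in $\left\Vert \bnabla\bu\right\Vert _{2}$ is exactly what makes this manageable, but one has to be a little careful in extracting the almost-everywhere convergence of the spherical integrals of the smooth approximants. The genuinely substantive step, however, is the differential inequality $\left|g'\right|\leq2g^{1/2}k^{1/2}$ together with the weighted Cauchy--Schwarz estimate, which is where the decay rate $r^{-1}$ is produced.
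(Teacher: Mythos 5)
Your argument is correct, but it is not the route the paper takes. The paper's proof is purely an interpolation/scaling argument: it applies the trace theorem on a fixed reference ball $B_{R}$, rescales it to $\partial B_{r}$ to get $r^{-2}\bigl\Vert\bu;L^{2}(\partial B_{r})\bigr\Vert^{2}\lesssim r^{-1}\bigl[\bigl\Vert\bu/\left|\bx\right|;L^{2}(B_{r})\bigr\Vert^{2}+\bigl\Vert\bnabla\bu;L^{2}(B_{r})\bigr\Vert^{2}\bigr]$, and then absorbs the first term via \lemref{hardy-3d}. You instead run the classical Gilbarg--Weinberger computation on the spherical means: the differential inequality $\left|g'\right|\leq2g^{1/2}k^{1/2}$, the weighted Cauchy--Schwarz step $\int_{r}^{\infty}k^{1/2}\leq\left(\int s^{2}k\right)^{1/2}r^{-1/2}$, and the identification of the limit of $g^{1/2}$ as zero (which is where you actually need \lemref{hardy-3d}, to get $g\in L^{1}$). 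Both proofs rest on the same two ingredients --- the Hardy inequality and the finiteness of the Dirichlet integral --- and both yield the same rate; yours is more elementary in that it avoids the trace theorem and produces the explicit constant $\left\Vert\bnabla\bu\right\Vert_{2}^{2}$, while the paper's avoids any pointwise-in-$r$ differentiation and so transfers to $D_{0}^{1,2}$ with less fuss about representatives. Two small points to tidy in yours: where $g$ vanishes, $m=g^{1/2}$ need not be differentiable, so the bound $\left|m(r_{2})-m(r_{1})\right|\leq\int_{r_{1}}^{r_{2}}k^{1/2}$ should be obtained via $m_{\epsilon}=(g+\epsilon)^{1/2}$ and $\epsilon\to0$; and the final estimate is then only asserted for almost every $r$ after the density passage, which is all the statement can mean for an element of $D_{0}^{1,2}(\Omega)$ anyway.
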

\begin{proof}
There exists $R>0$ such that $\mathbb{R}^{3}\setminus\Omega\subset B_{R}$.
For $r\geq1$. By the trace theorem in $B_{R}$, there exists $C>0$
such that for all $\bu\in W^{1,2}(B_{R})$,
\[
\bigl\Vert\bu;L^{2}(\partial B_{R})\bigr\Vert^{2}\leq C\left(\bigl\Vert\bu;L^{2}(B_{R})\bigr\Vert^{2}+\bigl\Vert\bnabla\bu;L^{2}(B_{R})\bigr\Vert^{2}\right)\,.
\]
By a scaling argument, we have, for all $r\geq R$, 
\begin{align*}
\frac{R^{2}}{r^{2}}\bigl\Vert\bu;L^{2}(\partial B_{r})\bigr\Vert^{2} & \leq\frac{CR^{3}}{r^{3}}\bigl\Vert\bu;L^{2}(B_{r})\bigr\Vert^{2}+\frac{CR}{r}\bigl\Vert\bnabla\bu;L^{2}(B_{r})\bigr\Vert^{2}\\
 & \leq\frac{CR\left(1+R^{2}\right)}{r}\left[\bigl\Vert\bu/\left|\bx\right|;L^{2}(B_{r})\bigr\Vert^{2}+\bigl\Vert\bnabla\bu;L^{2}(B_{r})\bigr\Vert^{2}\right]\,.
\end{align*}
By using \lemref{hardy-3d}, we have for some $C>0$ independent of
$r$,
\[
\frac{1}{r^{2}}\bigl\Vert\bu;L^{2}(\partial B_{r})\bigr\Vert^{2}\leq\frac{C}{r}\bigl\Vert\bnabla\bu;L^{2}(\Omega)\bigr\Vert^{2}\,.
\]
Since $\left|\partial B_{r}\right|=4\pi r^{2}$, this completes the
proof.
\end{proof}
By applying this lemma to the weak solution constructed in \secref{existence-weak},
we obtain its behavior at infinity:
\begin{prop}
Let the hypothesis of \thmref{existence-weak-solutions} be satisfied,
so that there exists a weak solution $\bu\in D_{0,\sigma}^{1,2}$.
In case $\Omega\subset\mathbb{R}^{3}$ is unbounded, we have \eqref{weak-limit}
in the following sense
\[
\int_{S^{2}}\left|\bu-\bu_{\infty}\right|^{2}=O(\left|\bx\right|^{-1})\,.
\]
\end{prop}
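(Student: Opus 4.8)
The plan is to reuse the decomposition of the weak solution from the proof of \thmref{existence-weak-solutions}: in the unbounded case the solution is constructed as $\bu = \bu_\infty + \ba + \bv$, where $\ba$ is the extension supplied by \lemref{extention} and $\bv \in D_{0,\sigma}^{1,2}(\Omega) \subset D_0^{1,2}(\Omega)$. Since \lemref{limit-3d} applies verbatim to any vector field in $D_0^{1,2}(\Omega)$, I would apply it to $\bv$ to obtain
\[
\frac{1}{\left|\partial B_r\right|}\int_{\partial B_r}\left|\bv\right|^2 = O(r^{-1})\,.
\]
This is the entire analytic content of the statement; what remains is only to check that the extension term $\ba$ does not spoil it.

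For the latter, recall from the proof of \lemref{extention} that $\ba = \ba_\Phi + \ba_\delta$, where $\ba_\delta \in W^{1,2}_\sigma(\Omega) \cap L^4(\Omega)$ is supported in a bounded tube around $\partial\Omega$, hence vanishes on $\partial B_r$ for every large $r$, and where for $n = 3$
\[
\ba_\Phi(\bx) = \frac{1}{4\pi}\sum_{i=1}^{k}\Phi_i\,\frac{\bx - \bx_i}{\left|\bx - \bx_i\right|^{3}}\,,
\]
so that $\left|\ba_\Phi(\bx)\right| \lesssim \left|\bx\right|^{-2}$ and therefore $\frac{1}{\left|\partial B_r\right|}\int_{\partial B_r}\left|\ba\right|^2 \lesssim r^{-4}$ for $r$ large. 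Applying the elementary inequality $\left|\ba + \bv\right|^2 \le 2\left|\ba\right|^2 + 2\left|\bv\right|^2$ to $\bu - \bu_\infty = \ba + \bv$ then yields
\[
\frac{1}{\left|\partial B_r\right|}\int_{\partial B_r}\left|\bu - \bu_\infty\right|^2 \le \frac{2}{\left|\partial B_r\right|}\int_{\partial B_r}\left(\left|\ba\right|^2 + \left|\bv\right|^2\right) = O(r^{-1})\,,
\]
which is exactly \eqref{weak-limit} understood in the integral sense, namely $\int_{S^2}\left|\bu - \bu_\infty\right|^2 = O(\left|\bx\right|^{-1})$.

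I do not expect any genuine obstacle here: all the work — the trace theorem on a fixed ball, a scaling argument, and the three-dimensional Hardy inequality \lemref{hardy-3d} — is already packaged inside \lemref{limit-3d}, and in three dimensions the extension $\ba$ is harmless because, away from the boundary, it decays like $\left|\bx\right|^{-2}$. The only point deserving a word of care is that the weak solution produced in \secref{existence-weak} really is of the form $\bu_\infty + \ba + \bv$ with $\bv \in D_{0,\sigma}^{1,2}(\Omega)$, which is immediate from that construction and is precisely what makes \lemref{limit-3d} applicable to $\bv$.
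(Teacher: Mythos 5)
Your proposal is correct and follows essentially the same route as the paper: decompose the weak solution as $\bu-\bu_{\infty}=\ba+\bv$, observe that $\ba$ consists of a compactly supported part plus a flux carrier decaying like $\left|\bx\right|^{-2}$, and apply \lemref{limit-3d} to $\bv\in D_{0,\sigma}^{1,2}(\Omega)$. The only difference is that you spell out the splitting $\ba=\ba_{\Phi}+\ba_{\delta}$ and the triangle inequality explicitly, which the paper leaves implicit.
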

\begin{proof}
The weak solution has the form $\bu-\bu_{\infty}=\ba+\bv$. By construction,
$\ba$ has one part of compact support, and one part carrying the
fluxes decaying like $\left|\bx\right|^{-2}$, so $\ba=O(\left|\bx\right|^{-2})$.
By applying \lemref{limit-3d} to $\bv\in D_{\sigma,0}^{1,2}(\Omega)$,
we obtain the claimed result.
\end{proof}

\subsection{Two dimensions}

In two dimensions,\index{Weak solutions!limit of the velocity}\index{Limit of the velocity}\index{Navier-Stokes equations!limit of the velocity}
the information contained in the space $D_{0}^{1,2}(\Omega)$ is not
sufficient to determine the limit of the velocity at infinity, mainly
due to the failure of \lemref{hardy-3d} for $n=2$. In fact a function
in $D_{\sigma,0}^{1,2}(\Omega)$ can even grow at infinity, as shown
by the following example. Therefore, the choice of $\bu_{\infty}$
is apparently completely lost during the construction of weak solutions.
\begin{example}
\label{expl:counter-example-hardy}Let $\Omega\subset\mathbb{R}^{2}$
be an unbounded Lipschitz domain. For $R>0$ such that $\mathbb{R}^{2}\setminus\Omega\subset B(\bzero,R)$,
let $\chi$ be a cut-off function such that $\chi(\bx)=0$ for $\left|\bx\right|\leq R$,
and $\chi(\bx)=1$ for $\left|\bx\right|\geq2R$. For $\nu\in\bigl[\frac{-1}{2};\frac{1}{2}\bigr)$,
the function $\bu=\bnabla\bwedge\left(\chi\psi\right)$, where
\[
\psi=-x_{2}\,\log^{\nu}\left|\bx\right|\,,
\]
satisfies $\bu\in D_{0,\sigma}^{1,2}(\Omega)$, $\bu/\left|\bx\right|\notin L^{2}(\Omega)$
and $\bu=O(\log^{\nu}\left|\bx\right|)$ at infinity.\end{example}
\begin{proof}
By construction, $\bu(\bx)=\bzero$ for $\left|\bx\right|\leq R$,
so in particular on $\partial\Omega$. For $\left|\bx\right|\geq2R$,
we have
\[
\bu=\log^{\nu}\left|\bx\right|\left(1+\nu\frac{x_{2}^{2}}{\left|\bx\right|^{2}}\frac{1}{\log\left|\bx\right|},-\nu\frac{x_{1}x_{2}}{\left|\bx\right|^{2}}\frac{1}{\log\left|\bx\right|}\right)\,,
\]
and
\[
\bnabla\bu=O\left(\frac{\log^{\nu}\left|\bx\right|}{\left|\bx\right|\log\left|\bx\right|}\right)\,.
\]
Since
\[
\frac{1}{s\log^{\alpha}s}\in L^{1}([2,\infty))\quad\Longleftrightarrow\quad\alpha>1\,,
\]
we obtain that
\begin{align*}
\bu/\left|\bx\right|\in L^{2}(\Omega)\quad & \Longleftrightarrow\quad\nu<\frac{-1}{2}\,,\\
\bnabla\bu\in L^{2}(\Omega)\quad & \Longleftrightarrow\quad\nu<\frac{1}{2}\,,
\end{align*}
and therefore, we obtained the desired behavior for $\nu\in\bigl[\frac{-1}{2};\frac{1}{2}\bigr)$.
\end{proof}
In two dimensions, the best known result concerning the behavior at
infinity is due to \citet{Gilbarg.Weinberger-AsymptoticPropertiesof1974,Gilbarg.Weinberger-Asymptoticpropertiesof1978}:
\begin{thm}[{\citealp[Theorem 3.3]{Galdi-StationaryNavier-Stokesproblem2004}}]
Let $(\bu,p)$ be a weak solution in an exterior domain $\Omega$
that contains an open ball. Let $L\in[0,\infty]$ be defined by

\[
L=\lim_{r\to\infty}\sup_{\theta\in[0,2\pi]}\left|\bu(r,\theta)\right|\,.
\]
If $L<\infty$, there exists $\boldsymbol{\xi}\in\mathbb{R}^{2}$
such that $\lim_{\left|\bx\right|\to\infty}\bu=\boldsymbol{\xi}$
in the following sense,
\[
\lim_{r\to\infty}\int_{0}^{2\pi}\left|\bu(r,\theta)-\boldsymbol{\xi}\right|^{2}\rd\theta=0\,,
\]
and if $L=\infty$, then
\[
\lim_{r\to\infty}\int_{0}^{2\pi}\left|\bu(r,\theta)\right|^{2}\rd\theta=\infty\,.
\]
Moreover, if $\bu^{*}=\bff=\bzero$, then $L<\infty$.
\end{thm}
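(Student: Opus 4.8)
The plan is to follow the approach of Gilbarg and Weinberger, which exploits the single piece of a priori information available, $\bnabla\bu\in L^2(\Omega)$. One may assume $\bff$ vanishes outside a ball $B_R:=B(\bzero,R)$ containing $\mathbb{R}^2\setminus\Omega$ --- the general case only requires $\bff$ to decay fast enough, and some regularity of $\bff$ is needed anyway for the pointwise quantities in the statement to make sense --- so that, by interior elliptic regularity, $(\bu,p)$ is smooth and solves the homogeneous steady Navier--Stokes equations in $\mathbb{R}^2\setminus B_R$. The natural auxiliary object is the scalar vorticity $\omega=\bnabla\bwedge\bu$, which lies in $L^2(\Omega)$ (since $\bnabla\bu\in L^2$) and satisfies in $\mathbb{R}^2\setminus B_R$ the drift--diffusion equation $\Delta\omega=\bu\bcdot\bnabla\omega=\bnabla\bcdot(\omega\bu)$, the last equality using $\bnabla\bcdot\bu=0$.

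The first step I would carry out is the Gilbarg--Weinberger growth estimate $\bu(\bx)=o\bigl((\log|\bx|)^{1/2}\bigr)$ as $|\bx|\to\infty$: writing the Dirichlet integral in polar coordinates, $\int_R^\infty\bigl(r\int_0^{2\pi}|\bnabla\bu(r,\theta)|^2\,\rd\theta\bigr)\rd r<\infty$, one controls the angular mean $\overline{\bu}(r)$ by integrating $\partial_r\bu$ along rays and applying the Cauchy--Schwarz inequality (this is where the logarithm appears), controls the angular oscillation by the one-dimensional Wirtinger inequality $\int_0^{2\pi}|\bu(r,\theta)-\overline{\bu}(r)|^2\,\rd\theta\le\int_0^{2\pi}|\partial_\theta\bu(r,\theta)|^2\,\rd\theta$, and uses incompressibility to get $\overline{u_r}(r)=\Phi/(2\pi r)\to0$, with $\Phi$ the flux. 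The second step is to feed this bound into the equation for $\omega$ to obtain decay of the vorticity: although the drift $\bu$ is only controlled up to a logarithmic power, weighted energy estimates on dyadic annuli --- which go through because $\bu$ is solenoidal, so the drift term can be integrated by parts and only ever appears multiplied by the decaying quantity $\omega$ --- followed by a Moser-type iteration give $\omega(\bx)\to0$ with an algebraic rate, and in particular $\int_0^{2\pi}|\omega(r,\theta)|^2\,\rd\theta\to0$ at a definite rate.

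The third step, and the technical heart, is the dichotomy itself. From $\int_R^\infty r\bigl(r\int_0^{2\pi}|\bnabla\bu|^2\,\rd\theta\bigr)\rd r<\infty$ one extracts a sequence $r_k\to\infty$ along which $r_k^2\int_0^{2\pi}|\bnabla\bu(r_k,\theta)|^2\,\rd\theta\to0$, hence, by the Wirtinger inequality, $\bu(r_k,\cdot)-\overline{\bu}(r_k)\to\bzero$ in $L^2(S^1)$; when $L<\infty$ the means $\overline{\bu}(r_k)$ are bounded, so along a further subsequence $\overline{\bu}(r_{k_j})\to\bxi$ for some $\bxi\in\mathbb{R}^2$ and $\bu(r_{k_j},\cdot)\to\bxi$ in $L^2(S^1)$. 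The crucial and most delicate point is to promote this to convergence \emph{for all} $r\to\infty$ with the \emph{same} limit, i.e. to rule out oscillation of the Cartesian angular mean $\overline{\bu}(r)$: this is where the momentum equation enters in an essential way, through the identity $\tfrac1r\tfrac{\rd}{\rd r}\bigl(r\,\overline{u_\theta}(r)\bigr)=\overline{\omega}(r)$ and the analogous relations for the first angular Fourier mode of $\bu$ (the combination that forms $\overline{\bu}(r)$ in Cartesian coordinates), with the pressure contributing only controllable terms after integration in $\theta$ and the vorticity decay of the second step being fed in. Once one knows that either $\overline{\bu}(r)$ converges to a finite $\bxi$ or $|\overline{\bu}(r)|\to\infty$, and that $\bu(r,\cdot)-\overline{\bu}(r)\to\bzero$ in $L^2(S^1)$, the statement follows: if $L<\infty$ then $\overline{\bu}(r)\to\bxi$ and $\int_0^{2\pi}|\bu(r,\theta)-\bxi|^2\,\rd\theta\to0$, while if $L=\infty$ then $|\overline{\bu}(r)|\to\infty$ and $\int_0^{2\pi}|\bu(r,\theta)|^2\,\rd\theta\to\infty$.

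Finally, the assertion that $\bu^*=\bff=\bzero$ forces $L<\infty$ is genuinely deeper and cannot be obtained from the Dirichlet integral alone; the plan here is to invoke Amick's argument, built around the total head pressure $\Pi=p+\tfrac12|\bu|^2$, which satisfies $\Delta\Pi=|\omega|^2-\bu^{\perp}\bcdot\bnabla\omega$ --- a right-hand side controllable once the decay of $\omega$ is in hand --- so that maximum-principle and Phragm\'en--Lindel\"of-type estimates can be run to bound $|\bu|$ and thereby exclude the alternative $L=\infty$. The main obstacle overall is therefore twofold: the no-oscillation statement for $\overline{\bu}(r)$ (the substance of the Gilbarg--Weinberger theorem), and, for the ``moreover'' clause, Amick's boundedness theorem, each of which requires a genuine amount of work.
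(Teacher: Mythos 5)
The paper does not prove this statement: it is quoted verbatim from \citet[Theorem 3.3]{Galdi-StationaryNavier-Stokesproblem2004} as a known result of \citet{Gilbarg.Weinberger-AsymptoticPropertiesof1974,Gilbarg.Weinberger-Asymptoticpropertiesof1978}, with the final clause due to \citet{Amick-Leraysproblemsteady1988}, so there is nothing in the text to compare your argument against line by line. Your outline faithfully reconstructs exactly the strategy of those cited works --- the $o(\sqrt{\log|\bx|})$ growth bound from the Dirichlet integral, vorticity decay via the drift--diffusion equation for $\omega$, the no-oscillation argument for the angular mean $\overline{\bu}(r)$ giving the dichotomy, and Amick's head-pressure argument for $L<\infty$ when $\bu^{*}=\bff=\bzero$ --- and it correctly flags the two genuinely hard steps; as a proof it remains a plan (those two steps are named rather than executed), but that is consistent with the paper itself deferring them to the literature.
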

However, the question of the finiteness of $L$ and of the coincidence
of $\boldsymbol{\xi}$ with the prescribed value $\bu_{\infty}$ is
still open. Unfortunately, the proof of the pointwise limit of $\bu$
obtained in \citet[Theorem 3.4]{Galdi-StationaryNavier-Stokesproblem2004}
is not correct due to a gap in the proof between (3.54) and (3.55)
when integrating over $\theta$.

In case the data are invariant under the central symmetry \eqref{central-symmetry},
we can prove that the velocity satisfies \eqref{weak-limit} with
$\bu_{\infty}=\bzero$. We first improve \lemref{hardy-2d-log} by
removing the logarithm. The following lemma improves the results of
\citet[Lemma 3.2]{Galdi-StationaryNavier-Stokesproblem2004} which
requires, in addition to the central symmetry, a reflection symmetry,
\emph{i.e.} the symmetry \eqref{symmetry-galdi}.
\begin{lem}
\label{lem:hardy-inequality-central-symmetry}Let $\Omega\subset\mathbb{R}^{2}$
by an exterior Lipschitz domain that is centrally symmetric and such
that there exists $\varepsilon>0$ with $B(\bzero,\varepsilon)\subset B$.
Then for any $\bu\in D^{1,2}(\Omega)$ that is centrally symmetric
\eqref{central-symmetry}, we have
\[
\left\Vert \frac{\bu}{\left|\bx\right|}\right\Vert _{2}\leq\frac{C}{\varepsilon}\left\Vert \bnabla\bu\right\Vert _{2}\,,
\]
where $C=C(\Omega)$.\end{lem}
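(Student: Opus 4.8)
The plan is to exploit the one consequence of central symmetry that matters here: the mean of $\bu$ over every circle centred at the origin vanishes. This is exactly the piece of information missing in the general two-dimensional situation, and it is what makes the (otherwise false) Hardy inequality true; so the argument should run parallel to \lemref{hardy-3d}, with the zero-mean property playing the role of the exponent $n-2>0$.

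First I would set up the geometry. Fix $R>0$ large enough that $\mathbb{R}^{2}\setminus\Omega\subset B(\bzero,R)$, that $\partial B(\bzero,R)$ meets $\partial\Omega$ transversally, and — this is possible because an exterior domain is connected, so $\Omega\cap B(\bzero,R)$ is connected once $R$ exceeds the circumradius of $\mathbb{R}^{2}\setminus\Omega$ — that $\Omega_{R}:=\Omega\cap B(\bzero,R)$ is a connected bounded Lipschitz domain, depending only on $\Omega$. By a routine mollification one may assume $\bu$ smooth, or one works directly with $\bu\in D^{1,2}(\Omega)$ using that for a.e. $r>R$ the restriction $\bu(r,\cdot)$ lies in $W^{1,2}(S^{1})$. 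Then split $\int_{\Omega}|\bu|^{2}/|\bx|^{2}=\int_{\Omega_{R}}|\bu|^{2}/|\bx|^{2}+\int_{|\bx|>R}|\bu|^{2}/|\bx|^{2}$ and treat the two pieces separately.

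For the outer piece the whole circle $\{|\bx|=r\}$ lies in $\Omega$ for $r>R$, and central symmetry, $\bu(r,\theta+\pi)=-\bu(r,\theta)$, gives $\int_{0}^{2\pi}\bu(r,\theta)\,\rd\theta=\bzero$. Wirtinger's inequality on $S^{1}$ then yields $\int_{0}^{2\pi}|\bu(r,\theta)|^{2}\,\rd\theta\leq\int_{0}^{2\pi}|\partial_{\theta}\bu(r,\theta)|^{2}\,\rd\theta$; since $|\partial_{\theta}\bu|^{2}\leq r^{2}|\bnabla\bu|^{2}$ pointwise (from $|\bnabla f|^{2}=|\partial_{r}f|^{2}+r^{-2}|\partial_{\theta}f|^{2}$ applied componentwise), integrating the resulting bound $\frac{1}{r}\int_{0}^{2\pi}|\bu|^{2}\,\rd\theta\leq r\int_{0}^{2\pi}|\bnabla\bu|^{2}\,\rd\theta$ over $r>R$ gives $\int_{|\bx|>R}|\bu|^{2}/|\bx|^{2}\leq\int_{\Omega}|\bnabla\bu|^{2}$. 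For the inner piece, $B(\bzero,\varepsilon)\subset\mathbb{R}^{2}\setminus\Omega$ forces $\Omega\subset\{|\bx|\geq\varepsilon\}$, so $1/|\bx|^{2}\leq1/\varepsilon^{2}$ on $\Omega_{R}$; moreover $\Omega_{R}$ is centrally symmetric and each component of $\bu$ is odd, hence $\int_{\Omega_{R}}\bu=\bzero$, and the Poincaré–Wirtinger inequality on the connected bounded Lipschitz domain $\Omega_{R}$ gives $\Vert\bu\Vert_{L^{2}(\Omega_{R})}\leq C(\Omega)\Vert\bnabla\bu\Vert_{L^{2}(\Omega_{R})}$. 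Combining, $\int_{\Omega_{R}}|\bu|^{2}/|\bx|^{2}\leq C(\Omega)\varepsilon^{-2}\int_{\Omega}|\bnabla\bu|^{2}$, and adding the two pieces (using $\varepsilon\leq R\leq C(\Omega)$ to absorb the constant $1$ from the outer estimate into $C(\Omega)\varepsilon^{-2}$) yields $\Vert\bu/|\bx|\Vert_{2}\leq C(\Omega)\varepsilon^{-1}\Vert\bnabla\bu\Vert_{2}$.

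The central-symmetry input — vanishing circular and volume averages of $\bu$ — is what does all the real work. The only genuinely delicate points are the approximation/slicing in polar coordinates that legitimises Wirtinger's inequality on almost every circle, and, on the near field, verifying that $R$ can be chosen so that $\Omega_{R}$ is a connected bounded Lipschitz domain, so that its Poincaré constant depends only on $\Omega$.
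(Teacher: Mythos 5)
Your proof is correct, but it takes a different route from the paper's. The paper decomposes $\Omega$ into a sequence of scaled annuli $S_{n}$, applies the Poincar\'e inequality on each annulus (the mean of $\bu$ vanishes there by central symmetry), observes that both $\Vert\bu/\left|\bx\right|\Vert_{2}$ and $\Vert\bnabla\bu\Vert_{2}$ are scale-invariant in two dimensions so that a single Poincar\'e constant works for all shells, and then sums over $n$. You instead split into a near field $\Omega\cap B(\bzero,R)$ and a far field $\left|\bx\right|>R$, and in the far field you slice in polar coordinates: central symmetry forces $\int_{0}^{2\pi}\bu(r,\theta)\,\rd\theta=\bzero$ on every circle, Wirtinger's inequality on $S^{1}$ gives $\int_{0}^{2\pi}\left|\bu\right|^{2}\leq\int_{0}^{2\pi}\left|\partial_{\theta}\bu\right|^{2}\leq r^{2}\int_{0}^{2\pi}\left|\bnabla\bu\right|^{2}$, and integrating in $r$ yields the far-field bound with constant $1$ and no summation; the near field is handled by a single Poincar\'e inequality on the centrally symmetric truncated domain together with $\left|\bx\right|\geq\varepsilon$. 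Your continuous slicing is in effect the limit of the paper's shell argument and buys a cleaner, essentially sharp far-field constant while isolating the precise symmetry input needed (vanishing circular averages, which is weaker than full central symmetry); the paper's shell decomposition is more robust, since it only requires a Poincar\'e inequality on annuli and so transfers to other symmetry classes or higher dimensions where the one-dimensional Wirtinger structure on circles is unavailable. The two technical points you flag (Fubini-type slicing to justify Wirtinger on almost every circle, and connectedness plus Lipschitz regularity of $\Omega\cap B(\bzero,R)$ for suitable large $R$ so that its Poincar\'e constant depends only on $\Omega$) are genuine but standard, and both resolve without difficulty.
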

\begin{proof}
First of all, since $\bu$ is centrally symmetric, we have
\[
\int_{\gamma}\bu=0\,,
\]
for $\gamma$ any centrally symmetric smooth curve and the average
of $\bu$ vanishes on any centrally symmetric bounded domain. Let
$B=\mathbb{R}^{2}\setminus\Omega$, so there exists $R>0$ such that
$B\subset B(\bzero,R)$. We denote by $B_{n}$ the ball $B_{n}=B(\bzero,nR)$
and by $S_{n}$ the shell
\begin{align*}
S_{0} & =B_{1}\setminus B\,, & S_{n} & =B_{2n}\setminus B_{n}\,,\quad\text{for}\quad n\geq1\,.
\end{align*}
By Poincaré inequality in $S_{n}$, there exists a constant $C_{n}>0$
such that
\[
\bigl\Vert\bu;L^{2}(S_{n})\bigr\Vert\leq C_{n}\bigl\Vert\bnabla\bu;L^{2}(S_{n})\bigr\Vert\,,
\]
for all $\bu\in W^{1,2}(\Omega)$ that are centrally symmetric, because
$\bar{\bu}_{S_{n}}=\bzero$. Since $\left|\bx\right|\geq\varepsilon$
by hypothesis, we obtain
\[
\bigl\Vert\bu/\left|\bx\right|;L^{2}(S_{n})\bigr\Vert\leq\frac{C_{n}}{\varepsilon}\bigl\Vert\bnabla\bu;L^{2}(S_{n})\bigr\Vert\,.
\]
But the domains $S_{n}$ are scaled versions of $S_{1}$, \emph{i.e.}
$S_{n}=nS_{1}$ for $n\geq1$ and therefore, since the two norms in
the previous inequality are scale invariant, we obtain that $C_{n}=C_{1}$,
for $n\geq1$. Now we have for $N\geq1$,
\begin{align*}
\bigl\Vert\bu/\left|\bx\right|;L^{2}(B_{2N}\setminus B)\bigr\Vert & =\sum_{n=0}^{N}\bigl\Vert\bu/\left|\bx\right|;L^{2}(S_{n})\bigr\Vert\leq\frac{1}{\varepsilon}\sum_{n=0}^{N}C_{n}\bigl\Vert\bnabla\bu;L^{2}(S_{n})\bigr\Vert\\
 & \leq\frac{C_{0}+C_{1}}{\varepsilon}\sum_{n=0}^{N}\bigl\Vert\bnabla\bu;L^{2}(S_{n})\bigr\Vert\leq\frac{C_{0}+C_{1}}{\varepsilon}\bigl\Vert\bnabla\bu;L^{2}(B_{2N}\setminus B)\bigr\Vert\,.
\end{align*}
Finally, by taking the limit $N\to\infty$, we have
\[
\bigl\Vert\bu/\left|\bx\right|\bigr\Vert_{2}\leq\frac{C}{\varepsilon}\bigl\Vert\bnabla\bu\bigr\Vert_{2}\,,
\]
for all $\bu\in D^{1,2}(\Omega)$ where $C=C_{0}+C_{1}$ depends on
$R$ only.
\end{proof}
Now, we can obtain the limit of a function $\bu\in D^{1,2}(\Omega)$
under the central symmetry.
\begin{lem}
\label{lem:limit-central-symmetry}If the hypothesis of \lemref{hardy-inequality-central-symmetry}
are satisfied, we have $\lim_{\left|\bx\right|\to\infty}\bu=\bzero$
in the following sense
\[
\lim_{r\to\infty}\int_{0}^{2\pi}\left|\bu(r,\theta)\right|^{2}\rd\theta=0\,,
\]
for all $\bu\in D^{1,2}(\Omega)$ that are invariant under the central
symmetry.\end{lem}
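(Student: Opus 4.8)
The plan is to reduce the statement to a one–dimensional estimate for
\[
g(r)=\int_0^{2\pi}\left|\bu(r,\theta)\right|^2\rd\theta\,,\qquad h(r)=\int_0^{2\pi}\left|\partial_r\bu(r,\theta)\right|^2\rd\theta\,,
\]
using two integrability facts. The first, $\bnabla\bu\in L^2(\Omega)$, holds by the very definition of $D^{1,2}(\Omega)$; the second, $\bu/\left|\bx\right|\in L^2(\Omega)$, is exactly the content of \lemref{hardy-inequality-central-symmetry}, and is the only place where the central symmetry of $\bu$ and $\Omega$ enters. Fixing $R>0$ with $\mathbb{R}^2\setminus\Omega\subset B_R$ and writing both facts in polar coordinates gives
\[
\int_R^\infty\frac{g(r)}{r}\,\rd r=\int_{\Omega\cap\{\left|\bx\right|>R\}}\frac{\left|\bu\right|^2}{\left|\bx\right|^2}\,\rd\bx<\infty\,,\qquad\int_R^\infty h(r)\,r\,\rd r\leq\int_{\Omega}\left|\bnabla\bu\right|^2<\infty\,,
\]
the second inequality because $\left|\partial_r\bu\right|\leq\left|\bnabla\bu\right|$.

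First I would prove an oscillation bound for $g^{1/2}$ on dyadic annuli. For $\bu\in D^{1,2}(\Omega)$ one has $\bu(r_2,\theta)-\bu(r_1,\theta)=\int_{r_1}^{r_2}\partial_r\bu(s,\theta)\,\rd s$ for a.e.\ $\theta$ and a.e.\ $r_1<r_2$, so Minkowski's integral inequality gives $\bigl|g(r_2)^{1/2}-g(r_1)^{1/2}\bigr|\leq\int_{r_1}^{r_2}h(s)^{1/2}\,\rd s$. Hence, for $r_1,r_2\in[R',2R']$ with $R'\geq R$, using Cauchy–Schwarz and $s\geq R'$ on this interval,
\[
\bigl|g(r_2)^{1/2}-g(r_1)^{1/2}\bigr|\leq\left(R'\int_{R'}^{2R'}h(s)\,\rd s\right)^{1/2}\leq\left(\int_{R'}^{2R'}h(s)\,s\,\rd s\right)^{1/2}\,,
\]
and the right-hand side tends to $0$ as $R'\to\infty$, being the tail of a convergent integral. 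Second, in each such annulus I would locate a radius at which $g$ is already small: since $\frac1{R'}\int_{R'}^{2R'}\frac{g(s)}{s}\,\rd s$ is the average of $s\mapsto g(s)/s$ over $[R',2R']$, there is $r_{R'}\in[R',2R']$ with $g(r_{R'})/r_{R'}$ not exceeding it, whence, using $r_{R'}\leq2R'$,
\[
g(r_{R'})\leq2\int_{R'}^{2R'}\frac{g(s)}{s}\,\rd s\xrightarrow[R'\to\infty]{}0\,.
\]

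Combining the two, for a.e.\ $r\in[R',2R']$ the triangle inequality gives
\[
g(r)^{1/2}\leq g(r_{R'})^{1/2}+\bigl|g(r)^{1/2}-g(r_{R'})^{1/2}\bigr|\leq\left(2\int_{R'}^{2R'}\frac{g(s)}{s}\,\rd s\right)^{1/2}+\left(\int_{R'}^{2R'}h(s)\,s\,\rd s\right)^{1/2}\,;
\]
taking $R'=r/2$ and letting $r\to\infty$ makes both terms vanish, so $g(r)\to0$, which is the assertion. The one point that deserves care, and the only real obstacle, is that $g$ need not be monotone: neither integrability fact alone suffices, and one must use the finiteness of $\int h\,s\,\rd s$ to control the \emph{oscillation} of $g^{1/2}$ over a whole dyadic annulus, together with the finiteness of $\int g/s$ to find, \emph{inside} that annulus, one radius at which $g$ is small. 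The remaining ingredients — the polar-coordinate bookkeeping, Minkowski's inequality, and the two applications of Cauchy–Schwarz — are routine, as is the justification (by the Sobolev fundamental theorem of calculus along radii) of the displayed representation of $\bu(r_2,\cdot)-\bu(r_1,\cdot)$.
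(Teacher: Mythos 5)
Your proof is correct, and it takes a genuinely different route from the paper. The paper applies the trace theorem on a fixed reference shell $S_R=B_{2R}\setminus B_R$, rescales it to $S_r$, and absorbs the resulting $\bigl\Vert\bu/\left|\bx\right|;L^{2}(S_{r})\bigr\Vert$ term via the shell-wise Poincar\'e estimate underlying \lemref{hardy-inequality-central-symmetry}; this yields the one-line bound $\int_{0}^{2\pi}\left|\bu(r,\theta)\right|^{2}\rd\theta\leq C\bigl\Vert\bnabla\bu;L^{2}(S_{r})\bigr\Vert^{2}$, and the conclusion follows because the Dirichlet tail vanishes. You instead reduce everything to the radial functions $g$ and $h$ and combine two one-dimensional facts: the oscillation of $g^{1/2}$ over a dyadic annulus is controlled by the tail of $\int h(s)\,s\,\rd s$ (fundamental theorem of calculus along rays, Minkowski, Cauchy--Schwarz), while a mean-value argument applied to the tail of $\int g(s)/s\,\rd s$ produces one good radius per annulus. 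Both proofs use \lemref{hardy-inequality-central-symmetry} as the sole entry point of the central symmetry. What your version buys is that it avoids the trace theorem and the scaling argument entirely, and it isolates transparently why both integrability conditions are needed ($\bnabla\bu\in L^{2}$ controls oscillation, $\bu/\left|\bx\right|\in L^{2}$ locates a small value); this is essentially the Gilbarg--Weinberger style argument and would apply verbatim whenever both weighted integrals are finite. What the paper's version buys is brevity and a clean quantitative bound by the Dirichlet tail alone. Two cosmetic points: your conclusion is a priori only for a.e.\ $r$, but the oscillation estimate itself shows $g^{1/2}$ agrees a.e.\ with a continuous function, so the limit holds for the precise representative; and the identity $\bu(r_{2},\theta)-\bu(r_{1},\theta)=\int_{r_{1}}^{r_{2}}\partial_{r}\bu(s,\theta)\,\rd s$ is justified, as you note, by absolute continuity on almost every ray.
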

\begin{proof}
For $r>0$, we denote by $B_{r}$ the ball $B(\bzero,r)$ and by $S_{r}$
the shell $B_{2r}\setminus B_{r}$. Again, we define $R>0$ such that
$\mathbb{R}^{2}\setminus\Omega\subset B_{R}$. By the trace theorem
in $S_{R}$, there exists a constant $C>0$ such that
\[
\bigl\Vert\bu;L^{2}(\partial B_{R})\bigr\Vert^{2}\leq\bigl\Vert\bu;L^{2}(\partial S_{R})\bigr\Vert^{2}\leq C\bigl\Vert\bu;L^{2}(S_{R})\bigr\Vert^{2}+C\bigl\Vert\bnabla\bu;L^{2}(S_{R})\bigr\Vert^{2},
\]
for any $\bu\in W^{1,2}(S_{R})$. By a rescaling argument, we obtain
that for $r\ge R$,
\begin{align*}
\frac{R}{r}\bigl\Vert\bu;L^{2}(\partial B_{r})\bigr\Vert^{2} & \leq\frac{CR^{2}}{r^{2}}\bigl\Vert\bu;L^{2}(S_{r})\bigr\Vert^{2}+C\bigl\Vert\bnabla\bu;L^{2}(S_{r})\bigr\Vert^{2}\\
 & \leq4CR^{2}\bigl\Vert\bu/\bx;L^{2}(S_{r})\bigr\Vert^{2}+C\bigl\Vert\bnabla\bu;L^{2}(S_{r})\bigr\Vert^{2},
\end{align*}
for any $\bu\in W^{1,2}(S_{r})$. Now if $\bu$ is in addition centrally
symmetric, by applying \lemref{hardy-inequality-central-symmetry},
we obtain that there exists $C>0$ depending on $\Omega$ such that
\[
\frac{1}{r}\bigl\Vert\bu;L^{2}(\partial B_{r})\bigr\Vert\leq C\bigl\Vert\bnabla\bu;L^{2}(S_{r})\bigr\Vert,
\]
for all centrally symmetric $\bu\in W^{1,2}(S_{r})$. For $\bu\in D^{1,2}(\Omega)$,
we have
\[
\bigl\Vert\bnabla\bu;L^{2}(S_{r})\bigr\Vert^{2}=\bigl\Vert\bnabla\bu;L^{2}(B_{2r}\setminus B)\bigr\Vert^{2}-\bigl\Vert\bnabla\bu;L^{2}(B_{r}\setminus B)\bigr\Vert^{2},
\]
and since
\[
\lim_{r\to\infty}\bigl\Vert\bnabla\bu;L^{2}(B_{2r}\setminus B)\bigr\Vert=\bigl\Vert\bnabla\bu;L^{2}(\Omega)\bigr\Vert,
\]
we obtain
\[
\lim_{r\to\infty}\frac{1}{r}\bigl\Vert\bu;L^{2}(\partial B_{r})\bigr\Vert=0\,,
\]
which proves the claimed result.
\end{proof}
This result shows that a centrally symmetric weak solution goes to
zero at infinity. A stronger result showing the uniformly pointwise
limit was announced by \citet[Theorem~7]{Russo-existenceofDsolutions2011},
 but the correctness of the uniform limit is questionable, since it
implicitly relies on Lemma~3.10 of \citet{Galdi-StationaryNavier-Stokesproblem2004},
whose proof contains a gap.
\begin{thm}
Let the hypothesis of \thmref{existence-weak-solutions} be satisfied.
If $\Omega$, $\bu^{*}$ and $\bff$ are invariant under the central
symmetry \eqref{central-symmetry}, there exists a weak solution $\bu$
such that $\lim_{\left|\bx\right|\to0}\bu=\bzero$ in the following
sense
\[
\lim_{r\to\infty}\int_{0}^{2\pi}\left|\bu(r,\theta)\right|^{2}\rd\theta=0\,.
\]
\end{thm}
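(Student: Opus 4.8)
The plan is to re-examine the construction in the proof of \thmref{existence-weak-solutions} and carry it out entirely within the subspace of centrally symmetric vector fields. Observe first that if a weak solution $\bu$ is invariant under \eqref{central-symmetry} and tends to $\bu_\infty$ at infinity, then $\bu_\infty=-\bu_\infty$, so $\bu_\infty=\bzero$, consistently with the statement; I will therefore take $\bu_\infty=\bzero$ throughout. Recall that a field $\bw$ is centrally symmetric when $\bw(\bx)=-\bw(-\bx)$, and denote by $D^{1,2}_{0,\sigma}(\Omega)_{\mathrm{sym}}$ the (closed, hence weakly closed) subspace of such fields.

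The first step is to arrange that the extension $\ba$ of \lemref{extention} is centrally symmetric. Since $\Omega$ is centrally symmetric, the map $\bx\mapsto-\bx$ permutes the bounded components $B_i$ of $\mathbb{R}^{2}\setminus\Omega$; choosing the reference points $\bx_i$ compatibly with this permutation (in particular $\bx_i=-\bx_j$ for a swapped pair, and $\bx_i=\bzero$ for a self-symmetric component containing the origin) and using that $\bu^{*}$ is centrally symmetric so that the corresponding fluxes $\Phi_i$ match, one checks directly that the flux carrier $\ba_\Phi$ is centrally symmetric. The near-boundary part $\ba_\delta$ coming from \lemref{extension-basic} is built from a stream function of the zero-flux boundary data $\bu^{*}-\ba_\Phi$; replacing $\ba_\delta$ by its centrally symmetric component changes neither its trace on $\partial\Omega$, nor its support in a tube around $\partial\Omega$, nor the estimates, so we may assume $\ba=\ba_\Phi+\ba_\delta$ is centrally symmetric and still satisfies the extension bound.

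Next I would run the Leray--Schauder argument of \thmref{existence-weak-solutions} inside $D^{1,2}_{0,\sigma}(\Omega_n)_{\mathrm{sym}}$ on the centrally symmetric invading domains $\Omega_n=\Omega\cap B(\bzero,R+n)$. Because the Navier--Stokes operator is equivariant under \eqref{central-symmetry} and $\bff$ is centrally symmetric, all the trilinear and bilinear forms entering the definition of $\bF$, $\boldsymbol{B}$ and the completely continuous map $A$ annihilate anti-symmetric test functions; hence $\bF,\boldsymbol{B}$ lie in the symmetric subspace and $A$ maps it to itself. The compact embedding $D^{1,2}_{0,\sigma}(\Omega_n)\hookrightarrow L^{4}(\Omega_n)$ restricts to the subspace, so the fixed point theorem applies there and yields a centrally symmetric weak solution $\bu_n=\ba+\bv_n$, $\bv_n\in D^{1,2}_{0,\sigma}(\Omega_n)_{\mathrm{sym}}$, with the same a priori bound on $\|\bnabla\bv_n\|_2$ as before. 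Since the subspace is weakly closed, the weak limit $\bv$ of a subsequence of $(\bv_n)$ is centrally symmetric, and $\bu=\ba+\bv$ is a centrally symmetric weak solution in $\Omega$ exactly as in \thmref{existence-weak-solutions}.

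Finally I would conclude by treating the two pieces of $\bu=\ba+\bv$ separately. The extension $\ba$ has a compactly supported part and the flux carrier $\ba_\Phi=O(|\bx|^{-1})$ in two dimensions, so $\int_0^{2\pi}|\ba(r,\theta)|^2\,\rd\theta=O(r^{-2})\to0$. For the centrally symmetric $\bv\in D^{1,2}(\Omega)$, \lemref{hardy-inequality-central-symmetry} supplies the weighted Hardy inequality and \lemref{limit-central-symmetry} then gives $\lim_{r\to\infty}\int_0^{2\pi}|\bv(r,\theta)|^2\,\rd\theta=0$; adding the two contributions proves the claim. The main obstacle is the equivariance bookkeeping of the first two steps: making the extension $\ba$ centrally symmetric while keeping $|(\bv\bcdot\bnabla\ba,\bv)|$ small enough to close the energy estimate, and checking that every ingredient of \thmref{existence-weak-solutions} — complete continuity of $A$, the compactness used in the invading-domain limit — survives restriction to $D^{1,2}_{0,\sigma}(\Omega)_{\mathrm{sym}}$. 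Once that is in place, the decay at infinity is immediate from \lemref{limit-central-symmetry}.
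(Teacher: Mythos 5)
Your proposal is correct and takes essentially the same route as the paper: the paper's proof simply observes that the Navier-Stokes equations are equivariant under the central symmetry \eqref{central-symmetry}, so \thmref{existence-weak-solutions} can be run within the class of centrally symmetric fields to produce a centrally symmetric weak solution, and then concludes by \lemref{limit-central-symmetry}. Your write-up just makes explicit the equivariance bookkeeping (symmetrizing the extension $\ba$, restricting the Leray-Schauder argument to the weakly closed symmetric subspace, and splitting $\bu=\ba+\bv$ at the end) that the paper leaves implicit.
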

\begin{proof}
Since the Navier-Stokes equations are invariant under the central
symmetry \eqref{central-symmetry}, by applying \thmref{existence-weak-solutions},
we can construct of weak solution $\bu$ that is centrally symmetric.
Then the result follows by applying \lemref{limit-central-symmetry}.
\end{proof}

\section{Asymptotic behavior of the velocity}

The linearization\index{Weak solutions!asymptotic behavior}\index{Asymptotic behavior!Navier-Stokes solutions!weak solutions},
of the Navier-Stokes equations around $\bu=\bu_{\infty}$, leads to
the system
\begin{align}
\Delta\bu-\bnabla p-\bu_{\infty}\bcdot\bnabla\bu & =-\bff\,, & \bnabla\bcdot\bu & =0\,,\label{eq:weak-stokes-oseen}
\end{align}
which is the Stokes system for $\bu_{\infty}=\bzero$, and the Oseen
system in case $\bu_{\infty}\neq\bzero$. By bootstrapping the decay
of the velocity and of the nonlinearity, the Oseen system is well-posed
which furnishes the asymptotic behavior in case $\bu_{\infty}\neq\bzero$.
If $\bu_{\infty}=\bzero$, the situation is more complicated because
the Stokes system is ill-posed.

\subsection{In case \texorpdfstring{$\protect\bu_{\infty}\protect\neq\protect\bzero$}{u!=0}}

In three dimensions the following result was first obtained by \citet{Babenko-stationarysolutionsof1973}
by using results of \citet{Finn-exteriorstationaryproblem1965} and
later on by \citet[Theorem 4.1]{Galdi-AsymptoticStructureDsol-1992}.
In two dimensions, \citet[\S 4]{Smith-EstimatesatInfinity1965} showed
that if $\bu$ is a solution the Navier-Stokes equations such that
$\left|\bu-\bu_{\infty}\right|=O(\left|\bx\right|^{-1/4-\varepsilon})$
for some $\varepsilon>0$, the leading term of the asymptotic expansion
of $\bu-\bu_{\infty}$ is given by the Oseen fundamental solution.
This result was further clarified by \citet[Theorem XII.8.1]{Galdi-IntroductiontoMathematical2011}.
\begin{thm}[{\citealt[Theorems X.8.1 \& XII.8.1]{Galdi-IntroductiontoMathematical2011}}]
\label{thm:weak-asy-oseen}Let $\bu$ be a weak solution in an exterior
domain $\Omega\subset\mathbb{R}^{n}$, $n=2,3$ of class $C^{2}$.
If $\bu_{\infty}\neq\bzero$, $\bff\in L^{q}(\Omega)$ has compact
support, and $\bu^{*}\in W^{2-1/q_{0},1/q_{0}}(\partial\Omega)$ for
some $q_{0}>n$ and all $q\in(1,q_{0}]$. In case $n=2$, we assume
moreover that
\begin{equation}
\lim_{\left|\bx\right|\to\infty}\bu=\bu_{\infty}\,.\label{eq:weak-asymptotic-limit}
\end{equation}
Then
\[
\bu-\bu_{\infty}=\mathbf{E}\bcdot\boldsymbol{Z}+O(\left|\bx\right|^{-n/2+\varepsilon})\quad\text{for any}\quad\varepsilon>0
\]
where $\mathbf{E}$ is the Oseen tensor which satisfies as $\left|\bx\right|\to\infty$,
\[
\left|\mathbf{E}\right|=\begin{cases}
O\left(\dfrac{1}{\left|\bx\right|}+\dfrac{\e^{-s}}{\sqrt{\left|\bx\right|}}\right)\,, & n=2\,,\vspace{2mm}\\
O\left(\dfrac{1}{\left|\bx\right|}\dfrac{1-\e^{-s}}{s}\right)\,, & n=3\,,
\end{cases}\quad\text{with}\quad s=\frac{\left|\bu_{\infty}\right|\left|\bx\right|-\bu_{\infty}\bcdot\bx}{2}\,,
\]
and $\boldsymbol{Z}$ is a modification of the net force $\bF$ by
the flux $\Phi$,
\[
\boldsymbol{Z}=\bF+\bu_{\infty}\Phi\,,
\]
where
\[
\bF=\int_{\Omega}\bff+\int_{\partial B}\mathbf{T}\bn\quad\text{with}\quad\mathbf{T}=\bnabla\bu+\left(\bnabla\bu\right)^{T}-p\,\boldsymbol{1}-\bu\otimes\bu\,,
\]
and
\[
\Phi=\int_{\partial B}\bu\bcdot\bn\,.
\]
\end{thm}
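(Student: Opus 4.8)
The plan is to recast the equation for $\bw=\bu-\bu_{\infty}$ as an Oseen system and then read off the far-field behaviour from the Oseen representation formula. Since $\bu_{\infty}$ is constant and $\bnabla\bcdot\bu=0$, one has $\bu\bcdot\bnabla\bu=\bu_{\infty}\bcdot\bnabla\bw+\bw\bcdot\bnabla\bw$, so that $(\bw,p)$ solves
\[
\Delta\bw-\bnabla p-\bu_{\infty}\bcdot\bnabla\bw=\bw\bcdot\bnabla\bw+\bff\,,\qquad\bnabla\bcdot\bw=0\,,
\]
in $\Omega$, with $\bw=\bu^{*}-\bu_{\infty}$ on $\partial\Omega$, and by the regularity theorem of the previous section $\bw$ is smooth up to the boundary. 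The first step is to establish a preliminary algebraic decay of $\bw$ and $\bnabla\bw$, strong enough to make the right-hand side integrable against the Oseen kernel. For $n=3$ this is furnished by the function space: $\bnabla\bw\in L^{2}(\Omega)$ together with the Hardy inequality \lemref{hardy-3d} gives $\int_{S^{2}}|\bw|^{2}=O(|\bx|^{-1})$, and local $L^{q}$ estimates for the Oseen (or Stokes) system bootstrap this, the nonlinearity improving at each stage, up to $\bw=O(|\bx|^{-1/4-\delta})$ for some $\delta>0$ together with the matching gradient decay. For $n=2$ no such a priori information is available, which is exactly why the hypothesis \eqref{weak-asymptotic-limit} is imposed: starting from $\bw\to\bzero$ one runs the same Oseen bootstrap --- the argument of \citet[\S 4]{Smith-EstimatesatInfinity1965} --- and reaches again the decay $\bw=O(|\bx|^{-1/4-\delta})$.

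Next I would use the Green representation for the Oseen operator: with $\mathbf{E}$ the Oseen tensor and its associated pressure vector, for $\bx\in\Omega$,
\[
\bw(\bx)=\int_{\Omega}\mathbf{E}(\bx-\by)\bcdot\bigl(\bw\bcdot\bnabla\bw+\bff\bigr)(\by)\,\rd\by+\int_{\partial B}\mathbf{K}(\bx,\by)\,\rd S_{\by}\,,
\]
where $\mathbf{K}$ is the usual bilinear combination of $\mathbf{E}$, its pressure, $\bw$, $p$, $\bnabla\bw$ and $(\bu_{\infty}\bcdot\bn)\bw$, and the contribution of the sphere at infinity vanishes thanks to the decay obtained above. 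Since $\bff$ has compact support, $\int_{\Omega}\mathbf{E}(\bx-\by)\bcdot\bff(\by)\,\rd\by$ equals $\mathbf{E}(\bx)$ contracted with $\int_{\Omega}\bff$ up to a remainder of lower order, by expanding $\mathbf{E}(\bx-\by)$ about $\bx$ for $\by\in\supp\bff$. The nonlinear volume term is controlled with the anisotropic pointwise bounds on $\mathbf{E}$ (decay of order $|\bx|^{-(n-1)/2}$ inside the paraboloidal wake $\{s\lesssim1\}$ and a faster, essentially wake-free decay outside it), combined with the decay of $\bw\bcdot\bnabla\bw$ from the first step: the resulting convolution estimate shows this term is $O(|\bx|^{-n/2+\varepsilon})$ for every $\varepsilon>0$, hence strictly subordinate to the leading Oseen behaviour. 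Carrying out this far-field convolution bound while correctly tracking the wake is the principal technical obstacle, and the $\varepsilon$-loss is intrinsic to the borderline logarithmic factors that appear in it.

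It remains to identify the constant vector. Expanding $\mathbf{E}(\bx-\by)$ about $\bx$ for $\by$ on the bounded set $\partial B$, the part of $\int_{\partial B}\mathbf{K}$ carrying $\bnabla\mathbf{E}(\bx)$ is of lower order, while the single-layer part produces $\mathbf{E}(\bx)$ contracted with $\int_{\partial B}\bigl[\mathbf{T}\bn+(\bw\bcdot\bn)\bu_{\infty}\bigr]$, the total momentum flux through $\partial B$ in the Oseen sense. Integrating the Oseen equation for $\bw$ over $\Omega\cap B_{R}$, applying the divergence theorem --- using that $\bu_{\infty}\bcdot\bnabla\bw=\bnabla\bcdot(\bw\otimes\bu_{\infty})$ --- and letting $R\to\infty$ with the $\partial B_{R}$-integral controlled by the sharp decay and wake estimates already in hand, one finds that this boundary quantity together with the $\bff$-volume contribution $\mathbf{E}(\bx)\bcdot\int_{\Omega}\bff$ assembles into $\mathbf{E}(\bx)\bcdot\boldsymbol{Z}$ with $\boldsymbol{Z}=\bF+\bu_{\infty}\Phi$; here $\int_{\partial B}\bw\bcdot\bn=\Phi$ because $\bnabla\bcdot\bu=0$ and $\int_{\partial B}\bn=\bzero$, and the flux correction $\bu_{\infty}\Phi$ is precisely the footprint of the transport term $\bu_{\infty}\bcdot\bnabla\bw$ in the Oseen operator. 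Collecting the pieces yields $\bu-\bu_{\infty}=\mathbf{E}\bcdot\boldsymbol{Z}+O(|\bx|^{-n/2+\varepsilon})$, as claimed.
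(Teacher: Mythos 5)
The paper does not prove this theorem: it is stated as a quoted result, attributed to Theorems X.8.1 and XII.8.1 of Galdi's monograph (building on Babenko, Finn and Smith), so there is no in-paper argument to compare yours against. Your outline does follow the route taken in those references --- rewrite $\bw=\bu-\bu_{\infty}$ as an Oseen system with right-hand side $\bw\bcdot\bnabla\bw+\bff$, obtain a preliminary algebraic decay, invoke the Oseen representation formula, and extract the coefficient $\boldsymbol{Z}=\bF+\bu_{\infty}\Phi$ from the momentum flux through $\partial B$, the flux correction coming from the $\bu_{\infty}\otimes\bw$ piece of the convective stress --- and the mechanism you describe for each step is the right one.

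Two caveats on where the sketch is thinnest. First, in two dimensions the passage from the pointwise hypothesis $\bu\to\bu_{\infty}$ to a rate $\bw=O(|\bx|^{-1/4-\delta})$ is not just ``the same Oseen bootstrap'': it is the hardest part of Galdi's Theorem XII.8.1 and rests on the Gilbarg--Weinberger/Amick machinery (vorticity maximum principles and energy estimates) before any Oseen iteration can start; your sentence compresses this into an assertion. Second, the wake-sensitive convolution bound that shows the nonlinear volume term is $O(|\bx|^{-n/2+\varepsilon})$ is named but not performed, and it is precisely where the anisotropy of $\mathbf{E}$ must be exploited; without it the estimate only gives the trivial $O(|\bx|^{-(n-1)/2})$, which is not better than the leading term. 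As a roadmap your proposal is sound; as a proof it defers the two genuinely difficult steps to the literature, which is also what the paper itself does.
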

\begin{rem}
In two dimensions, the validity of \eqref{weak-asymptotic-limit}
for a weak solution constructed by \thmref{existence-weak-solutions}
is still an open problem.
\end{rem}

\subsection{In case \texorpdfstring{$\protect\bu_{\infty}=\protect\bzero$}{u=0}}

If $\bu_{\infty}=\bzero$, the situation is more complicated and we
have to distinguish the two-dimensional and three-dimensional cases.
For $n=3$, the fundamental solution $\mathbf{U}$ of the Stokes system
\eqref{weak-stokes-oseen} decay like $\left|\bx\right|^{-1}$, which
by power counting implies that the nonlinearity $\bu\bcdot\bnabla\bu$
decays like $\left|\bx\right|^{-3}$. But as shown on \secref{failure-asymptotic}
for the two-dimensional case, the inversion of the Stokes operator
on a source term that decays like $\left|\bx\right|^{-3}$, leads
to a solution that decays like $\log\left|\bx\right|/\left|\bx\right|$.
Therefore, the Stokes system is ill-posed in this setting, and the
leading term at infinity cannot be the Stokes fundamental solution.
This fact was precisely formulated and proved by \citet[Theorem 3.1]{Deuring.Galdi-AsymptoticBehaviorof2000}.
Therefore, the term in $\left|\bx\right|^{-1}$ of the asymptotic
expansion has to be solution of a nonlinear equation. \citet[Theorem 3.2]{Nazarov-steady2000}
have shown that there exists a function $\boldsymbol{V}$ on the sphere
$S^{2}$ such that 
\[
\bu=\frac{\boldsymbol{V}\left(\hat{\bx}\right)}{\left|\bx\right|}+O(\left|\bx\right|^{-2+\varepsilon})\,,
\]
for all $\varepsilon>0$ provided the data are small enough. \citet{Sverak-LandausSolutionsNavier2011}
proved that the only nontrivial scale-invariant solution of the Navier-Stokes
equation in $\mathbb{R}^{3}\setminus\left\{ \bzero\right\} $ is the
\citet{Landau-newexactsolution1944} solution. The proof that the
leading asymptotic term is given by the Landau solution was simplified
by \citet{Korolev.Sverak-largedistanceasymptotics2011}. They proved
the following result:
\begin{thm}[{\citealp[Theorem 1]{Korolev.Sverak-largedistanceasymptotics2011}}]
\label{thm:weak-asy-landau}Let $\left(\bu,p\right)$ be a solution
of the Navier-Stokes equation in $\mathbb{R}^{3}\setminus\widebar B(\bzero,1)$.
For each $\varepsilon>0$, there exists $\nu>0$, such that if
\[
\left|\bu(\bx)\right|\leq\frac{\nu}{1+\left|\bx\right|}\,,
\]
then
\[
\bu=\bUF(\bx)+O(\left|\bx\right|^{-2+\varepsilon})\,,
\]
where $\bUF(\bx)$ is the Landau solution with net force $\bF$.\end{thm}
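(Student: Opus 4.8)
The plan is to follow the scheme outlined in the introduction for the three–dimensional critical case: first turn the pointwise bound on $\bu$ into decay of its derivatives and of the pressure, then identify the Landau solution $\bUF$ whose net force matches that of $\bu$, and finally bootstrap the decay of the difference $\bv=\bu-\bUF$ by inverting a critically perturbed Stokes operator whose net–force compatibility obstruction has been removed.

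\textbf{Step 1: interior estimates and decay of the derivatives.} On each dyadic shell $A_R=\{R<|\bx|<2R\}$, rescale $\bu_R(\by)=R\,\bu(R\by)$, which solves a Navier–Stokes system on $A_1$ with $\|\bu_R\|_{L^\infty(A_1)}\lesssim\nu$. Treating the convective term as a small right–hand side for the Stokes system, interior elliptic estimates (and boundary estimates near $|\bx|=1$) give $\|\bnabla\bu_R\|_{L^\infty}+\|\bnabla^2\bu_R\|_{L^\infty}\lesssim\nu$ and, after normalising the pressure by its mean on $A_1$, $\|p_R-\bar p_R\|_{L^\infty}\lesssim\nu^2$. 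Undoing the scaling yields $|\bnabla\bu|\lesssim\nu|\bx|^{-2}$, $|\bnabla^2\bu|\lesssim\nu|\bx|^{-3}$, and — since the pressure means form a Cauchy sequence — $|p-p_\infty|\lesssim\nu^2|\bx|^{-2}$ for a constant $p_\infty$ that we subtract; in particular $\bu\to\bzero$ and $p\to\bzero$ at infinity.

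\textbf{Step 2: the net force and the matching Landau solution.} Writing $\bu\bcdot\bnabla\bu=\bnabla\bcdot(\bu\otimes\bu)$ and invoking the divergence–form conservation law of \propref{invariants}, the quantity
\[
\bF=\int_{\partial B(\bzero,R)}\mathbf{T}\bn\,,\qquad \mathbf{T}=\bnabla\bu+\left(\bnabla\bu\right)^{T}-p\,\boldsymbol{1}-\bu\otimes\bu\,,
\]
is independent of $R\ge 1$, because $\mathbf{T}=O(|\bx|^{-2})$ and $\bnabla\bcdot\mathbf{T}=\bzero$ for $|\bx|>1$; by Step~1, $|\bF|\lesssim\nu$. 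Since the Landau solutions depend analytically on $\bF$ near $\bF=\bzero$, with $\bUF=O(|\bF|\,|\bx|^{-1})$, $\bnabla\bUF=O(|\bF|\,|\bx|^{-2})$, and carry exactly net force $\bF$ through every sphere, we fix $\bUF$ to be the Landau solution with that $\bF$; for $\nu$ small it is globally small.

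\textbf{Step 3: the perturbation equation and the bootstrap.} Set $\bv=\bu-\bUF$, $q=p-P_{\bF}$. Then $\bnabla\bcdot\bv=0$ on $|\bx|>1$ and
\[
\Delta\bv-\bnabla q=\bUF\bcdot\bnabla\bv+\bv\bcdot\bnabla\bUF+\bv\bcdot\bnabla\bv\,,
\]
with $\bv\to\bzero$ at infinity and a fixed smooth boundary value on $|\bx|=1$. The crucial point is that the total stress tensor $\mathbf{T}_{\mathrm{tot}}[\bv]=\bnabla\bv+(\bnabla\bv)^{T}-q\,\boldsymbol{1}-\bUF\otimes\bv-\bv\otimes\bUF-\bv\otimes\bv$ is divergence free and $\int_{\partial B_R}\mathbf{T}_{\mathrm{tot}}[\bv]\bn=\bF-\bF=\bzero$, so no Stokes–paradox obstruction survives. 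Starting from the qualitative bound $|\bv|\lesssim\nu|\bx|^{-1}$ of Steps~1–2, one uses that $\bUF\otimes\bv+\bv\otimes\bUF+\bv\otimes\bv=O(\nu^{2}|\bx|^{-2})$ together with the vanishing of the net force to conclude, by a smallness argument on the leading angular coefficient, that the $|\bx|^{-1}$ term of $\bv$ is absent; hence $\bv=O(|\bx|^{-2}\log|\bx|)=O(|\bx|^{-2+\varepsilon})$. A clean way to organise this is a contraction mapping in a weighted space such as
\[
X_\varepsilon=\bigl\{\bv:\ |\bv(\bx)|\le C|\bx|^{-2+\varepsilon},\ |\bnabla\bv(\bx)|\le C|\bx|^{-3+\varepsilon}\bigr\}
\]
(equivalently a homogeneous weighted Sobolev or Lorentz setting on which the exterior Stokes operator with vanishing net force gains two orders of decay): power counting gives $\bUF\bcdot\bnabla\bv+\bv\bcdot\bnabla\bUF=O(\nu|\bx|^{-4+\varepsilon})$ and $\bv\bcdot\bnabla\bv=O(|\bx|^{-5+2\varepsilon})$, both mapped back into $X_\varepsilon$, the cross terms carrying an extra factor $|\bF|\lesssim\nu$, so the map is a contraction for $\nu$ small. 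Its fixed point is $\bv$, which yields $\bu=\bUF+O(|\bx|^{-2+\varepsilon})$.

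\textbf{Main difficulty.} The delicate ingredient is the sharp linear theory behind Step~3: one must show that the Stokes operator \emph{perturbed by the critical terms} $\bUF\bcdot\bnabla(\cdot)+(\cdot)\bcdot\bnabla\bUF$ is invertible on the class of decaying fields with zero net force, and that its inverse gains precisely the $|\bx|^{-2+\varepsilon}$ decay — the $\varepsilon$ absorbing the logarithm produced by inverting Stokes against a borderline $|\bx|^{-3}$ source. This requires careful weighted (or Lorentz–space) estimates for the exterior Stokes resolvent, control of the constant at infinity of $q$, and the argument that matching $\bF$ indeed annihilates the leading $|\bx|^{-1}$ term of $\bv$; once this is in place the nonlinear closure is routine power counting.
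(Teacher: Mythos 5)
The paper does not actually prove this theorem: it is imported verbatim from \citet{Korolev.Sverak-largedistanceasymptotics2011}, and the text only sketches the underlying strategy in the introduction and at the start of \chapref{wake-sim} --- subtract the Landau solution whose net force matches the stress-flux invariant of $\bu$, observe that the resulting source term then has zero mean so the Stokes compatibility condition is lifted, and close a fixed-point/bootstrap argument in a space where the remainder is bounded by $\left|\bx\right|^{-2+\varepsilon}$, the cross terms being absorbed into the nonlinearity because $\bUF$ carries a factor $\left|\bF\right|\lesssim\nu$. Your proposal is a faithful elaboration of exactly that strategy, so it takes essentially the same approach as the (cited) proof the paper relies on.
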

\begin{rem}
In particular, the asymptotic results of \thmref{weak-asy-oseen,weak-asy-landau}
show that in three dimensions,
\[
\sup_{\left|\bx\right|=r}\left|\bu-\bu_{\infty}\right|=O(r^{-1})\,,
\]
and therefore the limit \eqref{weak-limit} is uniformly pointwise.

In two dimensions, even if we take \eqref{weak-asymptotic-limit}
as an hypothesis, the asymptotic behavior of such a hypothetical solution
is not known. The aim of the following chapters is to determine the
asymptotic behavior of the solutions under compatibility conditions
or under symmetries (\chapref{strong-compatibility}), to study the
link between the asymptotic behavior of the Stokes and Navier-Stokes
equations equations (\chapref{link-Stokes-NS}), to perform a formal
asymptotic expansion in case the net force is non zero and to provide
some ideas of the possible asymptotic behavior that can emerge (\chapref{wake-sim}).
\end{rem}

\chapter{\label{chap:strong-compatibility}Strong solutions with compatibility
conditions}

We construct strong solution to the stationary and incompressible
Navier-Stokes equations in the plane, under compatibility conditions
on the source force. In particular these compatibility conditions
are fulfilled if the source force is invariant under four axes of
symmetry passing through the origin and separated by an angle of $\pi/4$.
Under this symmetry, the existence of a solution that is bounded by
$\left|\bx\right|^{-1}$ was shown by \citet{Yamazaki-Uniqueexistence2011}.
Here we improve this result by showing the existence of a solution
decaying like $\left|\bx\right|^{-3+\varepsilon}$ for all $\varepsilon>0$.
We also discuss how an explicit solution can be used to lift the compatibility
condition and actually lift the compatibility condition corresponding
to the net torque.

\section{Introduction}

The stationary Navier-Stokes equations in two-dimensional unbounded
domains are not mathematically understood in a proper way, especially
the existence of solutions such that the velocity converges to zero
at large distances is an open problem \citep[see][]{Galdi-IntroductiontoMathematical2011,Galdi-StationaryNavier-Stokesproblem2004}.
\citet{Leray-Etudedediverses1933} constructed weak solutions to the
Navier-Stokes equations in exterior domains in two and three dimensions,
with one major restriction: the domain cannot be $\mathbb{R}^{2}$
in his construction. Due to the properties of the function spaces
in two dimensions, \citet{Leray-Etudedediverses1933} was not able
to characterize the behavior at infinity of the weak solutions, \emph{i.e.}
more precisely the validity of
\begin{equation}
\lim_{\left|\bx\right|\to\infty}=\bu_{\infty}\,,\label{eq:compatibility-limit-uinfty}
\end{equation}
where $\bu_{\infty}\in\mathbb{R}^{2}$ is a prescribed vector. This
was remained open until \citet{Gilbarg.Weinberger-AsymptoticPropertiesof1974,Gilbarg.Weinberger-Asymptoticpropertiesof1978}
partially answer this question, by showing that either there exists
$\bu_{0}\in\mathbb{R}^{2}$ such that
\[
\lim_{\left|\bx\right|\to\infty}\int_{S^{1}}\left|\bu-\bu_{0}\right|^{2}=0\,,
\]
or either
\[
\lim_{\left|\bx\right|\to\infty}\int_{S^{1}}\left|\bu\right|^{2}=\infty\,.
\]
However, they cannot show that $\bu_{0}$ can be chosen arbitrarily,
that is to say that $\bu_{0}=\bu_{\infty}$ holds. Under some restriction,
this result was improved by \citet{Amick-Leraysproblemsteady1988}
who shows that $\bu$ is bounded. In case $\bu_{\infty}\neq\bzero$,
the linearization of the Navier-Stokes equations around $\bu=\bu_{\infty}$
is the Oseen equations and by a fixed point argument \citet{Finn-stationarysolutionsNavier1967}
showed the existence of solutions satisfying \eqref{compatibility-limit-uinfty}
provided the data are small enough. However, the existence of solutions
satisfying \eqref{compatibility-limit-uinfty} with $\bu_{\infty}=\bzero$
is still an open problem in its generality, even for small data. Moreover,
if the domain is the whole plane, even the existence of weak solutions
is unknown in general. The only results, which will be described in
details later on, are under suitable symmetries \citep{Galdi-StationaryNavier-Stokesproblem2004,Yamazaki-stationaryNavier-Stokesequation2009,Yamazaki-Uniqueexistence2011,Pileckas-existencevanishing2012}
or specific boundary conditions \citep{Hillairet-mu2013}.

We consider the incompressible Navier-Stokes equations in $\mathbb{R}^{2}$,
\begin{align}
\Delta\bu-\bnabla p & =\bu\bcdot\bnabla\bu+\bff\,, & \bnabla\bcdot\bu & =0\,, & \lim_{|\bx|\to\infty}\bu & =\bzero\,,\label{eq:compatibility-ns-force}
\end{align}
where $\bff$ is the source force. Under compatibility conditions
on the source term $\bff$ or suitable symmetries that fulfill these
compatibility conditions, we will show the existence of solutions
satisfying \eqref{compatibility-ns-force} and provide their asymptotic
expansions.

If an exterior domain and the data are symmetric with respect to two
orthogonal axes, then \citet[\S 3.3]{Galdi-StationaryNavier-Stokesproblem2004}
showed the existence of solutions satisfying the limit at infinity
in the following sense
\[
\lim_{|\bx|\to\infty}\int_{S^{1}}\left|\bu\right|^{2}=0\,.
\]
This result was improved by \citet[Theorem~7]{Russo-existenceofDsolutions2011}
by only requiring that the domain and the data are invariant under
the central symmetry $\bx\mapsto-\bx$, and by \citet{Pileckas-existencevanishing2012}
by allowing a flux through the boundary. However, all these results
rely only on the properties on the subset of symmetric functions of
the function space in which weak solutions are constructed, and therefore
the decay of the velocity at infinity is unknown. If the force force
$\bff$ is symmetric with respect to four axes with an angle of $\pi/4$
between them, \citet{Yamazaki-stationaryNavier-Stokesequation2009}
proved the existence of solutions in $\mathbb{R}^{2}$ such that the
velocity decays like $\left|\bx\right|^{-1}$ at infinity. Moreover,
\citet{Nakatsuka-uniquenessofsymmetric2013} proved the uniqueness
of the solution in this symmetry class. Later on, \citet{Yamazaki-Uniqueexistence2011}
showed the existence and uniqueness of the solutions in an exterior
domain always under the same four symmetries. In fact under these
symmetries, we will show that the solution decays like $\left|\bx\right|^{-3+\varepsilon}$
for all $\varepsilon>0$. In the exterior of a disk, \citet{Hillairet-mu2013}
proved the existence of solutions that also decay like $\left|\bx\right|^{-1}$
at infinity provided that the boundary condition on the disk is close
to $\mu\be_{r}$ for $\left|\mu\right|>\sqrt{48}$. To our knowledge,
these results are the only ones showing the existence of solutions
in two-dimensional unbounded domains with a known decay rate at infinity.

The linearization of Navier-Stokes equations \eqref{compatibility-ns-force}
around $\bu=\bzero$ is the Stokes system\index{Stokes equations}
\begin{align}
\Delta\bu-\bnabla p & =\bff\,, & \bnabla\bcdot\bu & =0\,.\label{eq:compatibility-stokes}
\end{align}
First of all, we will perform the general asymptotic expansion up
to any order of the solution of the Stokes system (\secref{compatibility-asymptotic})
and then explain the implications of some symmetries on this asymptotic
behavior (\secref{compatibility-symmetries}). By defining the net
force as
\[
\bF=\int_{\mathbb{R}^{2}}\bff\,,
\]
we will in particular recover the Stokes paradox: if $\bF\neq\bzero$
the Stokes equation \eqref{compatibility-stokes} has no solution
satisfying
\[
\lim_{|\bx|\to\infty}\bu=\bzero\,.
\]
Even in the case where $\bF=\bzero$, so that the solution of the
Stokes equation decay like $\left|\bx\right|^{-1}$, one can show
that the inversion of the Stokes operator on the nonlinearity $\bu\bcdot\bnabla\bu$
leads to an ill-defined problem (\secref{failure-asymptotic}). This
ill-possessedness of the Stokes system in a space of function decaying
like $\left|\bx\right|^{-1}$ is also present in three dimensions
\citep[Theorem 3.1]{Deuring.Galdi-AsymptoticBehaviorof2000}.

If one restricts oneself to the case where $\bF=\bzero$, then the
Stokes system has three compatibility conditions in order that its
solution decays better than $\left|\bx\right|^{-1}$ and only one
of them is an invariant quantity: the net torque (see \lemref{lin-stokes-asy-explicit}).
As shown by \thmref{compatibility-strong-mu}, the compatibility condition
corresponding to the net torque $M$ can be lifted by the exact solution
$M\be_{\theta}/r$. We remark that another way of lifting this compatibility
condition might be given by the small exact solutions found by \citet{Guillod-Generalizedscaleinvariant2015}.
The other two compatibility conditions are not invariant quantities
and therefore much more difficult to lift (see also \chapref{wake-sim}).

\section{Stokes fundamental solution}

The fundamental solution of the Stokes\index{Stokes equations!fundamental solution}\index{Fundamental solution!Stokes equations}
equation is given by
\begin{align*}
\mathbf{E} & =\frac{1}{4\pi}\left[\log\left|\bx\right|\,\mathbf{1}-\frac{\bx\otimes\bx}{\left|\bx\right|^{2}}\right]\,, & \be & =\frac{-1}{2\pi}\frac{\bx}{\left|\bx\right|^{2}}\,,
\end{align*}
so that the solution of the Stokes equation
\begin{align*}
\Delta\bu-\bnabla p & =\bff\,, & \bnabla\bcdot\bu & =0\,,
\end{align*}
in $\mathbb{R}^{2}$ is given by
\begin{align*}
\bu & =\mathbf{E}*\bff\,, & p & =\be*\bff\,.
\end{align*}
We can rewrite the Stokes tensor so that it becomes explicitly divergence
free,
\[
\mathbf{E}=\bnabla\bwedge\boldsymbol{\Psi}\,,
\]
where
\[
\boldsymbol{\Psi}=\frac{\bx^{\perp}}{4\pi}\left(\log\left|\bx\right|-1\right)\,.
\]
This notation is to be understood as the $i$th line of $\mathbf{E}$
is the curl (or rotated gradient) of the $i$th element of the vector
field $\boldsymbol{\Psi}$.

\section{\label{sec:compatibility-asymptotic}Asymptotic expansion of the
Stokes solutions}

We first define weighted $L^{\infty}$-spaces:
\begin{defn}[function spaces]
For $q\geq0$, we define the weight
\[
w_{q}(\bx)=\begin{cases}
1+\left|\bx\right|^{q}\,, & q>0\,,\\
\left[\log\left(2+\left|\bx\right|\right)\right]^{-1}\,, & q=0\,,
\end{cases}
\]
and the associated Banach space for $k\in\mathbb{N}$,\index{Space!B@$\mathcal{B}_{k,q}$}
\[
\mathcal{B}_{k,q}=\left\{ f\in C^{k}(\mathbb{R}^{n})\,:\: w_{q+\left|\alpha\right|}D^{\alpha}f\in L^{\infty}(\mathbb{R}^{n})\:\forall\left|\alpha\right|\leq k\right\} \,,
\]
with the norm
\[
\bigl\Vert f;\mathcal{B}_{k,q}\bigr\Vert=\max_{\left|\alpha\right|\leq k}\sup_{\bx\in\mathbb{R}^{n}}w_{q+\left|\sigma\right|}\left|D^{\alpha}f\right|\,.
\]

\end{defn}
The asymptotic expansion of a solution of the Stokes equation is given
by:
\begin{lem}
\label{lem:lin-stokes-asy}For $q>0$ and $q\notin\mathbb{N}$, if
$\bff\in\mathcal{B}_{0,2+q}$, then the solution of\index{Asymptotic behavior!Stokes solutions}\index{Stokes equations!asymptotic behavior}\index{Asymptotic expansion!Stokes equations}
\begin{align*}
\Delta\bu-\bnabla p & =\bff\,, & \bnabla\bcdot\bu & =0\,,
\end{align*}
satisfies
\begin{align*}
\bu & =\sum_{n=0}^{\left\lfloor q\right\rfloor }\bS_{n}+\bR\,, & p & =\sum_{n=0}^{\left\lfloor q\right\rfloor }s_{n}+r\,,
\end{align*}
where $\bS_{n}\in\mathcal{B}_{1,n}$, $\bR\in\mathcal{B}_{1,q}$,
$s_{n}\in\mathcal{B}_{0,n+1}$ and $r\in\mathcal{B}_{0,q+1}$. The
asymptotic terms are given by
\begin{align*}
\bS_{n} & =\bnabla\bwedge\left[\sum_{\left|\alpha\right|=n}\frac{\chi}{\alpha!}\left(\int_{\mathbb{R}^{2}}\left(-\bx\right)^{\alpha}\bff(\bx)\rd^{2}\bx\right)\bcdot\left(D^{\alpha}\boldsymbol{\Psi}\right)\right]\,,\\
s_{n} & =\sum_{\left|\alpha\right|=n}\frac{\chi}{\alpha!}\left(\int_{\mathbb{R}^{2}}\left(-\bx\right)^{\alpha}\bff(\bx)\rd^{2}\bx\right)\bcdot\left(D^{\alpha}\be\right)\,.
\end{align*}
\end{lem}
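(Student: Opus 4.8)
Since $2+q>2$, the bound $|\bff(\bx)|\lesssim(1+|\bx|)^{-2-q}$ puts $\bff$ in $L^{1}(\mathbb{R}^{2})\cap L^{\infty}(\mathbb{R}^{2})$, so $\bu=\mathbf{E}*\bff$ and $p=\be*\bff$ are well defined and, differentiating once under the integral sign (using $\bnabla\mathbf{E}\in L^{1}_{loc}$ and $\bff$ continuous and integrable), lie in $C^{1}$; writing $\mathbf{E}=\bnabla\bwedge\boldsymbol{\Psi}$ shows $\bu=\bnabla\bwedge(\boldsymbol{\Psi}*\bff)$ is solenoidal, and every $\bS_{n}$ is solenoidal for the same reason. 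The plan is to produce the expansion by Taylor expanding the convolution kernel in the integration variable.

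Fix a cut-off $\chi\in C^{\infty}(\mathbb{R}^{2})$ vanishing near the origin and equal to $1$ outside a ball. Taylor expanding $\by\mapsto\mathbf{E}(\bx-\by)$ at $\by=\bzero$ to order $\lfloor q\rfloor$ and exchanging the finite sum with the integral gives, for large $|\bx|$,
\[
\bu(\bx)=\sum_{|\alpha|\le\lfloor q\rfloor}\frac{1}{\alpha!}\Bigl(\int_{\mathbb{R}^{2}}(-\by)^{\alpha}\bff(\by)\,\rd^{2}\by\Bigr)\,\bigl(D^{\alpha}\mathbf{E}\bigr)(\bx)+\bR(\bx),
\]
and, using $D^{\alpha}\mathbf{E}=\bnabla\bwedge D^{\alpha}\boldsymbol{\Psi}$ together with the explicit $\boldsymbol{\Psi}$, the sum matches $\sum_{n\le\lfloor q\rfloor}\bS_{n}$ up to a compactly supported term produced by derivatives of $\chi$, which is absorbed into $\bR$. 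The moments $\int(-\by)^{\alpha}\bff(\by)\,\rd^{2}\by$ converge precisely because $|\alpha|\le\lfloor q\rfloor<q$; this is exactly where $q\notin\mathbb{N}$ is used, since for integer $q$ the top moment would diverge logarithmically. The pressure statement is the same argument with $\mathbf{E}$, $\boldsymbol{\Psi}$ replaced by $\be$ and its scalar potential, using $|D^{\alpha}\be(\bx)|\lesssim|\bx|^{-1-|\alpha|}$.

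The real content is the remainder estimate $\bR\in\mathcal{B}_{1,q}$, i.e. $|\bR|\lesssim|\bx|^{-q}$ and $|\bnabla\bR|\lesssim|\bx|^{-q-1}$. I would split the $\by$-integral defining $\bR$ into $|\by|\le|\bx|/2$, $|\bx|/2\le|\by|\le2|\bx|$ and $|\by|\ge2|\bx|$. On the inner region the bracket $\mathbf{E}(\bx-\by)-\sum_{|\alpha|\le\lfloor q\rfloor}\tfrac{(-\by)^{\alpha}}{\alpha!}(D^{\alpha}\mathbf{E})(\bx)$ is a Taylor remainder of size $|\by|^{\lfloor q\rfloor+1}|\bx|^{-\lfloor q\rfloor-1}$ (since $|D^{\beta}\mathbf{E}(\bz)|\lesssim|\bz|^{-|\beta|}$ and the Taylor base point stays in $|\bz|\ge|\bx|/2$), and integrating against $(1+|\by|)^{-2-q}$ gives $|\bx|^{-q}$: the radial integral converges at the origin because $\lfloor q\rfloor-q>-1$ and contributes $|\bx|^{\lfloor q\rfloor+1-q}$ at the outer end. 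On the outer region $|\bx-\by|\sim|\by|$, so the Taylor-polynomial part is controlled by $|\bx|^{-|\alpha|}\int_{|\by|\ge2|\bx|}|\by|^{|\alpha|-2-q}\,\rd^{2}\by\lesssim|\bx|^{-q}$, while the genuine kernel part is handled after subtracting from $\mathbf{E}(\bx-\by)$ the first few terms of its expansion in the \emph{other} variable $\bx$, so that the apparent $|\bx|^{-q}\log|\bx|$ is removed. The near-diagonal region is estimated using the local integrability of the logarithmic singularity of $\mathbf{E}$, again after subtracting the matching partial Taylor polynomial so that the logarithms cancel before absolute values are taken. The gradient estimates are the same with $\mathbf{E}$ replaced by $\bnabla\mathbf{E}$, which decays one power faster.

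The step I expect to require the most care is exactly this last bookkeeping: showing that no logarithmic factor survives, so that $\bR$ really lies in $\mathcal{B}_{1,q}$ and not merely in $\mathcal{B}_{1,q-\varepsilon}$. Each of the three pieces of the split contributes a spurious $|\bx|^{-q}\log|\bx|$, and these must cancel; the clean way to see this is to subtract, in every region, the partial Taylor polynomial of the kernel appropriate to that region before estimating, which is legitimate precisely because $q\notin\mathbb{N}$ keeps all the relevant radial integrals off their borderline exponents. An alternative organisation that sidesteps the cancellation is to split $\bff=\bff\eta+\bff(1-\eta)$ with $\eta$ a bump at the origin, treat the compactly supported part by the classical Stokes estimates and the tail directly, but either way the heart of the proof is the sharp weighted convolution bound.
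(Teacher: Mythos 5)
Your proposal follows essentially the same route as the paper: represent the solution as $\mathbf{E}*\bff$, Taylor-expand the kernel to order $\lfloor q\rfloor$ (the moments converging because $|\alpha|\le\lfloor q\rfloor<q$), and control the remainder by splitting the $\by$-integral and using $q-\lfloor q\rfloor\in(0,1)$ to keep every radial integral off its borderline exponent. The paper's only organizational difference is that it first replaces $\mathbf{E}$ by the globally smooth cut-off kernel $\mathbf{E}_{\chi}=\chi\mathbf{E}$ (the discrepancies being supported near the diagonal or compactly supported in $\bx$) and then uses the integral-form Taylor remainder on the two regions $|\by|\lessgtr|\bx|/2$, which automatically keeps $\mathbf{E}_{\chi}(\bx-\by)-\mathbf{E}_{\chi}(\bx)$ together on the far region — so the logarithm you worry about never appears; the cancellation is local to each region (kernel against its zeroth-order Taylor term), not between the pieces of the split.
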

\begin{proof}
The solution is given by
\begin{align*}
\bu & =\mathbf{E}*\bff\,, & \bnabla\bu & =\bnabla\mathbf{E}*\bff\,, & p & =\be*\bff\,.
\end{align*}
The Stokes tensor $\mathbf{E}$ diverges like $\log\left|\bx\right|$
at the origin and $\bnabla\mathbf{E}$ as well as $\be$ like $\left|\bx\right|^{-1}$,
but the integrals defining $\bu$, $\bnabla\bu$ and $p$ converge
and are continuous \citep[Proposition 8.8]{Folland1999}, so $\bu\in C^{1}(\mathbb{R}^{2})$.
Therefore, it remains only to prove the decay of $\bR$, $\bnabla\bR$
and $r$. By definition, we have the estimate
\[
\left|\bR\right|\leq\int_{\mathbb{R}^{2}}\biggl|\mathbf{E}(\bx-\by)-\bnabla\bwedge\biggl[\chi(\left|\bx\right|)\sum_{\left|\alpha\right|\leq\left\lfloor q\right\rfloor }\frac{\left(-\by\right)^{\alpha}}{\alpha!}D^{\alpha}\boldsymbol{\Psi}(\bx)\biggr]\biggr|\left|\bff(\by)\right|\rd^{2}\by\,.
\]
We first define the cut-off of the Stokes tensor,
\[
\mathbf{E}_{\chi}(\bx)=\chi(\left|\bx\right|)\mathbf{E}(\bx)\,,
\]
and split the bound in three parts,
\[
\left|\bR\right|\lesssim I+J+K\,,
\]
where
\begin{align*}
I & =\int_{\mathbb{R}^{2}}\left|\mathbf{E}(\bx-\by)-\mathbf{E}_{\chi}(\bx-\by)\right|\frac{1}{1+\left|\by\right|^{q+2}}\rd^{2}\by\,,\\
J & =\int_{\mathbb{R}^{2}}\biggl|\mathbf{E}_{\chi}(\bx-\by)-\sum_{\left|\alpha\right|\leq\left\lfloor q\right\rfloor }\frac{\left(-\by\right)^{\alpha}}{\alpha!}D^{\alpha}\mathbf{E}_{\chi}(\bx)\biggr|\frac{1}{1+\left|\by\right|^{q+2}}\rd^{2}\by\,,\\
K & =\int_{\mathbb{R}^{2}}\sum_{\left|\alpha\right|\leq\left\lfloor q\right\rfloor }\left|\frac{\left(-\by\right)^{\alpha}}{\alpha!}\left[D^{\alpha}\mathbf{E}_{\chi}(\bx)-\bnabla\bwedge\left(\chi(\left|\bx\right|)D^{\alpha}\boldsymbol{\Psi}(\bx)\right)\right]\right|\frac{1}{1+\left|\by\right|^{q+2}}\rd^{2}\by\,.
\end{align*}
The first integral is easy to estimate, since it has support only
in the region where $\left|\bx-\by\right|\leq2$,
\[
I\lesssim\int_{\mathbb{R}^{2}}\left(1-\chi\left(\left|\bx-\by\right|\right)\right)\left|\mathbf{E}(\bx-\by)\right|\frac{1}{1+\left|\by\right|^{q+2}}\rd^{2}\by\lesssim\frac{1}{1+\left|\bx\right|^{q+2}}\,.
\]
For the third integral, we have 
\[
K\lesssim\bigl|D^{\alpha}\mathbf{E}_{\chi}(\bx)-\bnabla\bwedge\left(\chi(\left|\bx\right|)D^{\alpha}\boldsymbol{\Psi}(\bx)\right)\bigr|\int_{\mathbb{R}^{2}}\frac{1}{1+\left|\by\right|^{q-\left\lfloor q\right\rfloor +2}}\rd^{2}\by\lesssim\left(1-\chi(\left|\bx\right|)\right)\,,
\]
since the integral vanishes for $\left|\bx\right|\ge2$. We now estimate
the second integral which requires more calculations. Since $\mathbf{E}_{\chi}$
is a smooth function on $\mathbb{R}^{2}$, by using Taylor theorem,
we have
\[
\mathbf{E}_{\chi}(\bx-\by)=\sum_{\left|\alpha\right|\leq k}\frac{\left(-\by\right)^{\alpha}}{\alpha!}D^{\alpha}\mathbf{E}_{\chi}(\bx)+\mathbf{H}_{k}(\bx,\by)\,,
\]
where
\[
\mathbf{H}_{k}(\bx,\by)=\left(k+1\right)\sum_{\left|\alpha\right|=k+1}\frac{\left(-\by\right)^{\alpha}}{\alpha!}\int_{0}^{1}\left(1-\lambda\right)^{k}D^{\alpha}\mathbf{E}_{\chi}(\bx-\lambda\by)\,\rd\lambda\,.
\]
Since $D^{\alpha}\mathbf{E}_{\chi}\in\mathcal{B}_{0,\left|\alpha\right|}$,
we have
\[
\left|\mathbf{H}_{k}(\bx,\by)\right|\lesssim\left|\by\right|^{k+1}\int_{0}^{1}\frac{\left(1-\lambda\right)^{k}}{1+\left|\bx-\lambda\by\right|^{k+1}}\rd\lambda\,.
\]
In order to estimate $J$, we divide the integration into two parts
$J=J_{1}+J_{2}$, with
\begin{align*}
D_{1} & =\left\{ \by\in\mathbb{R}^{2}\,:\:\left|\by\right|\leq\left|\bx\right|/2\right\} \,, & D_{2} & =\left\{ \by\in\mathbb{R}^{2}\,:\:\left|\by\right|\geq\left|\bx\right|/2\right\} \,.
\end{align*}
If $\by\in D_{1}$, we have
\[
\bigl|\mathbf{H}_{\left\lfloor q\right\rfloor }(\bx,\by)\bigr|\lesssim\frac{\left|\by\right|^{\left\lfloor q\right\rfloor +1}}{1+\left|\bx\right|^{\left\lfloor q\right\rfloor +1}}\,,
\]
and therefore
\[
\left|J_{1}\right|=\int_{D_{1}}\bigl|\mathbf{H}_{\left\lfloor q\right\rfloor }(\bx,\by)\bigr|\frac{1}{1+\left|\by\right|^{q+2}}\rd^{2}\by\lesssim\frac{1}{1+\left|\bx\right|^{\left\lfloor q\right\rfloor +1}}\int_{D_{1}}\frac{1}{\left|\by\right|^{q-\left\lfloor q\right\rfloor +1}}\rd^{2}\by\lesssim\frac{1}{1+\left|\bx\right|^{q}}\,.
\]
If $\by\in D_{2}$, we use that
\[
\bigl|\mathbf{H}_{\left\lfloor q\right\rfloor }(\bx,\by)\bigr|\lesssim\bigl|\mathbf{H}_{0}(\bx,\by)\bigr|+\sum_{k=1}^{\left\lfloor q\right\rfloor }\frac{\left|\by\right|^{k}}{1+\left|\bx\right|^{k}}\lesssim\int_{0}^{1}\frac{\left|\by\right|}{1+\left|\bx-\lambda\by\right|}\rd\lambda+\sum_{k=1}^{\left\lfloor q\right\rfloor }\frac{\left|\by\right|^{k}}{1+\left|\bx\right|^{k}}\,,
\]
so
\begin{align*}
\left|J_{2}\right| & =\int_{D_{2}}\bigl|\mathbf{H}_{\left\lfloor q\right\rfloor }(\bx,\by)\bigr|\frac{1}{1+\left|\by\right|^{q+2}}\rd^{2}\by\\
 & \lesssim\int_{D_{2}}\int_{0}^{1}\frac{1}{1+\left|\bx-\lambda\by\right|}\frac{1}{1+\left|\by\right|^{q+1}}\rd\lambda\,\rd^{2}\by+\sum_{k=1}^{\left\lfloor q\right\rfloor }\frac{1}{1+\left|\bx\right|^{k}}\int_{D_{2}}\frac{1}{1+\left|\by\right|^{q-k+2}}\rd^{2}\by\\
 & \lesssim\frac{1}{1+\left|\bx\right|^{\left\lfloor q\right\rfloor }}\left(\int_{D_{2}}\int_{0}^{1}\frac{1}{1+\left|\bx-\lambda\by\right|}\frac{1}{1+\left|\by\right|^{q-\left\lfloor q\right\rfloor +1}}\rd\lambda\,\rd^{2}\by+\int_{D_{2}}\frac{1}{1+\left|\by\right|^{q-\left\lfloor q\right\rfloor +2}}\rd^{2}\by\right)\\
 & \lesssim\frac{1}{1+\left|\bx\right|^{\left\lfloor q\right\rfloor }}\left(\int_{0}^{1}\lambda^{q-\left\lfloor q\right\rfloor -1}\rd\lambda\int_{\mathbb{R}^{2}}\frac{1}{1+\left|\bx-\boldsymbol{z}\right|}\frac{1}{\left|\boldsymbol{z}\right|^{q-\left\lfloor q\right\rfloor +1}}\rd^{2}\boldsymbol{z}+\frac{1}{1+\left|\bx\right|^{q-\left\lfloor q\right\rfloor }}\right)\\
 & \lesssim\frac{1}{1+\left|\bx\right|^{q}}\,.
\end{align*}
Consequently, we have proven that $\bR\in\mathcal{B}_{0,q}$.

We now estimate the pressure remainder, also by splitting the bound
into three parts,
\[
\left|r\right|\leq\int_{\mathbb{R}^{2}}\biggl|\be(\bx-\by)-\sum_{\left|\alpha\right|\leq\left\lfloor q\right\rfloor }\frac{\left(-\by\right)^{\alpha}}{\alpha!}\chi(\left|\bx\right|)D^{\alpha}\be(\bx)\biggr|\left|\bff(\by)\right|\rd^{2}\by\lesssim I+J+K\,,
\]
where
\begin{align*}
I & =\int_{\mathbb{R}^{2}}\biggl|\be(\bx-\by)-\be_{\chi}(\bx-\by)\biggr|\frac{1}{1+\left|\by\right|^{q+2}}\rd^{2}\by\,,\\
J & =\int_{\mathbb{R}^{2}}\biggl|\be_{\chi}(\bx-\by)-\sum_{\left|\alpha\right|\leq\left\lfloor q\right\rfloor }\frac{\left(-\by\right)^{\alpha}}{\alpha!}D^{\alpha}\be_{\chi}(\bx)\biggr|\frac{1}{1+\left|\by\right|^{q+2}}\rd^{2}\by\,,\\
K & =\int_{\mathbb{R}^{2}}\sum_{\left|\alpha\right|\leq\left\lfloor q\right\rfloor }\biggl|\frac{\left(-\by\right)^{\alpha}}{\alpha!}\left[D^{\alpha}\be_{\chi}(\bx)-\chi(\left|\bx\right|)D^{\alpha}\be(\bx)\right]\biggr|\frac{1}{1+\left|\by\right|^{q+2}}\rd^{2}\by\,.,
\end{align*}
and where we also consider the cut-off of the fundamental solution
for the pressure,
\[
\be_{\chi}(\bx)=\chi(\left|\bx\right|)\be(\bx)\,.
\]
The first integral is easy to estimate, since it has support only
in the region where $\left|\bx-\by\right|\leq2$,
\[
I\lesssim\int_{\mathbb{R}^{2}}\left(1-\chi\left(\left|\bx-\by\right|\right)\right)\left|\be(\bx-\by)\right|\frac{1}{1+\left|\by\right|^{q+2}}\rd^{2}\by\lesssim\frac{1}{1+\left|\bx\right|^{q+2}}\,.
\]
The third integral converges and has compact support. For the second
integral, by using Taylor theorem, we have
\[
\be_{\chi}(\bx-\by)=\sum_{\left|\alpha\right|\leq\left\lfloor q\right\rfloor }\frac{\left(-\by\right)^{\alpha}}{\alpha!}D^{\alpha}\be_{\chi}(\bx)+\boldsymbol{h}(\bx,\by)\,,
\]
where
\[
\boldsymbol{h}(\bx,\by)=\left(\left\lfloor q\right\rfloor +1\right)\sum_{\left|\alpha\right|=\left\lfloor q\right\rfloor +1}\frac{\left(-\by\right)^{\alpha}}{\alpha!}\int_{0}^{1}\left(1-\lambda\right)^{\left\lfloor q\right\rfloor }D^{\alpha}\be_{\chi}(\bx-\lambda\by)\,\rd\lambda\,.
\]
Since $D^{\alpha}\be_{\chi}\in\mathcal{B}_{0,\left|\alpha\right|+1}$,
we have
\[
\left|\boldsymbol{h}(\bx,\by)\right|\lesssim\left|\by\right|^{\left\lfloor q\right\rfloor +1}\int_{0}^{1}\frac{\left(1-\lambda\right)^{\left\lfloor q\right\rfloor }}{1+\left|\bx-\lambda\by\right|^{\left\lfloor q\right\rfloor +2}}\rd\lambda\,.
\]
In order to estimate $J$, we divide the integration into two parts
$J=J_{1}+J_{2}$, with
\begin{align*}
D_{1} & =\left\{ \by\in\mathbb{R}^{2}\,:\:\left|\by\right|\leq\left|\bx\right|/2\right\} \,, & D_{2} & =\left\{ \by\in\mathbb{R}^{2}\,:\:\left|\by\right|\geq\left|\bx\right|/2\right\} \,.
\end{align*}
If $\by\in D_{1}$, we have
\[
\left|\boldsymbol{h}(\bx,\by)\right|\lesssim\frac{\left|\by\right|^{\left\lfloor q\right\rfloor +1}}{1+\left|\bx\right|^{\left\lfloor q\right\rfloor +2}}\,,
\]
and therefore
\[
\left|J_{1}\right|=\int_{D_{1}}\left|\boldsymbol{h}(\bx,\by)\right|\frac{1}{1+\left|\by\right|^{q+2}}\rd^{2}\by\lesssim\frac{1}{1+\left|\bx\right|^{\left\lfloor q\right\rfloor +2}}\int_{D_{1}}\frac{1}{\left|\by\right|^{q-\left\lfloor q\right\rfloor +1}}\rd^{2}\by\lesssim\frac{1}{1+\left|\bx\right|^{q+1}}\,.
\]
If $\by\in D_{2}$, we bound each term separately,
\[
\left|\boldsymbol{h}(\bx,\by)\right|\lesssim\frac{1}{1+\left|\bx-\by\right|}+\frac{1}{1+\left|\bx\right|}\sum_{k=0}^{\left\lfloor q\right\rfloor }\frac{\left|\by\right|^{k}}{1+\left|\bx\right|^{k}}\,,
\]
so we have
\begin{align*}
\left|J_{2}\right| & =\int_{D_{2}}\left|\boldsymbol{h}(\bx,\by)\right|\frac{1}{1+\left|\by\right|^{q+2}}\rd^{2}\by\\
 & \lesssim\int_{D_{2}}\frac{1}{1+\left|\bx-\by\right|}\frac{1}{1+\left|\by\right|^{q+2}}\rd^{2}\by+\sum_{k=0}^{\left\lfloor q\right\rfloor }\frac{1}{1+\left|\bx\right|^{k+1}}\int_{D_{2}}\frac{1}{1+\left|\by\right|^{q-k+2}}\rd^{2}\by\\
 & \lesssim\frac{1}{1+\left|\bx\right|^{\left\lfloor q\right\rfloor +1}}\left(\int_{D_{2}}\frac{1}{1+\left|\bx-\by\right|}\frac{1}{1+\left|\by\right|^{q-\left\lfloor q\right\rfloor +1}}\rd^{2}\by+\int_{D_{2}}\frac{1}{1+\left|\by\right|^{q-\left\lfloor q\right\rfloor +2}}\rd^{2}\by\right)\\
 & \lesssim\frac{1}{1+\left|\bx\right|^{q+1}}\,.
\end{align*}
Consequently, we have proven that $r\in\mathcal{B}_{0,q+1}$. The
proof of $\bnabla\bR\in\mathcal{B}_{0,q+1}$ works the same way as
the previous bounds, so only the main differences are pointed out
below. For $\sigma$ a multi-index with $\left|\sigma\right|=1$,
we split the integrals $\left|D^{\sigma}\bR\right|$ into three parts,
$I^{\sigma}$, $J^{\sigma}$ and $K^{\sigma}$. The parts $I^{\sigma}$
and $K^{\sigma}$ are as before. The second part is given by 
\[
\left|J^{\sigma}\right|\leq\int_{\mathbb{R}^{2}}\left|\mathbf{H}^{\sigma}(\bx,\by)\right|\left|\bff(\by)\right|\rd^{2}\by\,,
\]
where $\mathbf{H}^{\sigma}$ is defined through 
\[
D^{\sigma}\mathbf{E}_{\chi}(\bx-\by)=\sum_{\left|\alpha\right|\leq\left\lfloor q\right\rfloor }\frac{\left(-\by\right)^{\alpha}}{\alpha!}D^{\alpha+\sigma}\mathbf{E}_{\chi}(\bx)+\mathbf{H}^{\sigma}(\bx,\by)\,.
\]
In the region $D_{1}$, we use the Taylor theorem to obtain
\[
\left|\mathbf{H}^{\sigma}(\bx,\by)\right|\lesssim\left|\by\right|^{\left\lfloor q\right\rfloor +1}\int_{0}^{1}\frac{\left(1-\lambda\right)^{\left\lfloor q\right\rfloor }}{1+\left|\bx-\lambda\by\right|^{\left\lfloor q\right\rfloor +2}}\rd\lambda\lesssim\frac{\left|\by\right|^{\left\lfloor q\right\rfloor +1}}{1+\left|\bx\right|^{\left\lfloor q\right\rfloor +2}}\,,
\]
and in the region $D_{2}$, we bound each terms separately,
\[
\left|\mathbf{H}^{\sigma}(\bx,\by)\right|\lesssim\frac{1}{1+\left|\bx-\by\right|}+\frac{1}{1+\left|\bx\right|}\sum_{k=0}^{\left\lfloor q\right\rfloor }\left(\frac{\left|\by\right|}{1+\left|\bx\right|}\right)^{k}\,.
\]

\end{proof}
The first three orders of the asymptotic expansion are computed explicitly
in the following lemma:
\begin{lem}
\label{lem:lin-stokes-asy-explicit}Each term of the asymptotic expansion
of the Stokes system can be written as
\begin{align*}
\bS_{i} & =\bC_{i}[\bff]\bcdot\mathbf{E}_{i}\,, & s_{i} & =\bC_{i}[\bff]\bcdot\be_{i}\,,
\end{align*}
where
\[
\mathbf{E}_{i}=\bnabla\bwedge\left(\chi\boldsymbol{\Psi}_{i}\right)\,,
\]
and $\bC_{i}[\bff]$ is a constant\index{Compatibility conditions!Stokes equations}\index{Asymptotic expansion!Stokes equations}
vector whose length depends on $i$. The zeroth order is given by
\begin{align*}
\boldsymbol{\Psi}_{0} & =\frac{r}{4\pi}\left(\log r-1\right)\bigl(-\sin\theta,\:\cos\theta\bigr)\,,\\
\be_{0} & =\frac{-\chi}{2\pi r}\be_{r}\,,\\
\boldsymbol{C}_{0}[\bff] & =\int_{\mathbb{R}^{2}}\bff\,.
\end{align*}
The first order is given by
\begin{align*}
\boldsymbol{\Psi}_{1} & =\frac{1}{8\pi}\bigl(\sin(2\theta),\:-\cos(2\theta),\:1-2\log r\bigr)\,,\\
\be_{1} & =\frac{-\chi}{2\pi r^{2}}\bigl(\cos(2\theta),\:\sin(2\theta),\:0\bigr)\,,\\
\boldsymbol{C}_{1}[\bff] & =\int_{\mathbb{R}^{2}}\bigl(x_{1}f_{1}-x_{2}f_{2},\: x_{1}f_{2}+x_{2}f_{1},\: x_{1}f_{2}-x_{2}f_{1}\bigr)\,,
\end{align*}
and explicitly for $\left|\bx\right|\geq2$,
\[
\mathbf{E}_{1}=\frac{-1}{4\pi r}\bigl(\cos(2\theta)\be_{r},\:\sin(2\theta)\be_{r},\:\be_{\theta}\bigr)\,.
\]
Finally, for the second order, we have
\begin{align*}
\boldsymbol{\Psi}_{2} & =\frac{1}{8\pi r}\bigl(\sin(3\theta),\:\cos(3\theta),\:\sin(\theta),\:\cos(\theta)\bigr)\,,\\
\be_{2} & =\frac{-\chi}{2\pi r^{2}}\bigl(\cos(3\theta),\:\sin(3\theta),0,0\bigr)\,,\\
\boldsymbol{C}_{2}[\bff] & =\int_{\mathbb{R}^{2}}\Big((x_{1}^{2}-x_{2}^{2})f_{1}-2x_{1}x_{2}f_{2},\:(x_{2}^{2}-x_{1}^{2})f_{2}-2x_{1}x_{2}f_{1},\:\\
 & \phantom{=\int_{\mathbb{R}^{2}}\Big(\Big(}2x_{1}x_{2}f_{2}-3x_{2}^{2}f_{1}-x_{1}^{2}f_{1},\:3x_{1}^{2}f_{2}+x_{2}^{2}f_{2}-2x_{1}x_{2}f_{1}\Big)\,,
\end{align*}
and for $\left|\bx\right|\geq2$ we have explicitly
\begin{align*}
\boldsymbol{S}_{2} & =\frac{A_{1}}{4\pi r^{2}}\bigl(\cos(2\theta),\:\sin(2\theta)\bigr)+\frac{A_{2}}{4\pi r^{2}}\bigl(\sin(2\theta),\:-\cos(2\theta)\bigr)\\
 & \phantom{==}+\frac{A_{3}}{4\pi r^{2}}\bigl(\cos(2\theta)+\cos(4\theta),\:\sin(4\theta)\bigr)+\frac{A_{4}}{4\pi r^{2}}\bigl(\sin(2\theta)+\sin(4\theta),\:-\cos(4\theta)\bigr)\,,
\end{align*}
where $\boldsymbol{A}\in\mathbb{R}^{4}$ is related to the second
moments $\boldsymbol{C}_{2}[\bff]$.\end{lem}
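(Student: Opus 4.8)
The plan is to derive everything directly from the closed-form expressions for $\bS_n$ and $s_n$ supplied by \lemref{lin-stokes-asy},
\[
\bS_n=\bnabla\bwedge\biggl[\sum_{|\alpha|=n}\frac{\chi}{\alpha!}\Bigl(\int_{\mathbb{R}^2}(-\bx)^\alpha\bff\,\rd^2\bx\Bigr)\bcdot D^\alpha\boldsymbol{\Psi}\biggr],\qquad s_n=\sum_{|\alpha|=n}\frac{\chi}{\alpha!}\Bigl(\int_{\mathbb{R}^2}(-\bx)^\alpha\bff\,\rd^2\bx\Bigr)\bcdot D^\alpha\be,
\]
together with the explicit potentials $\boldsymbol{\Psi}=\tfrac{\bx^\perp}{4\pi}(\log|\bx|-1)$ and $\be=-\tfrac{1}{2\pi}\tfrac{\bx}{|\bx|^2}$. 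The structural observation is that for a fixed order $n$ the family $\{D^\alpha\boldsymbol{\Psi}:|\alpha|=n\}$ — a priori $2(n+1)$ scalar profiles, since $\boldsymbol{\Psi}$ is a $2$-vector and there are $n+1$ multi-indices of length $n$ — consists of functions homogeneous in $r$ of degree $1-n$ (plus a logarithmic term when $n\le 1$) whose angular parts span only a small fixed set of trigonometric monomials. Collecting these independent profiles defines the vector $\mathbf{E}_n=\bnabla\bwedge(\chi\boldsymbol{\Psi}_n)$ of "elementary" Stokes flows, and matching the coefficients term by term identifies the corresponding moments as the combinations assembled into $\bC_n[\bff]$; the identities $\bS_n=\bC_n[\bff]\bcdot\mathbf{E}_n$ and $s_n=\bC_n[\bff]\bcdot\be_n$ then hold by construction.

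I would start with $n=0$, where $D^0\boldsymbol{\Psi}=\boldsymbol{\Psi}$, $D^0\be=\be$, the moment is $\bC_0[\bff]=\int_{\mathbb{R}^2}\bff$, and writing $\bx^\perp=r(-\sin\theta,\cos\theta)$ and $\bx/|\bx|^2=\be_r/r$ yields $\boldsymbol{\Psi}_0$ and $\be_0$ immediately. For $n=1$ I would compute $\partial_i\boldsymbol{\Psi}$ and $\partial_i\be$ using $\partial_i\log|\bx|=x_i/|\bx|^2$; in polar coordinates the four scalar profiles in $\partial_1\boldsymbol{\Psi},\partial_2\boldsymbol{\Psi}$ collapse onto the three angular types $\sin 2\theta$, $\cos 2\theta$ and $\log r$, and regrouping $\sum_{|\alpha|=1}(-\bx)^\alpha\bff\bcdot D^\alpha\boldsymbol{\Psi}$ in this basis produces exactly the three first-moment combinations $x_1f_1-x_2f_2$, $x_1f_2+x_2f_1$, $x_1f_2-x_2f_1$ (the last being the net torque $\bx\bwedge\bff$), i.e. $\bC_1[\bff]$, with matching profiles $\boldsymbol{\Psi}_1$, $\be_1$; applying $\bnabla\bwedge$ on $\{|\bx|\ge 2\}$, where $\chi\equiv 1$, and simplifying gives the stated form of $\mathbf{E}_1$.

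For $n=2$ the identical computation is run over the three multi-indices $(2,0),(1,1),(0,2)$: the six scalar profiles in $D^\alpha\boldsymbol{\Psi}$ reduce to the four types $r^{-1}\sin 3\theta$, $r^{-1}\cos 3\theta$, $r^{-1}\sin\theta$, $r^{-1}\cos\theta$, matching gives the four second-moment combinations listed as $\bC_2[\bff]$ together with $\boldsymbol{\Psi}_2$, $\be_2$, and differentiating $\chi\boldsymbol{\Psi}_2$ followed by re-expanding products of sines and cosines produces the displayed expression for $\boldsymbol{S}_2$ with $\boldsymbol{A}\in\mathbb{R}^4$ a fixed linear function of $\bC_2[\bff]$.

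The main obstacle is purely the bookkeeping: carrying out the trigonometric reductions correctly and performing the $6\!\to\!4$ (respectively $4\!\to\!3$) regroupings of scalar profiles without error, then verifying that each $\mathbf{E}_n$ thus defined is indeed divergence free and solves the homogeneous Stokes equations on $\{|\bx|\ge 2\}$ — which is automatic, since there $\mathbf{E}_n$ is a derivative of the Stokes fundamental tensor and $\mathbf{E}_n=\bnabla\bwedge(\chi\boldsymbol{\Psi}_n)$ is manifestly solenoidal. All of this can be organized and checked with a computer algebra system.
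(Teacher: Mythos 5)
Your proposal is correct and follows essentially the same route as the paper: the paper's proof likewise starts from the closed-form expressions for $\bS_{n}$ and $s_{n}$ in \lemref{lin-stokes-asy}, computes $D^{\alpha}\boldsymbol{\Psi}$ and $D^{\alpha}\be$ explicitly, regroups the resulting angular profiles ($4\to3$ at first order, $6\to4$ at second order) to identify the moment combinations $\bC_{i}[\bff]$, and then takes the curl explicitly on $\left\{ \left|\bx\right|\geq2\right\}$ where $\chi\equiv1$. The only content of the proof is exactly the bookkeeping you describe.
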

\begin{proof}
The zeroth order follows directly by applying \lemref{lin-stokes-asy}.
By definition, the first order is
\begin{align*}
\bS_{1} & =-\bnabla\bwedge\biggl[\chi\left(\int_{\mathbb{R}^{2}}x_{1}\bff(\bx)\,\rd^{2}\bx\right)\bcdot\left(\partial_{1}\boldsymbol{\Psi}\right)+\chi\left(\int_{\mathbb{R}^{2}}x_{2}\bff(\bx)\,\rd^{2}\bx\right)\bcdot\left(\partial_{2}\boldsymbol{\Psi}\right)\biggr]\,,\\
 & =\bnabla\bwedge\biggl[\frac{\chi}{8\pi}\biggl[\sin(2\theta)\left(\int_{\mathbb{R}^{2}}x_{1}f_{1}-x_{2}f_{2}\right)-\cos(2\theta)\left(\int_{\mathbb{R}^{2}}x_{1}f_{2}+x_{2}f_{1}\right)\\
 & \phantom{=\bnabla\bwedge\biggl[\frac{\chi}{8\pi}\biggl[}+\left(1-2\log r\right)\left(\int_{\mathbb{R}^{2}}x_{1}f_{2}-x_{2}f_{1}\right)\biggr]\biggr]\\
 & =\bnabla\bwedge\left[\chi\bC_{1}[\bff]\bcdot\boldsymbol{\Psi}_{1}\right]=\bC_{1}[\bff]\bcdot\mathbf{E}_{1}\,,
\end{align*}
where $\bC_{1}[\bff]$, $\boldsymbol{\Psi}_{1}$ and $\mathbf{E}_{1}$
are defined in the wording of the Lemma. In the same way, we obtain
the pressure,
\begin{align*}
s_{1} & =-\chi\left(\int_{\mathbb{R}^{2}}x_{1}\bff\right)\bcdot\left(\partial_{1}\be\right)-\chi\left(\int_{\mathbb{R}^{2}}x_{1}\bff\right)\bcdot\left(\partial_{2}\be\right)\\
 & =\frac{-\chi}{2\pi r^{2}}\biggl[\cos(2\theta)\left(\int_{\mathbb{R}^{2}}x_{1}f_{1}-x_{2}f_{2}\right)+\sin(2\theta)\left(\int_{\mathbb{R}^{2}}x_{1}f_{2}+x_{2}f_{1}\right)\biggr]\\
 & =\bC_{1}[\bff]\bcdot\be_{1}\,.
\end{align*}
By explicitly taking the curl, for $\left|\bx\right|\geq2$, we have
\[
\mathbf{E}_{1}=\frac{-1}{4\pi r}\bigl(\cos(2\theta)\be_{r},\sin(2\theta)\be_{r},\be_{\theta}\bigr)\,.
\]
For the second order, we have

\begin{align*}
\bS_{2} & =\bnabla\bwedge\biggl[\frac{\chi}{8\pi r}\biggl[\sin(3\theta)\left(\int_{\mathbb{R}^{2}}(x_{1}^{2}-x_{2}^{2})f_{1}-2x_{1}x_{2}f_{2}\right)+\cos(3\theta)\left(\int_{\mathbb{R}^{2}}(x_{2}^{2}-x_{1}^{2})f_{2}-2x_{1}x_{2}f_{1}\right)\\
 & \phantom{=\bnabla\bwedge\biggl[}+\sin(\theta)\left(\int_{\mathbb{R}^{2}}2x_{1}x_{2}f_{2}-3x_{2}^{2}f_{1}-x_{1}^{2}f_{1}\right)+\cos(\theta)\left(\int_{\mathbb{R}^{2}}3x_{1}^{2}f_{2}+x_{2}^{2}f_{2}-2x_{1}x_{2}f_{1}\right)\biggr]\\
 & =\bnabla\bwedge\left[\chi\bC_{2}[\bff]\bcdot\boldsymbol{\Psi}_{2}\right]=\bC_{2}[\bff]\bcdot\mathbf{E}_{2}\,,
\end{align*}
and
\begin{align*}
s_{2} & =\frac{-\chi}{\pi r^{3}}\biggl[\cos(3\theta)\left(\int_{\mathbb{R}^{2}}(x_{1}^{2}-x_{2}^{2})f_{1}-2x_{1}x_{2}f_{2}\right)+\sin(3\theta)\left(\int_{\mathbb{R}^{2}}(x_{1}^{2}-x_{2}^{2})f_{2}+2x_{1}x_{2}f_{1}\right)\biggr]\\
 & =\bC_{2}[\bff]\bcdot\be_{2}\,.
\end{align*}
Moreover, for $\left|\bx\right|\geq2$ we have explicitly
\begin{align*}
\boldsymbol{S}_{2} & =\frac{A_{1}}{4\pi r^{2}}\bigl(\cos(2\theta),\:\sin(2\theta)\bigr)+\frac{A_{2}}{4\pi r^{2}}\bigl(\sin(2\theta),\:-\cos(2\theta)\bigr)\\
 & \phantom{==}+\frac{A_{3}}{4\pi r^{2}}\bigl(\cos(2\theta)+\cos(4\theta),\:\sin(4\theta)\bigr)+\frac{A_{4}}{4\pi r^{2}}\bigl(\sin(2\theta)+\sin(4\theta),\:-\cos(4\theta)\bigr)\,,
\end{align*}
where $\boldsymbol{A}\in\mathbb{R}^{4}$ is given in terms of the
second momenta by
\begin{align*}
A_{1} & =\frac{C_{21}-C_{23}}{2}\,, & A_{2} & =\frac{C_{24}-C_{22}}{2}\,, & A_{3} & =-C_{11}\,, & A_{4} & =C_{22}\,.
\end{align*}
\end{proof}
\begin{rem}
\label{rem:compatibility-stokes-paradox}The zeroth order $\bS_{0}$
grows like $\log r$ at infinity, which is the well-known Stokes paradox.
\end{rem}

\section{\label{sec:compatibility-symmetries}Symmetries and compatibility
conditions}

We consider the discrete symmetries\index{Symmetry!compatibility conditions}
represented on \figref{symmetries} and in particular their implications
on the asymptotic terms of the solution of the Stokes system:

{\renewcommand\theenumi{(\alph{enumi})} \renewcommand\labelenumi{\theenumi} 
\begin{enumerate}
\item The central symmetry\index{Symmetry!central}\index{Central symmetry},
\begin{equation}
\bff(\bx)=-\bff(-\bx)\label{eq:symmetry-central-second}
\end{equation}
cancels the zeroth order of the asymptotic expansion, because
\[
\boldsymbol{C}_{0}[\bff]=\int_{\mathbb{R}^{2}}\bff=\bzero\,.
\]

\item The symmetry with respect to the $x_{2}$-axis\index{Symmetry!axial}\index{Axial symmetry},
\begin{equation}
\begin{aligned}f_{1}(x_{1},x_{2}) & =-f_{1}(-x_{1},x_{2})\,,\\
f_{2}(x_{1},x_{2}) & =f_{2}(-x_{1},x_{2})\,,
\end{aligned}
\label{eq:symmetry-x1}
\end{equation}
implies that
\begin{align*}
\int_{\mathbb{R}^{2}}\left(x_{1}f_{2}+x_{2}f_{1}\right) & =0\,, & \int_{\mathbb{R}^{2}}\left(x_{1}f_{2}-x_{2}f_{1}\right) & =0\,,
\end{align*}
so that the last two components of $\boldsymbol{C}_{1}[\bff]$ vanish.
\item By considering the symmetry with respect to the $x_{1}$-axis rotated
by $\frac{\pi}{2}$,
\begin{equation}
\begin{aligned}f_{1}(x_{1},x_{2}) & =f_{2}(x_{2},x_{1})\,,\\
f_{2}(x_{1},x_{2}) & =f_{1}(x_{2},x_{1})\,,
\end{aligned}
\label{eq:symmetry-x1-rotated}
\end{equation}
we have
\begin{align*}
\int_{\mathbb{R}^{2}}\left(x_{1}f_{1}-x_{2}f_{2}\right) & =0\,, & \int_{\mathbb{R}^{2}}\left(x_{1}f_{2}-x_{2}f_{1}\right) & =0\,,
\end{align*}
so that only the second component of $\boldsymbol{C}_{1}[\bff]$ is
non zero.
\item By combining the central symmetry \eqref{symmetry-central-second},
and the symmetry with respect to the $x_{2}$-axis \eqref{symmetry-x1},
we obtain two axes of symmetry coinciding with the coordinate axes,
\begin{equation}
\begin{aligned}f_{1}(x_{1},x_{2}) & =f_{1}(x_{1},-x_{2})=-f_{1}(-x_{1},x_{2})\,,\\
f_{2}(x_{1},x_{2}) & =-f_{2}(x_{1},-x_{2})=f_{2}(-x_{1},x_{2})\,.
\end{aligned}
\label{eq:symmetry-galdi}
\end{equation}

\item By combining the symmetry with respect to the rotated $x_{1}$-axis
\eqref{symmetry-x1-rotated} together with the central symmetry \eqref{symmetry-central-second},
we obtain,
\begin{equation}
\begin{aligned}f_{1}(x_{1},x_{2}) & =f_{2}(x_{2},x_{1})=-f_{2}(-x_{2},-x_{1})\,,\\
f_{2}(x_{1},x_{2}) & =f_{1}(x_{2},x_{1})=-f_{1}(-x_{2},-x_{1})\,.
\end{aligned}
\label{eq:symmetry-yamazaki}
\end{equation}

\item Finally, by combining the symmetries \eqref{symmetry-galdi} and \eqref{symmetry-yamazaki},
which is equivalent to combining \eqref{symmetry-x1} and \eqref{symmetry-x1-rotated},
we obtain that the first two asymptotic terms vanish,
\begin{align*}
\boldsymbol{C}_{0}[\bff] & =\bzero\,, & \boldsymbol{C}_{1}[\bff] & =\bzero\,.
\end{align*}

\end{enumerate}
}

For the second order, the situation somewhat astonishing: the central
symmetry directly imply that $\boldsymbol{C}_{2}[\bff]=\bzero$ because
$\boldsymbol{C}_{2}[\bff]$ consists of moments of order two. We summarize
the implications of the symmetries on the asymptotic terms in the
following table:

\begin{center}
\begin{tabular}{llccccccccccc}
\toprule[2pt]Symmetries & \multicolumn{4}{c}{$\boldsymbol{C}_{0}[\bff]$} & \multicolumn{3}{c}{$\boldsymbol{C}_{1}[\bff]$} &  & \multicolumn{4}{c}{$\boldsymbol{C}_{2}[\bff]$}\tabularnewline
\cmidrule(l){1-1}\cmidrule(lr){3-4}\cmidrule(lr){6-8}\cmidrule(lr){10-13}$\text{(a)}$ &  & $0$ & $0$ &  & $*$ & $*$ & $*$ &  & $0$ & $0$ & $0$ & $0$\tabularnewline
$\text{(b)}$ &  & $*$ & $*$ &  & $*$ & $0$ & $0$ &  & $0$ & $*$ & $0$ & $*$\tabularnewline
$\text{(c)}$ &  & $*$ & $*$ &  & $0$ & $*$ & $0$ &  & $*$ & $*$ & $*$ & $*$\tabularnewline
$\text{(d)}=\text{(a)}+\text{(b)}$ &  & $0$ & $0$ &  & $*$ & $0$ & $0$ &  & $0$ & $0$ & $0$ & $0$\tabularnewline
$\text{(e)}=\text{(a)}+\text{(c)}$ &  & $0$ & $0$ &  & $0$ & $*$ & $0$ &  & $0$ & $0$ & $0$ & $0$\tabularnewline
$\text{(f)}=\text{(d)}+\text{(e)}=\text{(b)}+\text{(c)}$ &  & $0$ & $0$ &  & $0$ & $0$ & $0$ &  & $0$ & $0$ & $0$ & $0$\tabularnewline\bottomrule[2pt]
\end{tabular}
\par\end{center}

\begin{figure}[h]
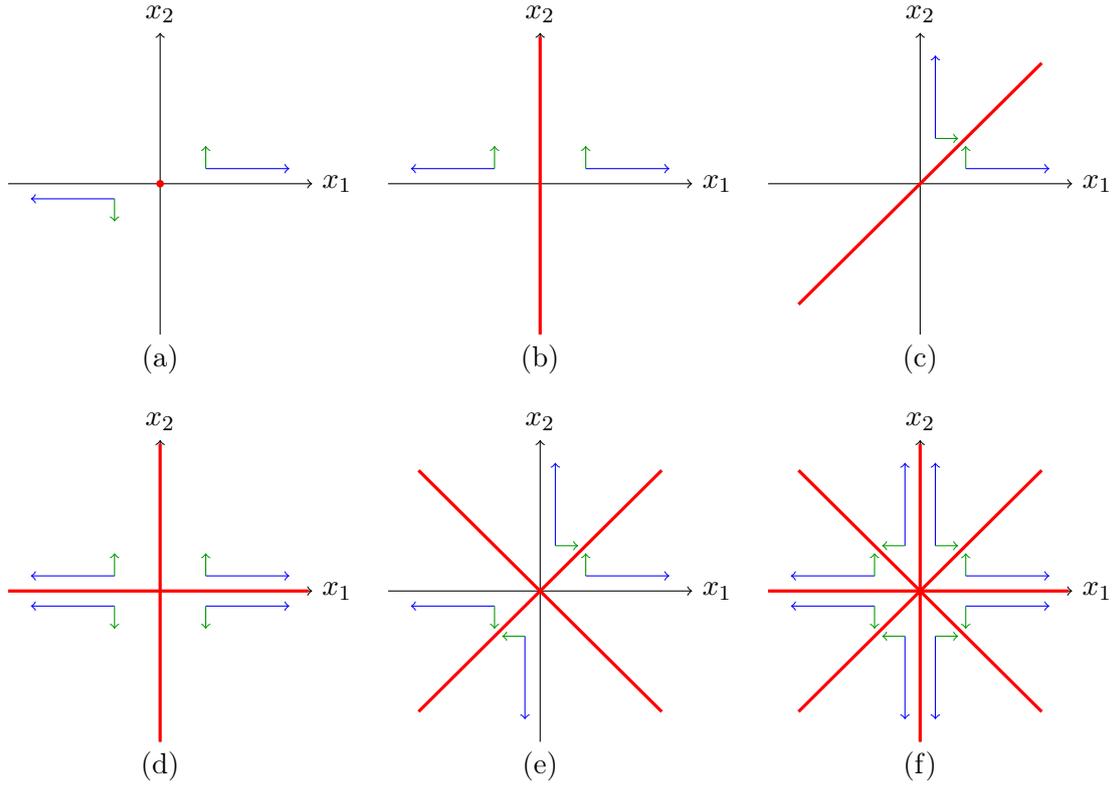

\includefigure{symmetries}

\caption{\label{fig:symmetries}Sketch of the discrete symmetries (a)-(f) respectively
given by \eqref{symmetry-central-second}-\eqref{symmetry-yamazaki}.
The axes of symmetry are drawn in red, the first component of $\protect\bff$
in blue and the second in green.}
\end{figure}

\section{\label{sec:failure-asymptotic}Failure of standard asymptotic expansion}

The aim of this paragraph is to show that even in the case the source
force $\bff\in C_{0}^{\infty}(\mathbb{R}^{2})$ has zero mean, inverting
the Stokes operator on the nonlinearity in an attempt to solve \eqref{intro-ns}
for $\Omega=\mathbb{R}^{2}$ leads to fundamental problems concerning
the behavior at infinity. We consider a source force $\bff\in C_{0}^{\infty}(\mathbb{R}^{2})$
with zero net force,
\[
\int_{\mathbb{R}^{2}}\bff=\bzero\,.
\]
By iteratively inverting the Stokes operator the aim is to generate
an expansion in term of $\nu$ of the solution $\bu$ of \eqref{intro-ns}
with the source term $\bff$ multiplied by $\nu$,
\begin{align*}
\Delta\bu-\bnabla p & =\bu\bcdot\bnabla\bu+\nu\bff\,, & \bnabla\bcdot\bu & =0\,,
\end{align*}
in the form
\[
\bu\approx\sum_{n=1}^{\infty}\nu^{n}\bu_{n}\qquad\text{as}\qquad\nu\approx0\,.
\]
The following result shows that the successive iterates $\bu_{n}$
decay in general less and less at infinity, so the question of the
convergence of the previous series is highly nontrivial.
\begin{prop}
The first order has the following asymptotic expansion,
\begin{align*}
\bu_{1} & =\frac{-1}{4\pi r}\left[A\cos(2\theta)\be_{r}+B\sin(2\theta)\be_{r}+M\be_{\theta}\right]+O(r^{-2+\varepsilon})\,,\\
p_{1} & =\frac{1}{2\pi r^{2}}\left(A\cos(2\theta)+B\sin(2\theta)\right)+O(r^{-3+\varepsilon})\,,
\end{align*}
for any $\varepsilon>0$, the second order satisfies
\begin{align*}
\bu_{2} & =\frac{M\log r}{2(4\pi)^{2}r}\left(A\sin(2\theta)-B\cos(2\theta)\right)\be_{r}+O(r^{-1})\,,\\
p_{2} & =\frac{M\log r}{(4\pi)^{2}r^{2}}\left(A\sin(2\theta)-B\cos(2\theta)\right)+O(r^{-2})\,,
\end{align*}
and finally, the expansion of the third order is given by
\begin{align*}
\bu_{3} & =\frac{M^{2}\log^{2}r}{(8\pi)^{3}r}\left(A\cos(2\theta)+B\sin(2\theta)\right)\be_{r}+O(r^{-1}\log r)\,,\\
p_{3} & =\frac{2M^{2}\log^{2}r}{(8\pi)^{3}r^{2}}\left(A\cos(2\theta)+B\sin(2\theta)\right)+O(r^{-1}\log r)\,,
\end{align*}
for $M\neq0$ and by
\begin{align*}
\bu_{3} & =-\frac{\left(A^{2}+B^{2}\right)\log r}{12(8\pi)^{3}r}\left(A\cos(2\theta)+B\sin(2\theta)\right)\be_{r}+O(r^{-1})\,,\\
p_{3} & =-\frac{\left(A^{2}+B^{2}\right)\log r}{12(8\pi)^{3}r^{2}}\left(A\cos(2\theta)+B\sin(2\theta)\right)+O(r^{-1})\,,
\end{align*}
for $M=0$. The constants $A,B,M\in\mathbb{R}$ are given by
\begin{align*}
A & =\int_{\mathbb{R}^{2}}\left(x_{1}f_{1}-x_{2}f_{2}\right)\,, & B & =\int_{\mathbb{R}^{2}}\left(x_{1}f_{2}+x_{2}f_{1}\right)\,, & M & =\int_{\mathbb{R}^{2}}\left(x_{1}f_{2}-x_{2}f_{1}\right)\,.
\end{align*}
Therefore, unless $A=B=0$, the third order does not decay like $r^{-1}$
and therefore decays less than the Stokes solution $\bu_{1}$.\end{prop}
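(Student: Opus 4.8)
The plan is to run the perturbation cascade explicitly for three steps and to track, at each step, how a logarithm is generated by a resonance of the Stokes operator on homogeneous functions. Substituting $\bu=\sum_{n\ge1}\nu^n\bu_n$ and $p=\sum_{n\ge1}\nu^np_n$ into \eqref{compatibility-ns-force} with $\bff$ replaced by $\nu\bff$ and matching powers of $\nu$, the iterates are the decaying (or least-growing) divergence-free solutions of the Stokes systems
\begin{align*}
\Delta\bu_1-\bnabla p_1 &= \bff\,, &
\Delta\bu_2-\bnabla p_2 &= \bu_1\bcdot\bnabla\bu_1\,, &
\Delta\bu_3-\bnabla p_3 &= \bu_1\bcdot\bnabla\bu_2+\bu_2\bcdot\bnabla\bu_1\,.
\end{align*}
The first iterate is handled directly by \lemref{lin-stokes-asy} and \lemref{lin-stokes-asy-explicit}: the hypothesis $\int_{\mathbb{R}^2}\bff=\bzero$ forces $\bC_0[\bff]=\bzero$, so the $\log r$ term $\bS_0$ vanishes, and taking $q$ slightly below $2$ gives $\bu_1=\bS_1+O(r^{-2+\varepsilon})$ and $p_1=s_1+O(r^{-3+\varepsilon})$, which is exactly the asserted expansion with $(A,B,M)=\bC_1[\bff]$.

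For the higher iterates I would pass to the vorticity--stream-function formulation, which isolates the resonance. Writing $\bu_n=\bnabla\wedge\psi_n$ and $\omega_n=\bnabla\bwedge\bu_n=\Delta\psi_n$, the curl of the momentum equation together with the two-dimensional identity $\bnabla\bwedge(\bv\bcdot\bnabla\bv)=\bv\bcdot\bnabla(\bnabla\bwedge\bv)$, valid for divergence-free $\bv$, reduces the cascade to the scalar Poisson problems $\Delta\omega_2=\bu_1\bcdot\bnabla\omega_1$ and $\Delta\omega_3=\bu_1\bcdot\bnabla\omega_2+\bu_2\bcdot\bnabla\omega_1$. Using the explicit leading parts of $\bu_1$ and $\omega_1$ one computes that $\bu_1\bcdot\bnabla\omega_1$ is homogeneous of degree $-4$ and carries only the angular frequencies $0,\pm2,\pm4$; of these, the frequency-$\pm2$ piece is resonant because $r^{-2}\e^{\pm2\i\theta}$ is annihilated by $\Delta$, so its inversion produces a term proportional to $r^{-2}\log r$ in $\omega_2$. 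The crucial point is that this resonant frequency-$\pm2$ part of $\bu_1\bcdot\bnabla\omega_1$ comes entirely from the cross term between the rotational mode $M\be_\theta/r$ of $\bu_1$ and the strain mode of $\omega_1$ — the self-interaction of the $\be_r$-modes of $\bu_1$ only feeds the frequencies $0$ and $\pm4$ — so its coefficient is a fixed multiple of $M$ times $(A\cos2\theta+B\sin2\theta)$; inverting once more via $\Delta\psi_2=\omega_2$ and applying $\bnabla\wedge$ recovers the stated $\bu_2=\frac{M\log r}{2(4\pi)^2r}(A\sin2\theta-B\cos2\theta)\be_r+O(r^{-1})$. All non-resonant and faster-decaying contributions are controlled by \lemref{lin-stokes-asy}, applied after subtracting enough explicit homogeneous terms that the residual source has non-integer decay exponent (so that the excluded integer exponents are avoided).

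The third order is obtained by repeating this bookkeeping. When $M\ne0$, the $r^{-3}\log r$, frequency-$\pm2$ part of $\bu_1\bcdot\bnabla\bu_2+\bu_2\bcdot\bnabla\bu_1$ falls on the same resonance and produces an $r^{-1}\log^2r$ term in $\bu_3$ with coefficient proportional to $M^2(A\cos2\theta+B\sin2\theta)$; when $M=0$ the iterate $\bu_2$ carries no logarithm, but then the first resonant contribution is the $r^{-3}$, frequency-$\pm2$ part of the same nonlinearity, whose coefficient works out to a nonzero multiple of $(A^2+B^2)(A\cos2\theta+B\sin2\theta)$, giving an $r^{-1}\log r$ term in $\bu_3$. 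In either case the coefficient of the leading logarithm in $\bu_3$ is a nonzero scalar multiple of $(A\cos2\theta+B\sin2\theta)$ (times $M^2$ or $A^2+B^2$), hence it is nonzero unless $A=B=0$, so $\bu_3$ decays strictly slower than $\bu_1\sim r^{-1}$ whenever $(A,B)\neq\bzero$.

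I expect the main obstacle to be the resonance bookkeeping through two levels of the cascade: correctly identifying which angular Fourier modes of each nonlinearity land on the kernel of the relevant homogeneous (biharmonic) operator, and computing the exact constants — in particular verifying that the $\be_r$-mode self-interactions of $\bu_1$ and of $\bu_2$ do not cancel the rotational contribution, and keeping the two mutually exclusive cases $M\ne0$ and $M=0$ straight. The only analytic input needed to upgrade the formal expansion to genuine asymptotics is \lemref{lin-stokes-asy}, provided one always peels off enough homogeneous terms to sidestep its integer-exponent exclusion.
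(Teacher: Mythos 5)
Your proposal is correct and follows essentially the same route as the paper: iterate the Stokes inversion, pass to the stream-function/vorticity formulation (the paper solves $\Delta^{2}\psi_{n}=\bnabla\bwedge(\text{nonlinearity})$ with the separated ansatz $f(\theta)+g(\theta)\log r$, resp.\ $+h(\theta)\log^{2}r$ at third order, which is your two-step $\Delta\omega=\cdots$, $\Delta\psi=\omega$ inversion written as one biharmonic solve), identify the resonant angular frequency $\pm2$ at homogeneity $-2$ as the source of the logarithms, and control the non-resonant remainder with \lemref{lin-stokes-asy} after peeling off the explicit homogeneous terms. Your structural observations — that the resonant frequency-$2$ part of $\bu_{1}\bcdot\bnabla\omega_{1}$ is exactly the cross term $M\be_{\theta}/r$ against the strain modes, and the case split $M\neq0$ versus $M=0$ at third order — match the paper's explicit ODE computation precisely.
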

\begin{proof}
The first order is given by the solution of the Stokes equation,
\begin{align*}
\Delta\bu_{1}-\bnabla p_{1} & =\bff\,, & \bnabla\bcdot\bu_{1} & =0\,.
\end{align*}
By using the asymptotic expansion of the solution of the Stokes equation
obtained in \lemref{lin-stokes-asy,lin-stokes-asy-explicit}, we have
\begin{align*}
\bu_{1} & =\bS_{1}+\bR_{1}\,,\\
p_{1} & =s_{1}+r_{1}\,,
\end{align*}
where $\bS_{1}\in\mathcal{B}_{1,1}$, $\bR_{1}\in\mathcal{B}_{1,2}$,
$s_{1}\in\mathcal{B}_{0,2}$ and $r_{1}\in\mathcal{B}_{0,3}$. Explicitly,
we have
\begin{align*}
\bS_{1} & =\bC_{1}\bcdot\mathbf{E}_{1}\,, & s_{1} & =\bC_{1}\bcdot\be_{1}\,,
\end{align*}
where $\bC_{1}=\left(A,B,M\right)\in\mathbb{R}^{3}$ and
\begin{align*}
\mathbf{E}_{1} & =\bnabla\bwedge\left(\chi\boldsymbol{\Psi}_{1}\right)\,, & \boldsymbol{\Psi}_{1} & =\frac{1}{8\pi}\bigl(\sin(2\theta),\:-\cos(2\theta),\:1-2\log r\bigr)\,,
\end{align*}
\[
\be_{1}=\frac{\chi}{2\pi r^{2}}\bigl(\cos(2\theta),\:\sin(2\theta),\:0\bigr)\,.
\]
The second order has to satisfy the equation
\begin{equation}
\Delta\bu_{2}-\bnabla p_{2}=\bu_{1}\bcdot\bnabla\bu_{1}\,.\label{eq:nonlinear-Stokes-order2}
\end{equation}
However, since $\bS_{1}\bcdot\bnabla\bS_{1}\in\mathcal{B}_{0,3}$,
we cannot apply \lemref{lin-stokes-asy} in order to obtain the asymptotic
expansion of $\bu_{2}$. We make an ansatz that explicitly cancels
this term for $r>2$. We make the following ansatz for the stream
function,
\[
\psi_{2}=f_{2}(\theta)+g_{2}(\theta)\log r\,,
\]
and consider the equation for the vorticity
\[
\Delta^{2}\psi_{2}=\bnabla\bwedge\left(\bS_{1}\bcdot\bnabla\bS_{1}\right)\,,
\]
for $r>2$. We obtain the following ordinary differential equations,
\begin{align*}
g_{2}^{(4)}+4g_{2}^{(2)} & =0\,,\\
f_{2}^{(4)}+4f_{2}^{(2)}-4g_{2}^{(2)} & =\frac{1}{4\pi^{2}}\left(M-A\sin(2\theta)+B\cos(2\theta)\right)\left(A\cos(2\theta)+B\sin(2\theta)\right)\,.
\end{align*}
The periodic solutions for $g_{2}$ are
\[
g_{2}(\theta)=\lambda_{A}\cos(2\theta)+\lambda_{B}\sin(2\theta)\,.
\]
Periodic solutions for $f_{2}$ exist if and only if
\begin{align*}
\lambda_{A} & =\frac{AM}{(8\pi)^{2}}\,, & \lambda_{B}=\frac{BM}{(8\pi)^{2}}\,,
\end{align*}
and a particular solution is given by
\[
f_{2}(\theta)=\frac{1}{6(16\pi)^{2}}\left(\left(B^{2}-A^{2}\right)\sin(4\theta)+2AB\cos(4\theta)\right)\,.
\]
Therefore, by defining
\begin{align*}
\boldsymbol{A}_{2} & =\bnabla\bwedge\left(\chi\psi_{2}\right)\,,\\
a_{2} & =\frac{\chi}{r^{2}}\left[\left(1-2\log r\right)g_{1}^{\prime}(\theta)-2f_{1}^{\prime}(\theta)-\frac{A^{2}+B^{2}+2M^{2}}{(8\pi)^{2}}\right]\,,
\end{align*}
we obtain
\[
\Delta\boldsymbol{A}_{2}-\bnabla a_{2}=\bS_{1}\bcdot\bnabla\bS_{1}+\boldsymbol{\delta}_{2}\,,
\]
where $\boldsymbol{\delta}_{2}\in C_{0}^{\infty}(\mathbb{R}^{2})$
has compact support. Now by applying \lemref{lin-stokes-asy-explicit}
to \eqref{nonlinear-Stokes-order2}, we obtain
\begin{align*}
\bu_{2} & =\boldsymbol{A}_{2}+\bS_{2}+\bR_{2}\,, & p_{2} & =a_{2}+s_{2}+r_{2}\,,
\end{align*}
where $\bS_{2}\in\mathcal{B}_{1,1}$, $\bR_{2}\in\mathcal{B}_{1,2-\varepsilon}$,
$s_{2}\in\mathcal{B}_{0,2}$, and $r_{2}\in\mathcal{B}_{0,3-\varepsilon}$
for all $\varepsilon>0$. Again, we have $\bS_{2}=\bC_{2}\bcdot\mathbf{E}_{1}$
and $s_{2}=\bC_{2}\bcdot\be_{1}$ where $\bC_{2}\in\mathbb{R}^{3}$.
The terms $\boldsymbol{A}_{2}$ and $a_{2}$ contain explicit logarithms
when $M\neq0$ and $A^{2}+B^{2}\neq0$, 
\begin{align*}
\bu_{2} & =\frac{M\log r}{2(4\pi)^{2}r}\left(A\sin(2\theta)-B\cos(2\theta)\right)\be_{r}+O(r^{-1})\,,\\
p_{2} & =\frac{M\log r}{(4\pi)^{2}r^{2}}\left(A\sin(2\theta)-B\cos(2\theta)\right)+O(r^{-2})\,.
\end{align*}
In case where $M=0$, the second order has no logarithm, \emph{i.e.}
$\bu_{2}\in\mathcal{B}_{1,1}$ and $p_{2}\in\mathcal{B}_{0,2}$. However,
we will see that the third order has logarithms as soon as $A^{2}+B^{2}\neq0$.
The third order has to satisfy
\[
\Delta\bu_{3}-\bnabla p_{3}=\bu_{1}\bcdot\bnabla\bu_{2}+\bu_{2}\bcdot\bnabla\bu_{1}\,.
\]
In the same spirit as before, we make an ansatz in order to explicitly
cancel the terms of the right-hand-side that are not $o(r^{-3})$.
We make the following ansatz for the approximated stream function
at third order,
\[
\psi_{3}=f_{3}(\theta)+g_{3}(\theta)\log r+h_{3}(\theta)\log^{2}r\,.
\]
The periodic solutions are given by
\begin{align*}
h_{3}(\theta) & =\frac{M}{32\pi}g_{2}^{\prime}(\theta)\,,\\
g_{3}(\theta) & =\frac{M}{16\pi}f_{2}^{\prime}(\theta)+\frac{A^{2}+B^{2}-6M^{2}}{3(16\pi)^{3}}\left(A\sin(2\theta)-B\cos(2\theta)\right)\\
 & \phantom{=}+\frac{M}{2(4\pi)^{2}}\left(C_{21}\cos(2\theta)+C_{22}\sin(2\theta)\right)\,,\\
f_{3}(\theta) & =\frac{1}{9(32\pi)^{3}}\left(B\left(B^{2}-3A^{2}\right)\cos(6\theta)+A\left(A^{2}-3B^{2}\right)\sin(6\theta)\right)\\
 & \phantom{=}+\frac{1}{6(8\pi)^{2}}\left(\left(AC_{22}+BC_{21}\right)\cos(4\theta)+\left(BC_{22}-AC_{21}\right)\sin(4\theta\right)\,.
\end{align*}
Therefore, by defining $\boldsymbol{A}_{3}=\bnabla\bwedge\left(\chi\psi_{3}\right)$
and $a_{3}$ as a suitable pressure that we do not write here explicitly,
we obtain that
\[
\Delta\boldsymbol{A}_{3}-\bnabla a_{3}=\bS_{1}\bcdot\bnabla\left(\boldsymbol{A}_{2}+\bS_{2}\right)+\left(\boldsymbol{A}_{2}+\bS_{2}\right)\bcdot\bnabla\bS_{1}+\boldsymbol{\delta}_{3}\,,
\]
where $\boldsymbol{\delta}_{3}\in C_{0}^{\infty}(\mathbb{R}^{2})$.
We then obtain the following asymptotic expansion for $\bu_{3}$,
\begin{align*}
\bu_{3} & =\boldsymbol{A}_{3}+\bC_{3}\bcdot\mathbf{E}_{1}+\bR_{3}\,, & p_{3} & =a_{3}+\bC_{3}\bcdot\be_{1}+r_{2}\,,
\end{align*}
where $\bC_{3}\in\mathbb{R}^{3}$ and $\bR_{3}\in\mathcal{B}_{1,2-2\varepsilon}$,
$r_{3}\in\mathcal{B}_{0,3-2\varepsilon}$ for all $\varepsilon>0$.
By explicit calculations, the asymptotic expansion of the third order
is proven.
\end{proof}

\section{\label{sec:compatibility-ns}Navier-Stokes equations with compatibility
conditions}

We look at strong solutions to the stationary Navier-Stokes equations
in $\mathbb{R}^{2}$,
\begin{align}
\Delta\bu-\bnabla p & -\bu\bcdot\bnabla\bu=\bff\,, & \bnabla\bcdot\bu & =0\,, & \lim_{|\bx|\to\infty}\bu & =\bzero\,,\label{eq:ns-force}
\end{align}
and show that for source-terms $\bff$ with zero mean and in a space
of co-dimension three, the Navier-Stokes equations admit a solution
decaying like $\left|\bx\right|^{-2}$:
\begin{thm}
\label{thm:strong-under-compatibility}For all $\varepsilon\in\left(0,1\right)$,
there exists $\nu>0$ such that for any $\bk\in\mathcal{B}_{0,4+\varepsilon}$
satisfying
\begin{align*}
\left\Vert \bk;\mathcal{B}_{0,4+\varepsilon}\right\Vert  & \leq\nu\,, & \int_{\mathbb{R}^{2}}\bk & =\bzero\,,
\end{align*}
there exists $\boldsymbol{a}\in\mathbb{R}^{3}$ such that there exists
$\bu$ and $p$ satisfying \eqref{ns-force} with\index{Existence!strong solutions!under compatibility conditions}\index{Compatibility conditions!Navier-Stokes equations}\index{Navier-Stokes equations!existence of strong solutions!under compatibility
conditions}\index{Asymptotic behavior!Navier-Stokes solutions!under compatibility conditions}\index{Strong solutions!under compatibility conditions}\index{Navier-Stokes equations!asymptotic behavior!for bu_{infty}=bzero
@for $\bu_{\infty}=\bzero$}
\[
\bff=\bk+\frac{\e^{-\left|\bx\right|{}^{2}}}{\pi}\left[a_{1}\left(x_{1},-x_{2}\right)+a_{2}\left(x_{2},x_{1}\right)+a_{3}\left(-x_{2},x_{1}\right)\right]\,.
\]
Moreover, there exists $\boldsymbol{A}\in\mathbb{R}^{4}$ such that
\begin{align*}
\bu & =\frac{A_{1}}{4\pi r^{2}}\bigl(\cos(2\theta),\:\sin(2\theta)\bigr)+\frac{A_{2}}{4\pi r^{2}}\bigl(\sin(2\theta),\:-\cos(2\theta)\bigr)\\
 & \phantom{=}+\frac{A_{3}}{4\pi r^{2}}\bigl(\cos(2\theta)+\cos(4\theta),\:\sin(4\theta)\bigr)+\frac{A_{4}}{4\pi r^{2}}\bigl(\sin(2\theta)+\sin(4\theta),\:-\cos(4\theta)\bigr)+O(r^{-2-\varepsilon})\,.
\end{align*}
\end{thm}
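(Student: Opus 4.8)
The plan is to solve \eqref{ns-force} by a contraction argument in the weighted space $\mathcal{B}_{1,2}$, using the three parameters $\ba=(a_{1},a_{2},a_{3})$ to enforce, at each iteration, the compatibility condition that makes the Stokes solution decay like $\left|\bx\right|^{-2}$. Write the Gaussian correction as
\[
\boldsymbol{c}_{\ba}(\bx)=\frac{\e^{-\left|\bx\right|^{2}}}{\pi}\left[a_{1}\left(x_{1},-x_{2}\right)+a_{2}\left(x_{2},x_{1}\right)+a_{3}\left(-x_{2},x_{1}\right)\right].
\]
A direct computation using $\int_{\mathbb{R}^{2}}\left|\bx\right|^{2}\e^{-\left|\bx\right|^{2}}/\pi=1$ and the vanishing of the mixed Gaussian moments by parity shows that this family is tuned to affect \emph{only} the first moments, in the notation of \lemref{lin-stokes-asy-explicit}: $\boldsymbol{C}_{0}[\boldsymbol{c}_{\ba}]=\bzero$ and $\boldsymbol{C}_{1}[\boldsymbol{c}_{\ba}]=\ba$. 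This is the only place where a genuine idea enters; everything else is a standard fixed point.

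First I would record the linear theory in the compatibility subspace. If $\boldsymbol{g}\in\mathcal{B}_{0,4+\varepsilon}$ satisfies $\boldsymbol{C}_{0}[\boldsymbol{g}]=\boldsymbol{C}_{1}[\boldsymbol{g}]=\bzero$, then by \lemref{lin-stokes-asy} with $q=2+\varepsilon\notin\mathbb{N}$ the Stokes solution $\bu=\mathbf{E}*\boldsymbol{g}$, $p=\be*\boldsymbol{g}$ satisfies $\bu=\bS_{2}+\bR$ with $\bR\in\mathcal{B}_{1,2+\varepsilon}$, and, since every estimate in the proof of that lemma is linear in the force, $\left\Vert\bu;\mathcal{B}_{1,2}\right\Vert\leq C\left\Vert\boldsymbol{g};\mathcal{B}_{0,4+\varepsilon}\right\Vert$; moreover $\bnabla\bcdot\bu=0$ because each row of $\mathbf{E}$ is a rotated gradient, hence divergence free. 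Next, for $\bu\in\mathcal{B}_{1,2}$ one has $\bu\bcdot\bnabla\bu=\bnabla\bcdot(\bu\otimes\bu)\in\mathcal{B}_{0,5}\hookrightarrow\mathcal{B}_{0,4+\varepsilon}$; the boundary terms at infinity in the integrations by parts defining $\boldsymbol{C}_{0}[\bu\bcdot\bnabla\bu]$ and $\boldsymbol{C}_{1}[\bu\bcdot\bnabla\bu]$ vanish because the integrands decay over $\partial B_{R}$ like $\left|\bx\right|^{-1}$ and $\left|\bx\right|^{-2}$ respectively; hence $\boldsymbol{C}_{0}[\bu\bcdot\bnabla\bu]=\bzero$, $\left|\boldsymbol{C}_{1}[\bu\bcdot\bnabla\bu]\right|=\bigl|\int_{\mathbb{R}^{2}}\bu\otimes\bu\bigr|\lesssim\left\Vert\bu;\mathcal{B}_{1,2}\right\Vert^{2}$, and the torque component of $\boldsymbol{C}_{1}[\bu\bcdot\bnabla\bu]$ is identically zero, consistently with $M$ being an invariant quantity.

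Then I would set up the map. Given $\bu\in\mathcal{B}_{1,2}$, put $\ba(\bu)=-\boldsymbol{C}_{1}[\bk]-\boldsymbol{C}_{1}[\bu\bcdot\bnabla\bu]$ and $\boldsymbol{g}_{\bu}=\bu\bcdot\bnabla\bu+\bk+\boldsymbol{c}_{\ba(\bu)}$; by the choice of $\ba(\bu)$ and the identity $\boldsymbol{C}_{1}[\boldsymbol{c}_{\ba}]=\ba$ one has $\boldsymbol{C}_{0}[\boldsymbol{g}_{\bu}]=\boldsymbol{C}_{1}[\boldsymbol{g}_{\bu}]=\bzero$, so $\mathcal{T}(\bu):=\mathbf{E}*\boldsymbol{g}_{\bu}$ lies in $\mathcal{B}_{1,2}$ with $\left\Vert\mathcal{T}(\bu);\mathcal{B}_{1,2}\right\Vert\leq C\bigl(\left\Vert\bu;\mathcal{B}_{1,2}\right\Vert^{2}+\nu\bigr)$, using $\left\Vert\boldsymbol{c}_{\ba(\bu)};\mathcal{B}_{0,4+\varepsilon}\right\Vert\lesssim\left|\ba(\bu)\right|\lesssim\nu+\left\Vert\bu;\mathcal{B}_{1,2}\right\Vert^{2}$. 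On the ball $\left\Vert\bu;\mathcal{B}_{1,2}\right\Vert\leq 2C\nu$ this is a self-map for $\nu$ small, and the bilinearity of $(\bu,\bv)\mapsto\bu\bcdot\bnabla\bv$, together with $\ba(\bu)-\ba(\bv)=-\boldsymbol{C}_{1}[\bu\bcdot\bnabla\bu-\bv\bcdot\bnabla\bv]$ and the fact that $\boldsymbol{g}_{\bu}-\boldsymbol{g}_{\bv}$ again lies in the compatibility subspace, gives $\left\Vert\mathcal{T}(\bu)-\mathcal{T}(\bv);\mathcal{B}_{1,2}\right\Vert\leq C\nu\left\Vert\bu-\bv;\mathcal{B}_{1,2}\right\Vert$, a contraction. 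The fixed point $\bu$, together with $\ba=\ba(\bu)$, $\bff=\bk+\boldsymbol{c}_{\ba}$ and $p=\be*\boldsymbol{g}_{\bu}$, solves \eqref{ns-force}; the limit $\lim_{\left|\bx\right|\to\infty}\bu=\bzero$ holds since $\bu\in\mathcal{B}_{1,2}$, and elliptic regularity upgrades $(\bu,p)$ to a strong solution (to $C^{\infty}$ when $\bk\in C^{\infty}$). Finally, \lemref{lin-stokes-asy-explicit} gives $\bu=\bS_{2}+\bR$ with $\bS_{2}$ of exactly the stated four-parameter form for $\left|\bx\right|\geq 2$ and $\bR\in\mathcal{B}_{1,2+\varepsilon}$, \emph{i.e.} $\bR=O(r^{-2-\varepsilon})$, where $\boldsymbol{A}\in\mathbb{R}^{4}$ is the vector attached to the second moments $\boldsymbol{C}_{2}[\boldsymbol{g}_{\bu}]$ in \lemref{lin-stokes-asy-explicit}.

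I expect the main obstacle to be purely the bookkeeping of decay rates: checking that every boundary term at infinity vanishes when the nonlinearity is passed through the integration-by-parts identities for $\boldsymbol{C}_{0}$ and $\boldsymbol{C}_{1}$ — which is precisely why one must work with decay $\left|\bx\right|^{-2}$ rather than the borderline $\left|\bx\right|^{-1}$ — and extracting the clean linear bound $\left\Vert\mathbf{E}*\boldsymbol{g};\mathcal{B}_{1,2}\right\Vert\lesssim\left\Vert\boldsymbol{g};\mathcal{B}_{0,4+\varepsilon}\right\Vert$ on the compatibility subspace from the (otherwise purely qualitative) statement of \lemref{lin-stokes-asy}. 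No single step is deep; the substance is the observation $\boldsymbol{C}_{1}[\boldsymbol{c}_{\ba}]=\ba$, which decouples the adjustment of the three compatibility constants from the fixed-point iteration.
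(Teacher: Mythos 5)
Your proposal is correct and follows essentially the same route as the paper: a fixed point in $\mathcal{B}_{1,2}$ where the vector $\boldsymbol{a}$ is chosen at each iterate as minus the first-moment functional of $\bk$ plus the nonlinearity (your identity $\boldsymbol{C}_{1}[\boldsymbol{c}_{\ba}]=\ba$ is exactly what makes the paper's choice $\boldsymbol{a}=-\Lambda(\bu)$ cancel $\boldsymbol{C}_{1}[\bN]$), with the asymptotics read off from \lemref{lin-stokes-asy,lin-stokes-asy-explicit}. You merely spell out the contraction and the quantitative linear bound that the paper leaves implicit.
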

\begin{proof}
We perform a fixed point argument on $\bu$ in the space $\mathcal{B}_{1,2}$.
We have
\begin{align}
\Delta\bu-\bnabla p & =\bN\,, & \bnabla\bcdot\bu & =0\,,\label{eq:compatibility-stokes-N}
\end{align}
with
\[
\bN=\bff+\bnabla\bcdot\left(\bu\otimes\bu\right)\,.
\]
By using \lemref{lin-stokes-asy-explicit}, the compatibility conditions
for the solution $\bu$ of the Stokes system \eqref{compatibility-stokes-N}
to decay faster than $r^{-1}$ are $\boldsymbol{C}_{0}[\bN]=\bzero$
and $\boldsymbol{C}_{1}[\bN]=\bzero$. By using the explicit form
of $\bN$, we have 
\begin{align*}
\boldsymbol{C}_{0}[\bN] & =\bzero\,, & \boldsymbol{C}_{1}[\bN] & =\boldsymbol{a}+\Lambda(\bu)\,,
\end{align*}
where
\[
\Lambda(\bu)=\int_{\mathbb{R}^{2}}\bigl(x_{1}k_{1}-x_{2}k_{2}-u_{1}u_{1}+u_{2}u_{2},\, x_{1}k_{2}+x_{2}k_{1}-2u_{1}u_{2},\, x_{1}k_{2}-x_{2}k_{1}\bigr)\,.
\]
By defining $\boldsymbol{a}=-\Lambda(\bu)$, the compatibility conditions
of the Stokes system are satisfied, and since $\bN\in\mathcal{B}_{0,4+\varepsilon}$,
then \lemref{lin-stokes-asy} proves that $\bu\in\mathcal{B}_{1,2}$.

Since,
\begin{align*}
\left|\Lambda(\bu)\right| & \leq6\left\Vert \bk;\mathcal{B}_{0,4+\varepsilon}\right\Vert \int_{\mathbb{R}^{2}}\frac{1}{1+\left|\bx\right|^{3+\varepsilon}}\rd^{2}\bx+3\left\Vert \bu;\mathcal{B}_{0,2}\right\Vert ^{2}\int_{\mathbb{R}^{2}}\frac{1}{1+\left|\bx\right|^{4}}\rd^{2}\bx\\
 & \le8\left\Vert \bk;\mathcal{B}_{0,4+\varepsilon}\right\Vert +3\left\Vert \bu;\mathcal{B}_{0,2}\right\Vert ^{2},
\end{align*}
we have
\begin{align*}
\left\Vert \bN;\mathcal{B}_{0,4+\varepsilon}\right\Vert  & \leq\left\Vert \boldsymbol{k};\mathcal{B}_{0,4+\varepsilon}\right\Vert +\left\Vert \bu;\mathcal{B}_{1,2}\right\Vert ^{2}+\left|\Lambda(\bu)\right|\\
 & \leq9\left\Vert \boldsymbol{k};\mathcal{B}_{0,4+\varepsilon}\right\Vert +4\left\Vert \bu;\mathcal{B}_{1,2}\right\Vert ^{2}.
\end{align*}
By hypothesis $\left\Vert \boldsymbol{k};\mathcal{B}_{0,4+\varepsilon}\right\Vert \leq\nu$,
so by taking $\varepsilon>0$ small enough, we can perform a fixed
point argument which shows that the Navier-Stokes system admits a
solution $\bu\in\mathcal{B}_{1,2}$.

Moreover, by using \lemref{lin-stokes-asy} and the explicit form
shown in \lemref{lin-stokes-asy-explicit}, the asymptotic behavior
is proven.
\end{proof}
Under symmetry \eqref{symmetry-yamazaki} sketched on \figref{symmetries}f,
the compatibility conditions $\boldsymbol{C}_{0}[\bN]=\bzero$ and
$\boldsymbol{C}_{1}[\bN]=\bzero$ are satisfied for $a_{1}=a_{2}=a_{3}=0$,
so the previous theorem shows that $\bu$ decays faster than $r^{-2}$:
\begin{cor}
\label{cor:strong-under-symmetries}For all $\varepsilon\in\left(0,1\right)$,
there exists $\nu>0$ such that for any $\bff\in\mathcal{B}_{0,4+\varepsilon}$
satisfying\index{Existence!strong solutions!under symmetries}\index{Navier-Stokes equations!existence of strong solutions!under symmetries}\index{Asymptotic behavior!Navier-Stokes solutions!under symmetries}\index{Strong solutions!under symmetries}\index{Navier-Stokes equations!asymptotic behavior!for bu_{infty}=bzero
@for $\bu_{\infty}=\bzero$}
\begin{align*}
\left\Vert \bff;\mathcal{B}_{0,4+\varepsilon}\right\Vert  & \leq\nu\,, & \int_{\mathbb{R}^{2}}\bff & =\bzero\,,
\end{align*}
and the symmetry conditions \eqref{symmetry-galdi} and \eqref{symmetry-yamazaki},
there exists $\bu$ and $p$ satisfying \eqref{ns-force}, and moreover
$\bu=O(r^{-2-\varepsilon})$ and $p=O(r^{-3-\varepsilon})$.\end{cor}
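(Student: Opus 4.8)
The plan is to read the corollary off \thmref{strong-under-compatibility} by running its fixed-point argument inside the closed subspace of $\mathcal{B}_{1,2}$ consisting of vector fields invariant under the symmetries \eqref{symmetry-galdi} and \eqref{symmetry-yamazaki}, and then to squeeze one more term out of the Stokes expansion of \lemref{lin-stokes-asy}. First I would note that the Navier--Stokes system \eqref{ns-force} is invariant under both \eqref{symmetry-galdi} and \eqref{symmetry-yamazaki}; since these are realized by orthogonal changes of variables that commute with convolution by the Stokes tensor $\mathbf{E}$, and since $\bnabla\bcdot(\bu\otimes\bu)=\bu\bcdot\bnabla\bu$ inherits the symmetry of $\bu$, the solution operator $\bu\mapsto\bu'$, where $(\bu',p')$ solves $\Delta\bu'-\bnabla p'=\bff+\bnabla\bcdot(\bu\otimes\bu)$ with vanishing compatibility correction, maps this subspace into itself. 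The a priori bound used in the proof of \thmref{strong-under-compatibility} is unaffected, so the Banach fixed point can be carried out there and produces a symmetric solution $\bu$.

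The gain of working in the symmetric subspace is that the correction vector $\boldsymbol{a}$ of \thmref{strong-under-compatibility} may be taken to be zero. Indeed, \eqref{symmetry-galdi} together with \eqref{symmetry-yamazaki} is exactly case (f) of the table in \secref{compatibility-symmetries}, so all three linear moments of $\bff=\bk$ entering $\Lambda(\bu)$ vanish; moreover, for a symmetric $\bu$ the quadratic contributions also cancel --- $\int_{\mathbb{R}^2}u_1u_2=0$ because the integrand is odd in $x_1$ by \eqref{symmetry-x1}, and $\int_{\mathbb{R}^2}(u_1^2-u_2^2)=0$ because \eqref{symmetry-x1-rotated} exchanges the two terms under $x_1\leftrightarrow x_2$. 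Hence $\Lambda(\bu)=\bzero$, so $\boldsymbol{C}_1[\bN]=\bzero$ with $a_1=a_2=a_3=0$ and $\bff=\bk$, as the statement requires, and \lemref{lin-stokes-asy} already gives $\bu\in\mathcal{B}_{1,2}$.

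It remains to improve the decay from $r^{-2}$ to $r^{-2-\varepsilon}$. Since $\bu\in\mathcal{B}_{1,2}$ gives $\bu\bcdot\bnabla\bu\in\mathcal{B}_{0,5}$, the right-hand side $\bN=\bff+\bnabla\bcdot(\bu\otimes\bu)$ lies in $\mathcal{B}_{0,4+\varepsilon}$, so \lemref{lin-stokes-asy} applies with $q=2+\varepsilon$ and yields $\bu=\bS_0+\bS_1+\bS_2+\bR$ with $\bR\in\mathcal{B}_{1,2+\varepsilon}$ and $p=s_0+s_1+s_2+r$ with $r\in\mathcal{B}_{0,3+\varepsilon}$. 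We already have $\boldsymbol{C}_0[\bN]=\boldsymbol{C}_1[\bN]=\bzero$, so $\bS_0=\bS_1=\bzero$ and $s_0=s_1=0$. For the second order, $\boldsymbol{C}_2[\bN]$ consists of second moments of $\bN$: as $\bff=\bk$ is odd under $\bx\mapsto-\bx$ (being in class (f)) and $\bu$ is odd under $\bx\mapsto-\bx$, the tensor $\bu\otimes\bu$ is even and $\bnabla\bcdot(\bu\otimes\bu)$ is odd, so $\bN$ is odd and all its second moments vanish, exactly as for $\boldsymbol{C}_2[\bff]$ in \secref{compatibility-symmetries}. Therefore $\bS_2=\bzero$ and $s_2=0$, leaving $\bu=\bR=O(r^{-2-\varepsilon})$ and $p=r=O(r^{-3-\varepsilon})$.

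I expect no essential obstacle: the corollary is a symmetry-restricted rerun of \thmref{strong-under-compatibility} together with one additional term of \lemref{lin-stokes-asy}. The only point requiring care is the bookkeeping of the first paragraph --- verifying that the symmetric subspace is closed and invariant under the solution operator (in particular that the compatibility correction stays zero along the iteration, which is where the quadratic cancellations of the second paragraph are used at each step) and that the contraction constant of \thmref{strong-under-compatibility} is the same on it --- but this is routine once the commutation of the Stokes tensor with the reflections and rotations underlying \eqref{symmetry-galdi}--\eqref{symmetry-yamazaki} has been recorded.
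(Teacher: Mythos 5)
Your proposal is correct and follows essentially the same route as the paper: exploit the invariance of the equations under the symmetries to run the fixed point of \thmref{strong-under-compatibility} in the symmetric subspace with $\Lambda(\bu)=\bzero$ and hence $\boldsymbol{a}=\bzero$, then observe that the second moments of $\bN$ vanish (since $\bN$ is odd under the central symmetry) so the $r^{-2}$ terms of the expansion from \lemref{lin-stokes-asy} drop out. The paper's proof is terser and leaves the vanishing of $\boldsymbol{C}_{2}[\bN]$ implicit, whereas you spell out that cancellation and the quadratic cancellations in $\Lambda(\bu)$ explicitly; the substance is identical.
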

\begin{proof}
Since $\bff$ satisfies the symmetry conditions \eqref{symmetry-galdi}
and \eqref{symmetry-yamazaki}, due to the invariance of the Navier-Stokes
equation under axial symmetries \eqref{axial-symmetry}, $\bu$ satisfies
the same symmetry conditions, as well as the nonlinearity $\bu\bcdot\bnabla\bu$.
Therefore, we can apply \thmref{strong-under-compatibility} with
$\bff=\bk$ and $\boldsymbol{a}=\bzero$ \emph{i.e.} $\Lambda(\bu)=\bzero$.
\end{proof}
The exact solution $\frac{-M}{4\pi r}\be_{\theta}$ of the Navier-Stokes
equations generates a net torque and therefore can be used to lift\index{Compatibility conditions!lift by using an exact solution}\index{Lift of compatibility conditions}
the third component of $\bC_{1}[\bN]$ corresponding to the net torque.
More precisely, we can enlarge the class of source terms $\bff$ to
a subspace of co-dimension two: 
\begin{thm}
\label{thm:compatibility-strong-mu}For all $\varepsilon\in\left(0,1\right)$,
there exists $\nu>0$, such that for any $\bk\in\mathcal{B}_{0,3+\varepsilon}$
satisfying
\begin{align*}
\left\Vert \bk;\mathcal{B}_{0,3+\varepsilon}\right\Vert  & \leq\nu\,, & \int_{\mathbb{R}^{2}}\bk & =\bzero\,,
\end{align*}
there exists $\boldsymbol{a}\in\mathbb{R}^{2}$ such that there exists
$\bu$ and $p$ satisfying \eqref{ns-force} with
\[
\bff=\bk+\frac{\e^{-\left|\bx\right|{}^{2}}}{\pi}\left[a_{1}\left(x_{1},-x_{2}\right)+a_{2}\left(x_{2},x_{1}\right)\right].
\]
Moreover, 
\[
\bu=-\frac{M}{4\pi r}\be_{\theta}+O(r^{-1-\varepsilon})\,,
\]
where
\[
M=\int_{\mathbb{R}^{2}}\bx\bwedge\bk=\int_{\mathbb{R}^{2}}\bx\bwedge\bff\,.
\]
\end{thm}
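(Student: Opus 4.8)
The plan is to look for a solution of the form $\bu = \boldsymbol{U}_{M} + \bv$, $p = P_{M} + q$, where $M := \int_{\mathbb{R}^{2}}\bx\bwedge\bk$ is fixed by the data and $\boldsymbol{U}_{M}$ is the regularisation near the origin of the harmonic swirl $-\tfrac{M}{4\pi r}\be_{\theta}$, namely $M$ times the third component of the tensor $\mathbf{E}_{1}$ of \lemref{lin-stokes-asy-explicit}, so that $\boldsymbol{U}_{M} = \bzero$ for $\left|\bx\right|\leq 1$ and $\boldsymbol{U}_{M} = -\tfrac{M}{4\pi r}\be_{\theta}$ for $\left|\bx\right|\geq 2$. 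The key point is that $\boldsymbol{U}_{M}$ already solves the nonlinear system up to a harmless source: for $\left|\bx\right|\geq 2$ the field $\boldsymbol{U}_{M} = -\tfrac{M}{4\pi}\bnabla^{\perp}\log r$ is harmonic and divergence free and $\boldsymbol{U}_{M}\bcdot\bnabla\boldsymbol{U}_{M} = \tfrac{M^{2}}{2(4\pi)^{2}}\bnabla(r^{-2})$ is a gradient, so choosing $P_{M}$ smooth with $P_{M} = -\tfrac{M^{2}}{2(4\pi)^{2}r^{2}}$ outside $B(\bzero,2)$ gives $\Delta\boldsymbol{U}_{M} - \bnabla P_{M} - \boldsymbol{U}_{M}\bcdot\bnabla\boldsymbol{U}_{M} = \boldsymbol{g}_{M}$ with $\boldsymbol{g}_{M}\in C_{0}^{\infty}(\mathbb{R}^{2})$ supported in $\{1\leq\left|\bx\right|\leq 2\}$ and $\left\Vert\boldsymbol{g}_{M}\right\Vert\lesssim\left|M\right|+M^{2}$; by the divergence theorem and a short computation of the off-diagonal stress $\sigma_{r\theta}$ for the exact swirl one obtains $\int_{\mathbb{R}^{2}}\boldsymbol{g}_{M} = \bzero$ and $\int_{\mathbb{R}^{2}}\bx\bwedge\boldsymbol{g}_{M} = M$. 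The field $\bv$ will be produced by a fixed point in the ball $\{\left\Vert\bv;\mathcal{B}_{1,1+\varepsilon}\right\Vert\leq\rho\}$, $\rho\sim\nu$, so that $\bv = O(r^{-1-\varepsilon})$.

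Inserting $\bu = \boldsymbol{U}_{M}+\bv$ into \eqref{ns-force} with $\bff = \bk + \tfrac{\e^{-\left|\bx\right|^{2}}}{\pi}\bigl[a_{1}(x_{1},-x_{2}) + a_{2}(x_{2},x_{1})\bigr]$ turns it into the Stokes problem $\Delta\bv - \bnabla q = \bN$, $\bnabla\bcdot\bv = 0$, with
\[
\bN = \bff - \boldsymbol{g}_{M} + \bnabla\bcdot\bigl(\boldsymbol{U}_{M}\otimes\bv + \bv\otimes\boldsymbol{U}_{M} + \bv\otimes\bv\bigr)\,.
\]
For $\bv$ in the ball above the cross terms $\boldsymbol{U}_{M}\bcdot\bnabla\bv$ and $\bv\bcdot\bnabla\boldsymbol{U}_{M}$ are $O(r^{-3-\varepsilon})$ and the quadratic term is better, so $\bN\in\mathcal{B}_{0,3+\varepsilon}$ with $\left\Vert\bN;\mathcal{B}_{0,3+\varepsilon}\right\Vert\lesssim\left\Vert\bk;\mathcal{B}_{0,3+\varepsilon}\right\Vert + \left|M\right| + M^{2} + \left|\boldsymbol{a}\right| + \left|M\right|\left\Vert\bv;\mathcal{B}_{1,1+\varepsilon}\right\Vert + \left\Vert\bv;\mathcal{B}_{1,1+\varepsilon}\right\Vert^{2}$. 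Since $\varepsilon>0$ the moments of $\bN$ of order $\leq 1$ converge, so by \lemref{lin-stokes-asy-explicit} the solution $\bv$ decays faster than $r^{-1}$ precisely when $\boldsymbol{C}_{0}[\bN] = \bzero$ and $\boldsymbol{C}_{1}[\bN] = \bzero$.

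Now $\boldsymbol{C}_{0}[\bN] = \int_{\mathbb{R}^{2}}\bN = \bzero$ automatically, since $\int\bk = \bzero$, the Gaussian terms and $\boldsymbol{g}_{M}$ carry no net force, and the divergence-form term integrates to zero by decay; and the third component of $\boldsymbol{C}_{1}[\bN]$, which equals $\int_{\mathbb{R}^{2}}\bx\bwedge(\bff - \boldsymbol{g}_{M}) = M - M = 0$, is automatic too, because the divergence of a symmetric tensor has vanishing torque and the Gaussian correction carries none — this is exactly the role of $\boldsymbol{U}_{M}$. Using $\int_{\mathbb{R}^{2}}\tfrac{\e^{-\left|\bx\right|^{2}}}{\pi}\left|\bx\right|^{2} = 1$ and the parity of the Gaussian, the correction adds precisely $(a_{1},a_{2},0)$ to $\boldsymbol{C}_{1}[\bN]$, so the two scalar equations $\boldsymbol{C}_{1,1}[\bN] = \boldsymbol{C}_{1,2}[\bN] = 0$ are solved by an explicit $\boldsymbol{a} = \boldsymbol{a}(\bv)\in\mathbb{R}^{2}$, with $\left|\boldsymbol{a}(\bv)\right|\lesssim\left\Vert\bk;\mathcal{B}_{0,3+\varepsilon}\right\Vert + \left|M\right| + M^{2} + \left|M\right|\left\Vert\bv;\mathcal{B}_{1,1+\varepsilon}\right\Vert + \left\Vert\bv;\mathcal{B}_{1,1+\varepsilon}\right\Vert^{2}$.

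With this $\boldsymbol{a}$, \lemref{lin-stokes-asy} gives $\bv = \bR[\bN]\in\mathcal{B}_{1,1+\varepsilon}$ with $\left\Vert\bR[\bN];\mathcal{B}_{1,1+\varepsilon}\right\Vert\lesssim\left\Vert\bN;\mathcal{B}_{0,3+\varepsilon}\right\Vert$, so, using $\left|M\right|\lesssim\left\Vert\bk;\mathcal{B}_{0,3+\varepsilon}\right\Vert\leq\nu$, the map $\bv\mapsto\bR[\bN]$ sends the ball of radius $\rho\sim\nu$ into itself for $\nu$ small, and the same estimates on differences make it a contraction; the fixed point yields $\bu = \boldsymbol{U}_{M}+\bv$, $p = P_{M}+q$ solving \eqref{ns-force} with the stated $\bff$, the expansion $\bu = -\tfrac{M}{4\pi r}\be_{\theta} + O(r^{-1-\varepsilon})$, and $\int_{\mathbb{R}^{2}}\bx\bwedge\bff = \int_{\mathbb{R}^{2}}\bx\bwedge\bk = M$ because the Gaussian corrections carry no torque. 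The main difficulty is structural rather than analytic: realising that the harmonic swirl is an exact Navier--Stokes solution with gradient nonlinearity whose compactly supported defect carries net torque exactly $M$ and zero net force — this removes the torque compatibility condition and reduces the free Gaussian parameters from three (as in \thmref{strong-under-compatibility}) to two; the only technical price is that $\boldsymbol{U}_{M}$ has exactly the critical size $r^{-1}$, so one must check $\boldsymbol{U}_{M}\bcdot\bnabla\bv,\,\bv\bcdot\bnabla\boldsymbol{U}_{M}\in\mathcal{B}_{0,3+\varepsilon}$ and that $\int\bx\bwedge\bN$ converges, which is precisely why the hypothesis requires $\bk\in\mathcal{B}_{0,3+\varepsilon}$ with $\varepsilon>0$ rather than $\mathcal{B}_{0,3}$.
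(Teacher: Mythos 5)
Your proposal is correct and follows essentially the same route as the paper: lifting the net-torque compatibility condition with the cut-off harmonic swirl $-\frac{M}{4\pi r}\be_{\theta}$ (whose compactly supported defect has zero net force and torque exactly $M$), observing that the torque of the divergence-form remainder vanishes by symmetry of the tensor and the decay $O(r^{-2-\varepsilon})$, and then running a fixed point for $\bv\in\mathcal{B}_{1,1+\varepsilon}$ with the two Gaussian parameters $\boldsymbol{a}$ adjusted at each iteration to kill the first two components of $\bC_{1}[\bN]$. The only differences are cosmetic (computing the defect's torque via the stress $\sigma_{r\theta}$ on a large circle rather than the paper's explicit radial integral of the cut-off).
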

\begin{proof}
In order to lift the compatibility condition corresponding to the
net torque, we consider the solution 
\begin{align*}
\bu_{0} & =-\frac{M}{4\pi}\bnabla\bwedge\left(\chi(r)\log(r)\right)\,, & p_{0} & =\frac{-1}{2}\left(\frac{M\chi(r)}{4\pi r}\right)^{2},
\end{align*}
which is an exact solution of the Stokes and Navier-Stokes equations
for $r\geq2$. So we have
\[
\Delta\bu_{0}-\bnabla p_{0}-\bu_{0}\bcdot\bnabla\bu_{0}=\bff_{\!0}\,,
\]
where $\bff_{\!0}$ is a source force of compact support. Since $\bu_{0}$
and $p_{0}$ are invariant under rotations, $\boldsymbol{C}_{0}[\bff_{\!0}]=\bzero$
and $\boldsymbol{C}_{1}[\bff_{\!0}]=\left(0,0,*\right)$. To determine
the last unknown, we integrate
\[
\int_{\mathbb{R}^{2}}\bx\bwedge\bff_{\!0}=-\frac{M}{2}\int_{0}^{\infty}\left[r\log r\,\chi^{(3)}(r)+\left(\log r+3\right)\chi^{\prime\prime}(r)-\frac{\log r+1}{r}\chi^{\prime}(r)\right]\rd r=M\,.
\]
Therefore, we have
\begin{align*}
\boldsymbol{C}_{0}[\bff_{\!0}] & =\bzero\,, & \boldsymbol{C}_{1}[\bff_{\!0}] & =\left(0,0,-M\right)\,.
\end{align*}
By writing $\bu$ and $p$ as $\bu=\bu_{0}+\bu_{1}$, $p=p_{0}+p_{1}$,
the Navier-Stokes equations become
\begin{align*}
\Delta\bu_{1}-\bnabla p_{1} & =\bN\,, & \bnabla\bcdot\bu_{1} & =0\,,
\end{align*}
with
\[
\bN=\bff+\bnabla\bcdot\left(\bu_{0}\otimes\bu_{1}+\bu_{1}\otimes\bu_{0}+\bu_{1}\otimes\bu_{1}\right)-\bff_{\!0}\,.
\]
The aim is to perform a fixed point on $\bu_{1}$ in the space $\mathcal{B}_{1,1+\varepsilon}$.
Since $\bff$ has zero mean by hypothesis, $\boldsymbol{C}_{0}[\bN]=\bzero$.
By defining $M=\int_{\mathbb{R}^{2}}\bx\bwedge\bff$ and by using
\propref{invariants}, the third component of $\boldsymbol{C}_{1}[\bN]$,
which is the net torque, is given by
\begin{align*}
\int_{\mathbb{R}^{2}}\bx\bwedge\bN & =\int_{\mathbb{R}^{2}}\bnabla\bcdot\left[\left(\bu_{0}\otimes\bu_{1}+\bu_{1}\otimes\bu_{0}+\bu_{1}\otimes\bu_{1}\right)\bcdot\bx^{\perp}\right]\\
 & =\lim_{R\to\infty}\int_{\partial B(\bzero,R)}\bx^{\perp}\bcdot\left(\bu_{0}\otimes\bu_{1}+\bu_{1}\otimes\bu_{0}+\bu_{1}\otimes\bu_{1}\right)\bcdot\bn\,.
\end{align*}
Since $\bu_{0}\in\mathcal{B}_{0,1}$ and $\bu_{1}\in\mathcal{B}_{0,1+\varepsilon}$
, we obtain 
\[
\left|\int_{\mathbb{R}^{2}}\bx\bwedge\bN\right|\leq\left\Vert \bu_{1};\mathcal{B}_{0,1+\varepsilon}\right\Vert \left(\left\Vert \bu_{0};\mathcal{B}_{0,1}\right\Vert +\left\Vert \bu_{1};\mathcal{B}_{0,1+\varepsilon}\right\Vert \right)\lim_{R\to\infty}R^{-\varepsilon}=0\,,
\]
so the compatibility condition for the net torque is automatically
fulfilled. In the same way as in the proof of \thmref{strong-under-compatibility},
we have 
\begin{align*}
\boldsymbol{C}_{0}[\bN] & =\bzero\,, & \boldsymbol{C}_{1}[\bN] & =\bigl(\boldsymbol{a}+\Lambda(\bu_{1}),0\bigr)\,,
\end{align*}
where
\[
\Lambda(\bu_{1})=\int_{\mathbb{R}^{2}}\bigl(x_{1}k_{1}-x_{2}k_{2}-u_{1}u_{1}+u_{2}u_{2},\, x_{1}k_{2}+x_{2}k_{1}-2u_{1}u_{2}\bigr)\,.
\]
By defining $\boldsymbol{a}=-\Lambda(\bu)$, the compatibility conditions
of the Stokes system to decay faster than $r^{-1}$ are satisfied.
Therefore, it remains to bound $\bN$ is order to apply a fixed point
argument. We have
\[
\left\Vert \bu_{0};\mathcal{B}_{1,1}\right\Vert \lesssim\left|M\right|\lesssim\left\Vert \bk;\mathcal{B}_{0,3+\varepsilon}\right\Vert \leq\nu\,,
\]
and
\begin{align*}
\left|\Lambda(\bu)\right| & \lesssim\left\Vert \bk;\mathcal{B}_{0,3+\varepsilon}\right\Vert +\left\Vert \bu_{1};\mathcal{B}_{0,1+\varepsilon}\right\Vert \left(\left\Vert \bu_{0};\mathcal{B}_{0,1}\right\Vert +\left\Vert \bu_{1};\mathcal{B}_{0,1+\varepsilon}\right\Vert \right)\\
 & \lesssim\nu+\nu\left\Vert \bu_{1};\mathcal{B}_{0,1+\varepsilon}\right\Vert +\left\Vert \bu_{1};\mathcal{B}_{0,1+\varepsilon}\right\Vert ^{2},
\end{align*}
so
\begin{align*}
\left\Vert \bN;\mathcal{B}_{0,4+\nu}\right\Vert  & \leq\left\Vert \boldsymbol{k};\mathcal{B}_{0,3+\varepsilon}\right\Vert +\left\Vert \bu_{1};\mathcal{B}_{1,1+\varepsilon}\right\Vert \left(\left\Vert \bu_{0};\mathcal{B}_{1,1}\right\Vert +\left\Vert \bu_{1};\mathcal{B}_{1,1+\varepsilon}\right\Vert \right)+\left|\Lambda(\bu)\right|\\
 & \lesssim\nu+\nu\left\Vert \bu_{1};\mathcal{B}_{1,1+\varepsilon}\right\Vert +\left\Vert \bu_{1};\mathcal{B}_{1,1+\varepsilon}\right\Vert ^{2}.
\end{align*}
Consequently, by applying \lemref{lin-stokes-asy}, we can perform
a fixed point argument on $\bu_{1}\in\mathcal{B}_{1,1+\varepsilon}$
which proves the existence of a solution $\bu=\bu_{0}+\bu_{1}$ of
the Navier-Stokes system together with the claimed asymptotic behavior,
since $\bu_{0}=\frac{-M}{4\pi r}\be_{\theta}$ for $r\ge2$.\end{proof}
\begin{rem}
This theorem shows that the knowledge of one suitable explicit solution
of the Navier-Stokes equations can be used to lift one compatibility
condition and enlarge the space of source forces $\bff$ for which
we can solve the problem. The compatibility condition we lifted is
the one related to net torque, which is an invariant quantity, so
we do not need to adjust $M$ inside the fixed point, \emph{i.e.}
$M$ depends only on $\bff$ not on $\bu_{1}$. In the case where
we try to lift a compatibility condition that is not a conserved quantity,
we would have to adjust the parameter of the explicit solution at
each iteration of the fixed point argument.
\end{rem}

\begin{rem}
The method used in this theorem cannot be applied to the case where
$\bff$ has nonzero mean for the following reason. In order to treat
the nonlinearity by inverting the Stokes operator on it, the explicit
solution $\bu_{0}$ that lifts the compatibility condition has to
be in the space $\mathcal{B}_{1,1}$ and the perturbation $\bu_{1}$
in $\mathcal{B}_{1,1+\varepsilon}$ for some $\varepsilon>0$, otherwise
the inversion of the Stokes operator on the nonlinearity leads to
logarithms, and the fixed point argument cannot be closed. But we
cannot lift the mean value of the force $\bF$ with an explicit solution
$\bu_{0}\in\mathcal{B}_{1,1}$: if $\bu_{0}\in\mathcal{B}_{1,1}$
and $p_{0}\in\mathcal{B}_{0,2}$, we have
\[
\mathbf{T}_{0}=\bnabla\bu_{0}+\left(\bnabla\bu_{0}\right)^{T}-p_{0}\mathbf{1}-\bu_{0}\otimes\bu_{0}\in\mathcal{B}_{0,2}\,,
\]
so by using \propref{invariants},
\[
\left|\bF_{0}\right|=\left|\int_{\mathbb{R}^{2}}\bff_{0}\right|=\lim_{R\to\infty}\biggl|\int_{B(\bzero,R)}\!\mathbf{T}_{0}\bn\biggr|\leq\left\Vert \mathbf{T}_{0};\mathcal{B}_{0,2}\right\Vert \lim_{R\to\infty}R^{-1}=0\,.
\]

\end{rem}

\chapter{\label{chap:link-Stokes-NS}On the asymptotes of the Stokes and Navier-Stokes
equations}

We consider the Navier-Stokes equations in the exterior domain $\Omega=\mathbb{R}^{2}\setminus B$
where $B$ is a compact and simply connected set with smooth boundary,
\begin{equation}
\begin{aligned}\Delta\bu-\bnabla p & =\nu\bu\bcdot\bnabla\bu+\bff\,, & \bnabla\bcdot\bu & =0\,,\\
\left.\bu\right|_{\partial B} & =\bu^{*}\,, & \lim_{|\bx|\to\infty}\bu & =\bzero\,,
\end{aligned}
\label{eq:link-ns}
\end{equation}
where $\nu\in\mathbb{R}$ is a parameter, $\bu^{*}$ is a boundary
condition and $\bff$ a source force. These equations admit four invariant
quantities (see \propref{invariants}): the flux $\Phi$, the net
force $\bF$, and the net torque $M$,
\begin{align*}
\Phi= & \int_{\partial B}\bu\bcdot\bn\,, & \bF & =\int_{\Omega}\bff+\int_{\partial B}\mathbf{T}\bn\,, & M & =\int_{\Omega}\bx\bwedge\bff+\int_{\partial B}\bx\bwedge\mathbf{T}\bn\,,
\end{align*}
where $\mathbf{T}$ is the stress tensor including the convective
part,
\[
\mathbf{T}=\bnabla\bu+\left(\bnabla\bu\right)^{T}-p\,\boldsymbol{1}-\nu\bu\otimes\bu\,.
\]
For $\nu=0$, the system \eqref{link-ns} is linear and is called
the Stokes equations, whereas if $\nu\neq0$, the equations are the
Navier-Stokes equations. \citet{Deuring.Galdi-AsymptoticBehaviorof2000}
proved that in three dimensions, the solution of the Navier-Stokes
equations cannot be asymptotic to the Stokes fundamental solution.
The aim of this chapter is to prove an analog result in two dimensions.
In contrast to the three-dimensional case, the requirement that the
velocity vanishes at infinity imposes that the net force vanishes
for the Stokes equations. The asymptotic expansion of the Stokes equations
up to order $r^{-1}$ has four real parameters. Moreover, the velocity
of the Navier-Stokes equations can be asymptotic only to two of the
four terms in $r^{-1}$ of the Stokes asymptote. The existence of
such a solution was proven in \thmref{compatibility-strong-mu}. These
two terms are the two harmonic functions\index{Exact solutions!harmonic}
decaying like $r^{-1}$ and therefore the asymptotic expansion of
the pressure up to order $r^{-2}$ cannot coincide since the pressure
term of an harmonic function is given by $\frac{\nu}{2}\left|\bu\right|^{2}$.

The following theorem provides the main result of this chapter:
\begin{thm}
\label{thm:link-asy-S-NS}Let $\varepsilon\in\left(0,1\right)$, $\nu\in\mathbb{R}$,
$\bff\in C^{0}(\Omega)$ such that $\left(1+\left|\bx\right|^{3+\varepsilon}\right)\bff\in L^{\infty}(\Omega)$
and let $\left(\bu,p\right)\in C^{2}(\Omega)\times C^{1}(\Omega)$
be a solution of the Navier-Stokes equations \eqref{link-ns}.
\begin{enumerate}
\item If $\nu=0$, then there exists $\bA=(A_{0},A_{1},A_{2},A_{3})\in\mathbb{R}^{4}$
such that
\begin{align*}
\bu & =\bu_{1}+O(r^{-1-\varepsilon})\,, & \bnabla\bu & =\bnabla\bu_{1}+O(r^{-1-\varepsilon})\,, & p & =p_{1}+O(r^{-2-\varepsilon})\,,
\end{align*}
where
\begin{align*}
\bu_{1} & =\frac{1}{4\pi r}\left[2A_{0}\be_{r}-A_{1}\cos(2\theta)\be_{r}-A_{2}\sin(2\theta)\be_{r}-A_{3}\be_{\theta}\right],\\
p_{1} & =\frac{-1}{4\pi r^{2}}\left[A_{1}\cos(2\theta)+A_{2}\sin(2\theta)\right].
\end{align*}
Moreover, the net force vanishes, $\bF=\bzero$, the parameters of
the vector $\bA$ can be expressed in terms of integrals involving
$\bu$ and $\bff$, and in particular
\begin{align*}
A_{0} & =\Phi\,, & A_{3}= & M\,.
\end{align*}

\item If $\nu\neq0$ and $\bu$ satisfies
\[
\bu=\bu_{1}+O(r^{-1-\varepsilon})\,,
\]
for some $\bA=(A_{0},A_{1},A_{2},A_{3})\in\mathbb{R}^{4}$, then $A_{1}=A_{2}=0$
and $A_{0}=\Phi$. If in addition $p$ satisfies
\[
p=p_{1}+O(r^{-2-\varepsilon})\,,
\]
then $\Phi=M=0$, so $\bA=\bzero$ and $\bu=O(r^{-1-\varepsilon})$.
Moreover if
\[
\bnabla\bu=\bnabla\bu_{1}+O(r^{-1-\varepsilon})\,,
\]
then the net force vanishes $\bF=\bzero$ and the net torque is $M=A_{3}$.
\end{enumerate}
\end{thm}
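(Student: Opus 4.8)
The plan is to reduce each regime to the asymptotic analysis, on the whole plane, of a Stokes system whose right-hand side lies in the weighted spaces $\mathcal{B}_{0,q}$, and then to read off the coefficients and identify them with invariant quantities.

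\emph{Part 1 ($\nu=0$).} Fix $R_{0}$ with $B\subset B(\bzero,R_{0})$. Since $\bu\to\bzero$, interior Stokes estimates and the decay of $\bff$ give $\bnabla\bu\to\bzero$ and, after normalising the additive constant, $p\to0$. I would first peel off the flux carrier, $\bu'=\bu-\tfrac{\Phi}{2\pi}\chi\,\bx/|\bx|^{2}$ with $\chi$ a cut-off equal to $1$ for $|\bx|\ge2R_{0}$ and $0$ for $|\bx|\le R_{0}$: the carrier is harmonic with zero pressure off $B$, so $\bu'$ still solves the Stokes system and now has zero flux through every large circle. Multiplying $\bu'$ by $\chi$ and restoring the divergence-free condition by a compactly supported correction (Bogovskii's operator) produces a divergence-free field $\bR_{*}$ on $\mathbb{R}^{2}$ which equals $\bu'$ for $|\bx|\ge2R_{0}$ and solves $\Delta\bR_{*}-\bnabla p_{*}=\boldsymbol{G}$ with $\boldsymbol{G}=\chi\bff+(\text{compactly supported term})\in\mathcal{B}_{0,3+\varepsilon}$. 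Comparing $\bR_{*}$ with $\mathbf{E}*\boldsymbol{G}$: the difference is an entire Stokes flow of at most logarithmic growth, hence constant by a Liouville argument (curl it to see the vorticity is entire harmonic and vanishes, so the velocity is entire harmonic of sublinear growth); since $\bR_{*}\to\bzero$ this forces the logarithmic term of $\mathbf{E}*\boldsymbol{G}$ to be absent, i.e. $\bC_{0}[\boldsymbol{G}]=\bzero$, and $\bR_{*}=\mathbf{E}*\boldsymbol{G}$. A divergence-theorem computation as in the proof of \propref{invariants} identifies $\bC_{0}[\boldsymbol{G}]$ with the net force, so $\bF=\bzero$. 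Then \lemref{lin-stokes-asy} and \lemref{lin-stokes-asy-explicit} (applicable since here $q=1+\varepsilon\notin\mathbb{N}$) give $\bR_{*}=\bC_{1}[\boldsymbol{G}]\bcdot\mathbf{E}_{1}+O(r^{-1-\varepsilon})$ together with the matching expansions for $\bnabla\bR_{*}$ and $p_{*}$; adding back the carrier yields the stated $\bu_{1},p_{1}$ with $A_{0}=\Phi$ and $(A_{1},A_{2},A_{3})$ the components of $\bC_{1}[\boldsymbol{G}]$. Finally, rerunning the divergence-theorem computation with the weight $\bx^{\perp}$ (using $\bnabla\bcdot(\bx^{\perp}\!\cdot\mathbf{T})=\bx\bwedge\bff$ for symmetric $\mathbf{T}$, and that the carrier contributes nothing to the torque surface integral) identifies the third component of $\bC_{1}[\boldsymbol{G}]$ with the net torque $M$.

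\emph{Part 2 ($\nu\neq0$), first two assertions.} That $A_{0}=\Phi$ is just flux conservation: the flux of $\bu$ through $\{|\bx|=\rho\}$ equals $\Phi$, that of $\bu_{1}$ equals $A_{0}$ (only the $\be_{r}/r$ mode contributes), and the $O(r^{-1-\varepsilon})$ remainder gives $O(\rho^{-\varepsilon})\to0$. For $A_{1}=A_{2}=0$ I would bootstrap ($\bu=O(r^{-1})$, interior Navier--Stokes estimates $\Rightarrow\bnabla\bu=O(r^{-2})$, hence $\nu\bu\bcdot\bnabla\bu+\bff=O(r^{-3})$) and globalise the equation for $\bR=\bu-\bu_{1}$ exactly as in Part 1; the leading part of the resulting source is $\nu\,\bu_{1}\bcdot\bnabla\bu_{1}=\tfrac{\nu}{2}\bnabla|\bu_{1}|^{2}+\nu\,\omega_{1}\bu_{1}^{\perp}$ with $\omega_{1}=\bnabla\bwedge\bu_{1}$. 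The gradient part is absorbed into the pressure; the rotational part $\nu\,\omega_{1}\bu_{1}^{\perp}$ decays exactly like $r^{-3}$ and is governed entirely by $\omega_{1}$, which depends only on $A_{1},A_{2}$ — the two surviving Stokes modes $\be_{r}/r$ and $\be_{\theta}/r$ are precisely the harmonic ones and carry no vorticity. If $(A_{1},A_{2})\neq\bzero$ then $\omega_{1}\not\equiv0$, and inverting the Stokes operator on a source decaying like $r^{-3}$ in the critical regime of \secref{failure-asymptotic} produces, through the resonance at angular frequency $2$ (and, via the self-product of $\omega_{1}$, at frequency $4$), a contribution to $\bR$ of size $\gtrsim r^{-1}$: a logarithmically growing term, or a genuine $r^{-1}$ mode at frequency $4$ absent from $\bu_{1}$. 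Either way this contradicts $\bR=O(r^{-1-\varepsilon})$, forcing $A_{1}=A_{2}=0$.

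\emph{Part 2, remaining assertions.} Once $A_{1}=A_{2}=0$ we have $p_{1}\equiv0$ and $\bu_{1}$ harmonic, so $\bu_{1}\bcdot\bnabla\bu_{1}=\tfrac12\bnabla|\bu_{1}|^{2}$; the momentum equation for $\bR$ then gives $\bnabla(p+\tfrac{\nu}{2}|\bu_{1}|^{2})=O(r^{-3})$, so the $r^{-2}$-coefficient of $p$ is $-\tfrac{\nu}{2}$ times that of $|\bu_{1}|^{2}=(4A_{0}^{2}+A_{3}^{2})/(4\pi r)^{2}$, which is angle-independent and nonzero unless $A_{0}=A_{3}=0$. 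The hypothesis $p=p_{1}+O(r^{-2-\varepsilon})=O(r^{-2-\varepsilon})$ thus forces $\nu(4A_{0}^{2}+A_{3}^{2})=0$, hence (as $\nu\neq0$) $\bA=\bzero$, $\Phi=A_{0}=0$ and $\bu=O(r^{-1-\varepsilon})$; re-bootstrapping then gives $\mathbf{T}=O(r^{-2-\varepsilon})$, so $M=\lim_{\rho\to\infty}\int_{|\bx|=\rho}\bx\bwedge\mathbf{T}\bn=0$ by \propref{invariants}. Finally, under the additional hypothesis $\bnabla\bu=\bnabla\bu_{1}+O(r^{-2-\varepsilon})$ one has $\mathbf{T}=\mathbf{T}_{1}+o(r^{-2})$ with $\mathbf{T}_{1}$ the full stress tensor of the homogeneous Stokes flow $\bu_{1}$, and passing to the limit in the surface-integral formulas of \propref{invariants} gives $\bF=\bzero$ (the flux and torque modes carry no net force) and $M=A_{3}$ (only the $\be_{\theta}/r$ mode of $\bu_{1}$ contributes to the torque, with coefficient $A_{3}$).

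\emph{Main obstacle.} The crux is Part 2: the nonlinear self-interaction produces a source decaying exactly like $r^{-3}$, the borderline exponent excluded from \lemref{lin-stokes-asy}, so one must carry out the ODE/ansatz analysis of \secref{failure-asymptotic} (stream function $f(\theta)+g(\theta)\log r$) and show that requiring $\bR=O(r^{-1-\varepsilon})$ annihilates the resonant (frequency-$2$ and frequency-$4$) content of $\bu_{1}\bcdot\bnabla\bu_{1}$, equivalently $\omega_{1}\equiv0$, and similarly pins down the $r^{-2}$-coefficient of the pressure. The delicate point is that the cross term $\bu_{1}\bcdot\bnabla\bR$ is a priori of the same order $r^{-3}$ as $\bu_{1}\bcdot\bnabla\bu_{1}$, so the decay bootstrap for $\bR$ and its derivatives must be iterated carefully before $\nu\,\bu_{1}\bcdot\bnabla\bu_{1}$ can be treated as the genuine leading term of the source.
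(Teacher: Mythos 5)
Your proposal is correct and follows essentially the same route as the paper: truncation of the exterior problem to a problem on $\mathbb{R}^{2}$ with a flux carrier, the weighted-space asymptotic expansion of the Stokes solution via \lemref{lin-stokes-asy,lin-stokes-asy-explicit}, the $\log r/r$ and frequency-four resonances generated by $\nu\bu_{1}\bcdot\bnabla\bu_{1}$ to force $A_{1}=A_{2}=0$, the pressure obstruction $\tfrac{\nu}{2}\left|\bu_{1}\right|^{2}$ for the remaining harmonic modes, and the surface-integral identification of $\Phi$, $\bF$ and $M$. The one step you leave as a plan --- the explicit stream-function ansatz $f(\theta)+g(\theta)\log r$ showing the resonant content is proportional to $\mathcal{A}_{1}\mathcal{A}_{2}$ and $\mathcal{A}_{1}^{2}$ --- is precisely what the paper's \lemref{explicit-sol-u2} supplies, so nothing essential is missing.
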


\section{Truncation procedure\index{Truncation procedure}}

The aim of this section is to show that by using a cut-off procedure
we can get rid of the body and consider modified equations in $\mathbb{R}^{2}$.

Since $B$ is compact, there exits $R>1$ such that $B\subset B(\bzero,R)$,
where $B(\bzero,R)$ is the ball of radius $R$ centered at the origin.
We denote by $\chi$ a smooth cut-off function such that $\chi(r)=0$
for $0\leq r\leq R$ and $\chi(r)=1$ if $r\geq2R$. The flux is defined
by
\[
\Phi=\int_{\partial B}\bu\bcdot\bn\,.
\]
To deal with the flux in $\mathbb{R}^{2}$, we define the following
smooth flux carrier\index{Flux carrier},
\begin{align*}
\bsigma & =\frac{\chi(r)}{2\pi r}\be_{r}\,, & \sigma & =\frac{\chi^{\prime}(r)}{2\pi r}\,,
\end{align*}
which is smooth in $\mathbb{R}^{2}$ and an exact solution of \eqref{link-ns}
for $\nu=0$ in $\mathbb{R}^{2}\setminus B(\bzero,2R)$ with $\bff=\bzero$
and $\bu^{*}=\frac{\be_{r}}{2\pi r}$.
\begin{prop}
\label{prop:truncation}Let $\left(\bu,p\right)\in C^{2}(\Omega)\times C^{1}(\Omega)$
be a solution of \eqref{link-ns}. Then there exists a solution $\left(\bar{\bu},\bar{p}\right)\in C^{2}(\mathbb{R}^{2})\times C^{1}(\mathbb{R}^{2})$
of
\begin{align}
\Delta\bar{\bu}-\bnabla\bar{p} & =\nu\bar{\bu}\bcdot\bnabla\bar{\bu}+\bar{\bff}\,, & \bnabla\bcdot\bar{\bu} & =\Phi\bnabla\bcdot\bsigma\,, & \lim_{\left|\bx\right|\to\infty} & \bu=\bzero\,,\label{eq:ns-plane}
\end{align}
in $\mathbb{R}^{2}$ such that $\bu=\bar{\bu}$, $p=\bar{p}$, and
$\bff=\bar{\bff}$ in $\mathbb{R}^{2}\setminus B(\bzero,2R)$.\end{prop}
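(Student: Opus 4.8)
The plan is to amputate the body $B$: I multiply the given solution by the cut-off $\chi$, repair the divergence that this destroys by a compactly supported corrector, and leave everything untouched for $\left|\bx\right|\geq 2R$. For the pressure I would simply take $\bar p:=\chi p$ (extended by zero inside $B$); since $p\in C^{1}(\Omega)$ and $\chi$ vanishes to infinite order on $\bigl\{\left|\bx\right|\leq R\bigr\}\supset\overline{B}$, this gives $\bar p\in C^{1}(\mathbb{R}^{2})$ with $\bar p=p$ on $\left\{\left|\bx\right|\geq 2R\right\}$, and for the same reason $\chi\bu$ extends to an element of $C^{2}(\mathbb{R}^{2})$ coinciding with $\bu$ there. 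The obstruction is that $\chi\bu$ is no longer solenoidal: because $\bnabla\bcdot\bu=0$ on $\Omega$,
\[
\bnabla\bcdot(\chi\bu)=\bnabla\chi\bcdot\bu=:g\,,
\]
and $g\in C^{2}$ is supported in the annulus $D:=\bigl\{R<\left|\bx\right|<2R\bigr\}$ (choosing the transition region of $\chi$ to lie in a compact subinterval of $(R,2R)$).

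The heart of the argument is to correct $g$ without touching the far field. Integrating $g$ and applying the divergence theorem on $B(\bzero,2R)\setminus B$, with $\chi\equiv 1$ on $\partial B(\bzero,2R)$ and $\chi\equiv 0$ on $\partial B$,
\[
\int_{\mathbb{R}^{2}}g=\int_{\partial B(\bzero,2R)}\bu\bcdot\be_{r}=\Phi\,,
\]
the last equality by incompressibility of $\bu$ and the definition of the flux (with the orientation under which the carrier $\bsigma$ of \eqref{ns-plane} transports unit flux). On the other hand $\bnabla\bcdot\bsigma=\sigma$ and $\int_{\mathbb{R}^{2}}\Phi\,\bnabla\bcdot\bsigma=\Phi\int_{\mathbb{R}^{2}}\sigma=\Phi\int_{0}^{\infty}\chi'(r)\,\rd r=\Phi$, so $g-\Phi\,\bnabla\bcdot\bsigma=g-\Phi\sigma$ is a $C^{2}$ function of zero mean supported in the bounded smooth annulus $D$. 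A standard Bogovskii-type construction on $D$ then yields $\bv\in C^{2}(\mathbb{R}^{2})$ with $\supp\bv\subset D$ and $\bnabla\bcdot\bv=g-\Phi\sigma$.

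It remains to set
\[
\bar\bu:=\chi\bu-\bv\,,\qquad\bar\bff:=\Delta\bar\bu-\bnabla\bar p-\nu\,\bar\bu\bcdot\bnabla\bar\bu\,,
\]
and to verify the claims. By construction $\bar\bu\in C^{2}(\mathbb{R}^{2})$, $\bar p\in C^{1}(\mathbb{R}^{2})$, $\bar\bff\in C^{0}(\mathbb{R}^{2})$, and $\bnabla\bcdot\bar\bu=g-(g-\Phi\sigma)=\Phi\,\bnabla\bcdot\bsigma$, so $(\bar\bu,\bar p)$ solves \eqref{ns-plane}. On $\left\{\left|\bx\right|\geq 2R\right\}$ one has $\chi\equiv 1$ and $\bv\equiv 0$, hence $\bar\bu=\bu$ and $\bar p=p$; since $(\bu,p)$ solves \eqref{link-ns} on $\Omega$, this forces $\bar\bff=\Delta\bu-\bnabla p-\nu\,\bu\bcdot\bnabla\bu=\bff$ on $\mathbb{R}^{2}\setminus B(\bzero,2R)$, and $\lim_{\left|\bx\right|\to\infty}\bar\bu=\lim_{\left|\bx\right|\to\infty}\bu=\bzero$.

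The only genuinely non-routine ingredient is the corrector $\bv$: one must check the compatibility condition $\int_{\mathbb{R}^{2}}(g-\Phi\sigma)=0$ — exactly the computation above, which is where the normalization $\int_{\mathbb{R}^{2}}\bnabla\bcdot\bsigma=1$ of the flux carrier enters — and then apply the Bogovskii operator on the annulus, with the (mild) bookkeeping that its output is $C^{2}$, which holds since $\bu\in C^{2}$, and supported compactly inside $D$. Everything else is routine: the smooth extension of $\chi\bu$ and $\chi p$ across $B$, the precise location of $\supp(g-\Phi\sigma)$, and the matching of supports ensuring nothing is altered outside $B(\bzero,2R)$.
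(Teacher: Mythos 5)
Your proof is correct, but the mechanism you use for the key step is genuinely different from the paper's. Both arguments truncate the solution near the body and both must bring in the flux carrier $\Phi\bsigma$ to account for the nonzero flux, but they diverge in how the divergence constraint survives the truncation. The paper first subtracts the carrier, so that $\bv=\bu-\Phi\bsigma$ has zero flux through $\partial B(\bzero,R)$; this makes the line integral $\psi(\bx)=\int_{\gamma(\bx)}\bv^{\perp}\bcdot\rd\bx$ path-independent in the exterior of $B(\bzero,R)$, hence a globally defined stream function, and the cut-off is applied to $\psi$: the field $\bar{\bv}=\bnabla\bwedge\left(\chi\psi\right)$ is then divergence-free by construction and no correction is needed. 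You instead cut off $\bu$ itself and repair the resulting divergence defect $g=\bnabla\chi\bcdot\bu$ with a Bogovskii corrector on the annulus, the zero-mean compatibility $\int_{\mathbb{R}^{2}}\left(g-\Phi\sigma\right)=0$ being exactly the same flux computation that the paper uses to legitimize the stream function. The paper's route is more elementary and self-contained (no solvability theory for the divergence equation is invoked, and the $C^{2}$ regularity of $\bar{\bu}$ is immediate from that of $\psi$), but it exploits a two-dimensional device; your route costs the standard but not entirely free $C^{2}$ regularity and compact-support statements for the Bogovskii operator (say via $W^{3,q}_{0}$ of the annulus and Sobolev embedding), in exchange for an argument that carries over verbatim to higher dimensions. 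Your choice $\bar{p}=\chi p$ differs from the paper's $\Phi\sigma+\chi\left(p-\Phi\sigma\right)$ only by a compactly supported term that is absorbed into $\bar{\bff}$, so both yield the same conclusion.
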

\begin{proof}
First of all let $\bv=\bu-\Phi\bsigma$ and $q=p-\Phi\sigma$ so that
$\bv$ has zero flux,
\[
\int_{\partial B(\bzero,R)}\bv\bcdot\bn=\int_{\partial B}\bu\bcdot\bn-\Phi\int_{\partial B(\bzero,R)}\bsigma\bcdot\bn=0\,,
\]
and therefore the function
\[
\psi(\bx)=\int_{\gamma(\bx)}\bv^{\perp}\bcdot\rd\bx\,,
\]
where $\gamma(\bx)$ is any smooth curve connecting $\left(R,0\right)$
to $\bx$ is a stream function for $\bv$, \emph{i.e.} $\bv=\bnabla\bwedge\psi$
in $\mathbb{R}^{2}\setminus B(\bzero,R)$. Since $\psi\in C^{2}(\mathbb{R}^{2}\setminus B(\bzero,R))$,
by defining
\begin{align*}
\bar{\bu} & =\Phi\bsigma+\bar{\bv}\,, & \bar{\bv} & =\bnabla\bwedge\left(\chi\psi\right)\,, & \bar{p} & =\Phi\sigma+\chi q\,,
\end{align*}
we have $\left(\bar{\bu},\bar{p}\right)\in C^{2}(\mathbb{R}^{2})\times C^{1}(\mathbb{R}^{2})$,
$\bar{\bu}=\bu$ and $\bar{p}=p$ for $r\geq2R$. By plugging $\left(\bar{\bu},\bar{p}\right)$
into \eqref{ns-plane}, we obtain
\begin{align*}
\Delta\bar{\bu}-\bnabla\bar{p}-\nu\bar{\bu}\bcdot\bnabla\bar{\bu} & =\chi\bff+\boldsymbol{\delta}\,, & \bnabla\bcdot\bar{\bu} & =\Phi\bnabla\bcdot\bsigma\,,
\end{align*}
where $\boldsymbol{\delta}\in C_{0}^{1}(\mathbb{R}^{2})$ and $\bnabla\bcdot\bsigma\in C_{0}^{\infty}(\mathbb{R}^{2})$
have support only on $B(\bzero,2R)$. The proposition is proved by
taking $\bar{\bff}=\chi\bff+\boldsymbol{\delta}$.
\end{proof}

\section{Stokes equations}

In this section, we prove the first part of the theorem concerning
the linear case: $\nu=0$. We first define weighted $L^{\infty}$-spaces:
\begin{defn}[function spaces]
For $q\geq0$, we define the weight
\[
w_{q}(\bx)=\begin{cases}
1+\left|\bx\right|^{q}\,, & q>0\,,\\
\left[\log\left(2+\left|\bx\right|\right)\right]^{-1}\,, & q=0\,,
\end{cases}
\]
and the associated Banach space for $k\in\mathbb{N}$,
\[
\mathcal{B}_{k,q}=\left\{ f\in C^{k}(\mathbb{R}^{n})\,:\: w_{q+\left|\alpha\right|}D^{\alpha}f\in L^{\infty}(\mathbb{R}^{n})\:\forall\left|\alpha\right|\leq k\right\} ,
\]
with the norm
\[
\bigl\Vert f;\mathcal{B}_{k,q}\bigr\Vert=\max_{\left|\alpha\right|\leq k}\sup_{\bx\in\mathbb{R}^{n}}w_{q+\left|\sigma\right|}\left|D^{\alpha}f\right|\,.
\]
\end{defn}
\begin{prop}
\label{prop:stokes}If $\bar{\bff}\in\mathcal{B}_{0,3+\varepsilon}$
for $\varepsilon\in\left(0,1\right)$, the solution of \eqref{ns-plane}
with $\nu=0$ satisfies
\begin{align*}
\bar{\bu} & =\bar{\bu}_{1}+\tilde{\bu}\,, & \bar{p} & =\bar{p}_{1}+\tilde{p}\,,
\end{align*}
where $\tilde{\bu}\in\mathcal{B}_{1,1+\varepsilon}$, $\tilde{p}\in\mathcal{B}_{0,2+\varepsilon}$,
and
\begin{align*}
\bar{\bu}_{1} & =\bA\bcdot\left(\bsigma,\mathbf{E}_{1}\right)\,, & \bar{p}_{1} & =\bA\bcdot\left(\sigma,\be_{1}\right)\,,
\end{align*}
for some $\bA\in\mathbb{R}^{4}$, with $A_{0}=\Phi$. The first order
of the asymptotic expansion $\mathbf{E}_{1}$ (which is a tensor of
type $\left(2,3\right)$) and $\be_{1}$ are defined in \lemref{lin-stokes-asy-explicit}.\end{prop}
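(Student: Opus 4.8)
The plan is to remove the flux with the carrier $\bsigma$, reduce \eqref{ns-plane} (with $\nu=0$) to a divergence-free Stokes problem on all of $\mathbb{R}^{2}$ whose source lies in $\mathcal{B}_{0,3+\varepsilon}$, apply the asymptotic expansion of \lemref{lin-stokes-asy} to the Stokes potential of that source, and then use the decay hypothesis $\bar{\bu}\to\bzero$ together with a Liouville argument to identify $\bar{\bu}$ with that potential and to see that the logarithmically growing (Stokes-paradox) term is absent. First I would set $\bv=\bar{\bu}-\Phi\bsigma$ and $\pi=\bar{p}-\Phi\sigma$, so that $\bnabla\bcdot\bv=0$ and $\Delta\bv-\bnabla\pi=\boldsymbol{h}$ with $\boldsymbol{h}=\bar{\bff}-\Phi\,\boldsymbol{h}_{\sigma}$, where $\boldsymbol{h}_{\sigma}:=\Delta\bsigma-\bnabla\sigma$. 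Since $\bsigma$ solves the homogeneous Stokes system in $\mathbb{R}^{2}\setminus B(\bzero,2R)$ (by the construction preceding \propref{truncation}), $\boldsymbol{h}_{\sigma}\in C_{0}^{\infty}(\mathbb{R}^{2})$, and an integration by parts on a large ball together with the angular symmetry of $\bsigma$ gives $\int_{\mathbb{R}^{2}}\boldsymbol{h}_{\sigma}=\bzero$; hence $\boldsymbol{h}\in\mathcal{B}_{0,3+\varepsilon}$ with $\int_{\mathbb{R}^{2}}\boldsymbol{h}=\int_{\mathbb{R}^{2}}\bar{\bff}$. Moreover $\bv\to\bzero$ at infinity because $\bar{\bu}\to\bzero$ and $\bsigma=O(r^{-1})$.

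Next I would introduce the Stokes potential $\bu^{\star}=\mathbf{E}*\boldsymbol{h}$ and $p^{\star}=\be*\boldsymbol{h}$. Writing $3+\varepsilon=2+q$ with $q:=1+\varepsilon\in(1,2)\setminus\mathbb{N}$, \lemref{lin-stokes-asy} applies and, together with \lemref{lin-stokes-asy-explicit}, gives $\bu^{\star}=\bC_{0}[\boldsymbol{h}]\bcdot\mathbf{E}_{0}+\bC_{1}[\boldsymbol{h}]\bcdot\mathbf{E}_{1}+\bR$ and $p^{\star}=\bC_{0}[\boldsymbol{h}]\bcdot\be_{0}+\bC_{1}[\boldsymbol{h}]\bcdot\be_{1}+r$, with $\bR\in\mathcal{B}_{1,1+\varepsilon}$, $r\in\mathcal{B}_{0,2+\varepsilon}$, $\bC_{0}[\boldsymbol{h}]=\int_{\mathbb{R}^{2}}\boldsymbol{h}$, and $\mathbf{E}_{0}$ equal to the Stokes fundamental solution for $|\bx|\ge 2$, hence growing like $\log|\bx|$. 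The difference $\bz:=\bv-\bu^{\star}$ is then a smooth solution of the homogeneous Stokes system on $\mathbb{R}^{2}$ that is $O(\log|\bx|)$ at infinity, being the difference of a bounded field and an $O(\log|\bx|)$ field.

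The heart of the argument is a Liouville step for the Stokes operator: interior Stokes estimates on the balls $B(\bx,|\bx|/2)$ show that the pressure associated with $\bz$ is an entire harmonic function whose gradient is $O(|\bx|^{-2}\log|\bx|)$, hence constant by the Liouville theorem for harmonic functions; then $\bz$ is itself an entire harmonic field with $\bz=o(|\bx|)$, hence constant, say $\bz\equiv\bc$. Substituting back, $\bv=\bu^{\star}+\bc$ is bounded, which is incompatible with the $\log|\bx|$ growth of $\bC_{0}[\boldsymbol{h}]\bcdot\mathbf{E}_{0}$ unless $\bC_{0}[\boldsymbol{h}]=\bzero$; consequently the terms $\bC_{0}[\boldsymbol{h}]\bcdot\mathbf{E}_{0}$ and $\bC_{0}[\boldsymbol{h}]\bcdot\be_{0}$ drop out, $\bu^{\star}\to\bzero$, and therefore $\bc=\bzero$, i.e. $\bv=\bu^{\star}$ and $\pi=p^{\star}$ up to an additive constant. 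This is also where the hypothesis $\bar{\bu}\to\bzero$ is used: it is exactly what forces the net force to vanish and removes the Stokes paradox term.

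Finally, undoing the flux substitution and normalizing the additive pressure constant so that $\bar{p}\to\bzero$, I obtain $\bar{\bu}=\Phi\bsigma+\bC_{1}[\boldsymbol{h}]\bcdot\mathbf{E}_{1}+\bR$ and $\bar{p}=\Phi\sigma+\bC_{1}[\boldsymbol{h}]\bcdot\be_{1}+r$, recalling that $\sigma$ has compact support. Setting $\bA=(\Phi,\bC_{1}[\boldsymbol{h}])\in\mathbb{R}^{4}$, $\tilde{\bu}=\bR\in\mathcal{B}_{1,1+\varepsilon}$ and $\tilde{p}=r\in\mathcal{B}_{0,2+\varepsilon}$ then yields exactly the asserted decomposition $\bar{\bu}=\bA\bcdot(\bsigma,\mathbf{E}_{1})+\tilde{\bu}$, $\bar{p}=\bA\bcdot(\sigma,\be_{1})+\tilde{p}$, with $A_{0}=\Phi$. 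The step I expect to be the main obstacle is precisely the Liouville argument for the homogeneous Stokes equations in the plane under merely logarithmic growth, which needs the interior regularity estimates for the Stokes system; the other ingredients (flux-carrier subtraction, invocation of \lemref{lin-stokes-asy} and \lemref{lin-stokes-asy-explicit}, and the bookkeeping of the moments $\bC_{0}[\boldsymbol{h}],\bC_{1}[\boldsymbol{h}]$) are routine.
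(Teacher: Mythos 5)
Your proposal is correct and follows the same route as the paper: subtract the flux carrier $\Phi\bsigma$, apply the asymptotic expansion of \lemref{lin-stokes-asy} together with \lemref{lin-stokes-asy-explicit}, and use the decay of $\bar{\bu}$ at infinity to eliminate the logarithmically growing term $\bC_{0}\bcdot\mathbf{E}_{0}$. The one substantive difference is that you make explicit two points the paper's proof leaves implicit: the compactly supported correction $\Phi\left(\Delta\bsigma-\bnabla\sigma\right)$ to the source term, and, more importantly, the identification of the given solution with the Stokes potential $\mathbf{E}*\boldsymbol{h}$ up to an additive constant. The paper simply invokes \lemref{lin-stokes-asy} as if $\bar{\bv}$ were automatically the convolution solution; your Liouville step (interior Stokes estimates giving a harmonic pressure with decaying gradient, hence constant, and then an entire harmonic field of sublinear growth, hence constant) is exactly what is needed to justify that identification, so your write-up is in fact more complete than the one in the text.
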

\begin{proof}
By plugging $\bar{\bu}=\Phi\bsigma+\bar{\bv}$ and $\bar{p}=\Phi\sigma+\bar{q}$
in \eqref{ns-plane}, we obtain
\begin{align*}
\Delta\bar{\bv}-\bnabla\bar{q} & =\bar{\bff}\,, & \bnabla\bcdot\bar{\bv} & =\bzero\,.
\end{align*}
By \lemref{lin-stokes-asy}, we have
\begin{align*}
\bar{\bv} & =\bS_{0}+\bS_{1}+\tilde{\bu}\,, & \bar{q} & =s_{0}+s_{1}+\tilde{p}\,,
\end{align*}
where $\bS_{i}\in\mathcal{B}_{1,i}$, $\tilde{\bu}\in\mathcal{B}_{1,1+\varepsilon}$,
$s_{i}\in\mathcal{B}_{0,i+1}$ and $\tilde{p}\in\mathcal{B}_{0,2+\varepsilon}$.
In particular, by using \lemref{lin-stokes-asy-explicit}, the terms
are given by
\begin{align*}
\bS_{0} & =\bC_{0}\bcdot\mathbf{E}_{0}\,, & s_{0} & =\bC_{0}\bcdot\be_{0}\,,\\
\bS_{1} & =\bC_{1}\bcdot\mathbf{E}_{1}\,, & s_{1} & =\bC_{1}\bcdot\be_{1}\,,
\end{align*}
where $\bC_{0},\bC_{1}\in\mathbb{R}^{3}$ are given by integrals of
$\bar{\bff}$. The term $\mathbf{E}_{0}$ grows at infinity like $\log r$
and since the velocity has to be zero at infinity, the term $\bS_{0}$
has to vanish, so $\bC_{0}=\bzero$.
\end{proof}

\section{Navier-Stokes equations}

In this section we prove the second part of the theorem concerning
the case where $\nu\neq0$. The term $\bar{\bu}_{1}\in\mathcal{B}_{1,1}$
generates a nonlinear term $\nu\bar{\bu}_{1}\bcdot\bnabla\bar{\bu}_{1}\in\mathcal{B}_{0,3}$,
so we cannot apply \propref{stokes} to solve the following Stokes
system,
\begin{align*}
\Delta\bar{\bu}_{2}-\bnabla\bar{p}_{2} & =\nu\bar{\bu}_{1}\bcdot\bnabla\bar{\bu}_{1}\,, & \bnabla\bcdot\bar{\bu}_{2} & =0\,.
\end{align*}
In the following key lemma, we explicitly construct a solution to
this system up to a compactly supported function and determine its
asymptotic behavior.
\begin{lem}
\label{lem:explicit-sol-u2}There exists a smooth solution $\left(\bar{\bu}_{2},\bar{p}_{2}\right)$
of the equations
\begin{align*}
\Delta\bar{\bu}_{2}-\bnabla\bar{p}_{2} & =\nu\bar{\bu}_{1}\bcdot\bnabla\bar{\bu}_{1}+\boldsymbol{\delta}_{2}\,, & \bnabla\bcdot\bar{\bu}_{2} & =0\,,
\end{align*}
in $\mathbb{R}^{2}$ where $\boldsymbol{\delta}_{2}\in C_{0}^{1}(\mathbb{R}^{2})$
has compact support, such that for $r\geq2R$,
\begin{equation}
\begin{aligned}\bar{\bu}_{2} & =\frac{\nu\mathcal{A}_{1}\mathcal{A}_{2}}{(8\pi)^{2}r}\left[\log r\,\sin\left(2\theta+\theta_{1}\right)\be_{r}+\cos\left(2\theta+\theta_{1}\right)\be_{\theta}\right]+\frac{\nu\mathcal{A}_{1}^{2}}{6(8\pi)^{2}r}\sin(4\theta+\theta_{2})\be_{r}\\
\bar{p}_{2} & =\frac{\nu\mathcal{A}_{1}\mathcal{A}_{2}}{32\pi^{2}r^{2}}\left(2\log r-1\right)\sin\left(2\theta+\theta_{1}\right)+\frac{\nu\mathcal{A}_{1}^{2}}{3(8\pi)^{2}r^{2}}\sin(4\theta+\theta_{2})-\frac{\nu\left(\mathcal{A}_{1}^{2}+\mathcal{A}_{2}^{2}\right)}{(8\pi)^{2}r^{2}}\,,
\end{aligned}
\label{eq:asy-v2-q2}
\end{equation}
where
\begin{align*}
\mathcal{A}_{1} & =\sqrt{A_{1}^{2}+A_{2}^{2}}\,, & \mathcal{A}_{2} & =\sqrt{4A_{0}^{2}+A_{3}}\,,
\end{align*}
and $\theta_{1}$, $\theta_{2}$ are angles related to $A_{1}$ and
$A_{2}$.\end{lem}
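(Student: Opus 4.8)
The plan is to imitate the ansatz construction used in \secref{failure-asymptotic}. Because the forcing $\nu\bar{\bu}_1\bcdot\bnabla\bar{\bu}_1$ lies in $\mathcal{B}_{0,3}$ and therefore only decays like $r^{-3}$, one cannot invert the Stokes operator through \lemref{lin-stokes-asy}. Instead I would build an explicit particular solution on the region $r>2R$ by prescribing its stream function directly, then reinsert a cut-off to handle the inner region, absorbing the resulting defect into the compactly supported term $\boldsymbol{\delta}_2$, exactly as in \propref{truncation}.

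First, for $r\geq 2R$ the field $\bar{\bu}_1$ coincides with the explicit expression $\frac{1}{4\pi r}\bigl[2A_0\be_r-A_1\cos(2\theta)\be_r-A_2\sin(2\theta)\be_r-A_3\be_\theta\bigr]$ of \propref{stokes} and \lemref{lin-stokes-asy-explicit}, so that there $\bar{\bu}_1\bcdot\bnabla\bar{\bu}_1$ equals $r^{-3}$ times a trigonometric polynomial in $\theta$. Using the identity $\bnabla\bwedge(\bu\bcdot\bnabla\bu)=\bnabla\psi\bwedge\bnabla\Delta\psi$, valid whenever $\bu=\bnabla\bwedge\psi$, a direct computation gives $\bnabla\bwedge(\bar{\bu}_1\bcdot\bnabla\bar{\bu}_1)=r^{-4}F(\theta)$ with $F$ supported on the Fourier modes $\pm 2$ and $\pm 4$ only. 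The crucial point is that there is no mode-$0$ forcing: the azimuthal mode-$0$ contributions produced by the cross terms between the radial flux part ($\propto A_0$) and the azimuthal torque part ($\propto A_3$) of $\bar{\bu}_1$ cancel each other, and the remaining mode-$0$ piece of $\bar{\bu}_1\bcdot\bnabla\bar{\bu}_1$ is radial, hence a gradient, hence curl-free. This cancellation is precisely what lets a two-term ansatz close without a $\log^2 r$ correction.

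Next I would set $\psi_2=g_2(\theta)\log r+f_2(\theta)$ and impose the vorticity equation $\Delta^2\psi_2=\nu\,\bnabla\bwedge(\bar{\bu}_1\bcdot\bnabla\bar{\bu}_1)$ on $r>2R$. Since $\Delta^2\bigl(g\log r+f\bigr)=r^{-4}\bigl[\log r\,(g^{(4)}+4g'')+(f^{(4)}+4f''-4g'')\bigr]$ and the right-hand side carries no $\log r$, this decouples into
\begin{align*}
g_2^{(4)}+4g_2'' &=0\,, & f_2^{(4)}+4f_2''-4g_2'' &=\nu F(\theta)\,.
\end{align*}
The periodic solutions of the first equation are $g_2=\lambda_A\cos(2\theta)+\lambda_B\sin(2\theta)$ (a constant being irrelevant). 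The operator $f\mapsto f^{(4)}+4f''$ annihilates mode $2$, so a periodic $f_2$ exists if and only if the mode-$2$ part of $\nu F+4g_2''$ vanishes; as $g_2$ is mode $2$, this is one linear condition that fixes $\lambda_A,\lambda_B$ uniquely in terms of the mode-$2$ Fourier coefficients of $F$, which are bilinear in $(A_0,A_3)$ and $(A_1,A_2)$ and hence of size $\mathcal{A}_1\mathcal{A}_2$ — the $\log r$ term being present exactly to supply this resonant correction. The mode-$4$ part of $f_2$ is then the unique periodic solution of a non-resonant fourth-order ODE (of size $\mathcal{A}_1^2$), and the free mode-$2$ homogeneous part of $f_2$ is set to zero.

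Finally, put $\bar{\bu}_2=\nu\,\bnabla\bwedge(\chi\psi_2)$, which is smooth on $\mathbb{R}^2$ and automatically divergence-free. For $r>2R$ the vorticity equation holds by construction, so $\Delta\bar{\bu}_2-\nu\bar{\bu}_1\bcdot\bnabla\bar{\bu}_1$ is curl-free there; being $O(r^{-3})$ it has vanishing circulation over large circles, hence equals $\bnabla\bar{p}_2$ for a single-valued $\bar{p}_2$, which one reads off explicitly and to which one adds the primitive of the mode-$0$ radial (gradient) part of $\bar{\bu}_1\bcdot\bnabla\bar{\bu}_1$ — the source of the $-\nu(\mathcal{A}_1^2+\mathcal{A}_2^2)/(8\pi)^2 r^2$ term. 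Commuting $\chi$ past the derivatives on $\{R\leq|\bx|\leq 2R\}$ produces the remainder $\boldsymbol{\delta}_2\in C_0^1(\mathbb{R}^2)$. The velocity asymptotics then follow from $\bnabla\bwedge\psi_2=r^{-1}\partial_\theta\psi_2\,\be_r-\partial_r\psi_2\,\be_\theta$: the $g_2\log r$ term contributes $\log r$ times a mode-$2$ function in $\be_r$ together with a mode-$2$ function in $\be_\theta$, while the mode-$4$ part of $f_2$ contributes only to $\be_r$, which is exactly the shape of \eqref{asy-v2-q2}. The step I expect to be the main obstacle is the Fourier bookkeeping: one must carry the amplitudes $\mathcal{A}_1,\mathcal{A}_2$ and the phases $\theta_1,\theta_2$ through the explicit form of $F$ and of the solvability condition, and in particular certify the mode-$0$ cancellation above; the ODE integration, the circulation argument for the pressure, and the cut-off estimate are routine and parallel \secref{failure-asymptotic}.
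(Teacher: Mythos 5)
Your proposal is correct and follows essentially the same route as the paper: the same ansatz $\psi_2=f_2(\theta)+g_2(\theta)\log r$ on $r>2R$, the same biharmonic vorticity equation yielding the decoupled ODEs $g_2^{(4)}+4g_2''=0$ and $f_2^{(4)}+4f_2''-4g_2''=\nu F$, the same resonance condition on the mode-$2$ part fixing the $\log r$ amplitude, and the same cut-off $\bnabla\bwedge(\chi\psi_2)$ producing $\boldsymbol{\delta}_2$. The only difference is presentational: you justify the recovery of a single-valued $\bar{p}_2$ by a circulation argument where the paper simply writes the pressure down explicitly.
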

\begin{proof}
We make an ansatz that explicitly cancel this term for $r>2R$. We
make the following ansatz for the stream function,
\[
\psi_{2}=f_{2}(\theta)+g_{2}(\theta)\log r\,,
\]
and consider the equation of the vorticity
\[
\Delta^{2}\psi_{2}=\bnabla\bwedge\left(\nu\bar{\bu}_{1}\bcdot\bnabla\bar{\bu}_{1}\right)\,,
\]
for $r>2R$. We obtain the following ordinary differential equations,
\begin{align*}
g_{2}^{(4)}+4g_{2}^{(2)} & =0\,,\\
f_{2}^{(4)}+4f_{2}^{(2)}-4g_{2}^{(2)} & =\frac{\nu\mathcal{A}_{1}}{8\pi^{2}}\left[2\mathcal{A}_{2}\cos(2\theta+\theta_{1})+\mathcal{A}_{1}\cos(4\theta+\theta_{2})\right],
\end{align*}
where $\theta_{2}$ and $\theta_{4}$ are angles expressed in terms
of $A$ and $B$. The periodic solutions for $g_{2}$ are
\[
g_{2}(\theta)=\lambda\cos(2\theta+\theta_{0})\,.
\]
Periodic solutions for $f_{2}$ exist if and only if
\begin{align*}
\lambda & =\frac{\nu\mathcal{A}_{1}\mathcal{A}_{2}}{(8\pi)^{2}}\,, & \theta_{0}=\theta_{2}\,,
\end{align*}
and a particular solution is given by
\[
f_{2}(\theta)=\frac{\nu\mathcal{A}_{1}^{2}}{6(16\pi)^{2}}\left(\cos(4\theta+\theta_{2})\right).
\]
Therefore, by defining
\begin{align*}
\bar{\bu}_{2} & =\bnabla\bwedge\left(\chi\psi_{2}\right),\\
\bar{p}_{2} & =\frac{\nu\chi}{r^{2}}\left[\left(1-2\log r\right)g_{1}^{\prime}(\theta)-2f_{1}^{\prime}(\theta)-\frac{\mathcal{A}_{1}^{2}+2\mathcal{A}_{2}^{2}}{(8\pi)^{2}}\right],
\end{align*}
the lemma is proven.
\end{proof}
By applying this lemma we obtain:
\begin{prop}
\label{prop:navier-stokes}Let $\varepsilon\in(0,1)$, $\bar{\bff}\in\mathcal{B}_{0,3+\varepsilon}$
and $\left(\bar{\bu},\bar{p}\right)\in C^{2}(\Omega)\times C^{1}(\Omega)$
be a solution of \eqref{ns-plane} for $\nu\neq0$. If $\bar{\bu}$
is asymptotic to the solution of the Stokes equations, \emph{i.e.
\[
\bar{\bu}=\bA\bcdot\left(\bsigma,\mathbf{E}_{1}\right)+\tilde{\bu}\,,
\]
}for some $\bA=\left(A_{0},A_{1},A_{2},A_{3}\right)\in\mathbb{R}^{4}$
and $\tilde{\bu}\in\mathcal{B}_{0,1+\varepsilon}$ then $A_{0}=\Phi$
and $A_{1}=A_{2}=0$. Moreover if $\bar{p}$ is asymptotic to the
solution of the stokes equations, \emph{i.e.}
\[
\bar{p}=\bA\bcdot\left(\bsigma,\mathbf{E}_{1}\right)+\tilde{p}
\]
for some $\tilde{p}\in\mathcal{B}_{0,2+\varepsilon}$, then $\bA=\bzero$.\end{prop}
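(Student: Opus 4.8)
The plan is to combine three ingredients: a flux identity that pins down $A_{0}$, the nonlinear obstruction built into \lemref{explicit-sol-u2}, and the fact that once $A_{1}=A_{2}=0$ the leading term of $\bar{\bu}$ is harmonic and irrotational at infinity. First, integrating $\bnabla\bcdot\bar{\bu}=\Phi\,\bnabla\bcdot\bsigma$ over $B(\bzero,\rho)$ gives $\int_{\partial B(\bzero,\rho)}\bar{\bu}\bcdot\bn=\Phi$ for every large $\rho$ because $\bsigma$ has unit flux there; since each component of $\mathbf{E}_{1}$ is divergence free it contributes zero flux, and $\tilde{\bu}\in\mathcal{B}_{0,1+\varepsilon}$ contributes $O(\rho^{-\varepsilon})$, so letting $\rho\to\infty$ yields $A_{0}=\Phi$, a fact used in both claims.

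For $A_{1}=A_{2}=0$ I would first upgrade the hypothesis to a gradient estimate by a bootstrap: rewriting \eqref{ns-plane} (after subtracting the flux carrier $\Phi\bsigma$) as a Stokes system with forcing $\bar{\bff}+\nu\bnabla\bcdot(\bar{\bu}\otimes\bar{\bu})$ and applying interior $L^{q}$ estimates for the Stokes operator on dyadic annuli, using $\bar{\bu}\in\mathcal{B}_{0,1}$, one obtains $\bnabla\bar{\bu}=\bnabla\bar{\bu}_{1}+O(r^{-2-\varepsilon})$ with $\bar{\bu}_{1}=\bA\bcdot(\bsigma,\mathbf{E}_{1})$. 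Then $\nu\bar{\bu}\bcdot\bnabla\bar{\bu}=\nu\bar{\bu}_{1}\bcdot\bnabla\bar{\bu}_{1}+\boldsymbol{g}$, where the remainder $\boldsymbol{g}=\nu(\bar{\bu}_{1}\bcdot\bnabla\tilde{\bu}+\tilde{\bu}\bcdot\bnabla\bar{\bu}_{1}+\tilde{\bu}\bcdot\bnabla\tilde{\bu})$ lies in $\mathcal{B}_{0,3+\varepsilon}$, so $\tilde{\bu}-\bar{\bu}_{2}$ — with $\bar{\bu}_{2}$ the explicit corrector of \lemref{explicit-sol-u2} — solves a Stokes system with right-hand side in $\mathcal{B}_{0,3+\varepsilon}$ and zero flux (because $A_{0}=\Phi$), hence by \propref{stokes} equals $\bC\bcdot\mathbf{E}_{1}+O(r^{-1-\varepsilon})$ for some $\bC\in\mathbb{R}^{3}$. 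Comparing with $\tilde{\bu}\in\mathcal{B}_{0,1+\varepsilon}$, the sum $\bar{\bu}_{2}+\bC\bcdot\mathbf{E}_{1}$ must decay strictly faster than $r^{-1}$; but in the polar frame the radial component of $\bC\bcdot\mathbf{E}_{1}$ is a linear combination of $1,\cos(2\theta),\sin(2\theta)$ divided by $r$, whereas \lemref{explicit-sol-u2} forces $\bar{\bu}_{2}$ to contribute $\frac{\nu\mathcal{A}_{1}^{2}}{6(8\pi)^{2}r}\sin(4\theta+\theta_{2})$ to that radial component, an angular frequency $4$ that nothing in $\bC\bcdot\mathbf{E}_{1}$ can cancel. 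Since $\nu\neq0$ this forces $\mathcal{A}_{1}=\sqrt{A_{1}^{2}+A_{2}^{2}}=0$, i.e. $A_{1}=A_{2}=0$; the surviving ($\log r$ and frequency-$2$) parts of $\bar{\bu}_{2}$ are proportional to $\mathcal{A}_{1}$ and vanish as well.

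For the second claim, once $A_{1}=A_{2}=0$ the leading velocity $\bar{\bu}_{1}=A_{0}\bsigma-\frac{A_{3}}{4\pi r}\be_{\theta}$ is harmonic and irrotational for $r\geq2R$, so $\bar{\bu}_{1}\bcdot\bnabla\bar{\bu}_{1}=\tfrac12\bnabla|\bar{\bu}_{1}|^{2}$ with $|\bar{\bu}_{1}|^{2}=(c_{1}A_{0}^{2}+c_{2}A_{3}^{2})r^{-2}$ and $c_{1},c_{2}>0$. Absorbing this gradient into the pressure and rerunning the argument (now $\bar{\bu}_{2}$ is absent), \propref{stokes} gives $\bar{p}+\tfrac{\nu}{2}(c_{1}A_{0}^{2}+c_{2}A_{3}^{2})r^{-2}=O(r^{-2-\varepsilon})$ for large $r$ (the $s_{1}$-term drops out because $\tilde{\bu}\in\mathcal{B}_{0,1+\varepsilon}$). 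With $A_{1}=A_{2}=0$ the Stokes pressure asymptote $\bar{p}_{1}$ is compactly supported, so the hypothesis $\bar{p}=\bar{p}_{1}+\tilde{p}$ with $\tilde{p}\in\mathcal{B}_{0,2+\varepsilon}$ forces $\nu(c_{1}A_{0}^{2}+c_{2}A_{3}^{2})r^{-2}=O(r^{-2-\varepsilon})$, hence $A_{0}=A_{3}=0$; combined with $A_{1}=A_{2}=0$ this is $\bA=\bzero$, and then $\Phi=A_{0}=0$, so $\bnabla\bcdot\bar{\bu}=0$ and $\bar{\bu}=\tilde{\bu}=O(r^{-1-\varepsilon})$. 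The hard part will be the gradient bootstrap: one must show that $\bar{\bu}=\bar{\bu}_{1}+O(r^{-1-\varepsilon})$ self-improves to $\bnabla\bar{\bu}=\bnabla\bar{\bu}_{1}+O(r^{-2-\varepsilon})$ so that the cross terms in $\boldsymbol{g}$ are genuinely of order $r^{-3-\varepsilon}$, and to turn the informal matching of angular frequencies into a clean uniqueness statement for the expansion of \propref{stokes} and \lemref{lin-stokes-asy-explicit}.
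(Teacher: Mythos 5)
Your overall architecture matches the paper's: the flux identity for $A_{0}=\Phi$, the explicit corrector of \lemref{explicit-sol-u2} whose $\log r/r$ and frequency-four terms cannot be matched by $\bC\bcdot\mathbf{E}_{1}$ (forcing $\mathcal{A}_{1}=0$), and the isotropic Bernoulli contribution $-\nu\mathcal{A}_{2}^{2}/((8\pi)^{2}r^{2})$ to the pressure (forcing $\mathcal{A}_{2}=0$) are exactly the three pillars of the paper's argument. The place where you deviate is the one you yourself flag as "the hard part", and it is a genuine gap, not just a hard step. You propose to bootstrap $\bnabla\bar{\bu}=\bnabla\bar{\bu}_{1}+O(r^{-2-\varepsilon})$ so that the cross terms $\boldsymbol{g}=\nu(\bar{\bu}_{1}\bcdot\bnabla\tilde{\bu}+\tilde{\bu}\bcdot\bnabla\bar{\bu}_{1}+\tilde{\bu}\bcdot\bnabla\tilde{\bu})$ land in $\mathcal{B}_{0,3+\varepsilon}$. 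Interior Stokes estimates on dyadic annuli, starting only from $\tilde{\bu}\in\mathcal{B}_{0,1+\varepsilon}$ and the equation with the quadratic term in divergence form $\bnabla\bcdot(\bar{\bu}\otimes\bar{\bu})$ with $\bar{\bu}\otimes\bar{\bu}=O(r^{-2})$, yield only $\bnabla\tilde{\bu}=O(r^{-2})$; the extra $\varepsilon$ is not free, and indeed cannot be true in the scenario you are trying to exclude, since for $\mathcal{A}_{1}\neq0$ the corrector $\bar{\bu}_{2}$ contributes $O(\log r/r^{2})$ to $\bnabla\tilde{\bu}$. With the achievable bound $\bnabla\tilde{\bu}=O(r^{-2})$ your remainder is only $\boldsymbol{g}=O(r^{-3})$, which sits exactly at the excluded integer exponent of \lemref{lin-stokes-asy} ($q\notin\mathbb{N}$ is required there) and produces logarithms upon inversion of the Stokes operator, so the subsequent application of \propref{stokes} does not close.

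The paper avoids this entirely by never differentiating the remainder: it keeps the cross terms in divergence form, $\bnabla\bcdot\mathbf{N}$ with $\mathbf{N}=\nu\bar{\bu}_{1}\otimes\tilde{\bu}+\nu\tilde{\bu}\otimes\bar{\bu}_{1}+\nu\tilde{\bu}\otimes\tilde{\bu}\in\mathcal{B}_{0,2+\varepsilon}$, which requires only the sup bounds you already have, and represents the corresponding Stokes solution as $\bnabla\mathbf{E}*\mathbf{N}$ after an integration by parts, using the derivative estimates already established inside the proof of \lemref{lin-stokes-asy}. If you replace your bootstrap by this divergence-form representation, the rest of your argument (the frequency-four obstruction in the radial component of $\bar{\bu}_{2}$, and the $\theta$-independent $r^{-2}$ pressure term, which is indeed $\tfrac{\nu}{2}\left|\bar{\bu}_{1}\right|^{2}$ for the surviving harmonic part) goes through as written. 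Two minor points: the zero-flux remark controls only the $\bsigma$ component, while the log-growing term $\bC_{0}\bcdot\mathbf{E}_{0}$ of \propref{stokes} must be killed by the boundedness of $\tilde{\bu}$, not by the flux; and with $A_{1}=A_{2}=0$ the pressure part $\bar{p}_{2}$ of \lemref{explicit-sol-u2} does not vanish, which is precisely the Bernoulli term you use, so "$\bar{\bu}_{2}$ is absent" should read "the velocity part of the corrector is absent".
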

\begin{proof}
We write the solution as
\begin{align*}
\bar{\bu} & =\bar{\bu}_{1}+\tilde{\bu}\,, & \bar{p} & =\bar{p}_{1}+\tilde{p}\,,
\end{align*}
where
\begin{align*}
\bar{\bu}_{1} & =A_{0}\bsigma+\left(A_{1},A_{2},A_{3}\right)\bcdot\mathbf{E}_{1}\,,\\
\bar{p}_{1} & =A_{0}\sigma+\left(A_{1},A_{2},A_{3}\right)\bcdot\be_{1}\,.
\end{align*}
Since $\bnabla\bcdot\bar{\bu}_{1}=A_{0}\bnabla\bcdot\bsigma$, the
system \eqref{ns-plane} becomes explicitly
\begin{align}
\Delta\tilde{\bu}-\bnabla\tilde{p} & =\bar{\bu}\bcdot\bnabla\bar{\bu}+\bar{\bff}\,, & \bnabla\bcdot\tilde{\bu} & =\left(\Phi-A_{0}\right)\bnabla\bcdot\bsigma\,.\label{eq:ns-bar}
\end{align}
For any $n\geq2R$, we have
\[
\int_{B(\bzero,n)}\bnabla\bcdot\bsigma=\int_{\partial B(\bzero,n)}\bsigma\bcdot\bn=1\,,
\]
and therefore by using \eqref{ns-bar}, we obtain
\[
\left(\Phi-A_{0}\right)=\int_{B(\bzero,n)}\bnabla\bcdot\tilde{\bu}=\int_{\partial B(\bzero,n)}\tilde{\bu}\,.
\]
By hypothesis $\tilde{\bu}\in\mathcal{B}_{0,1+\varepsilon}$ and we
have
\[
\left|\Phi-A_{0}\right|\leq\int_{\partial B(\bzero,n)}\left|\tilde{\bu}\right|\leq\left\Vert \tilde{\bu};\mathcal{B}_{0,1+\varepsilon}\right\Vert \int_{\partial B(\bzero,n)}\frac{1}{\left|\bx\right|^{1+\varepsilon}}\leq2\pi\left\Vert \tilde{\bu};\mathcal{B}_{0,1+\varepsilon}\right\Vert n^{-\varepsilon}\,,
\]
so by taking the limit $n\to\infty$, we obtain that $A_{0}=\Phi$.
By \lemref{explicit-sol-u2}, $\left(\bar{\bu}_{2},\bar{p}_{2}\right)$
satisfies
\begin{align*}
\Delta\bar{\bu}_{2}-\bnabla\bar{p}_{2} & =\nu\bar{\bu}_{1}\bcdot\bnabla\bar{\bu}_{1}+\boldsymbol{\delta}_{2}\,, & \bnabla\bcdot\bar{\bu}_{2} & =0\,,
\end{align*}
where $\boldsymbol{\delta}_{2}\in C_{0}^{1}(\mathbb{R}^{2})$. By
defining $\tilde{\bu}=\bar{\bu}_{2}+\bar{\bu}_{3}$ and $\tilde{p}=\bar{p}_{2}+\bar{p}_{3}$,
the system \eqref{ns-bar} is equivalent to
\begin{align*}
\Delta\bar{\bu}_{3}-\bnabla\bar{p}_{3} & =\bar{\bff}-\boldsymbol{\delta}_{2}+\bnabla\bcdot\mathbf{N}\,, & \bnabla\bcdot\bar{\bu}_{3} & =\bzero\,,
\end{align*}
where
\[
\boldsymbol{N}=\nu\bar{\bu}_{1}\otimes\bar{\bv}+\nu\bar{\bv}\otimes\bar{\bu}_{1}+\nu\bar{\bv}\otimes\bar{\bv}\in\mathcal{B}_{0,2+\varepsilon}\,.
\]
The solution of this system can be represented after an integration
by parts by
\begin{align*}
\bar{\bu}_{3} & =\mathbf{E}*\left(\bar{\bff}-\boldsymbol{\delta}_{2}\right)+\bnabla\mathbf{E}*\mathbf{N}\,,\\
\bar{p}_{3} & =\be*\left(\bar{\bff}-\boldsymbol{\delta}_{2}\right)+\bnabla\be*\mathbf{N}\,.
\end{align*}
The asymptotic expansion of the first term of the right-hand-side
was already given in \propref{stokes}. The asymptote of the second
term of the right-hand-side was already computed in \lemref{lin-stokes-asy}
in the estimate of the derivatives of the velocity and of the pressure.
Therefore, we obtain that there there exists $\bC\in\mathbb{R}^{3}$
such that
\begin{align*}
\bar{\bu}_{3} & =\bC_{0}\bcdot\mathbf{E}_{0}+\bC_{1}\bcdot\mathbf{E}_{1}+O(r^{-1-\varepsilon})\,,\\
\bar{p}_{3} & =\bC_{0}\bcdot\mathbf{E}_{0}+\bC_{1}\bcdot\be_{1}+O(r^{-2-\varepsilon})\,.
\end{align*}
Since by hypothesis $\tilde{\bu}=\bar{\bu}_{2}+\bar{\bu}_{3}\in\mathcal{B}_{0,1+\varepsilon}$,
we deduce that $\bC_{0}=\bzero$, otherwise the solution grows at
infinity. Then in view of \eqref{asy-v2-q2}, we obtain that $\mathcal{A}_{1}=0$
so $A_{1}=A_{2}=0$, and finally we deduce that $\bC_{1}=\bzero$.
Finally if moreover we assume that $\tilde{p}\in\mathcal{B}_{0,2+\varepsilon}$,
then in view of \eqref{asy-v2-q2} we obtain that $A_{0}=A_{3}=0$,
so $\bA=\bzero$.
\end{proof}

\begin{proof}[Proof of \thmref{link-asy-S-NS}]
By \propref{truncation}, we can transform the original equations
in $\Omega$ to \eqref{ns-plane} in $\mathbb{R}^{2}$. Then \propref{stokes,navier-stokes}
prove respectively the first part and the second part of the theorem.
These propositions also show that $A_{0}=\Phi$. The determination
of the net force and of the component $A_{3}$ of $\bA$ are now deduced
by using the asymptotic behavior of $\bu$ and $\bnabla\bu$. First
of all, by the same argument as used in Since the net force $\bF$
is an invariant quantity, we find in the truncated domain $\Omega_{n}=\Omega\cap B(\bzero,n)$
that
\[
\int_{\Omega_{n}}\bff=\int_{\partial B(\bzero,n)}\mathbf{T}\bn-\int_{\partial B}\mathbf{T}\bn\,,
\]
for all $n\geq R$, and therefore
\[
\bF=\int_{\Omega}\bff+\int_{\partial B}\mathbf{T}\bn=\lim_{n\to\infty}\int_{\partial B(\bzero,n)}\mathbf{T}\bn\,.
\]
Therefore, if $\tilde{\bu}\in\mathcal{B}_{1,1+\varepsilon}$, then
$\bu\in\mathcal{B}_{1,1}$ and $\mathbf{T}=O(\left|\bx\right|^{-2})$
so by taking the limit $n\to\infty$, we deduce that $\bF=\bzero$.
By using the same procedure for the net torque, we obtain,
\[
M=\int_{\Omega}\bx\bwedge\bff+\int_{\partial B}\bx\bwedge\mathbf{T}\bn=\lim_{n\to\infty}\int_{\partial B(\bzero,n)}\bx\bwedge\mathbf{T}\bn\,.
\]
and since $\mathbf{T}=\bnabla\bu_{1}+\left(\bnabla\bu_{1}\right)^{T}-p_{1}\,\boldsymbol{1}-\nu\bu_{1}\otimes\bu_{1}+O(\left|\bx\right|^{-2})$
we obtain by an explicit calculation that $M=A_{3}$.
\end{proof}

\chapter{\label{chap:wake-sim}On the general asymptote with vanishing velocity
at infinity}

In this chapter, we analyze the existence of solutions for the two-dimensional
Navier-Stokes equations converging to zero at infinity. A crucial
point towards showing the existence of such solutions is to determine
the asymptotic decay and behavior of the solution. The aim is to determine
the two-dimensional analog of the \citet{Landau-newexactsolution1944}
solution which plays a crucial role in three-dimensions \citep{Korolev.Sverak-largedistanceasymptotics2011}.
In the supercritical regime, i.e. when the net force is nonzero, we
provide an asymptotic solution $\bUF$ with a wake structure and decaying
like $|\bx|^{-1/3}$ and conjecture that all solutions with a nonzero
net force $\bF$ will behaves at infinity like $\bUF$ at least for
small data. The asymptotic behavior $\bUF$ was found by \citet{Guillod-Asymptoticbehaviour2013}
in Cartesian coordinates and here we use a conformal change of coordinates,
which simplifies and provides a better understanding of the asymptote.
Finally, we perform numerical simulations to analyze the validity
of the conjecture and to determine the possible asymptotic behaviors
when the net force vanishes. In this later case, the general asymptotic
behavior seems to be very far from trivial.

\section{Introduction}

As already said in the introduction, the Navier-Stokes equations in
three dimensions are critical if $\bF\neq\bzero$ and the velocity
decays like $\left|\bx\right|^{-1}$ and is asymptotic to the \citet{Landau-newexactsolution1944}
solution. If $\bF=\bzero$, the three-dimensional equations are subcritical:
the velocity decays like $\left|\bx\right|^{-2}$ and is asymptotic
to the Stokes solution. In two dimensions, the velocity field has
to decay less than $\left|\bx\right|^{-1/2}$ in order to generate
a not zero net force, so the equations are supercritical if $\bF\neq\bzero$.
If $\bF=\bzero$, the two-dimensional Navier-Stokes equations are
critical as the three-dimensional ones for $\bF\neq\bzero$, however,
there are crucial differences that make the two-dimensional problem
much more difficult.

We now review the results on the three-dimensional case. The Stokes
fundamental solution decays like $\left|\bx\right|^{-1}$ and in case
$\bF=\bzero$ like $\left|\bx\right|^{-2}$. Therefore, in case $\bF=\bzero$,
the Navier-Stokes equations \eqref{compatibility-ns-force} in $\mathbb{R}^{3}$
can be solved for small $\bff$ by a fixed point argument in a space
of function decay faster than $\left|\bx\right|^{-1}$ in which the
Stokes operator is well-posed. In case $\bF\neq\bzero$, one needs
a two-parameters family of explicit solutions that lifts the compatibility
condition $\bF=\bzero$ and makes the Stokes operator well-posed.
This family of explicit solution was found by \citet{Landau-newexactsolution1944}.
For any $\bF\in\mathbb{R}^{2}$, the Landau solution $\left(\bUF,P_{\bF}\right)$
is an exact and explicit solution of \eqref{compatibility-ns-force}
in $\mathbb{R}^{3}$ with $\bff(\bx)=\bF\delta^{3}(\bx)$, so having
a net force $\bF$. By defining $\bu=\bUF+\bv$ and $p=P_{\bF}+q$
the Landau solution lifts the compatibility condition: the Navier-Stokes
equations \eqref{compatibility-ns-force} become
\begin{align}
\Delta\bv-\bnabla q & =\boldsymbol{g}\,, & \bnabla\bcdot\bu & =0\,, & \lim_{|\bx|\to\infty}\bu & =\bzero\,,\label{eq:compatibility-ns-landau}
\end{align}
where now the source term
\[
\boldsymbol{g}=\bUF\bcdot\bnabla\bv+\bv\bcdot\bnabla\bUF+\bv\bcdot\bnabla\bv+\bff-\bF\delta^{3}\,,
\]
has zero mean. Since $\bUF$ is bounded by $\left|\bx\right|^{-1}$,
if $\bv$ is bounded by $\left|\bx\right|^{-2+\varepsilon}$, for
some $\varepsilon>0$, then by power counting, $\boldsymbol{g}$ decays
like $\left|\bx\right|^{-4+\varepsilon}$, so that the solution $\bv$
of this Stokes system is bounded by $\left|\bx\right|^{-2+\varepsilon}$.
This formal argument indicates that we can perform a fixed point argument
to show the existence of solutions satisfying
\begin{align*}
\bu & =\bUF+O(\left|\bx\right|^{-2+\varepsilon})\,, & p & =P_{F}+O(\left|\bx\right|^{-3+\varepsilon})\,.
\end{align*}
provided $\bff$ to be small enough. These formal considerations were
made rigorous by \citet{Korolev.Sverak-largedistanceasymptotics2011}.
There are two crucial points that make this idea to work. First the
Landau solutions decay like $\left|\bx\right|^{-1}$, so that the
term $\bUF\bcdot\bnabla\bv+\bv\bcdot\bnabla\bUF$ can be put with
the nonlinearity. Second, the compatibility condition which is the
mean of $\boldsymbol{g}$ does not depend on $\bv$, so that the lift
parameter $\bF$ can be taken as the mean value of $\bff$ from the
beginning and does not require an adaptation at each fixed point iteration.
The second property comes from the fact that the compatibility condition
is the net force which is an invariant quantity (see \propref{invariants}),
so
\[
\bF=\int_{\mathbb{R}^{2}}\bff=\lim_{R\to\infty}\int_{\partial B(\bzero,R)}\mathbf{T}\bn\,,
\]
where $\mathbf{T}$ is the stress tensor with the convective term
\eqref{stress-tensor}. Therefore, the net force $\bF$ depends only
on the asymptotic behavior of the solution, \emph{i.e.} on the Landau
solution and not on $\bv$.

The aim of this chapter is to determine an approximate solution of
the Navier-Stokes equations in two dimensions, which becomes more
and more accurate at large distances and might describe the general
asymptotic behavior of a solution of the two-dimensional Navier-Stokes
equations; in other words, the two-dimensional analog of the Landau
solution.

\section[Homogeneous asymptotic behavior for a nonzero
net force]{\label{sec:asy-euler}Homogeneous asymptotic behavior for a nonzero
net force%
\footnote{The explicit solution of the Euler equations presented here was brought to my attention by Matthieu Hillairet and to my knowledge was never published.
}}

If $\bF\neq\bzero$, the two-dimensional equations are, as already
said, in a supercritical regime, and the aim is to determine the asymptotic
behavior carrying the net force, as the \citet{Landau-newexactsolution1944}
solution does in three dimensions. By the previous power counting
argument, the net force cannot be generated by solutions decaying
faster than $\left|\bx\right|^{-1/2}$. However, if we make an ansatz
such that the velocity decays like $\left|\bx\right|^{-1/2}$ in all
directions, then $\bu$ has to be asymptotically a solution of the
stationary Euler equations
\begin{align}
\bu\bcdot\bnabla\bu+\bnabla p+\bff & =\bzero\,, & \bnabla\bcdot\bu & =0\,,\label{eq:intro-euler}
\end{align}
at large distances. Explicitly, if one takes the following ansatz
for the stream function,
\[
\psi_{0}(r,\theta)=r^{1/2}\varphi_{0}(\theta)\,,
\]
then
\begin{align}
\bu_{0} & =\frac{1}{2r^{1/2}}\left[-2\varphi_{0}^{\prime}(\theta)\,\be_{r}+\varphi_{0}(\theta)\be_{\theta}\right]\,, & p_{0} & =\frac{-A^{2}}{4r}\,,\label{eq:euler-sol}
\end{align}
is an exact solution of the Euler\index{Exact solutions!Euler equations}
equation \eqref{intro-euler} in $\mathbb{R}^{2}\setminus\left\{ \bzero\right\} $
provided $\varphi_{0}$ is a $2\pi$-periodic solution of the ordinary
differential equation
\[
2\varphi_{0}\varphi_{0}^{\prime\prime}+2\left(\varphi_{0}^{\prime}\right)^{2}+\varphi_{0}^{2}=A^{2}\,,
\]
for some $A\in\mathbb{R}$. The $2\pi$-periodic solutions of this
equation are given by
\[
\varphi_{0}(\theta)=A\sqrt{1-\lambda\cos(\theta-\theta_{0})}\,,
\]
with $A\in\mathbb{R}$, $\left|\lambda\right|<1$, and $\theta_{0}\in\mathbb{R}$.
Moreover, this is an exact solution of \eqref{intro-euler} in $\mathbb{R}^{2}$
in the sense of distributions with $\bff(\bx)=\bF\delta^{2}(\bx)$,
where
\[
\bF=\pi^{2}A^{2}\frac{1-\sqrt{1-\lambda^{2}}}{\lambda}\left(\cos\theta_{0},\sin\theta_{0}\right)\,.
\]
This exact solution therefore seems to be a very good candidate for
the asymptotic behavior of the two-dimensional Navier-Stokes equations
with a nonzero net force. However, this exact solution of the Euler
equations is very far from the asymptote that we observed in numerical
simulations, as shown later on. A mathematical explanation why this
cannot be the asymptotic behavior of the Navier-Stokes equations at
least for small data comes from the next order of the asymptotic expansion.

To analyze the possibility that the exact solution \eqref{euler-sol}
is the asymptote at large distances of a solution of the Navier-Stokes
equations \eqref{intro-ns-eq}, the idea is to determine a formal
asymptotic expansion for large values of $r$ which starts with the
leading term $\bu_{0}$. The idea of the asymptotic expansion is to
look at the solution in the form
\begin{align}
\bUF & =\sum_{i=0}^{n}\bu_{i}\,, & P_{\bF} & =\sum_{i=0}^{n}p_{i}\,,\label{eq:euler-exp-u}
\end{align}
with $\bu_{i}=O(r^{-(i+1)/2})$ and $p_{i}=O(r^{-(i+2)/2})$ such
that \eqref{euler-exp-u} is a solution of the Navier-Stokes equations
with a remainder $\bff=O(r^{-(5+i)/2})$ for some $n\geq0$. The case
$n=0$ is trivial because if $\bff=\Delta\bu_{0}=O(r^{-5/2})$, $\left(\bu_{0},p_{0}\right)$
is a solution of \eqref{intro-ns-eq}. We now consider the next order,
\emph{i.e.} $n=1$, and we choose the following Ansatz, 
\begin{align*}
\bu_{1}(r,\theta) & =\frac{1}{r}\left[-\varphi_{1}(\theta)\,\be_{r}+\mu\be_{\theta}\right]\,, & p_{1} & =\frac{\varrho_{1}(\theta)}{r^{3/2}}\,,
\end{align*}
where $\varphi_{1}$ and $\varrho_{1}$ are $2\pi$-periodic functions
we have to determine. By explicit calculations, we obtain that $\left(\bu_{0}+\bu_{1},p_{0}+p_{1}\right)$
is a solution of \eqref{intro-ns-eq} with some $\bff=O(r^{-3})$
only if $\varphi_{1}$ satisfies the following differential equation
\begin{equation}
\frac{4}{3}\left(\varphi_{0}^{4}\varphi_{1}^{\prime}\right)^{\prime}+\left(\varphi_{0}+4\varphi_{0}^{\prime\prime}\right)\varphi_{0}^{3}\varphi_{1}=R\,,\label{eq:euler-ode2}
\end{equation}
where
\[
R=\frac{\varphi_{0}^{3}}{6}\left[16\varphi_{0}^{(4)}-16\mu\varphi_{0}^{(3)}+40\varphi_{0}^{\prime\prime}-4\mu\varphi_{0}^{\prime}+9\varphi_{0}\right]\,.
\]
By an explicit calculation, we find
\[
\left(\varphi+4\varphi^{\prime\prime}\right)\varphi^{3}=A^{4}\left(1-\lambda^{2}\right)\,,
\]
so by integrating \eqref{euler-ode2} over a period, we obtain
\[
\int_{0}^{2\pi}\varphi_{1}(\theta)\rd\theta=\frac{1}{A^{4}\left(1-\lambda^{2}\right)}\int_{0}^{2\pi}R(\theta)\rd\theta=\frac{3\pi}{\sqrt{1-\lambda^{2}}}\,,
\]
where in the last step we used the explicit form of $\varphi$ to
integrate $R$. Therefore, the net flux carried by $\bUF=\bu_{0}+\bu_{1}$
is
\[
\Phi=\int_{S^{1}}\bUF\bcdot\bn=-\int_{0}^{2\pi}\left(\varphi_{0}^{\prime}(\theta)+\varphi_{1}(\theta)\right)\rd\theta=\frac{-3\pi}{\sqrt{1-\lambda^{2}}}\,,
\]
Since $\Phi\leq-3\pi$ independently of $A$, we conclude that $\left(\bUF,P_{\bF}\right)$
constructed in \eqref{euler-exp-u} cannot be the asymptotic behavior
of a solution of the Navier-Stokes equations at least for small data.
We remark, that for $\lambda=0$, then $\Phi=-3\pi$ and $\bUF=\bu_{0}+\bu_{1}$
is an exact solution of the Navier-Stokes equations in $\mathbb{R}^{2}\setminus\left\{ \bzero\right\} $
with $\bff=\bzero$ which was found by \citet[\S 11]{Hamel-SpiralfoermigeBewegungen1917},
\[
\bUF=\frac{-3}{2r}\be_{r}+\left(\frac{A}{2r^{1/2}}+\frac{\mu}{r}\right)\be_{\theta}\,.
\]
Another interpretation of this solution in terms of symmetries has
been given by \citet[\S 3]{Guillod-Generalizedscaleinvariant2015}.

\section{\label{sec:asy-wake}Inhomogeneous asymptotic behavior for a nonzero
net force\index{Asymptotic expansion!Navier-Stokes solutions!for bFneqbzero
@for $\protect\bF\protect\neq\protect\bzero$}\index{Navier-Stokes equations!asymptotic expansion!for bFneqbzero
@for $\protect\bF\protect\neq\protect\bzero$}}

In order to determine the asymptotic behavior of the solutions of
the Navier-Stokes equations for small data and $\bF\neq\bzero$, the
idea is to modify the homogeneous power counting introduced in \eqref{decay}
by introducing a preferred direction so that the equations become
almost critical at larges distances in a sense explained later. We
consider $D\subset\mathbb{C}$ defined by
\[
D=\left\{ \left(r\cos\theta,r\sin\theta\right)\,,r>0\text{ and }\theta\in(-\pi,\pi)\right\} \,,
\]
and the following change of coordinates $D\to D^{p}$, $z\mapsto\bar{z}=z^{p}$
for $0<p<1$, represented in \figref{conformal-map}. Explicitly,
the change of coordinates is given by
\begin{align*}
\bar{x}_{1} & =r^{p}\cos(p\theta)\,, & \bar{x}_{2} & =r^{p}\sin(p\theta)\,,
\end{align*}
and the scale factors are
\[
h_{1}=h_{2}=\frac{r^{1-p}}{p}=\frac{\left|\bar{\bx}\right|^{1/p-1}}{p}\,.
\]
\begin{figure}[h]
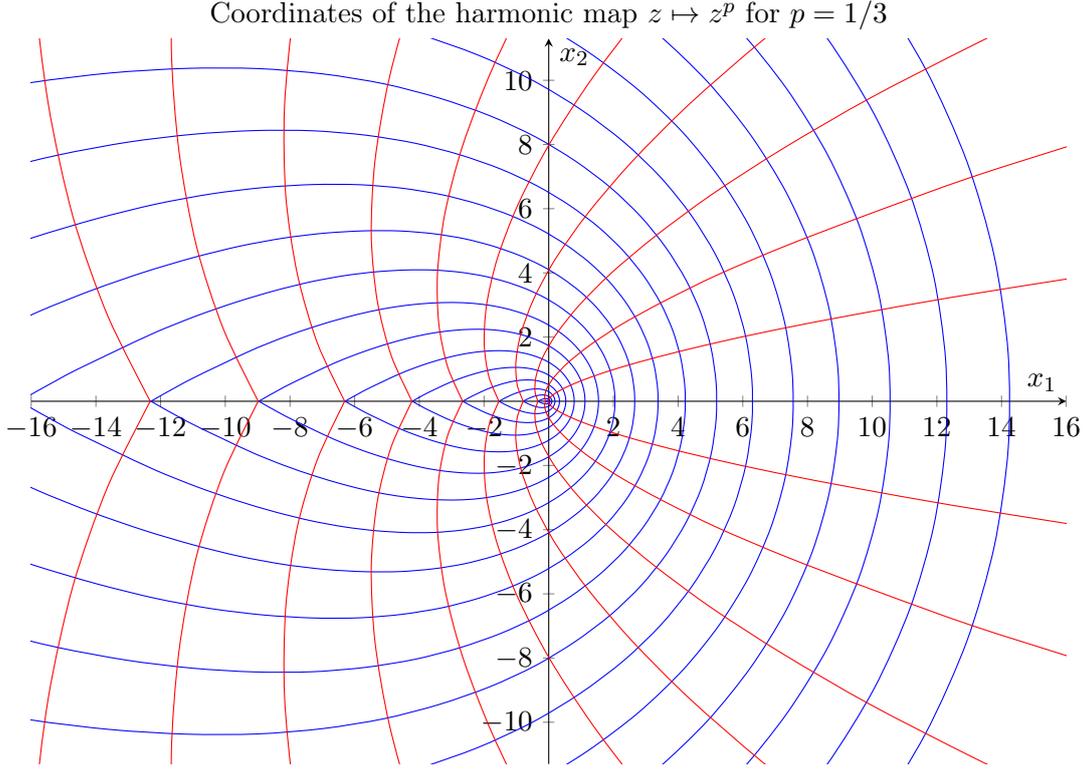

\includefigure{conformal-map}

\caption{\label{fig:conformal-map}Change of coordinates induced by the conformal
map $z\protect\mapsto z^{p}$ for $p=1/3$. The red lines corresponds
to constant values of $\bar{x}_{2}$ for $\bar{x}_{1}>\cot(p\pi)\left|\bar{x}_{2}\right|$
and the blue lines to constant values of $\bar{x}_{1}$ for $\left|\bar{x}_{2}\right|<\tan(p\pi)\bar{x}_{1}$.}
\end{figure}

The idea is now to look at large values of $\bar{x}_{1}$ with $\bar{x}_{2}$
fixed, so the scaling is as follows
\begin{align*}
\frac{\partial}{\partial\bar{x}_{1}} & \sim\bar{x}_{1}^{-1}\,, & \frac{\partial}{\partial\bar{x}_{2}} & \sim1\,,
\end{align*}
and therefore if the stream function grows like $\bar{x}_{1}^{1/p-\alpha-1}$
at fixed $\bar{x}_{2}$ for some $\alpha\geq0$, we have
\begin{align*}
\bu & \sim\begin{pmatrix}\bar{x}_{1}^{-\alpha} & \bar{x}_{1}^{-\alpha-1}\end{pmatrix}\,, & \bnabla\bu & \sim\begin{pmatrix}\bar{x}_{1}^{-\alpha-1/p} & \bar{x}_{1}^{-\alpha-1/p-1}\\
\bar{x}_{1}^{-\alpha-1/p+1} & \bar{x}_{1}^{-\alpha-1/p}
\end{pmatrix}\,,
\end{align*}
in the new basis $\left\{ \bar{\be}_{1},\bar{\be}_{2}\right\} $.
The laplacian is
\[
\Delta\bu\sim\begin{pmatrix}\bar{x}_{1}^{-\alpha-2/p+2} & \bar{x}_{1}^{-\alpha-2/p+1}\end{pmatrix}\,,
\]
and
\[
\bu\bcdot\bnabla\bu\sim\begin{pmatrix}\bar{x}_{1}^{-2\alpha-1/p} & \bar{x}_{1}^{-2\alpha-1/p-1}\end{pmatrix}\,,
\]
so with respect to this scaling the Navier-Stokes equations are critical
for $\alpha=1/p-2$. Moreover, the decay of the pressure that is compatible
is given by $p\sim\bar{x}_{1}^{-2\alpha-2}$. In these coordinates,
the net force is given by
\[
\bF=\lim_{\bar{x}_{1}\to\infty}\int_{-\tan(p\pi)\bar{x}_{1}}^{+\tan(p\pi)\bar{x}_{1}}\mathbf{T}\bcdot\bar{\be}_{1}h\rd\bar{x}_{2}\,.
\]
The stress tensor \eqref{stress-tensor} behaves like $\mathbf{T}\sim\bar{x}_{1}^{-2\alpha}$,
so by assuming that $\mathbf{T}$ decays fast enough in $\bar{x}_{2}$,
we obtain that $\bF=\bzero$ if $\alpha>\frac{1}{2}-\frac{1}{2p}$.
Therefore, the critical decay to obtain a nonzero net force is $\alpha=\frac{1}{2}-\frac{1}{2p}$
and if moreover we impose that the Navier-Stokes equations are critical,
\emph{i.e.} $\alpha=1/p-2$, we obtain the following result
\begin{align*}
\alpha & =1\,, & p & =\frac{1}{3}\,.
\end{align*}
As in the case of the homogeneous decay, we consider the following
ansatz for the stream function,
\begin{equation}
\psi_{0}(\bar{x}_{1},\bar{x}_{2})=\bar{x}_{1}\varphi_{0}(\bar{x}_{2})\,,\label{eq:wake-psi0}
\end{equation}
so we have
\begin{align}
\bu_{0} & =\frac{1}{3\left|\bar{\bx}\right|^{2}}\left[-\bar{x}_{1}\varphi_{0}^{\prime}(\bar{x}_{2})\,\bar{\be}_{1}+\varphi_{0}(\bar{x}_{2})\bar{\be}_{2}\right]\,, & p_{0} & =\frac{\rho_{0}(\bar{x}_{2})}{\bar{x}_{1}^{4}}\,.\label{eq:wake-ansatz}
\end{align}
By plugging \eqref{wake-ansatz} into the Navier-Stokes equations
\eqref{intro-ns-eq}, we obtain
\begin{align*}
\bff\bcdot\bar{\be}_{1} & =\frac{1}{27\bar{x}_{1}^{5}}\left(-\varphi_{0}^{(3)}(\bar{x}_{2})+\varphi_{0}(\bar{x}_{2})\varphi_{0}^{\prime\prime}(\bar{x}_{2})+\varphi_{0}^{\prime}(\bar{x}_{2})^{2}+O(\bar{x}_{1}^{-1})\right)\,,\\
\bff\bcdot\bar{\be}_{2} & =\frac{1}{27\bar{x}_{1}^{6}}\left(-3\varphi_{0}^{\prime\prime}(\bar{x}_{2})+2\bar{x}_{2}\varphi_{0}^{\prime}(\bar{x}_{2})^{2}-\varphi_{0}(\bar{x}_{2})\varphi_{0}^{\prime}(\bar{x}_{2})-9\rho_{0}^{\prime}(\bar{x}_{2})+O(\bar{x}_{1}^{-1})\right)\,,
\end{align*}
and by setting
\begin{align*}
\varphi_{0}(\bar{x}_{2}) & =-2a\tanh(a\bar{x}_{2})\,,\\
\rho_{0}(\bar{x}_{2}) & =\frac{4a^{2}}{27}\Big[4a\bar{x}_{2}\tanh(a\bar{x}_{2})-4\log\left(2\cosh(a\bar{x}_{2})\right)\\
 & \phantom{=\frac{4a^{2}}{27}\Big[}+\left(2a\bar{x}_{2}\tanh(a\bar{x}_{2})+7\text{sech}^{2}(a\bar{x}_{2})\right)\text{sech}(a\bar{x}_{2})\Big]\,,
\end{align*}
where $a>0$, we obtain that \eqref{wake-ansatz} is an exact solution
of the Navier-Stokes equations in $D$ with some $\bff=O(\bar{x}_{1}^{-6})\bar{\be}_{1}+O(\bar{x}_{1}^{-7})\bar{\be}_{2}=\left(O(r^{-6/3}),O(r^{-7/3})\right)$.
By an explicit calculation, the stress tensor including the convective
term is
\[
\mathbf{T}_{0}=\frac{-\varphi_{0}^{\prime}(\bar{x}_{2})^{2}}{9\bar{x}_{1}^{2}}\bar{\be}_{1}\otimes\bar{\be}_{1}+O(\bar{x}_{1}^{-3})\,,
\]
so the net force is then given by
\[
\bF=\lim_{\bar{x}_{1}\to\infty}\int_{-\tan(p\pi)\bar{x}_{1}}^{+\tan(p\pi)\bar{x}_{1}}\mathbf{T}\bcdot\bar{\be}_{1}h\rd\bar{x}_{2}=\int_{-\infty}^{+\infty}\left(\frac{-1}{3}\varphi_{0}^{\prime}(\bar{x}_{2})^{2},0\right)=\left(-\frac{16a^{3}}{9},0\right)\,.
\]
However, the stream function \eqref{wake-psi0} when expressed back
in the coordinates $(x_{1},x_{2})$ is not continuous along the line
$\left\{ (x_{1},0)\,,x_{1}<0\right\} $, there is a jump of order
$O(\left|x_{1}\right|^{1/3})$. This jump will be removed at the next
order.

The role of the next order is to improve the decay of the remainder
$\bff$, so we make the Ansatz,
\[
\psi_{1}(\bar{x}_{1},\bar{x}_{2})=\varphi_{1}(\bar{x}_{2})\,,
\]
in order to cancel the term decaying like $O(\bar{x}_{1}^{-6})\bar{\be}_{1}+O(\bar{x}_{1}^{-7})\bar{\be}_{2}$
in the remainder of the previous order. We have 
\begin{align*}
\bu_{1} & =\frac{-1}{3\left|\bar{\bx}\right|^{2}}\varphi_{1}^{\prime}(\bar{x}_{2})\,\bar{\be}_{1}\,, & p_{1} & =\frac{\rho_{1}(\bar{x}_{2})}{\bar{x}_{1}^{5}}\,.
\end{align*}
By plugging $\bu=\bu_{0}+\bu_{1}$ and $p=p_{0}+p_{1}$ into the Navier-Stokes
equations \eqref{intro-ns-eq}, we obtain
\begin{align*}
\bff\bcdot\bar{\be}_{1} & =\frac{1}{27\bar{x}_{1}^{6}}\left(-\varphi_{1}^{(3)}(\bar{x}_{2})+\varphi_{0}(\bar{x}_{2})\varphi_{1}^{\prime\prime}(\bar{x}_{2})+3\varphi_{0}^{\prime}(\bar{x}_{2})\varphi_{1}^{\prime}(\bar{x}_{2})+O(\bar{x}_{1}^{-1})\right)\,,\\
\bff\bcdot\bar{\be}_{2} & =\frac{1}{27\bar{x}_{1}^{6}}\left(-4\varphi_{1}^{\prime\prime}(\bar{x}_{2})+4\bar{x}_{2}\varphi_{0}^{\prime}(\bar{x}_{2})\varphi_{1}^{\prime}(\bar{x}_{2})-9\rho_{1}^{\prime}(\bar{x}_{2})+O(\bar{x}_{1}^{-1})\right)\,.
\end{align*}
So by setting
\begin{align*}
\varphi_{1}(\bar{x}_{2}) & =\sqrt{3}\left(\frac{2a\bar{x}_{2}}{3}-\tanh(a\bar{x}_{2})-a\bar{x}_{2}\sech^{2}(a\bar{x}_{2})\right)\,,\\
\rho_{1}(\bar{x}_{2}) & =\frac{2\sqrt{3}a}{27}\sech^{4}(a\bar{x}_{2})\left(6a^{2}\bar{x}_{2}^{2}-4az\sinh(2a\bar{x}_{2})+7\cosh(2a\bar{x}_{2})+7\right)\,,
\end{align*}
we obtain that $\bu=\bu_{0}+\bu_{1}$ and $p=p_{0}+p_{1}$ is an exact
solution of the Navier-Stokes equations \eqref{intro-ns} in $D$
with some $\bff=O(\bar{x}_{1}^{-7})\bar{\be}_{1}+O(\bar{x}_{1}^{-8})\bar{\be}_{2}=\left(O(r^{-7/3}),O(r^{-8/3})\right)$.
Moreover, the jump in the stream function $\psi=\psi_{0}+\psi_{1}$
on the line $\left\{ (x_{1},0)\,,x_{1}<0\right\} $ is now uniformly
bounded.

Therefore, we obtained the following result:
\begin{prop}
\label{prop:wake-UF-PF}For any $\bF\neq0$, there exists a solution
$(\bUF,P_{\bF})\in C^{\infty}(\mathbb{R}^{2})$ with some $\bff\in C^{\infty}(\mathbb{R}^{2})$
of the Navier-Stokes equations in $\mathbb{R}^{2}$ with $\bUF=O(\left|\bx\right|^{-1/3})$,
$P_{\bF}=O(\left|\bx\right|^{-2/3})$ and $\bff=\left(O(\left|\bx\right|^{-7/3}),O(\left|\bx\right|^{-8/3})\right)$.\end{prop}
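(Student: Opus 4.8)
The plan is to promote the two-term expansion of \secref{asy-wake} to a genuine global solution; the conformal framing makes the interior computation transparent, and the only real work is excising the slit of the chart. Throughout I will work in the conformal coordinates $\bar\bx=\bx^{1/3}$, with scale factors $h_{1}=h_{2}=3\left|\bar\bx\right|^{2}$ and under which the Laplacian and the stream-function (solenoidal) structure transform simply. Substituting $\bu=\bnabla\bwedge\psi$ with $\psi=\psi_{0}+\psi_{1}$, $\psi_{0}(\bar\bx)=\bar x_{1}\varphi_{0}(\bar x_{2})$, $\psi_{1}(\bar\bx)=\varphi_{1}(\bar x_{2})$ into \eqref{intro-ns-eq} and expanding in decreasing powers of $\bar x_{1}$ at fixed $\bar x_{2}$, the vanishing of the leading ($\bar x_{1}^{-5}$) part of the force reduces to a third-order ODE for $\varphi_{0}$ that integrates twice to the Riccati equation $\varphi_{0}^{\prime}=\tfrac12\varphi_{0}^{2}-2a^{2}$, whose solution with $\varphi_{0}(\pm\infty)=\mp 2a$ — the one producing a single localized wake — is $\varphi_{0}=-2a\tanh(a\bar x_{2})$, $a>0$ free; the vanishing of the next part reduces to a linear ODE for $\varphi_{1}$ with source built from $\varphi_{0}$, solved by the displayed $\varphi_{1}$, and the pressure profiles $\rho_{0},\rho_{1}$ are read off algebraically from the $\bar\be_{2}$-component. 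This step is a lengthy but mechanical (computer-algebra) verification; it gives that $(\bu_{0}+\bu_{1},p_{0}+p_{1})$ solves \eqref{intro-ns-eq} exactly on the slit domain $D$ with a remainder $\bff=\bigl(O(\left|\bx\right|^{-7/3}),O(\left|\bx\right|^{-8/3})\bigr)$ and with $\bUF,P_{\bF}$ of the claimed orders, the leading part of $\bu_{0}$ being \eqref{intro-exact-wake}.

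The net force is identified next: since the stress tensor including the convective term satisfies $\mathbf{T}=-\tfrac19\bar x_{1}^{-2}\varphi_{0}^{\prime}(\bar x_{2})^{2}\,\bar\be_{1}\otimes\bar\be_{1}+O(\bar x_{1}^{-3})$ and $\varphi_{0}^{\prime}$ decays exponentially in $\bar x_{2}$, the net-force integral in these coordinates converges, as $\bar x_{1}\to\infty$, to $\int_{\mathbb{R}}\bigl(-\tfrac13\varphi_{0}^{\prime}(\bar x_{2})^{2},\,0\bigr)\rd\bar x_{2}=\bigl(-\tfrac{16}{9}a^{3},0\bigr)$. Choosing $a=(9\left|\bF\right|/16)^{1/3}$ and composing the whole construction with the rotation of $\mathbb{R}^{2}$ carrying $-\bar\be_{1}$ onto $\bF/\left|\bF\right|$ — legitimate by \propref{symmetries} — yields the prescribed net force.

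It remains to globalize, which is where the genuine care is required. The fields built so far live on $D$ and have two defects: a singularity at the origin, harmlessly removed by multiplying $\psi$ by a radial cut-off $\chi$ (with $\chi\equiv 0$ near $\bzero$ and $\chi\equiv 1$ for $r\ge 1$), which changes $\bff$ only on a compact set; and a discontinuity across the slit $\{x_{1}<0\}$. Here I will use the cancellation recorded just before the statement — the $r^{1/3}$-growing part of $\psi_{0}$ is exactly cancelled by that of $\psi_{1}$ on the rays $\theta=\pm\pi$ — in the sharper form that, up to exponentially small terms in $r$, this growing part is on both sides of the slit the \emph{same} pure sinusoid in $\theta/3$ vanishing at the respective ray; consequently $\psi$ extends to the slit with all its $\theta$-derivatives agreeing and only its value jumping by a constant (up to exponentially small terms), so that $\bu=\bnabla\bwedge\psi$ and $p=p_{0}+p_{1}$ are already $C^{\infty}$ across the slit up to exponentially small discrepancies, while the constant jump of $\psi$ is removed by adding a suitable multiple of the (multivalued) angle $\theta$, which single-valuedises $\psi$ (and simultaneously makes the net flux vanish) at the price of a harmonic $O(\left|\bx\right|^{-1})$ correction to $\bu$ whose interaction with the leading wake perturbs $\bff$ only by a term of size $O(\left|\bx\right|^{-7/3})$. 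A final exponentially small surgery in a fixed angular wedge around the slit, together with the cut-off near the origin, then delivers $\bUF,P_{\bF},\bff\in C^{\infty}(\mathbb{R}^{2})$ with $\bUF=O(\left|\bx\right|^{-1/3})$, $P_{\bF}=O(\left|\bx\right|^{-2/3})$ and $\bff=\bigl(O(\left|\bx\right|^{-7/3}),O(\left|\bx\right|^{-8/3})\bigr)$, which is the assertion.

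The step I expect to be the crux is this last one: one must track the cancellation between $\psi_{0}$ and $\psi_{1}$ near $\theta=\pm\pi$ not only at the level of the value-jump but through all derivatives, and check that the corrections needed to make everything genuinely smooth on $\mathbb{R}^{2}$ disturb the force only by terms decaying at least like $\left|\bx\right|^{-7/3}$. By contrast the two ODE integrations and the net-force integral, though computationally heavy, are routine.
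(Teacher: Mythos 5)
Your proposal is correct and follows essentially the same route as the paper: the explicit two-term ansatz $\psi_{0}+\psi_{1}$ in the conformal coordinates $\bar{\bx}=\bx^{1/3}$ with the resulting remainder $\bff=\bigl(O(r^{-7/3}),O(r^{-8/3})\bigr)$, the net-force computation fixing $a=(9\left|\bF\right|/16)^{1/3}$ followed by a rotation, and the globalization by adding a multiple of the angle (the paper's $\tfrac{3\sqrt{3}}{\pi}\arg(\bar{x}_{1}+\i\bar{x}_{2})$) together with faster-decaying corrections near the slit and a cut-off at the origin. Your treatment of the slit --- checking that the $r^{1/3}$-growing part is a sinusoid in $\theta/3$ vanishing at $\theta=\pm\pi$ so that all $\theta$-derivatives match and only a constant jump plus exponentially small terms remain --- is a correct and more explicit account of what the paper compresses into ``and also terms decaying faster at infinity.''
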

\begin{proof}
By adding the term $\frac{3\sqrt{3}}{\pi}\arg(\bar{x}_{1}+\i\bar{x}_{2})$
to the stream function $\psi_{0}+\psi_{1}$ and also terms decaying
faster at infinity, we can construct a smooth stream function $\psi$
which generates a solution $(\bUF,P_{\bF})\in C^{\infty}(\mathbb{R}^{2})$
of Navier-Stokes equations \eqref{intro-ns} in $\mathbb{R}^{2}$
with $\bUF=O(\left|\bx\right|^{-1/3})$, $P_{\bF}=O(\left|\bx\right|^{-2/3})$
and some $\bff=\left(O(r^{-7/3}),O(r^{-8/3})\right)$ such that $\bF=\left(-\frac{16a^{3}}{9},0\right)$.
Since the equations are rotational invariant, we can rotate this solution
to obtain any $\bF\neq\bzero$.
\end{proof}
The solution $(\bUF,P_{\bF})$ is represented in \figref{wake-ansatz}
for $\bF=(-F,0)$ with some $F>0$. Within a wake the velocity field
decays like $\left|\bx\right|^{-1/3}$ whereas outside the wake it
decays like $\left|\bx\right|^{-2/3}$. The width of the wake is decreasing
as the net force is increasing. Moreover, we believe that this solution
describes the general asymptote of any solutions with small enough
$\bff$ or $\bu^{*}$ having a nonzero net force $\bF\neq\bzero$:
\begin{conjecture}
\label{conj:wake-conj}For a large class of boundary conditions $\bu^{*}$
and source terms $\bff$ with a nonzero net force $\bF$, there exists
a solution to \eqref{intro-ns} with $\bu_{\infty}=\bzero$ which
satisfies
\begin{align*}
\bu & =\bUF+O(r^{-1})\,, & p & =P_{\bF}+O(r^{-2})\,,
\end{align*}
where $\left(\bUF,P_{\bF}\right)$ is the solution constructed in
\propref{wake-UF-PF}.
\end{conjecture}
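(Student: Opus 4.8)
The plan is to transpose to two dimensions, in the anisotropic geometry dictated by the conformal map $z\mapsto z^{1/3}$ of \secref{asy-wake}, the fixed-point scheme that \citet{Korolev.Sverak-largedistanceasymptotics2011} used in three dimensions with the \citet{Landau-newexactsolution1944} solution as leading term. First I would reduce to the whole plane: if $\Omega$ is an exterior domain, apply the truncation of \propref{truncation} to obtain an equivalent problem in $\mathbb{R}^{2}$ with a modified source that is compactly supported up to a fast-decaying tail and still has net force $\bF$. Using the rotational invariance of the equations and the explicit dependence $\bF=(-16a^{3}/9,0)$ of the net force of $\bUF$ on the profile parameter $a$, I would fix $a$ and a rotation so that the solution $(\bUF,P_{\bF})$ of \propref{wake-UF-PF} carries exactly the prescribed net force. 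Writing $\bu=\bUF+\bv$ and $p=P_{\bF}+q$, and using that $(\bUF,P_{\bF})$ solves the Navier--Stokes equations with a source $\bff_{0}=\bigl(O(|\bx|^{-7/3}),O(|\bx|^{-8/3})\bigr)$ whose net force is also $\bF$, the perturbation satisfies
\begin{align*}
\Delta\bv-\bnabla q-\bUF\bcdot\bnabla\bv-\bv\bcdot\bnabla\bUF & =\bv\bcdot\bnabla\bv+\boldsymbol{g}\,, & \bnabla\bcdot\bv & =0\,, & \lim_{|\bx|\to\infty}\bv & =\bzero\,,
\end{align*}
where $\boldsymbol{g}=\bff-\bff_{0}$ now has vanishing net force (by \propref{invariants}), so the compatibility condition that obstructs a decay faster than $|\bx|^{-1}$ is lifted exactly as in the Landau case.

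The decisive difference with three dimensions is that $\bUF=O(|\bx|^{-1/3})$ decays too slowly for the cross terms $\bUF\bcdot\bnabla\bv+\bv\bcdot\bnabla\bUF$ to be a subcritical perturbation of the Laplacian; with respect to the wake power counting of \secref{asy-wake} — which is precisely why $p=1/3$ was selected — they are \emph{critical}. Hence the relevant linear operator, $\mathcal{L}\bv=\Delta\bv-\bnabla q-\bUF\bcdot\bnabla\bv-\bv\bcdot\bnabla\bUF$ together with $\bnabla\bcdot\bv=0$, is a genuine variable-coefficient wake operator, a nonlinear analogue of the Oseen operator whose coefficient is the $\sech^{2}$ profile of $\bu_{0}$. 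The core of the proof would be a solvability theory for $\mathcal{L}$ in weighted spaces adapted to the wake: norms measuring polynomial decay in $\bar x_{1}$ together with faster (sech-type or Gaussian) decay in the transverse variable $\bar x_{2}$, and controlling the behaviour across the ray $\{x_{1}<0,\ x_{2}=0\}$ where the change of variables degenerates. Concretely I would seek $\bv$ of size $O(|\bx|^{-1})$, that is $\bar x_{1}^{-3}$ in wake coordinates, matching the $O(r^{-1})$ correction claimed in Conjecture~\ref{conj:wake-conj}, given $\boldsymbol{g}$ of the size produced by $\bff_{0}$, possibly after subtracting finitely many further explicit correctors (continuing the expansion $\bu_{0}+\bu_{1}+\cdots$) both to improve the remainder and to saturate the cokernel of $\mathcal{L}$.

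With such a right inverse $\mathcal{L}^{-1}$ in hand, necessarily modulo the symmetry freedoms of $\bUF$ (transverse translation, the scaling that moves $a$, and a possible shear), the existence statement would follow by rewriting the system as $\bv=\mathcal{L}^{-1}\bigl(\bv\bcdot\bnabla\bv+\boldsymbol{g}\bigr)$ and running a contraction mapping in the weighted space, the smallness of $\boldsymbol{g}$ (hence of $\bu^{*}$ or $\bff$) being used to absorb the quadratic term. The quantitatively delicate point is that $\bv\bcdot\bnabla\bv$ must land in the source space with a little room to spare; since $\bv=O(|\bx|^{-1})$ this is borderline, so the argument will probably require an $\varepsilon$-loss ($\bv=O(|\bx|^{-1+\varepsilon})$), a logarithmically refined norm, or one extra corrector, combined with the transverse gain supplied by the wake weights.

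The main obstacle, and the reason the statement is only conjectural, is precisely the linear theory for $\mathcal{L}$. Unlike the Laplacian or the constant-coefficient Oseen operator it has no explicit fundamental solution, it is strongly anisotropic, and its coefficient is not small, so one cannot perturb off a solvable model; one must instead prove sharp resolvent or weighted-energy estimates directly in the wake geometry, for instance by a weighted energy method exploiting the monotonicity structure of the $\sech^{2}$ profile, or by analysing the transverse boundary-layer problem in $\bar x_{2}$ frequency-by-frequency in $\bar x_{1}$. In addition one has to identify and remove all obstructions: the kernel generated by the translational and scaling symmetries of $\bUF$, and any cokernel corresponding to further compatibility conditions on $\boldsymbol{g}$ beyond the net force, showing that after tuning the free parameters of $\bUF$ and adding correctors they can all be eliminated. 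This is the step on which a rigorous proof would stand or fall.
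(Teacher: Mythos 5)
This statement is a conjecture, and the paper offers no proof of it: immediately after stating it, the author explains that the natural strategy (lifting the net-force compatibility condition with $\bUF$ and running a fixed point on the perturbation, in analogy with the Landau solution in three dimensions) founders on the linear problem
\begin{align*}
\Delta\bv-\bnabla q-\bUF\bcdot\bnabla\bv-\bv\bcdot\bnabla\bUF & =\boldsymbol{g}\,, & \bnabla\bcdot\bv & =0\,,
\end{align*}
which, to the author's knowledge, is not solvable with existing methods; the only support given for the conjecture is numerical. Your proposal follows exactly this strategy — the decomposition $\bu=\bUF+\bv$, the observation that $\boldsymbol{g}$ has zero net force by \propref{invariants}, the recognition that the cross terms are critical rather than subcritical in the wake scaling — and you correctly identify the same obstruction as the paper: the absence of a solvability theory for the anisotropic, variable-coefficient wake operator. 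So your text is a faithful reconstruction of the intended (and still open) proof outline, not a proof.

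The one substantive point worth adding from the paper is its precise diagnosis of \emph{why} the linear theory is hard: to retain regularity one must invert the full Laplacian (the highest-order operator), while to capture the decay one must invert the degenerate operator $\left(\bF\bcdot\bnabla\right)^{2}\bv-\bUF\bcdot\bnabla\bv-\left(\bv\bcdot\bF\right)\bF\bcdot\bnabla\bUF$, which loses regularity in the direction $\bF^{\perp}$; these two requirements pull in opposite directions. Your suggestions (weighted norms with transverse sech-type decay, frequency-by-frequency analysis in $\bar x_{2}$, removal of the kernel generated by the symmetries of $\bUF$) are reasonable avenues, but none of them resolves this tension, and until someone does, the statement remains a conjecture. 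You should also be aware that your claim that $\bv\bcdot\bnabla\bv$ with $\bv=O(|\bx|^{-1})$ is merely ``borderline'' is optimistic: in two dimensions inverting the Stokes (or wake) operator on a source decaying like $|\bx|^{-3}$ already produces logarithms (see \secref{failure-asymptotic}), so the quadratic term is genuinely critical and the $\varepsilon$-loss or extra correctors you mention are not optional refinements but essential to any closing of the argument.
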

Once the asymptotic behavior is determined, the idea to prove its
validity is to lift the compatibility conditions by using the asymptotic
behavior, as the Landau solution does in three dimensions. However,
due to the decay in $\left|\bx\right|^{-1/3}$ of the asymptote, instead
of \eqref{compatibility-ns-landau}, we have to consider the linear
problem
\begin{align*}
\Delta\bv-\bnabla q-\bUF\bcdot\bnabla\bv-\bv\bcdot\bnabla\bUF & =\boldsymbol{g}\,, & \bnabla\bcdot & \bu=0\,,
\end{align*}
where $\boldsymbol{g}$ is a given source term. To our knowledge,
this linear problem is not solvable with the mathematical methods
developed so far. The reason is the following: in view of the regularity,
one has to inverse the whole Laplacian $\Delta$, which is the operator
of highest degree, otherwise we loose regularity and in view of the
decay at infinity, one has to inverse
\[
\left(\bF\bcdot\bnabla\right)^{2}\bv-\bUF\bcdot\bnabla\bv-\left(\bv\bcdot\bF\right)\,\bF\bcdot\bnabla\bUF\,,
\]
which leads to regularity lost in the direction $\bF^{\perp}$. Therefore,
in order to solve this linear problem, one has to face with these
two opposing principles. This will be part of further investigations.
However, the validity of the conjecture as well as the other asymptotic
regimes for the case of a vanishing net force will be investigated
numerically in the next sections.
\begin{figure}[h]
\includefigure{wake-ansatz}

\caption{\label{fig:wake-ansatz}Velocity field $\protect\bUF$ of \propref{wake-UF-PF}
for $\protect\bF=(-F,0)$ with different values of $F>0$. The color
represent the magnitude of $\left|\protect\bx\right|^{1/3}\protect\bUF$
in order to highlight, the fact the $\protect\bUF$ decays like $\left|\protect\bx\right|^{-1/3}$
inside a wake and like $\left|\protect\bx\right|^{-2/3}$ outside.}
\end{figure}

\section{\label{sec:wake-sim}Numerical simulations with Stokes solutions
as boundary conditions\index{Numerical simulations!with Stokes boundary conditions}\index{Numerical simulations!double straight wake}}

In an attempt to determine the general asymptotic behavior numerically,
we consider the Navier-Stokes equations \eqref{intro-ns} in the domain
$\Omega=\mathbb{R}^{2}\setminus\widebar B$ where $B=B(\bzero,1)$.
In view of \secref{failure-asymptotic} and \thmref{strong-under-compatibility},
we have seen that the problematic solutions of the Stokes equations
in order to construct a solutions of the Navier-Stokes equations are
the asymptotic term $\bS_{0}$ and $\bS_{1}$ given in \lemref{lin-stokes-asy}.
Therefore, the idea is to take for the boundary condition on $\partial B$,
the evaluation of the problematic asymptotic terms,
\[
\bu^{*}=\left.\bS_{0}+\bS_{1}\right|_{\partial B}=\bC_{0}\bcdot\mathbf{E}_{0}+\bC_{1}\bcdot\mathbf{E}_{1}\,,
\]
where $\bC_{0}\in\mathbb{R}^{2}$ and $\bC_{1}\in\mathbb{R}^{3}$
are parameters. Explicitly, by using \lemref{lin-stokes-asy}, we
have
\begin{equation}
\bu^{*}=\frac{-1}{4\pi}\left[\bC_{0}\bcdot\left(\cos\theta,\sin\theta\right)\,\be_{r}+\bC_{1}\bcdot\left(\cos(2\theta)\be_{r},\sin(2\theta)\be_{r},\be_{\theta}\right)\right]\,.\label{eq:plot-bc}
\end{equation}
The different boundary conditions are represented in \figref{plot-bc}.
Trivially the solution of the Stokes equations \eqref{compatibility-stokes}
satisfying this boundary condition grows at infinity like $\log\left|\bx\right|$
unless $\bC_{0}=\bzero$. We will see numerically that the solution
of the Navier-Stokes equations subject to the same boundary condition
will decay like $\left|\bx\right|^{-1/3}$ or faster. In order to
simulate this problem, we truncate the domain $\Omega$ to a ball
$B(\bzero,R)$ of radius $R=10^{5}$, and put open boundary conditions
on the artificial boundary $\partial B(\bzero,R)$. We make simulations
for various choices of the parameters $\bC_{0}$ and $\bC_{1}$. In
order to systematically analyze the solutions, we determine numerically
for each value of the parameters, the functions
\begin{align}
d(r) & =\max_{\theta\in[-\pi,\pi]}\left|\bu(r,\theta)\right|\,, & a(r) & =\argmax_{\theta\in[-\pi,\pi]}\left|\bu(r,\theta)\right|\,.\label{eq:functions-d-a}
\end{align}
In view of the symmetry of the boundary condition a nonzero net force
can be generated only if $\bC_{0}\neq\bzero$. 
\begin{figure}[h]
\includefigure{plot-bc}

\caption{\label{fig:plot-bc}Representation of the vector field $\protect\bu^{*}$
given by \eqref{plot-bc} with $\protect\bC_{1}=\protect\bzero$ for
the first line and $\protect\bC_{0}=\protect\bzero$ for the second
one.}
\end{figure}

\subsection{Nonzero net force}

First, we consider the case $\bC_{0}\neq\bzero$ which might generate
a nonzero net force. Without lost of generality, we can perform a
rotation such that $\bC_{0}=\left(-\mathcal{F},0\right)$ with $\mathcal{F}>0$.
In order to keep only two free parameters, we choose $\bC_{1}=\left(0,0,-\mathcal{M}\right)$
with $\mathcal{M}>0$. We perform simulations for $\mathcal{F}\in\left\{ 0,0.08,0.16,\dots,18\right\} \pi$
and $\mathcal{M}\in\left\{ 0,0.08,0.16,\dots,36\right\} \pi$. Since
the problem is highly nonlinear, we used a parametric solver in order
to follow the evolution of the solution starting from $\mathcal{F}=\mathcal{M}=0$.
Even with this parametric solver, the nonlinear solver fails to converge
for $\mathcal{F}\geq8\pi$ and $\mathcal{M}\leq28\pi$ approximately;
more precisely on the blank region of \figref{plot-Fmu}.

The velocity magnitude is represented in \figref{sim-Fmu-5,sim-Fmu-20}
respectively on the line $\mathcal{F}=2\pi$ and $\mathcal{F}=8\pi$
for varying values of $\mathcal{A}$. At $\mathcal{A}=0$, the velocity
field presents a wake behavior with a decay like $r^{-1/3}$ along
the first axis. The opening of the wake depends on $\mathcal{F}$.
As $\mathcal{A}$ is increasing, the orientation of the wake in varying
and when $\mathcal{A}$ is big enough, the wake behavior becomes blurred
and the solution has an homogeneous decay like $r^{-1}$. It is an
interesting result, that we observe a kind of phase transition between
a decay like $r^{-1/3}$ and a decay like $r^{-1}$. This is expected,
since for $\mathcal{M}>16\sqrt{3}\pi$ and $\mathcal{F}$ small enough,
\citet{Hillairet-mu2013} proved that the asymptote is given by $\mu\be_{\theta}/r$
for some $\mu>0$. 

We then make a more systematic analysis. In the region characterized
by $3\times10^{2}\leq r\leq3\times10^{4}$, the function $d$ seems
to be already in the asymptotic regime and not influenced by the artificial
boundary condition. We use this region to determine the power of decay
of the function $d$, which is represented in \figref{plot-Fmu}a
in terms of $\mathcal{F}$ and $\mathcal{M}$. We then analyze the
function $a$, by showing in \figref{plot-Fmu}b its mean value over
$3\times10^{2}\leq r\leq3\times10^{4}$. In order to determine if
this mean value is accurate or not, we compute the standard deviation
of $a$ and represent large standard deviations as more transparent
colors. At fixed value of $\mathcal{F}$ the angle is increasing with
$\mathcal{M}$ until the power of decay becomes almost $r^{-1}$.
We compute the net force and net torque acting on the body,
\begin{align*}
\bF & =\int_{\partial B}\mathbf{T}\bn\,, & M & =\int_{\partial B}\bx\bwedge\mathbf{T}\bn\,.
\end{align*}
The magnitude of the net force $\bF$ is shown in \figref{plot-Fmu}c
and its angle in \figref{plot-Fmu}d. If the net force is too small,
the angle is ill-defined, so we add more transparency to smallest
net forces. As expected the net force is zero in the region where
the power of decay is $r^{-1}$ and is increasing with $\mathcal{F}$
in the other region. In \figref{plot-Fmu}e, we represent the net
torque $M$ which increases almost linearly as a function of $\mathcal{M}$
and is independent of $\mathcal{F}$. Finally, in \figref{plot-Fmu}f,
we represent the difference between the angle of the net force and
the angle corresponding to the slowest decay. The two angles almost
coincide in the region where the angles are well-defined, \emph{i.e.}
when the net force is not too small and when the power of decay is
$r^{-1/3}$.

Moreover, one can show \citep[see][]{Guillod-Asymptoticbehaviour2013}
that the numerical solutions verify \conjref{wake-conj}, \emph{i.e.}
its asymptotic behavior is given by $\bUF$.

\begin{figure}[h]
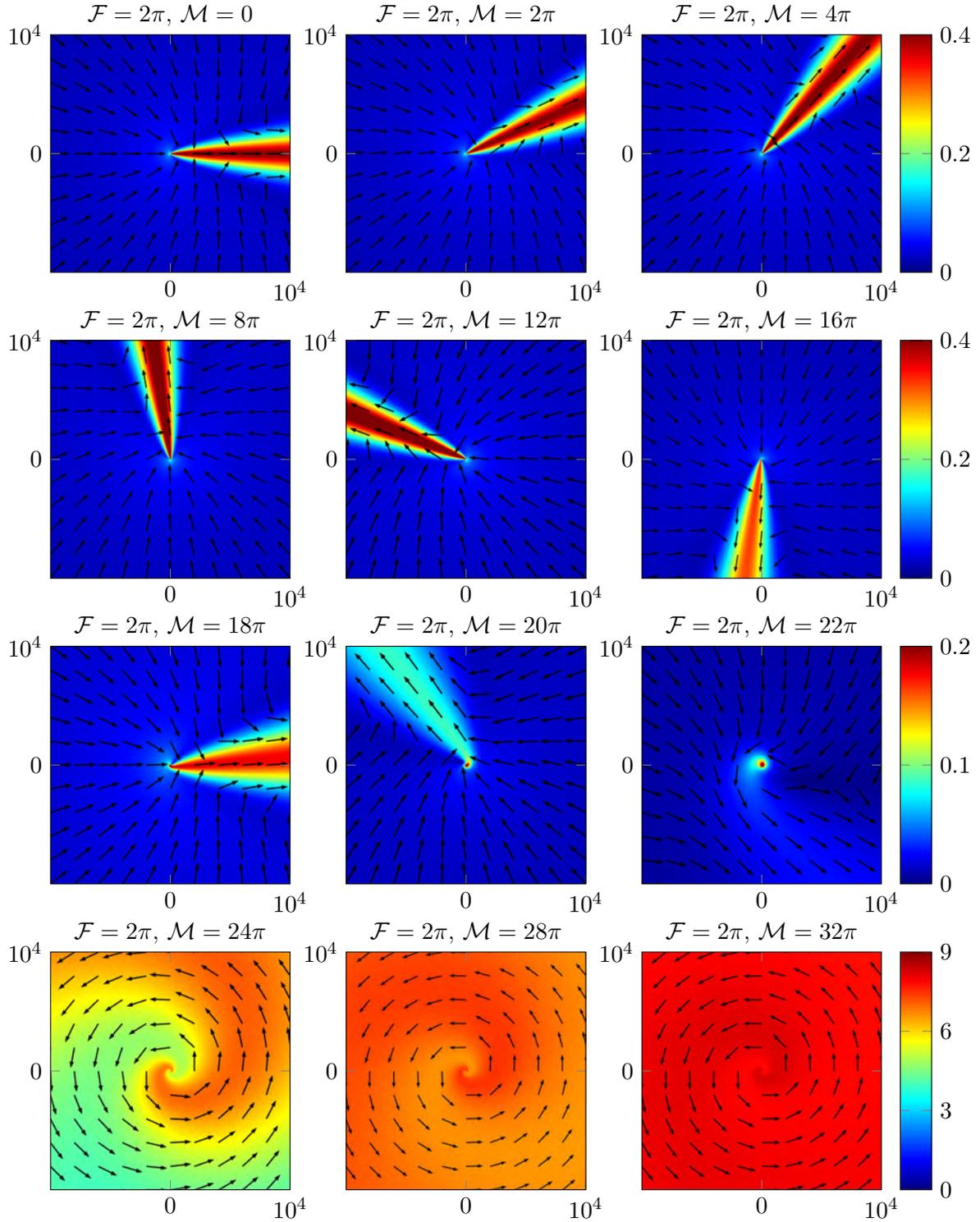

\includefigure{sim-Fmu-5}

\caption{\label{fig:sim-Fmu-5}Numerical simulations on the line $\mathcal{F}=2\pi$
of the velocity magnitude multiplied by $r^{1/3}$ for the first three
lines and by $r$ for the last one.}
\end{figure}
\begin{figure}[h]
\includefigure{sim-Fmu-20}

\caption{\label{fig:sim-Fmu-20}Numerical simulations on the line $\mathcal{F}=8\pi$
of the velocity magnitude multiplied by $r^{1/3}$ for the first three
lines and by $r$ for the last one.}
\end{figure}
\begin{figure}[h]
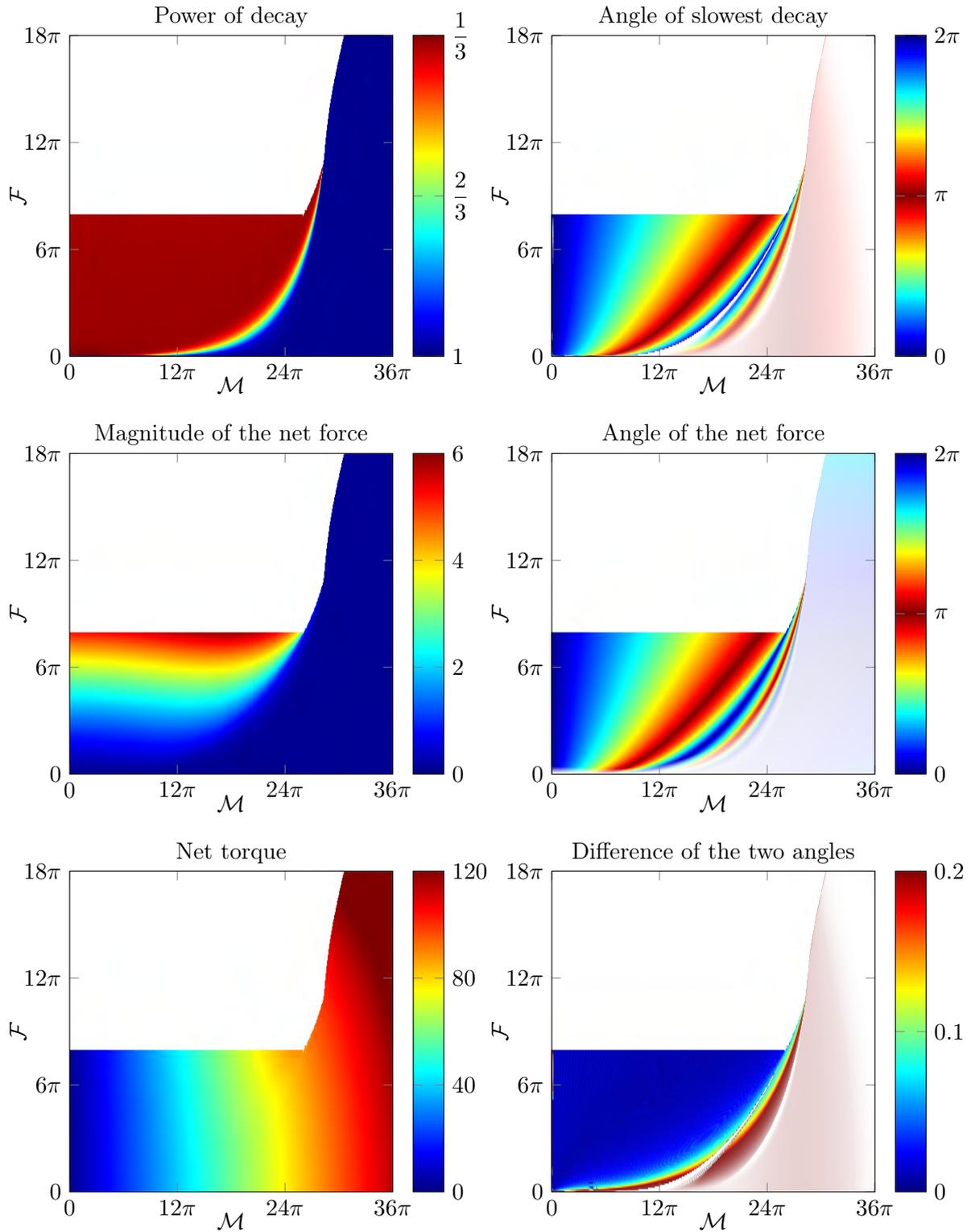

\includefigure{plot-Fmu}

\caption{\label{fig:plot-Fmu}Main characteristics of the solution for varying
$\mathcal{F}$ and $\mathcal{M}$: (a) the power of decay of the function
$d$; (b) the mean of the function $a$ with its standard deviation
shown with transparency; (c) the magnitude of the net force acting
on the body $B$; (d) the angle of the net force with the magnitude
of the net force in transparency; (e) the net torque acting on the
body; (f) the difference between the angle drawn on (b) and (d).}
\end{figure}
\clearpage{}

\subsection{Zero net force}

Second, we consider the case where $\bC_{0}=\bzero$, for which we
know by symmetry that the net force is zero. By a rotation and a reflection,
we can without generality assume that $\bC_{1}=\left(-\mathcal{A},0,-\mathcal{M}\right)$,
with $\mathcal{A},\mathcal{M}\geq0$. We perform numerical simulations
for $\mathcal{A}\in\left\{ 0,0.4,0.8,\dots,36\right\} \pi$ and $\mathcal{M}\in\left\{ 0,0.4,0.8,\dots,36\right\} \pi$.
Again, for values far from $\mathcal{A}=\mathcal{F}=0$, the nonlinear
solver has difficulties do converge, so we use a parametric solver
to follow the solution. At fixed value of $\mathcal{M}$, we perform
a parametric continuation from $\mathcal{F}=0$ to $\mathcal{F}=36\pi$,
as shown in \figref{plot-parametric}a, or we do the converse as shown
in \figref{plot-parametric}b and surprisingly the results are not
the same.
\begin{figure}[h]
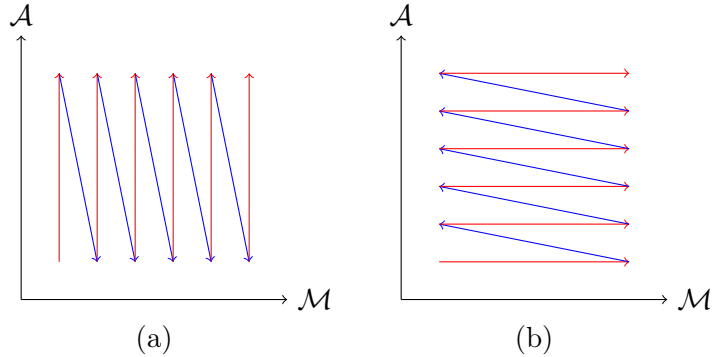

\includefigure{plot-parametric}

\caption{\label{fig:plot-parametric}In order to study the dependence of the
solution on two parameters $\mathcal{A}$ and $\mathcal{F}$, we have
two choices: (a) at fixed value of $\mathcal{M}$ we perform a parametric
continuation on $\mathcal{A}$ or (b) at fixed value of $\mathcal{F}$
we use a parametric solver in $\mathcal{M}$.}
\end{figure}

The magnitude of the velocity $\bu$ on the line $\mathcal{A}=8\pi$
for varying $\mathcal{M}$ is shown in \figref{sim-Amu-20}. For such
small values of $\mathcal{A}$, we are at small Reynolds number, so
this is not clear if the computational domain is big enough for seeing
the real asymptotic behavior and not only the Stokes one. Therefore,
we cannot conclude that the velocity decays like $r^{-1}$ or like
$r^{-1/3}$. On the line $\mathcal{A}=18\pi$ (\figref{sim-Amu-45}),
the velocity decay like $r^{-1/3}$ for small values of $\mathcal{M}$
and like $r^{-1}$ for large ones, so the first two lines of the figure,
the velocity magnitude is multiplied by $r^{1/3}$ and on the last
two by $r$. At $\mathcal{M}=0$, we have a double wake characterized
by $\bUF+\bUmF$ for some $\bF=(-F,0)$ depending on $\mathcal{A}$
and this double wake is rotated by an increasing angle in term of
$\mathcal{M}$. Around $\mathcal{M}=8.8\pi$, the wake behavior disappears
and the solution is asymptotic to the harmonic solution $\mu\be_{\theta}/r$
for some $\mu\in\mathbb{R}$. On the last line of \figref{sim-Amu-45}
we represent the norm $r\left|\bu-\mu\be_{\theta}/r\right|$ for the
best $\mu\in\mathbb{R}$. The same analysis is done in \figref{sim-Amu-90}
for $\mathcal{A}=36\pi$. Near $\mathcal{A}=\mathcal{F}=16\pi$, the
solution depends on the way we approach it: either the velocity decays
like $r^{-1/3}$ either like $r^{-1}$. We note that \figref{sim-Amu-45}f
is similar to the spiral solutions found in \citet{Guillod-Generalizedscaleinvariant2015}
with $n=2$.

In the same way, we also analyze the functions \eqref{functions-d-a}.
The power of decay in both cases are respectively shown in \figref{plot-Amu}a
and \figref{plot-Amu}c. In this situation the slowest decay is given
by two angles separated by $\pi$, so we take the mean of the function
$a$ modulo $\pi$, as shown in \figref{plot-Amu}b and \figref{plot-Amu}d.
Surprisingly, the two ways we used the parametric solver do not produce
the same results in a small triangle near $\mathcal{A}=\mathcal{F}=16\pi$.
Especially the power of decay seems to be $r^{-1/3}$ when the value
of $\mathcal{A}$ is increasing and like $r^{-1}$ when the value
of $\mathcal{M}$ is increasing. This strange behavior may either
mean that the solution is not unique or that the precision of the
numerical solver in not good enough to discard one of the two solutions.

\begin{figure}[h]
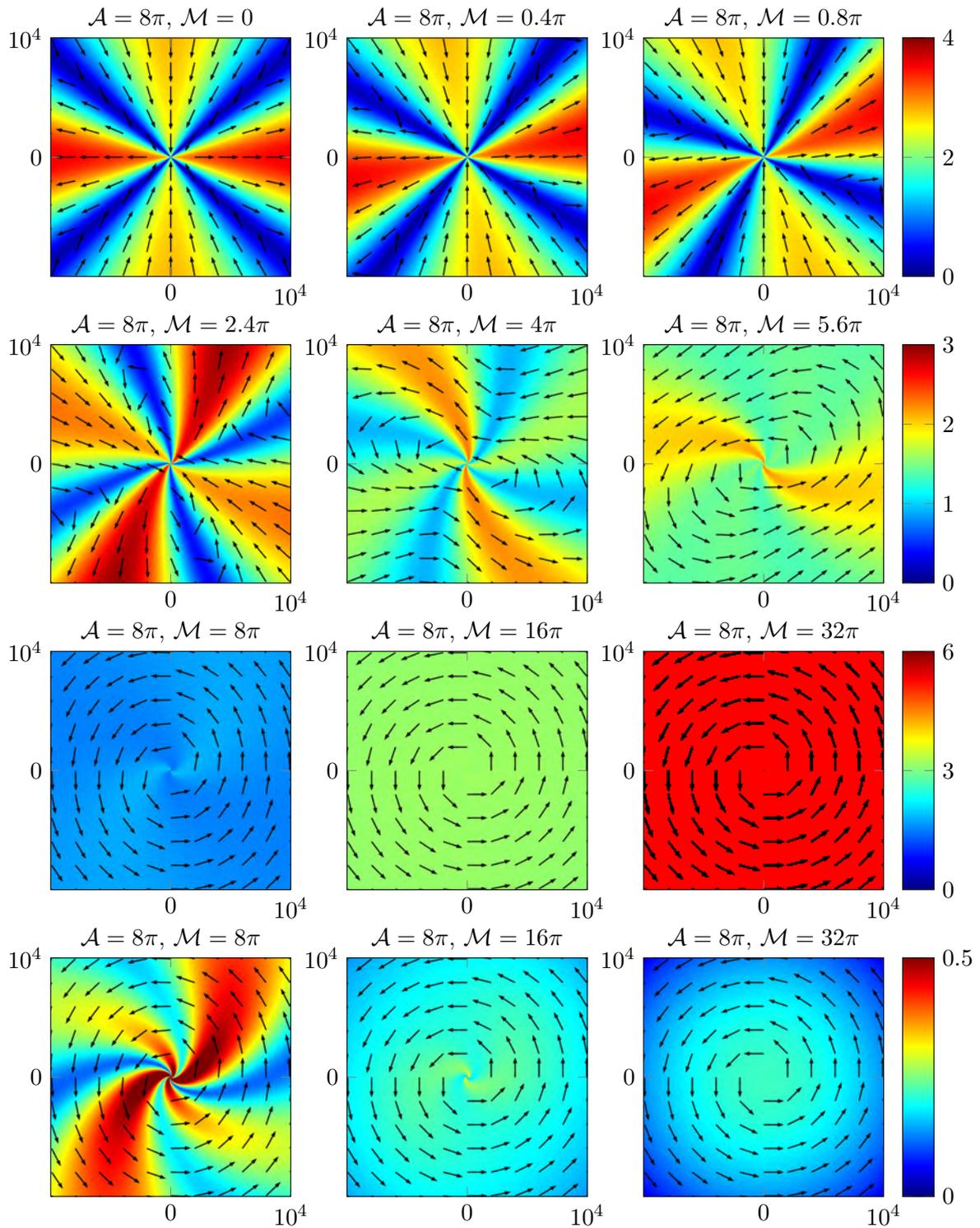

\includefigure{sim-Amu-20}

\caption{\label{fig:sim-Amu-20}Numerical simulations on the line $\mathcal{A}=8\pi$
of the velocity magnitude multiplied by $r$ for the first three lines.
Since $\mathcal{A}$ is small, for small values of $\mathcal{M}$
the velocity behaves like the solution of the Stokes equations except
that the velocity is bigger in the outflow regions than in the inflow
regions. For $\mathcal{M}$ larger than approximately $8\pi$, the
velocity is close to the harmonic solution $\mu\protect\be_{\theta}/r$
for some $\mu\in\mathbb{R}$. In the last line we represent the magnitude
$\left|r\protect\bu-\mu\protect\be_{\theta}\right|$ of the optimal
$\mu$ that minimize the remainder.}
\end{figure}
\begin{figure}[h]
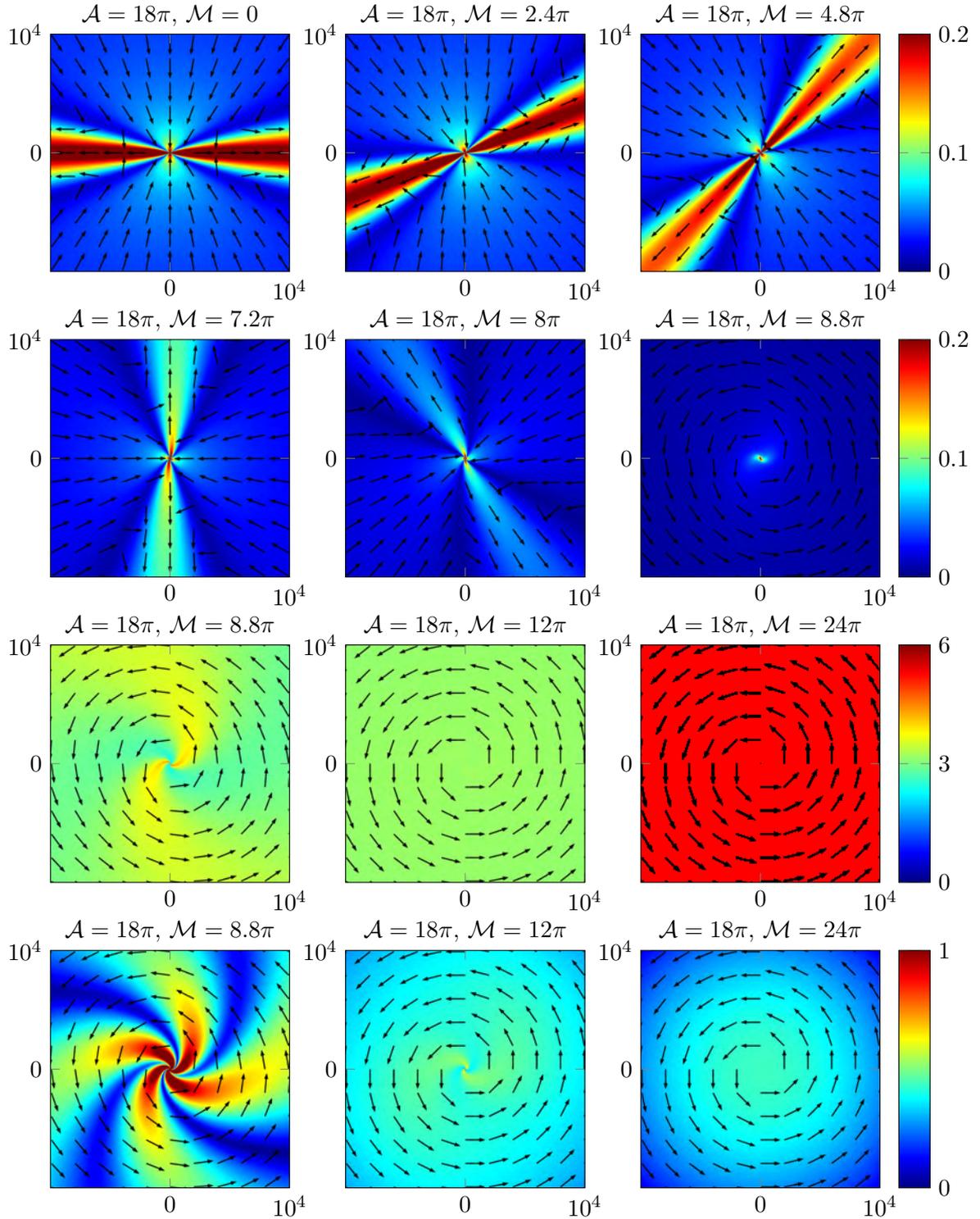

\includefigure{sim-Amu-45}

\caption{\label{fig:sim-Amu-45}Numerical simulations for $\mathcal{A}=18\pi$.
The first two lines represent $r^{1/3}\left|\protect\bu\right|$,
the third one $r\left|\protect\bu\right|$ and the last one $\left|r\protect\bu-\mu\protect\be_{\theta}\right|$
for the best $\mu$. For small $\mathcal{M}$, the velocity is well-modeled
by the solution $\protect\bUF$ of \propref{wake-UF-PF}. As $\mathcal{M}$
increases, the double wake rotates, its magnitude decreases and disappears
around $\mathcal{M}=8.8\pi$. From this value the the velocity is
close to the exact solution $\mu\protect\be_{\theta}/r$ for some
$\mu\in\mathbb{R}$.}
\end{figure}
\begin{figure}[h]
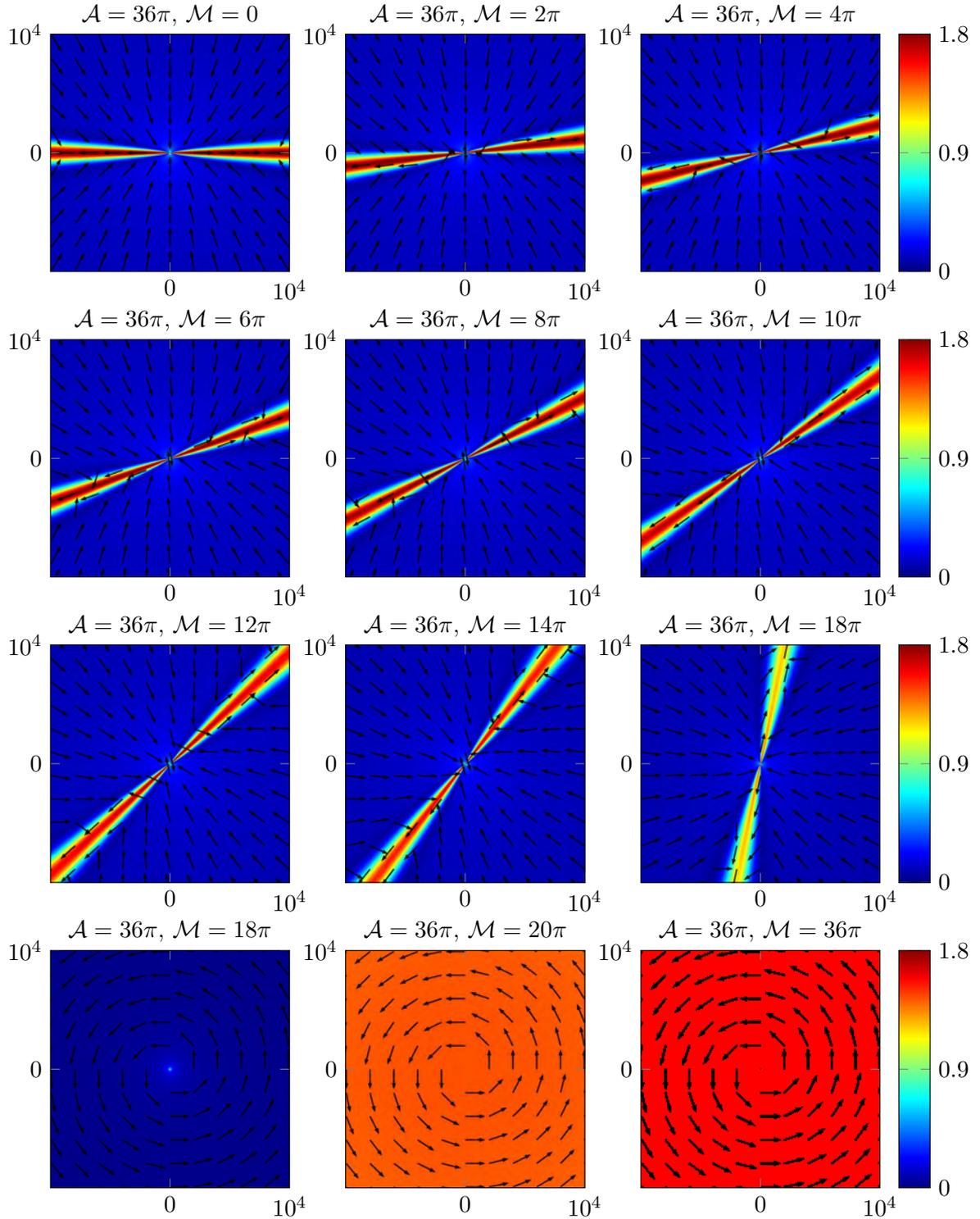

\includefigure{sim-Amu-90}

\caption{\label{fig:sim-Amu-90}Numerical simulations for $\mathcal{A}=36\pi$.
The first three lines represent $r^{1/3}\left|\protect\bu\right|$
and the last one $r\left|\protect\bu\right|$. As $\mathcal{A}$ is
bigger than in \figref{sim-Amu-45}, the opening of the double wake
is more narrow, so it corresponds to $\protect\bUF+\protect\bUmF$
with a bigger value of $\left|\protect\bF\right|$. As $\mathcal{M}$
increases, the magnitude of the double wake is reduced and finally
the velocity decays like $r^{-1}$ for large values of $\mathcal{M}$.}
\end{figure}
\begin{figure}[h]
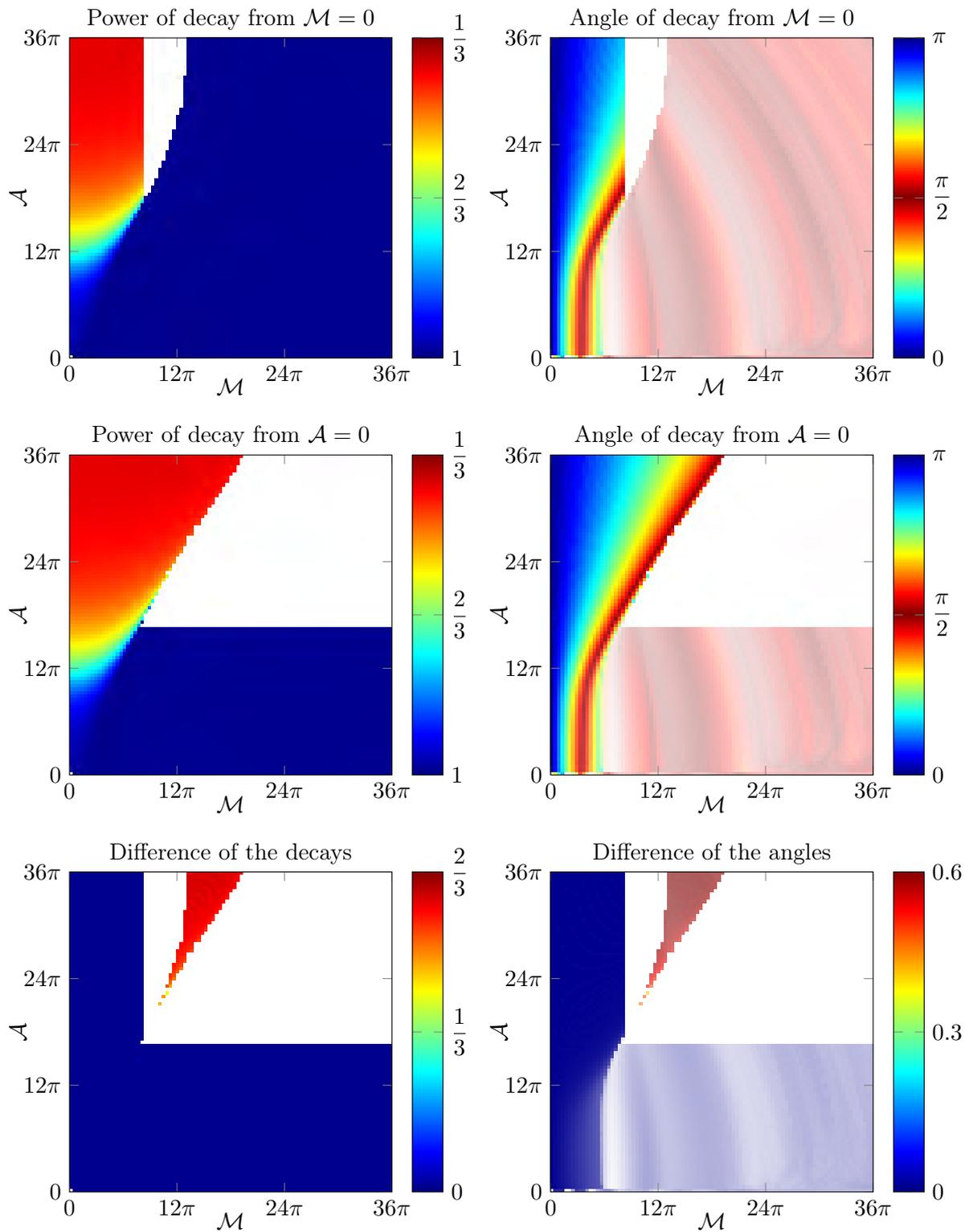

\includefigure{plot-Amu}

\caption{\label{fig:plot-Amu}Main characteristics of the numerical solution
for varying $\mathcal{A}$ and $\mathcal{M}$. The power of decay
of the function $d$ when the parametric solver is used at fixed value
of $\mathcal{M}$ or $\mathcal{A}$ is drawn in (a) and (c) respectively,
its difference is (e). The angle of the slowest decay which is the
mean of the function $a$ with its standard deviation shown with transparency
is represented on (b) and (d) respectively for the parametric solver
used at fixed value of $\mathcal{M}$ or $\mathcal{A}$ and the difference
is (f).}
\end{figure}

\clearpage{}

\section{\label{sec:wake-multiple}Numerical simulations with multiple wakes\index{Numerical simulations!multiple wakes}}

\begin{wrapfigure}[19]{r}{70mm}
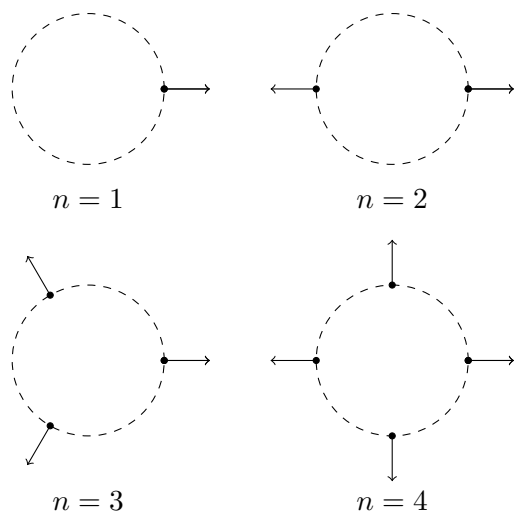
%
\vspace{-10pt}\includefigure{delta-n-forces}

\caption{\label{fig:delta-n-forces}Representation of the force \eqref{delta-n-f},
which is $n$ approximations of the delta-function uniformly distributed
on the circle on radius five.}
\end{wrapfigure}%
Finally, we examine the possibility of generating more than one or
two wakes. The idea is to take $\bff$ having $n$ approximations
of the delta function distributed on the circle of radius five (see
\figref{delta-n-forces}),
\begin{equation}
\bff(\bx)=-\mathcal{A}\sum_{i=0}^{n-1}\delta_{\varepsilon}(\bx-5\mathbf{R}_{2\pi i/n}\be_{1})\mathbf{R}_{2\pi i/n}\be_{1}\,,\label{eq:delta-n-f}
\end{equation}
where $\mathcal{A}\in\mathbb{R}$ is an amplitude, $\mathbf{R}_{\vartheta}\in\mathrm{SO}(2)$
is the rotation matrix of angle $\vartheta$, and $\delta_{\varepsilon}$
is the following approximation of the $\delta$-function,\index{Delta-function approximation}
\[
\delta_{\varepsilon}(\bx)=\frac{1}{\pi\varepsilon}\e^{-\left|\bx\right|^{2}/\varepsilon}\,.
\]
We perform numerical simulations in a disk $B(\bzero,R)$ of radius
$R=10^{4}$ with open boundary conditions on $\partial B(\bzero,R)$
and $\varepsilon=0.1$, which leads to the results drawn in \figref{branches-u}.
For $n=1$, we recover the straight simple wake studied in details
in \secref{asy-wake}. For $n=2$, we obtain two wakes which are in
opposite directions so that the net is effectively zero. For small
values of $\mathcal{A}$, the solution is very close to the solution
of the Stokes equations on a huge domain, so the magnitude of velocity
is quasi similar along the first and second axes, and decay like $r^{-1}$
on the computational domain. As $\mathcal{A}$ increases, this property
is more and more destroyed with the emergence of the two wakes that
decay like $r^{-1/3}$. For $n=3$, the situation is similar. For
$n=4$, for small values of $\mathcal{A}$, the velocity decays almost
like $r^{-2}$, but as $\mathcal{A}$ increases this situation becomes
unstable, and around $\mathcal{A}=96$, two wakes with an angle of
$\frac{\pi}{4}$ and $\frac{5\pi}{4}$ are created. By symmetry, the
same solution rotated by $\frac{\pi}{2}$ is also a solution, so the
choice between the two possibilities comes from the symmetry breaking
due to the meshing of the domain. As $\mathcal{A}$ increases even
more, the two wakes separate to become four distinct wakes.

Finally, we determine in \figref{branches-decay} the power of decay
of $\left|\bu\right|$ inside the wake on the region $10^{2}\leq r\leq8\times10^{3}$
in which the magnitude of the velocity seems to have a constant power
of decay not influenced by the artificial boundary conditions. For
$n=1$, the power of decay is essentially $r^{-1/3}$ as shown in
\secref{wake-sim}, except for small values of $\mathcal{A}$ for
which the computational domain is too small. For $n=2$, almost the
same situation appears: for small value of $\mathcal{A}$ the solution
is close to the Stokes solution which decays like $r^{-1}$ in a large
domain, so for small value of $\mathcal{A}$ the apparent decay of
the numerical solution is almost $r^{-1}$. For larger $\mathcal{A}$,
the two wakes decay like $r^{-1/3}$ and the velocity fields are almost
fitted by $\bUF+\bUmF$ where $\bF$ depends on $\mathcal{A}$. For
$n=3$, the solution of the Stokes equations decay like $r^{-2}$
and therefore, for small values of $\mathcal{A}$ the power of decay
inside the computational domain is near $r^{-2}$. As $\mathcal{A}$
increases, the three wakes described by some $\bUF$ emerge and decay
almost like $r^{-1/3}$. For $n=4$, there is a regime with two wakes
that break the symmetry before splitting into four wakes. The power
of decay in \figref{branches-u} seems to indicate that at small Reynolds
numbers, only one or two wakes can exist and that an higher number
of wakes is present only at large Reynolds numbers.

\begin{figure}[h]
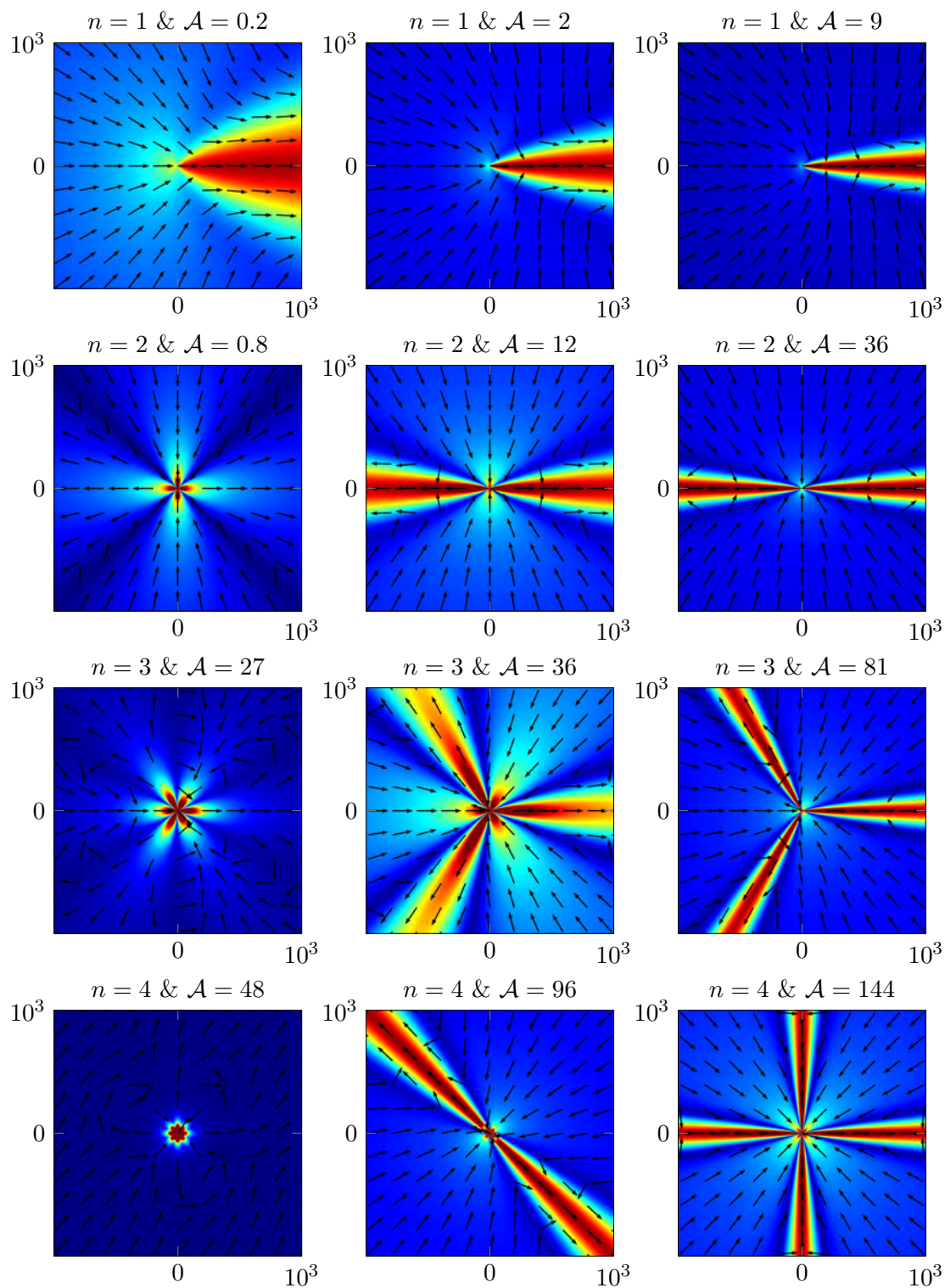

\includefigure{branches-u}

\caption{\label{fig:branches-u}Magnitude for the velocity field $r^{1/3}\left|\protect\bu\right|$
obtained by numerical simulations with $n$ approximations of the
delta function for the source force \eqref{delta-n-f}. For small
value of the amplitude $\mathcal{A}$, the solution is close to the
solution of the Stokes equations on a large domain, but for large
data, we obtain $n$ wakes. For $n=4$ and $\mathcal{A}=96$, the
numerically found solution breaks the symmetry of the source force.}
\end{figure}

\begin{figure}[h]
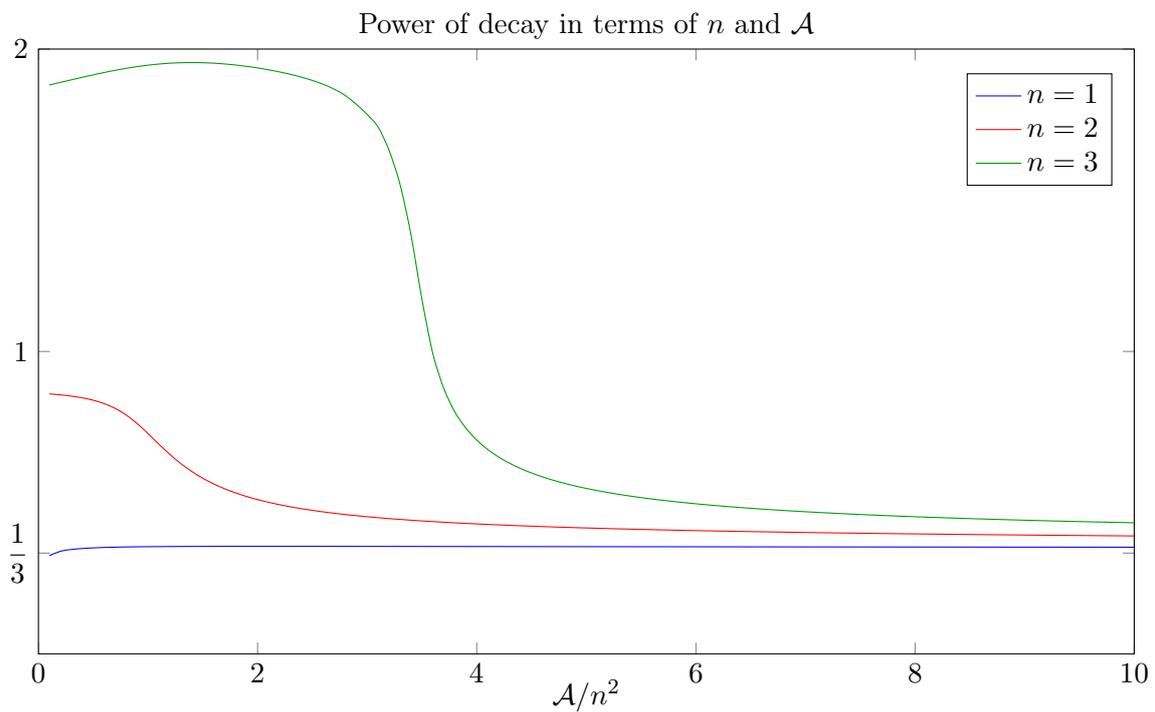

\includefigure{branches-decay}

\caption{\label{fig:branches-decay}Power of decay of the numerical solutions
fitted in the region $10^{2}\leq r\leq8\times10^{3}$. For small values
of $\mathcal{A}$, the velocity decays like the solution of the Stokes
equations in a large region which explains the behavior of the power
of decay near $\mathcal{M}=0$. For large values of $\mathcal{M}$
the velocity behaves like $r^{-1/3}$.}
\end{figure}

\clearpage{}

\section{Conclusions}

If the net force is nonzero, we have a physically motivated conjecture
for the asymptotic behavior of the solution, which is verified numerically.
In this case, the asymptote is given by $\bUF$ which is decaying
like $|\bx|^{-1/3}$ inside a wake in the direction of $\bF$ and
like $|\bx|^{-2/3}$ outside. If the net force vanishes, the velocity
can be asymptotic to the double wake $\bUF+{\boldsymbol{U}\!}_{-\bF}$
for some $\bF\in\mathbb{R}^{2}$ which also have a supercritical decay
like $|\bx|^{-1/3}$ . In another regime, the solution is asymptotic
to the exact harmonic solution $\mu\be_{\theta}/r$, where $\mu$
is a parameter. The previous section seems to indicate that at small
Reynolds number, three wakes or more are not possible. We remark that
these two regimes are clearly not the only ones. By choosing particular
boundary conditions on the disk, we can easily construct an exact
solution that is equal at large distances to spiral solutions found
in \citet{Guillod-Generalizedscaleinvariant2015} for $n=2$ and arbitrary
small $\kappa$.\index{Asymptotic expansion!Navier-Stokes solutions!for bF=bzero
@for $\bF=\bzero$}\index{Navier-Stokes equations!asymptotic expansion!for bF=bzero
@for $\bF=\bzero$} The results concerning the decay of the solutions of the Navier-Stokes
equations and their asymptotic behavior are summarized is the following
table\index{Asymptotic expansion!Navier-Stokes solutions!summary}\index{Navier-Stokes equations!asymptotic expansion!summary}:

\begin{center}
\includefigure{table-asymptotes}
\par\end{center}

In particular, we see that the nonlinearity of the Navier-Stokes equations
seems to allow the existence of solutions decaying to zero at infinity
even if the net force is nonzero, which removes the Stokes paradox
that is present at the linear level.

\clearpage{}

\bibliographystyle{merlin}
\phantomsection\addcontentsline{toc}{chapter}{\bibname}\bibliography{paper}

\clearpage{}\printindex{}
\end{document}